\newcolumntype{C}[1]{>{\centering\arraybackslash}p{#1}}
\definecolor{navy}{HTML}{2F729C} 
\newtheorem{theorem}{Theorem}[section]
\newtheorem{lemma}[theorem]{Lemma}
\newtheorem{proposition}[theorem]{Proposition}
\newtheorem{corollary}[theorem]{Corollary}
\theoremstyle{definition}
\newtheorem{example}[theorem]{Example}
\newenvironment{claim}[1][Claim]{\begin{trivlist}
\item[\hskip \labelsep {\bfseries #1}]}{\end{trivlist}}
\theoremstyle{remark}
\newtheorem{remark}[theorem]{Remark}
\numberwithin{equation}{section}
\begin{document}

\title[Minimal models of rational elliptic curves with
non-trivial torsion]{Minimal models of rational elliptic curves \\ with
non-trivial torsion}

\author{Alexander J. Barrios}

\address{Department of Mathematics and Statistics, Carleton College, Northfield, Minnesota 55057}

\email{abarrios@carleton.edu}

\subjclass{Primary 11G05, 11G07}



\begin{abstract}
In this paper, we explicitly classify the minimal discriminants of all elliptic curves $E/\mathbb{Q}$ with a non-trivial torsion subgroup. This is done by considering various parameterized families of elliptic curves with the property that they parameterize all elliptic curves $E/\mathbb{Q}$ with a non-trivial torsion point. We follow this by giving admissible change of variables, which give a global minimal model for $E$. We also provide necessary and sufficient conditions on the parameters of these families to determine the primes at which $E$ has additive reduction. In addition, we use these parameterized families to give new proofs of results due to Frey and Flexor-Oesterl\'{e} pertaining to the primes at which an elliptic curve over a number field $K$ with a non-trivial $K$-torsion point can have additive reduction.
\end{abstract}
\maketitle
\setcounter{tocdepth}{1}
\tableofcontents

\section{Introduction\label{chSS}}
The two-parameter family of elliptic curves $F\!\left(  a,b\right)
:y^{2}=x\left(  x+a\right)  \left(  x+b\right)  $, known as the Frey curve,
has the property that if $a$ and $b$ are relatively prime integers, then the
minimal discriminant $\Delta_{F}^{\text{min}}$ of the rational elliptic curve
$F=F\!\left(  a,b\right)  $ is easily computable \cite[Exercise 8.23]{MR2514094}. In fact,
\begin{equation}
\Delta_{F}^{\text{min}}=u^{-12}\left(  4ab\left(  a-b\right)  \right)
^{2}\qquad\text{where }\qquad u=\left\{
\begin{array}
[c]{cl}%
2 & \text{if }a\equiv0\ \operatorname{mod}16\ \text{and\ }b\equiv
1\ \operatorname{mod}4,\\
1 & \text{otherwise.}%
\end{array}
\right.  \label{Freycurve1}%
\end{equation}
Moreover, $F(a,b)$ is a global minimal model for $F$ if $u=1$ and
\begin{equation}\label{Freycurve3}
F^{\prime}:y^{2}+xy=x^{3}+\frac{a+b-1}{4}x^{2}+\frac{ab}{16}x
\end{equation}
is a global minimal model if $u=2$. Having explicit knowledge of a global minimal model for the Frey curve also allows us to easily determine the primes at which $F$ has additive reduction. In fact, $F$ is
semistable at all odd primes and has additive reduction at $2$ if and only if
$u=1$.

In addition to its use in the proof of Fermat's Last Theorem \cite{MR853387}, \cite{MR1333035}, the Frey curve has been a well-studied family due to its easily computable global minimal model. Knowing such a model allows us to bypass the standard means by which a global minimal model is computed for a rational elliptic curve $E$. For instance, if $E$ is given by an integral Weierstrass model, then a global minimal model for $E$ can be computed through the algorithms of Tate~\cite{MR0393039} or Laska~\cite{MR637305}. Laska's algorithm was later improved by Connell to make use of a theorem of Kraus~\cite{MR1024419}. This algorithm is known today as the Laska-Kraus-Connell Algorithm~\cite[Section~3.2]{MR1628193}, and it is the algorithm commonly used for computing a global minimal model. Moreover, knowing a global minimal model for $E$ allows for faster implementation of Tate's algorithm. Knowledge of a global minimal model also aids one in the study of Szpiro's conjecture \cite{MR642675}, which relates the minimal discriminant of an elliptic curve to its conductor. In fact, Oesterl\'{e} made use of~\eqref{Freycurve1} in his proof that the $abc$ Conjecture is equivalent to the modified Szpiro conjecture~\cite{MR992208}. On a related note, Wong \cite{MR1832985} showed that the $abc$ Conjecture holds for almost all Frey curves.

In this paper, we find global minimal models for all rational elliptic curves with a non-trivial torsion point. Specifically, we study fifteen parameterized families of elliptic
curves with the property that if $E$ is a rational elliptic curve with a
non-trivial torsion subgroup, then $E$ is $\mathbb{Q} $-isomorphic to an
elliptic curve occurring in one of these families (see Proposition
\ref{rationalmodels}). For each of these families, we prove results analogous
to those mentioned above for the Frey curve, which, in turn, allows us to
classify the minimal discriminants of all rational elliptic curves with a
non-trivial torsion subgroup.

More precisely, let $E$ be an elliptic curve over a number field $K$. The
Mordell-Weil Theorem states that the group $E\!\left(  K\right)  $ of
$K$-rational points on $E$ is finitely generated. Thus $E\!\left(  K\right)
\cong E\!\left(  K\right)  _{\text{tors}}\times\mathbb{Z}^{r}$ where
$E\!\left(  K\right)  _{\text{tors}}$ is the torsion subgroup of $E\!\left(
K\right)  $ and $r$ is a non-negative integer. Over $\mathbb{Q}$, Mazur's
Torsion Theorem \cite{MR488287} states that there are exactly $15$ possible
torsion subgroups $E\!\left(  \mathbb{Q}\right)  _{\text{tors}}$, and
parameterizations of the corresponding modular curves $X_{1}\!\left(
N\right)  $ and $X_{1}\!\left(  2,N\right)  $ are well known \cite[Table~3]%
{MR0434947}. We use these parameterizations in Section \ref{SecParamCurv} to construct families of elliptic curves $E_{T}$ where $T$ is one of the fourteen
non-trivial torsion subgroups allowed by Mazur's Torsion Theorem. 

The families $E_T$ (see Table \ref{ta:ETmodel}) have the property that
they parameterize all elliptic curves $E/K$ such that $T\hookrightarrow E\!\left(  K\right)$. When $T$ has a point of order greater than $3$, $E_T$ is a two-parameter family attained from the universal elliptic curve over the corresponding modular curve $X_1(N)$ or $X_1(2,N)$ (see Theorem~\ref{LemmaunivincT} and Lemma~\ref{ss:Lem1}). When $T=C_3$, where $C_{N}$ denotes the cyclic group of order $N$, we must consider two different
families of elliptic curves, $E_{C_3}$ and $E_{C_3}^0$. Indeed, we must take special care with the fiber of $X_1(3)\rightarrow X(1)$ corresponding to elliptic curves with $j$-invariant~$0$. In fact, $E_{C_3}^0$ is a one-parameter family of $j$-invariant~$0$ elliptic curves corresponding to the stacky point of $X_1(3)$. Next, let $Y$ denote the complement of this stacky point in $X_1(3)$. Then $E_{C_3}$ is a two-parameter family, which is derived from the universal elliptic curve over $Y$ \cite[Section~3.3]{MR3680373}.
For $T=C_2,C_2\times C_2$, $E_T$ is a three-parameter family with the property that $E_{C_2}$ parameterizes all elliptic curves with a~$2$-torsion point, but no full~$2$-torsion. Similarly, $E_{C_2 \times C_2}$ parameterizes all elliptic curves with full~$2$-torsion. In particular, the Frey curve is a special case of our
three-parameter family of elliptic curves $E_{C_2 \times C_2}$. We note that the families $E_{C_2}$ and $E_{{C_2}\times C_2}$ were also studied in \cite{MR1929230} and \cite{MR1424534}, respectively. 

We denote the invariants~$c_4$ and $c_6$ associated to $E_T$ by $\alpha_T$ and $\beta_T$, respectively, and their expressions can be found in Tables~\ref{ta:alpT}~and~\ref{ta:betT}, respectively. Similarly, the discriminant of $E_T$ is denoted by $\gamma_T$, and these quantities are found in Table~\ref{ta:gamT}. In
Section~\ref{SecFreyFlex}, we study the quantities $\alpha_T$, $\beta_T$, and $\gamma_T$ to deduce the following theorem:

\begin{claim}
[Theorem 1.]\textit{Let $E$ be an elliptic curve over a number field $K$ and let $R_{K}$ denote its ring of integers. Suppose further that $T\hookrightarrow
E\!\left(  K\right)  $ for one of the $T$ listed below. If $E$ has additive
reduction at a prime $\mathfrak{p}$ of $K$, then the residue characteristic of $\mathfrak{p}$ is a
rational prime in the set $S$ given below:}
\[%
\begin{tabular}
[c]{ccccccccccc}\hline
$T$ & $C_{5}$ &$C_{6}$& $C_{7}$ & $C_{8}$ & $C_{9}$ & $C_{10}$ & $C_{12}$ &
$C_{2}\times C_{4}$ & $C_{2}\times C_{6}$ & $C_{2}\times C_{8}$\\\hline
$S$ & $\left\{  5\right\}$& $\left\{  2,3\right\}  $ & $\left\{  7\right\}  $ & $\left\{  2\right\}  $
& $\left\{  3\right\}  $ & $\left\{  5\right\}  $ & $\left\{  2,3\right\}  $ &
$\left\{  2\right\}  $ & $\left\{  2,3\right\}  $ & $\left\{  2\right\}
$\\\hline
\end{tabular}
\]
\end{claim}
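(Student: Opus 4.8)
The plan is to pull everything back to the explicit models $E_T$ and reduce the claim to a finite resultant computation with the forms $\alpha_T,\beta_T,\gamma_T$ of Tables~\ref{ta:alpT},~\ref{ta:betT},~\ref{ta:gamT}. Fix $T$ among the ten groups listed; each contains a point of order at least $4$, so $E_T=E_T(a,b)$ is obtained from the universal elliptic curve over $X_1(N)$ (resp.\ $X_1(2,N)$), and by Theorem~\ref{LemmaunivincT} and Lemma~\ref{ss:Lem1} every elliptic curve $E/K$ with $T\hookrightarrow E(K)$ is $K$-isomorphic to $E_T(a,b)$ for some $a,b\in K$. The family $E_T$ is weighted homogeneous --- the substitution $(a,b)\mapsto(\lambda a,\lambda b)$ produces a $K$-isomorphic curve, and $\alpha_T,\beta_T,\gamma_T$ are homogeneous in $(a,b)$ of degrees $4,6,12$ --- so, having fixed a prime $\mathfrak p$ of $K$, we may rescale $(a,b)$ so that $a,b\in R_{K,\mathfrak p}$ and $\mathfrak p\nmid\gcd(a,b)$. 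Then $E_T(a,b)$ is an integral Weierstrass model of $E$ over $R_{K,\mathfrak p}$ whose associated quantities $c_4,c_6,\Delta$ are, respectively, $\alpha_T(a,b),\beta_T(a,b),\gamma_T(a,b)$.

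The second step is the standard local criterion (see, e.g., \cite{MR2514094}): in every residue characteristic, the minimal model of $E$ at $\mathfrak p$ has additive reduction if and only if $v_{\mathfrak p}(c_4^{\mathrm{min}})>0$ and $v_{\mathfrak p}(\Delta^{\mathrm{min}})>0$. Passing from an integral model to the minimal one only lowers these valuations, so if $E$ has additive reduction at $\mathfrak p$ then $v_{\mathfrak p}(\alpha_T(a,b))\ge 1$ and $v_{\mathfrak p}(\gamma_T(a,b))\ge 1$; from the identity $\alpha_T^{3}-\beta_T^{2}=1728\,\gamma_T$ together with $v_{\mathfrak p}(\gamma_T(a,b))\ge 1$ and $v_{\mathfrak p}(\alpha_T(a,b))\ge 1$ we then also get $v_{\mathfrak p}(\beta_T(a,b))\ge 1$, valid for every $\ell=\operatorname{char}\mathfrak p$. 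Because $\mathfrak p\nmid\gcd(a,b)$, the reduction $(\bar a:\bar b)$ is a well-defined point of $\mathbb P^{1}(\overline{\mathbb F_{\ell}})$, and it is a common zero of the reductions of the binary forms $\alpha_T$ and $\gamma_T$ (equivalently of $\alpha_T$ and $\beta_T$).

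The resultant of two binary forms is a fixed integer polynomial in their coefficients that vanishes over a field exactly when the forms acquire a common zero in $\mathbb P^{1}$; hence $\ell\mid\operatorname{Res}(\alpha_T,\gamma_T)$. The forms $\alpha_T$ and $\gamma_T$ are coprime over $\mathbb Q$ --- equivalently $\gcd(\alpha_T,\beta_T,\gamma_T)=1$, which one reads off from the tables --- so $\operatorname{Res}(\alpha_T,\gamma_T)$ is a nonzero integer, and what remains is to evaluate it for each of the ten families and to verify that its set of prime divisors is precisely the $S$ attached to $T$. (The nontrivial torsion subgroups absent from the statement either satisfy $\operatorname{Res}=\pm1$, so that no additive prime occurs at all, or correspond to families for which this obstruction degenerates --- the $j$-invariant $0$ locus for $C_3$, the three-parameter families for $C_2$ and $C_2\times C_2$ --- and additive reduction is then possible at every prime.) Running the same argument with $K$ and $\mathfrak p$ arbitrary, and with an integer $N\ge 4$ in place of $T$, reproves the theorems of Frey and of Flexor--Oesterl\'e on the primes of additive reduction of a curve carrying an $N$-torsion point.

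The main obstacle is computational bookkeeping rather than any conceptual difficulty, and it sits in two places. First, for the larger families --- those with $\gamma_T$ of degree $12$, such as $C_{12}$, $C_2\times C_6$, and $C_2\times C_8$ --- the forms are bulky and the resultants must be pinned down exactly, not merely up to their radicals, so that no spurious prime enters; concretely this amounts to checking that the non-cuspidal factor of $\gamma_T$ has a discriminant supported on the primes in $S$ and that the zero locus of $\alpha_T$ meets that of $\gamma_T$ nowhere else. Second, the residue characteristics $2$ and $3$ require the most care, since there the reduction behavior of $E_T(a,b)$ is subtlest and the model may fail to be $\mathfrak p$-minimal; one then relies on the characteristic-free valuation criterion above, and for the three families with $2\notin S$ or $3\notin S$, namely $C_5$, $C_7$, and $C_9$, one must confirm directly that $2$, respectively $3$, is prime to $\operatorname{Res}(\alpha_T,\gamma_T)$. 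Once these explicit evaluations are carried out, the table in the statement follows.
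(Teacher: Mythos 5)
Your argument is, up to repackaging, the paper's own: you reduce locally to the integral model $E_T(a,b)$ with $\mathfrak p\nmid\gcd(a,b)$, observe that additive reduction forces $v_{\mathfrak p}(\alpha_T)\geq 1$ and $v_{\mathfrak p}(\gamma_T)\geq 1$ (via Lemma~\ref{silverconverse}), and then invoke an elimination-theoretic obstruction. Where the paper writes down explicit cofactors $\mu_T^{(j)},\nu_T^{(j)}$ so that $(\alpha_T)+(\gamma_T)\subseteq(\delta_T)$ (Lemma~\ref{polynomials}), you take the resultant of the two binary forms; these carry the same information, and for nine of the ten groups your plan goes through and lands on the stated set $S$.

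The gap is at $T=C_{10}$. You assert that the verification will show the prime divisors of $\operatorname{Res}(\alpha_T,\gamma_T)$ are \emph{precisely} $S$, and you list $C_5,C_7,C_9$ as the only families where one must check that a prime of $S^c$ does not divide the resultant --- omitting $C_{10}$, which has $S=\{5\}$ but whose resultant is even. Indeed, modulo $2$ one has $\alpha_{C_{10}}\equiv a^{12}$ and $\gamma_{C_{10}}\equiv a^{12}b^{10}(a+b)^{10}(a^2+ab+b^2)^2$, so the two forms share the zero $(0:1)\in\mathbb P^1(\mathbb F_2)$ and $2\mid\operatorname{Res}(\alpha_{C_{10}},\gamma_{C_{10}})$ (equivalently, Lemma~\ref{polynomials} only gives $\delta_{C_{10}}=10$). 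Your method therefore proves only that the residue characteristic lies in $\{2,5\}$, and no resultant computation can do better: the $2$ is genuinely in the elimination ideal, and ruling it out requires seeing that the model $E_{C_{10}}(a,b)$ fails to be minimal at $\mathfrak p\mid 2$ when $a\equiv 0$. The paper closes this by a separate observation --- since $C_5\hookrightarrow C_{10}$, the same curve is $K_{\mathfrak p}$-isomorphic to $E_{C_5}(a',b')$ for coprime $a',b'$, and the $C_5$ obstruction is supported only at $5$. You need this (or an explicit minimalization at $2$) to finish; as written, the promised verification would simply fail at $C_{10}$. A cosmetic point: the forms $\alpha_T,\gamma_T$ have degrees $4k,12k$ with $k$ depending on $T$ (e.g.\ $8$ and $24$ for $C_7$), not uniformly $4$ and $12$, though this does not affect the homogeneity argument.
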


\noindent This is Theorem \ref{Thm1} and as a consequence, we attain a special case of Frey's Theorem \cite{MR0457444}. Specifically, Frey proved that if $E\!\left(  K\right)  $ contains a point of
order $\ell$ for $\ell$ a prime greater than~$3$, then $E$ is semistable at
all primes $\mathfrak{p}$ of $K$ whose residue field has a characteristic different from~$\ell$. We note that Theorem 1 extends Frey's Theorem to elliptic curves that contain $C_8,C_9,$ or $C_2\times C_4$ in their torsion subgroup.
Using Theorem 1 and Frey's Theorem, we provide a new proof of the following result of Flexor and Oesterl\'{e} \cite{MR1065153}:
\begin{claim}
[Theorem 2.]\textit{Let $E$ be an elliptic curve over a number field $K$. If $E\!\left(  K\right)
$ contains a point of order $N$ and $E$ has additive reduction at a prime
$\mathfrak{p}$ of $K$ whose residue characteristic does not divide $N$, then
$N\leq4$. Moreover, if $E$ has additive reduction at at least two primes of
$K$ with different residue characteristics, then $N$ divides $12$.}
\end{claim}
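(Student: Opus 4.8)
The plan is to reduce both assertions to Frey's Theorem together with Theorem~1, using the elementary observation that a $K$-rational point of order $N$ on $E$ generates a cyclic subgroup isomorphic to $C_N$, so that $C_d \hookrightarrow E(K)$ for every divisor $d$ of $N$. I will also use repeatedly that $E$ is semistable at a prime $\mathfrak{p}$ exactly when $E$ does not have additive reduction at $\mathfrak{p}$; hence any conclusion that $E$ is semistable at a prime where it is assumed to have additive reduction is a contradiction.

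The first step is to determine the primes that can divide $N$. Suppose a prime $\ell > 3$ divides $N$. Then $E(K)$ contains a point of order $\ell$, so Frey's Theorem shows that $E$ is semistable at every prime of $K$ whose residue characteristic differs from $\ell$. In the situation of the first assertion, the prescribed additive prime $\mathfrak{p}$ has residue characteristic $p$ with $p \nmid N$, so $p \neq \ell$ and $E$ is semistable at $\mathfrak{p}$, a contradiction. In the situation of the second assertion, at most one of the two additive primes has residue characteristic $\ell$, so $E$ is semistable at the other, again a contradiction. Therefore $N = 2^{a} 3^{b}$ for some integers $a, b \geq 0$ in both cases.

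The second step bounds the exponents $a$ and $b$ using Theorem~1. If $8$ divides $N$, then $C_8 \hookrightarrow E(K)$, so by Theorem~1 (the case $T = C_8$, for which $S = \{2\}$) every prime at which $E$ has additive reduction has residue characteristic $2$; this contradicts both hypotheses, since in the first $2 \mid N$ while $p \nmid N$, and in the second the two additive primes would then share residue characteristic $2$. Hence $a \leq 2$. Arguing identically with $C_9$ (the case $T = C_9$, for which $S = \{3\}$) gives $b \leq 1$. It follows that $N$ divides $12$, which proves the second assertion.

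It remains to treat the first assertion, for which we must still exclude $N = 6$ and $N = 12$. In either case $6$ divides $N$, so $C_6 \hookrightarrow E(K)$, and Theorem~1 (the case $T = C_6$, for which $S = \{2, 3\}$) forces every additive prime of $E$ to have residue characteristic $2$ or $3$; since both $2$ and $3$ divide $N$, this contradicts $p \nmid N$. (For $N = 12$ one could alternatively invoke the case $T = C_{12}$ directly.) Hence $N \in \{1, 2, 3, 4\}$, i.e.\ $N \leq 4$. I do not expect a genuine obstacle in this argument: the substantive content is already encapsulated in Theorem~1, and what is left is bookkeeping with divisibility relations. The one point that does require care is that the exclusion of $N = 6$ truly needs the $C_6$ case of Theorem~1 — it cannot be obtained by combining the $C_2$ and $C_3$ cases, neither of which appears in the table of Theorem~1 and neither of which constrains the additive primes on its own.
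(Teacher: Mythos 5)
Your proof is correct and follows essentially the same route as the paper: Frey's Theorem restricts the prime divisors of $N$ to $\{2,3\}$, the $C_8$ and $C_9$ cases of Theorem~1 exclude $8\mid N$ and $9\mid N$, and the $C_6$ case handles the exclusion of $N=6,12$ for the first assertion. Your write-up is in fact somewhat more explicit than the paper's, which compresses the final step ($N\leq 4$) into a single appeal to Theorem~1.
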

This is Theorem \ref{FOes}. The current proofs in the literature of Frey's Theorem, as well as  Flexor and Oesterl\'{e}'s Theorem, rely on knowledge of the N\'{e}ron
model of $E$. The same holds true for their generalizations \cite{MR1343556}. Our proofs of Theorems~1~and~2 differ in that they rely on the properties of the parameterized families $E_T$. In the case when $K=\mathbb{Q}$, we consider the converse to Theorem~1 and prove:
\begin{claim}
[Theorem 3.]\textit{There are necessary and sufficient conditions on the parameters of
$E_{T}$ to determine the primes at which $E_{T}$ has additive reduction.}
\end{claim}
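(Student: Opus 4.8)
The plan is to prove Theorem 3 in a uniform, family-by-family fashion, exploiting the explicit formulas for $\alpha_T$, $\beta_T$, and $\gamma_T$ recorded in Tables~\ref{ta:alpT}, \ref{ta:betT}, and \ref{ta:gamT}. The starting observation is that additive reduction is detectable from the invariants $c_4$ and $c_6$ of a \emph{minimal} model: a rational elliptic curve $E$ has additive reduction at a prime $p$ if and only if $p \mid c_4$ and $p \mid \Delta$ for the minimal model at $p$ (equivalently, $p \mid \gcd(c_4,c_6)$ and $p \mid \Delta$). Since $E_T(\mathbf{a}, \mathbf{b})$ need not be minimal, the first real step is to produce, for each of the fifteen families, an explicit admissible change of variables $u_T(\mathbf{a},\mathbf{b})$ and an associated global minimal model — exactly the analogue of \eqref{Freycurve3} for the Frey curve. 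This is the content promised in the introduction, and I would assume it is established in the body of the paper; here one pins down precisely the $u_T$-factor that must be divided out of $(\alpha_T, \beta_T, \gamma_T)$ to obtain minimal invariants $(c_4^{\min}, c_6^{\min}, \Delta^{\min})$.

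Granting minimal models, the second step is to translate "$E_T$ has additive reduction at $p$" into an arithmetic condition on the parameters. For each prime $p$ in the relevant set $S$ of Theorem~1 (plus $p=2,3$ for the $T = C_2, C_3, C_2\times C_2$ families, where additive reduction can genuinely occur), one writes $c_4^{\min}$, $c_6^{\min}$, and $\gamma_T / u_T^{12}$ as polynomials in the parameters and asks: for which parameter values does $p$ divide the minimal $c_4$ and the minimal discriminant simultaneously? Because the $\alpha_T, \beta_T, \gamma_T$ are explicit low-degree polynomials (and by Theorem~1 only finitely many primes $p$ are even candidates), this reduces to a finite collection of congruence conditions modulo small powers of $p$ — essentially a case analysis on the residues of the parameters, carried out with Tate's algorithm or directly from Kraus's criterion. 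For the large-torsion families ($C_5, C_7, C_8, C_9, C_{10}, C_{12}$ and the $C_2\times C_{2k}$) there is only one prime to consider and the conditions are clean; for $C_6, C_2\times C_6, C_{12}$ one handles $p=2$ and $p=3$ separately.

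The third step is to package these congruences into the "necessary and sufficient conditions" asserted in the statement: one must check that the conditions found are genuinely about the parameters $(\mathbf{a},\mathbf{b})$ and not about some auxiliary minimality hypothesis, and that they are stable under the reparameterizations identifying $\mathbb{Q}$-isomorphic curves within a family (the torsion structure forces a rigid model, so this is mild). I would present the outcome as a table: for each $T$ and each candidate prime $p \in S$, an explicit predicate $P_{T,p}(\mathbf{a},\mathbf{b})$ such that $E_T$ has additive reduction at $p$ iff $P_{T,p}$ holds, mirroring the clause "$u=1$" in the Frey case.

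The main obstacle is the interaction between non-minimality and additive reduction at the small primes $p = 2$ and $p = 3$. For $p \geq 5$ the models $E_T$ are automatically minimal or nearly so, and Kraus's criterion gives the answer immediately; but at $2$ and $3$ the change of variables $u_T$ can itself be divisible by $2$ or $3$, so that a curve looking additive in the given model is in fact (potentially) good or multiplicative after minimalization, or vice versa. Disentangling this requires carefully tracking the $2$-adic and $3$-adic valuations of $\alpha_T$, $\beta_T$, $\gamma_T$ across all residue classes of the parameters — the bulk of the work — and is precisely where the families $C_6$, $C_2 \times C_6$, $C_{12}$, $C_2$, $C_2 \times C_2$, and the two $C_3$-families demand individual attention rather than a single uniform argument.
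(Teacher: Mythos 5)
Your overall strategy coincides with the paper's proof of Theorem~\ref{semis}: first pin down the minimal discriminant $u_T^{-12}\gamma_T$ and the minimal invariant $u_T^{-4}\alpha_T$ (this is Theorem~\ref{semistablecondthm}), then detect additive reduction at $p$ via $p\mid\gcd\!\left(u_T^{-4}\alpha_T,\,u_T^{-12}\gamma_T\right)$, use Theorem~\ref{Thm1} to restrict the candidate primes when $|T|>4$, and finish with a residue-class analysis at $p=2$ and $p=3$ driven by Kraus's criterion and the explicit polynomials $\alpha_T,\beta_T,\gamma_T$. Your diagnosis of the $2$-adic interaction between the scaling factor $u_T$ and minimality as the main difficulty is also the correct one.

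There is, however, one concrete error in the plan. You assert that ``by Theorem~1 only finitely many primes $p$ are even candidates,'' supplementing the sets $S$ only by $p=2,3$ for $T=C_2,C_3,C_2\times C_2$. Theorem~\ref{Thm1} says nothing about the families with $|T|\leq 4$, namely $C_2$, $C_3$, $C_3^0$, $C_4$, and $C_2\times C_2$ (you omit $C_4$ and $C_3^0$ altogether), and for these families additive reduction genuinely occurs at arbitrarily large primes: $E_{C_2\times C_2}(a,b,d)$ is additive at every odd prime dividing $d$, $E_{C_4}(a,b)$ at every odd $p$ with $v_p(a)$ odd, $E_{C_3}(a,b)$ at every $p\neq 3$ with $v_p(a)\not\equiv 0 \pmod 3$, and $E_{C_2}(a,b,d)$ at every odd prime dividing $\gcd(a,bd)$. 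For the same reason your claim that the models are ``automatically minimal or nearly so'' at $p\geq 5$ fails for $C_3$ and $C_4$, where $u_T$ can be divisible by any prime. A literal execution of your plan would therefore return an incomplete answer for five of the fifteen families. The repair is exactly what the paper does: for these $T$ the necessary and sufficient condition is not a finite list of congruences but a valuation condition at a general prime $p$, extracted from Lemma~\ref{polynomials} (whose bounds $\delta_T$ for these families involve $a$ or $d$, not only $2$ and $3$) together with the reductions of $\alpha_T$ and $\gamma_T$ modulo $p$. With that adjustment the rest of your outline matches the paper's argument.
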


\noindent This is Theorem \ref{semis}, where it is given in its explicit form. As a consequence, we deduce that if $E/\mathbb{Q}$ has additive reduction at at least two primes, then $N$ divides $4$ or $6$. In particular, the condition that $N$ divides $12$ in Theorem~2 is not sharp when $K=\mathbb{Q}$. Another consequence of Theorem~3 is Corollary~\ref{semistableconditions}, which gives necessary and sufficient conditions on the parameters of $E_{T}$ to determine when $E_{T}$ is semistable. In particular, we have that if $E/\mathbb{Q}$ is an elliptic curve with $E\!\left(  \mathbb{Q}\right)  _{\text{tors}} \cong C_2\times C_8 $, then $E$ is semistable. This phenomenon is not realized over general number fields. Indeed, by Theorem 1, if $E$ is defined over a number field $K$ with $C_2 \times C_8 \hookrightarrow E(K) $ and $E$ has additive reduction at a prime $\mathfrak{p}$, then $\mathfrak{p}|2$. 
We see this is the case when $K=\mathbb{Q}(\sqrt{-2})$ and $E:y^2+\sqrt{-2}xy+\sqrt{-2}y=x^3+5x-22$ (\href{https://www.lmfdb.org/EllipticCurve/2.0.8.1/144.2/a/1}{LMFDB label 2.0.8.1-144.2-a1} \cite{lmfdb}) since $E/K$ has additive reduction at the prime $(\sqrt{-2})$ and has $E(K)_{\text{tors}}\cong C_2\times C_8$.

The proof of Theorem~3 relies on an understanding of the minimal discriminant of $E_T$. This investigation begins in Sections~\ref{sec:class}, where we restrict our attention to rational elliptic
curves with a non-trivial torsion point. Understanding the minimal
discriminants of these elliptic curves is equivalent to understanding the
minimal discriminants of the parameterized families $E_{T}$ (see Proposition~\ref{rationalmodels}). Our main result
is a classification of the minimal discriminants of these families~$E_{T}$,
which in turn, generalizes the aforementioned result of the Frey curve
(\ref{Freycurve1}).

\begin{claim}
[Theorem 4.]\textit{There are necessary and sufficient conditions on the parameters of
$E_{T}$ which uniquely determine the minimal discriminant of $E_{T}$.}
\end{claim}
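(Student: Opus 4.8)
The plan is to work family by family, using the explicit expressions for the invariants $\alpha_T = c_4$, $\beta_T = c_6$, and $\gamma_T = \Delta$ of $E_T$ recorded in Tables~\ref{ta:alpT}--\ref{ta:gamT}. The key structural fact is that two integral Weierstrass models with invariants $(c_4, c_6, \Delta)$ and $(c_4', c_6', \Delta')$ related by an admissible change of variables with scaling $u$ satisfy $c_4' = u^{-4} c_4$, $c_6' = u^{-6} c_6$, $\Delta' = u^{-12}\Delta$; hence finding the minimal discriminant amounts to determining, for each prime $p$, the largest power $u_p$ such that $u_p^{-4}\alpha_T$, $u_p^{-6}\beta_T$, $u_p^{-12}\gamma_T$ remain the invariants of a (necessarily minimal) integral model at $p$. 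For primes $p \geq 5$ this is governed purely by divisibility: $E_T$ is non-minimal at $p$ iff $p^4 \mid \alpha_T$ and $p^6 \mid \beta_T$, and then $v_p(\Delta_T^{\min}) = v_p(\gamma_T) - 12\lfloor \tfrac{1}{12}\min(\tfrac{v_p(\alpha_T)}{1}, \dots)\rfloor$ — more precisely $u_p = p^{\lfloor v_p(\alpha_T)/4 \rfloor}$ once one checks the $c_6$ condition is automatically compatible. So the first step is to analyze the content and common prime divisors of $\alpha_T, \beta_T, \gamma_T$ as polynomials in the parameters, subject to the coprimality normalization imposed in the construction of $E_T$ in Section~\ref{SecParamCurv}; here Theorem~1 is the crucial input, since it tells us that additive (hence potentially non-minimal in a subtle way) reduction can only occur at the small primes in the set $S$, so for all other primes minimality is essentially automatic from the coprimality of the parameters.

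Next I would handle the bad primes $p \in \{2, 3\}$ (and $p = 5, 7$ for the relevant families) separately and by hand, since there the naive divisibility criterion fails and one must invoke Kraus's criterion \cite{MR1024419}: a pair $(c_4, c_6)$ of integers with $c_4^3 - c_6^2 = 1728\Delta$, $\Delta \neq 0$, arises from an integral model iff certain explicit congruence conditions hold at $2$ and $3$. The strategy is: compute $v_p(\alpha_T)$, $v_p(\beta_T)$, $v_p(\gamma_T)$ in terms of congruence classes of the parameters modulo small powers of $p$, then determine the maximal $u_p$ for which Kraus's conditions are met by $(u_p^{-4}\alpha_T, u_p^{-6}\beta_T)$. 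This is exactly the mechanism behind the $u \in \{1, 2\}$ dichotomy for the Frey curve in \eqref{Freycurve1}, and for $E_{C_2 \times C_2}$ our answer must specialize to it. For each family this produces a finite case split on the parameters, and the "necessary and sufficient conditions" in the theorem statement are precisely the resulting list of congruence conditions that pin down $u_p$ for each bad $p$.

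Finally, I would assemble the local data into a global statement: since a rational elliptic curve has a global minimal model (the class number of $\mathbb{Q}$ is one), $\Delta_T^{\min} = u^{-12}\gamma_T$ with $u = \prod_p u_p$, and the product is finite because $u_p = 1$ for all $p \notin S$. The writeup would present, for each of the fifteen families, a table or proposition giving $u$ as an explicit function of congruence conditions on the parameters, together with an admissible change of variables realizing the global minimal model — mirroring the passage from $F(a,b)$ to $F'$ in \eqref{Freycurve3}. The main obstacle is the sheer bookkeeping at $p = 2$: the families with $2$-power torsion ($C_8$, $C_2 \times C_4$, $C_2 \times C_8$, and the $C_2$, $C_2 \times C_2$ cases) have invariants with high $2$-adic valuation, so the case analysis modulo $16$ or $32$ of the parameters is delicate, and one must be careful that the coprimality normalization chosen in Section~\ref{SecParamCurv} is the one that makes these cases cleanest; verifying that Kraus's $2$-adic condition is satisfiable at each claimed $u_2$ (and not at $2u_2$) is where essentially all the real work lies. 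The prime $3$ is similar but milder, and the stacky $j = 0$ fiber (the family $E_{C_3}^0$) needs its own short separate treatment because its extra automorphisms force a different normalization of the parameter.
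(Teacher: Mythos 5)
Your overall strategy coincides with the paper's: bound the possible scaling factor $u_T$ at each prime using the explicit polynomials $\alpha_T,\beta_T,\gamma_T$ and their common divisors under the coprimality normalization (this is Lemma~\ref{polynomials} in the paper, which produces explicit B\'ezout-type identities), then resolve the residual ambiguity at $p=2,3$ by Kraus's criterion (Theorem~\ref{kraus}) together with explicit admissible changes of variables, treating the $j=0,1728$ fibers and $E_{C_3^0}$ separately. That is essentially Section~\ref{pfmainthm}.

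There is, however, one concrete gap. You assert that for $p\notin S$ ``minimality is essentially automatic from the coprimality of the parameters,'' citing Theorem~1, and you conclude that the necessary and sufficient conditions are ``precisely the resulting list of congruence conditions.'' This fails for $T=C_3$ and $T=C_4$ (and Theorem~1 does not even apply there, as it assumes $|T|>4$). First, non-minimality of the given model at $p$ is not equivalent to additive reduction: $E_{C_3}(p^3,b)$ has $v_p(\alpha_{C_3})=9$ and $v_p(\gamma_{C_3})=24$, so the model is non-minimal at $p$ even though the minimalized curve has good reduction there. Second, the common divisor bounds for these families are $\delta_{C_3}=6a$ and $\delta_{C_4}=2a$ rather than constants, so $u_T$ can be divisible by arbitrarily large primes; the correct answer is $u_{C_3}=c^2d$ and $u_{C_4}\in\{c,2c\}$ after writing $a=c^3d^2e$ (resp.\ $a=c^2d$) as in \eqref{Cdef}. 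These conditions are not congruence conditions on the parameters but depend on the factorization type of $a$, so your proposed framework must be amended for these two families (and for $T=C_2$, where $\gcd(a,b)$ is only squarefree rather than $1$, the odd-prime analysis likewise requires a separate decomposition of the parameters, as in the paper's $m,n,l$ argument). With that correction the rest of your plan goes through and matches the paper's proof.
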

This is Theorem \ref{semistablecondthm} and Section \ref{pfmainthm} is devoted
to its proof. To illustrate Theorems~3~and~4, let us suppose that $E$ is a rational elliptic curve with full $2$-torsion. Then, by Proposition~\ref{rationalmodels},
there are integers $a,b,d$ with $\gcd\!\left(  a,b\right)  =1$, $a$ even, and
$d>0$ squarefree such that $E$ is $\mathbb{Q}$-isomorphic to
$E_{C_{2}\times C_{2}}=E_{C_{2}\times C_{2}}\!\left(  a,b,d\right)
:y^{2}=x^{3}+\left(  ad+bd\right)  x^{2}+abd^{2}x.$
Observe that if $d=1$, then we are in the setting of the Frey curve. By
Theorem 4, the minimal discriminant of $E_{C_{2}\times C_{2}}$ is
$u^{-12}d^{6}\left(  4ab\left(  a-b\right)  \right)  ^{2}$ where%
\[
u=\left\{
\begin{array}
[c]{cl}%
2 & \text{if }v_{2}\!\left(  a\right)  \geq4\text{ and }bd\equiv
1\ \operatorname{mod}4,\\
1 & \text{if }v_{2}\!\left(  a\right)  \leq3\text{ or }bd\not \equiv
1\ \operatorname{mod}4.
\end{array}
\right.
\]
By Theorem 3, $E_{C_{2}\times C_{2}}$ has additive reduction at a prime $p$
if and only if $\left(  i\right)  $ $p|d$ or $\left(  ii\right)  $ $p=2$ and
$u=1$.
Consequently, $E_{C_{2}\times C_{2}}$ is semistable if and only if $d=1,v_2(a)\geq 4,$ and $b\equiv 1\operatorname{mod}4$.
Moreover, $E_{C_{2}\times C_{2}}$ is a global minimal model if $u=1$. If
$u=2$, then
\[
y^{2}+xy=x^{3}+\frac{ad+bd-1}{4}x^{2}+\frac{abd^{2}}{16}x
\]
is a global minimal model for $E_{C_{2}\times C_{2}}$. More generally, we
prove the following result in Section~\ref{sec:GMM}:
\begin{claim}
[Theorem 5.]\textit{There are sufficient conditions on the parameters of $E_{T}$ to
determine a global minimal model for $E_{T}$.}
\end{claim}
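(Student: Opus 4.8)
The plan is to leverage Theorem~\ref{semistablecondthm} (Theorem~4) together with the fact that, since $\mathbb{Q}$ has class number one, every $E/\mathbb{Q}$ admits a global minimal model, and an integral Weierstrass model is a global minimal model precisely when its discriminant equals $\Delta^{\min}_E$ (equivalently, when it is minimal at every prime). By Theorem~\ref{semistablecondthm}, for each family in Table~\ref{ta:ETmodel} we have $\Delta^{\min}_{E_T}=u_T^{-12}\gamma_T$, where $u_T$ is a positive integer supported on the primes of the set $S$ attached to $T$ in Theorem~\ref{Thm1} and determined by explicit congruence/valuation conditions on the parameters. When those conditions force $u_T=1$, the given model $E_T$ is itself a global minimal model and there is nothing to do; the substance lies in the cases $u_T>1$.

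In each such case I would exhibit an explicit admissible change of variables $(x,y)\mapsto(u_T^{2}x+r,\ u_T^{3}y+u_T^{2}sx+t)$ with $r,s,t$ given as expressions in the parameters, and check that the transformed Weierstrass coefficients $a_1',\dots,a_6'$ are integers. Since the transformed model then has discriminant $u_T^{-12}\gamma_T=\Delta^{\min}_{E_T}$ by Theorem~\ref{semistablecondthm}, integrality alone forces it to be minimal at every prime, hence a global minimal model. To locate $r,s,t$ I would pass through the invariants: the target must have $(c_4,c_6)=(u_T^{-4}\alpha_T,\ u_T^{-6}\beta_T)$, with $\alpha_T,\beta_T$ as in Tables~\ref{ta:alpT} and~\ref{ta:betT}, and the standard reconstruction (choose $b_2$ in the prescribed class modulo $12$, then $b_4=(b_2^{2}-c_4)/24$, $b_6=(-b_2^{3}+36b_2b_4-c_6)/216$, then the $a_i$) yields a canonical integral model with those invariants; comparing it with $E_T$ pins down the change of variables. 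For the $2$-primary families this reproduces the Frey-type picture, e.g.\ $(u_T,r,s,t)=(2,0,1,0)$, giving a model with $a_1=1$ as in~\eqref{Freycurve3}, while the $j$-invariant $0$ family $E_{C_3}^{0}$ and the families with $3\in S$ require an analogous but separate $3$-adic choice.

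The main obstacle is the extent and delicacy of the case analysis rather than any single idea: fifteen families, and within several of them $u_T$ takes more than one value ($\{1,2\}$, $\{1,2,4\}$, or values divisible by both $2$ and $3$), each requiring its own $(r,s,t)$ and its own integrality check. These checks are where the conditions of Theorem~\ref{semistablecondthm} enter essentially — one must show that expressions of the form $(\text{polynomial in the parameters})/2^{k}$ or $/3^{k}$ are integers, which holds only because the parameters satisfy the congruences defining the relevant $u_T$. The families with $S=\{2,3\}$ are the most delicate, since there a single change of variables must achieve minimality at $2$ and at $3$ at once, and together with the $j$-invariant $0$ family I expect these to absorb essentially all of the difficulty. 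Finally, one should note that \emph{necessary} conditions cannot be expected here, as a global minimal model over $\mathbb{Q}$ is unique only up to an integral change of variables; the claim is that the stated conditions suffice to write down one such model for each $E_T$.
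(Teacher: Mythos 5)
Your proposal is correct and follows essentially the same route as the paper: Theorem~\ref{semistablecondthm} fixes the value of $u_T$, and one then exhibits an explicit admissible change of variables $(x,y)\mapsto(u_T^2x+r_T,\,u_T^3y+u_T^2s_Tx)$ whose transformed coefficients are checked to be integral, which suffices because an integral model with discriminant $u_T^{-12}\gamma_T=\Delta^{\min}_{E_T}$ is automatically a global minimal model. The only cosmetic difference is that you would locate $(r,s,t)$ via the Kraus–Laska reconstruction from $(c_4,c_6)$, while the paper simply writes the shifts down (and notes that for all $T\neq C_2,C_4,C_2\times C_2$ the integral models were already produced in the proof of Theorem~\ref{semistablecondthm}, so only those three families need new work).
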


This is Theorem \ref{GlobalMinModel}. Section \ref{secaddred} is devoted to the proof of Theorem 3. Section \ref{sec:corexa} concludes this paper by
considering some corollaries of Theorems~3~and~4 for rational elliptic curves
with a non-trivial torsion subgroup as well as some examples.

Lastly, we note that if $E$ is a rational elliptic curve with a $3$-torsion point, then $E$ is $\mathbb{Q}$-isomorphic to $E^{\prime}:y^2+axy+by=x^3$ for some integers $a,b$ with the property that for each prime $p$, $p\nmid a$ and $p^3 \nmid b$. Under these assumptions, $E^{\prime}$ is a global minimal model for $E$ \cite{MR491734}. Moreover, in an unpublished work of Koike \cite[Lemma 1.3]{MR491734}, necessary and sufficient conditions on $a$ and $b$ were found to determine the primes at which $E$ has additive reduction. In particular, Theorems 3, 4, and 5 have already been established for $T=C_3$. We note that our two-parameter family $E_{C_3}(a,b)$ differs from $E^{\prime}$, and in addition, the parameters of $E_{C_3}(a,b)$ are only required to be relatively prime.

\subsection{Notation and Terminology}

We start by recalling some basic facts about elliptic curves. See
\cite{MR2514094} for details. We say $E$ is defined over a field $K$ if $E$ is
given by an (affine) Weierstrass model%
\begin{equation}
E:y^{2}+a_{1}xy+a_{3}y=x^{3}+a_{2}x^{2}+a_{4}x+a_{6}\label{ch:inintroweier}%
\end{equation}
where each $a_{j}\in K$. From the Weierstrass coefficients, one defines the
quantities%
\begin{equation}%
\begin{array}
[c]{l}%
c_{4}=a_{1}^{4}+8a_{1}^{2}a_{2}-24a_{1}a_{3}+16a_{2}^{2}-48a_{4},\\
c_{6}=-\left(  a_{1}^{2}+4a_{2}\right)  ^{3}+36\left(  a_{1}^{2}%
+4a_{2}\right)  \left(  2a_{4}+a_{1}a_{3}\right)  -216\left(  a_{3}^{2}%
+4a_{6}\right)  ,\\
\Delta=\frac{c_{4}^{3}-c_{6}^{2}}{1728},\qquad\text{and}\qquad j=\frac
{c_{4}^{3}}{\Delta}.
\end{array}
\label{basicformulas}%
\end{equation}
We call $\Delta$ the \textit{discriminant} of $E$, and the assumption that $E$
is an elliptic curve is equivalent to $\Delta\neq0$. The quantities $c_{4}$
and $c_{6}$ are the \textit{invariants associated to the Weierstrass model} of
$E$. In particular, we have the identity $1728\Delta=c_{4}^{3}-c_{6}^{2}$. An
elliptic curve $E^{\prime}$ is $K$\textit{-isomorphic} to $E$ if $E^{\prime}$
arises from $E$ via an \textit{admissible change of variables} $x\longmapsto
u^{2}x+r$ and $y\longmapsto u^{3}y+u^{2}sx+w$ for $u,r,s,w\in K$ and $u\neq0$.
If $\Delta^{\prime},j^{\prime},c_{4}^{\prime},$ and $c_{6}^{\prime}$ denote
the quantities associated to the Weierstrass model of $E^{\prime}$, then%
\[
\Delta^{\prime}=u^{-12}\Delta,\qquad j^{\prime}=j,\qquad c_{4}^{\prime}%
=u^{-4}c_{4},\qquad c_{6}^{\prime}=u^{-6}c_{6}.
\]

Now suppose $K$ is a number field or a local field with ring of integers
$R_{K}$. We say $E$ is given by an \textit{integral Weierstrass model} if each
$a_{i}\in R_{K}$. Suppose further that $K$ is a number field and let
$\mathfrak{p}$ be a prime of $K$. Then $E$ is $K_{\mathfrak{p}}$-isomorphic to
an elliptic curve given by a \textit{minimal Weierstrass model at
}$\mathfrak{p}$ of the form \eqref{ch:inintroweier} with each $a_{j}\in
R_{K_{\mathfrak{p}}}$ and $v_{\mathfrak{p}}\!\left(  \Delta\right)  $ is
minimal over all elliptic curves $K_{\mathfrak{p}}$-isomorphic to $E$. Here
$v_{\mathfrak{p}}$ denotes the $\mathfrak{p}$-adic valuation of $K_{\mathfrak{p}}$. We call the discriminant associated to a minimal
Weierstrass model of $E$ at $\mathfrak{p}$ the \textit{minimal discriminant of
$E$ at $\mathfrak{p}$}, and denote it by $\Delta_{E/K_{\mathfrak{p}}%
}^{\text{min}}$. If $v_{\mathfrak{p}}(  \Delta_{E/K_{\mathfrak{p}}
}^{\text{min}})  ,\ v_{\mathfrak{p}}\!\left(  c_{4}\right)  >0$ where
$c_{4}$ is the invariant associated to a minimal Weierstrass model of $E$ at $\mathfrak{p}$,
then $E$ is said to have \textit{additive reduction at $\mathfrak{p}$}. If
this is not the case, then $E$ is \textit{semistable at }$\mathfrak{p}$, and
we say $E$ is \textit{semistable} if it is semistable at all primes
$\mathfrak{p}$ of $K$.

Now suppose $K$ is a number field with class number one. Then $E$ is
$K$-isomorphic to an elliptic curve given by a \textit{global minimal model}
of the form \eqref{ch:inintroweier} with each $a_{j}\in R_{K}$ and for each
prime $p$ of $K$, $v_{p}\!\left(  \Delta\right)  =v_{p}(
\Delta_{E/K_{p}}^{\text{min}})  $. We call the discriminant associated
to a global minimal model of $E$ the \textit{minimal discriminant of }$E$ and
denote it by $\Delta_{E/K}^{\text{min}}$. Next, let $c_{4}$ denote the invariant
associated to a global minimal model of $E$. Then $E$ has additive reduction
at a prime $p$ if and only if $p$ divides $\gcd\!\left(  c_{4},\Delta
_{E/K}^{\text{min}}\right)  $. Lastly, we say $E$ is a \textit{rational elliptic
curve} if $E$ is defined over~$\mathbb{Q}$.

\section{Parameterization of Certain Elliptic Curves with non-Trivial
Torsion\label{SecParamCurv}}

Let $K$ be a number field with ring of integers $R_{K}$ and let $E$ be the
elliptic curve given by the Weierstrass model%
\begin{equation}
E:y^{2}+a_{1}xy+a_{3}y=x^{3}+a_{2}x^{2}+a_{4}x+a_{6} \label{ch:SSintwei}%
\end{equation}
where each $a_{i}\in K$. Suppose further that $P=\left(  a,b\right)  \in
E\!\left(  K\right)  $ is a torsion point of order $N$. Then the admissible
change of variables $x\longmapsto x-a$ and $y\longmapsto y-b$ results in a
$K$-isomorphic elliptic curve with $P$ translated to the origin. In
particular, we may assume that $a_{6}=0$ in (\ref{ch:SSintwei}) and that
$P=\left(  0,0\right)  $.

\subsection{Point of Order \bm{$N=2$}}

First, suppose $N=2$, so that $P=-P$. By \cite[III.2.3]{MR2514094},
$-P=\left(  0,-a_{3}\right)  $ and so $a_{3}=0$. The admissible change of
variables $x\longmapsto u^{2}x$ and $y\longmapsto u^{3}y+u^{2}sx$ with
$u=\left(  2a_{1}\right)  ^{-1}$ and $s=-\frac{a_{1}}{2}$ results in a
$K$-isomorphic elliptic curve given by the Weierstrass model%
\[
y^{2}=x^{3}+\left(  a_{1}^{4}+4a_{1}^{2}a_{2}\right)  x^{2}+16a_{1}^{4}%
a_{4}x.
\]
As a result, if $E$ is an elliptic curve over $K$ with a torsion point of
order $2$, we may assume that $E$ is given by the Weierstrass model%
\[
E:y^{2}=x^{3}+a_{2}x^{2}+a_{4}x
\]
where each $a_{i}\in K$. In fact, we may assume that $a_{2},a_{4}\in R_{K}$
since the admissible change of variables $x\longmapsto w^{-2}x$ and
$y\longmapsto w^{-3}y$ results in the Weierstrass model $y^{2}=x^{3}%
+a_{2}w^{2}x^{2}+a_{4}w^{4}x$. Note that if $a_{2}^{2}-4a_{4}$ is a square in
$R_{K}$, then $x^{2}+a_{2}x+a_{4}=\left(  x+\alpha\right)  \left(
x+\beta\right)  $ for some $\alpha,\beta\in R_{K}$. In particular, we observe
that $C_{2}\times C_{2}\hookrightarrow E\!\left(  K\right)  $ if and only if
$a_{2}^{2}-4a_{4}$ is a square since $\left(  -\alpha,0\right)  $ is a torsion
point of order $2$.

\begin{lemma}
\label{ch:ss:lemc2}Let $K$ be a number field with class number one. Let $E$ be
an elliptic curve over $K$ with a $K$-rational torsion point $P$ of order $2$.
Suppose further that $C_{2}\times C_{2}\not \hookrightarrow E\!\left(
K\right)  $. Then $E$ is $K$-isomorphic to the elliptic curve%
\[
E_{C_{2}}\!\left(  a,b,d\right)  :y^{2}=x^{3}+2ax^{2}+\left(  a^{2}%
-b^{2}d\right)  x
\]
for some $a,b,d\in R_{K}$ with $d\neq1,b\neq0$ such that $\gcd\!\left(
a,b\right)  $ and $d$ are squarefree.
\end{lemma}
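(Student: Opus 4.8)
The plan is to start from the normal form $y^{2}=x^{3}+a_{2}x^{2}+a_{4}x$ with $a_{2},a_{4}\in R_{K}$ produced by the discussion preceding the lemma, and then to improve this model by a short sequence of admissible changes of variables until the asserted conditions on $a,b,d$ hold. The class number one hypothesis will be used only through unique factorization in $R_{K}$: this is what lets us extract a squarefree part of a nonzero element and what makes ``a prime $\pi$ with $\pi^{2}\mid\gcd(a,b)$'' a well-defined thing to search for.

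First I would record two consequences of the hypotheses. Since the discriminant of this model is $16a_{4}^{2}\!\left(a_{2}^{2}-4a_{4}\right)\neq 0$, we have $a_{2}^{2}-4a_{4}\neq 0$; and, as observed before the lemma, $C_{2}\times C_{2}\not\hookrightarrow E(K)$ is equivalent to $a_{2}^{2}-4a_{4}$ not being a square in $K$. Now apply the admissible change of variables $x\mapsto x/4$, $y\mapsto y/8$, which replaces $\left(a_{2},a_{4}\right)$ by $\left(4a_{2},16a_{4}\right)$; thus we may assume the coefficient of $x^{2}$ equals $2a$ with $a=2a_{2}\in R_{K}$. Using unique factorization, write $a_{2}^{2}-4a_{4}=b_{0}^{2}d$ with $b_{0},d\in R_{K}$ and $d$ squarefree, and put $b=2b_{0}$; a direct computation gives $16a_{4}=a^{2}-b^{2}d$, so $E$ is now presented as $E_{C_{2}}\!\left(a,b,d\right)$. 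Because $a_{2}^{2}-4a_{4}\neq 0$, neither $b$ nor $d$ is zero. Moreover $a_{2}^{2}-4a_{4}=b_{0}^{2}d$ is not a square in $K$, hence $d$ is not a square in $K$; in particular $d\neq 1$. (Concretely, $C_{2}\times C_{2}\hookrightarrow E_{C_{2}}\!\left(a,b,d\right)(K)$ if and only if $d$ is a square in $K$, by factoring $x^{2}+2ax+\left(a^{2}-b^{2}d\right)=\left(x+a\right)^{2}-b^{2}d$.)

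It remains to arrange $\gcd(a,b)$ to be squarefree. If some prime $\pi$ of $R_{K}$ satisfies $\pi^{2}\mid\gcd(a,b)$, then $\pi^{2}\mid 2a$ and $\pi^{4}\mid a^{2}-b^{2}d$, so the admissible change of variables $x\mapsto\pi^{2}x$, $y\mapsto\pi^{3}y$ yields the integral model $E_{C_{2}}\!\left(a/\pi^{2},b/\pi^{2},d\right)$ — of the same shape, with the same value of $d$ (still squarefree, still not a square, hence $\neq 1$) and with $b/\pi^{2}\neq 0$. Since $v_{\pi}\!\left(\gcd(a,b)\right)$ strictly decreases under this step and only finitely many primes divide $\gcd(a,b)$, iterating terminates in a model $E_{C_{2}}\!\left(a,b,d\right)$ with $\gcd(a,b)$ squarefree; this model has all the asserted properties.

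I do not expect a genuine obstacle here. Once the normal form preceding the lemma is available, the proof is just a chain of explicit admissible changes of variables, and each required property follows directly from the displayed identities together with the facts that $a_{2}^{2}-4a_{4}$ is nonzero and not a square. The only structural ingredient is unique factorization in $R_{K}$ — needed both to produce the squarefree $d$ and to run the descent on $\gcd(a,b)$ — which is precisely why the hypothesis is that $K$ has class number one; the routine part to check is that each of these changes of variables is admissible over $K$ and preserves integrality of the Weierstrass coefficients.
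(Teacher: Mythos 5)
Your proof is correct and follows essentially the same route as the paper: reduce to the normal form $y^{2}=x^{3}+a_{2}x^{2}+a_{4}x$, extract the squarefree part $d$ of $a_{2}^{2}-4a_{4}$ to reach the shape $y^{2}=x^{3}+2ax^{2}+(a^{2}-b^{2}d)x$, and then remove square factors from $\gcd(a,b)$ by the scaling $x\mapsto g^{2}x$, $y\mapsto g^{3}y$. The only (harmless) difference is that the paper factors the cubic over $K(\sqrt{d})$ with roots $a\pm b\sqrt{d}$ and afterwards clears denominators to make $a,b$ integral, whereas you first rescale by $u=1/2$ so that $a=2a_{2}$ and $b=2b_{0}$ are integral from the outset, and you additionally spell out why $b\neq 0$ and $d\neq 1$.
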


\begin{proof}
By the above discussion, we may assume that $E$ is given by the Weierstrass
model $E:y^{2}=x^{3}+a_{2}x^{2}+a_{4}x$ with $a_{2},a_{4}\in R_{K}$ and
$P=\left(  0,0\right)  $.

Since $C_{2}\times C_{2}\not \hookrightarrow E\!\left(  K\right)  $, we have
that $a_{2}^{2}-4a_{4}$ is not a square in $R_{K}$. Next, we proceed as in~\cite{MR1929230} and factor%
\[
x^{3}+a_{2}x^{2}+a_{4}x=x\left(  x-\theta_{1}\right)  \left(  x-\theta
_{2}\right)
\]
with $\theta_{1}=a+b\sqrt{d}$ and $\theta_{2}=a-b\sqrt{d}$ for some $a,b\in K$
with $b\neq0$ and $d\in R_{K}$ squarefree. Therefore $E:y^{2}=x^{3}%
+2ax^{2}+\left(  a^{2}-b^{2}d\right)  x$. We may further assume that $a,b\in
R_{K}$ since the admissible change of variables $x\longmapsto u^{-2}x$ and
$y\longmapsto u^{-3}y$ results in the $K$-isomorphic elliptic curve%
\[
y^{2}=x^{3}+2au^{2}x^{2}+u^{4}\left(  a^{2}-b^{2}d\right)  x.
\]
Now suppose $\gcd\!\left(  a,b\right)  =g^{2}h$ with $h$ squarefree. Note that this is valid since $K$ has class number one. Then the
admissible change of variables $x\longmapsto g^{2}x$ and $y\longmapsto g^{3}y$
results in the $K$-isomorphic elliptic curve%
\[
y^{2}=x^{3}+\frac{2a}{g^{2}}x^{2}+\frac{\left(  a^{2}-b^{2}d\right)  }{g^{4}%
}x.
\]
In particular, we may assume that $\gcd\!\left(  a,b\right)  $ and $d$ are
squarefree, which completes the proof.
\end{proof}

\begin{lemma}
\label{ch:ss:lemc2c2}Let $K$ be a number field with class number one. Let $E$
be an elliptic curve over $K$ with a $K$-rational torsion point $P$ of order
$2$. Suppose further that $C_{2}\times C_{2}\hookrightarrow E\!\left(
K\right)  $. Then $E$ is $K$-isomorphic to the elliptic curve%
\[
E_{C_{2}\times C_{2}}\!\left(  a,b,d\right)  :y^{2}=x^{3}+\left(
ad+bd\right)  x^{2}+abd^{2}x
\]
for some non-zero $a,b,d\in R_{K}$ such that $\gcd\!\left(  a,b\right)  =1$
and $d$ is squarefree.
\end{lemma}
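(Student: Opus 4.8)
The plan is to mirror the structure of the proof of Lemma~\ref{ch:ss:lemc2}, but now exploiting the extra hypothesis that $C_2\times C_2\hookrightarrow E(K)$. As in the opening discussion of this section, I would first reduce to the case where $E$ is given by $y^2=x^3+a_2x^2+a_4x$ with $a_2,a_4\in R_K$ and $P=(0,0)$. Since $C_2\times C_2\hookrightarrow E(K)$, the polynomial $x^2+a_2x+a_4$ splits over $K$, and because $K$ has class number one we may clear denominators so that it factors as $(x+\alpha)(x+\beta)$ with $\alpha,\beta\in R_K$; writing $\alpha = ad$, $\beta = bd$ where $d$ is chosen to be the (squarefree part of the) content of the pair $\alpha,\beta$, we get $a_2 = ad+bd$ and $a_4 = abd^2$, which is exactly the desired shape $E_{C_2\times C_2}(a,b,d)$. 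More carefully: set $g^2h = \gcd(\alpha,\beta)$ with $h$ squarefree (valid since the class number is one), so $\alpha = g^2 h \alpha_0$, $\beta = g^2 h \beta_0$ with $\gcd(\alpha_0,\beta_0)=1$; then the admissible change of variables $x\mapsto g^2 x$, $y\mapsto g^3 y$ absorbs the $g^2$, and we are left with $d = h$ squarefree, $a=\alpha_0$, $b=\beta_0$ coprime. The nonvanishing conditions $a,b,d\ne 0$ follow since $\alpha,\beta\ne 0$ (otherwise the cubic $x(x+\alpha)(x+\beta)$ has a repeated root and $\Delta=0$), and $d\ne 0$ likewise.

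The key steps, in order, are: (1) invoke the reduction at the top of Section~\ref{SecParamCurv} together with the $N=2$ subsection to put $E$ in the form $y^2 = x^3+a_2x^2+a_4x$ with integral coefficients; (2) use $C_2\times C_2\hookrightarrow E(K)$ — equivalently, $a_2^2-4a_4$ a square in $R_K$, as noted just before Lemma~\ref{ch:ss:lemc2} — to factor the cubic completely over $K$, clearing denominators via an admissible change of variables $x\mapsto w^{-2}x$, $y\mapsto w^{-3}y$ to get roots in $R_K$; (3) extract the squarefree content $d$ of the two nonzero roots, rescale by $x\mapsto g^2x$, $y\mapsto g^3y$ to arrive at $E_{C_2\times C_2}(a,b,d)$ with $\gcd(a,b)=1$ and $d$ squarefree; (4) check $a,b,d$ are all nonzero, since otherwise $\Delta=0$.

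I do not expect a genuine obstacle here — the argument is essentially the $C_2\times C_2$ specialization of the preceding lemma, and the splitting hypothesis makes the factorization immediate rather than requiring a quadratic extension. The one point requiring a little care is the bookkeeping in step (3): one must check that the content of the pair $(\alpha,\beta)$ can be written as $g^2 h$ with $h$ squarefree and that after the rescaling the resulting $d$ is genuinely squarefree and the new $a,b$ genuinely coprime — but this is the same content-extraction argument used in Lemma~\ref{ch:ss:lemc2}, and the class number one hypothesis is exactly what licenses talking about $\gcd$s and contents as honest elements of $R_K$. A final sanity check worth including is that the discriminant of $E_{C_2\times C_2}(a,b,d)$ is $16\,a^2b^2d^6(a-b)^2$ (up to the usual constant), so the conditions $a,b,d\ne 0$ and $a\ne b$ are precisely what is needed for $\Delta\ne 0$; the hypothesis that $P$ has order exactly $2$ and that $E$ is an elliptic curve forces these.
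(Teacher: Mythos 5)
Your proposal is correct and follows essentially the same route as the paper: reduce to $y^{2}=x^{3}+a_{2}x^{2}+a_{4}x$ with $P=(0,0)$, factor the cubic completely as $x(x+A)(x+B)$ with $A,B\in R_{K}$ using the full $2$-torsion hypothesis, write $\gcd(A,B)=g^{2}d$ with $d$ squarefree (using class number one), and absorb $g^{2}$ via $x\mapsto g^{2}x$, $y\mapsto g^{3}y$ to land on $E_{C_{2}\times C_{2}}(a,b,d)$ with $\gcd(a,b)=1$. The only cosmetic difference is that you insert an extra denominator-clearing step for the roots, which is unnecessary since roots in $K$ of a monic polynomial over $R_{K}$ already lie in $R_{K}$, but this does not affect correctness.
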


\begin{proof}
By the discussion before Lemma \ref{ch:ss:lemc2c2}, we may assume that $E$ is given by the Weierstrass
model $E:y^{2}=x^{3}+a_{2}x^{2}+a_{4}x$ with $a_{2},a_{4}\in R_{K}$ and
$P=\left(  0,0\right)  $.

Since $C_{2}\times C_{2}\hookrightarrow E\!\left(  K\right)  $, we proceed as
in \cite{MR1424534} and write%
\[
x^{3}+a_{2}x^{2}+a_{4}x=x\left(  x+A\right)  \left(  x+B\right)
\]
for non-zero $A,B\in R_{K}$. Since $K$ has class number one, we can write $\gcd\!\left(  A,B\right)
=g^{2}d$ with $d$ squarefree. Then the admissible change of variables
$x\longmapsto g^{2}x$ and $y\longmapsto g^{3}y$ results in the $K$-isomorphic
elliptic curve%
\[
y^{2}=x^{3}+\frac{\left(  A+B\right)  }{g^2}x^2+\frac{AB}{g^{4}}x
\]
and so we may assume that $\gcd\!\left(  A,B\right)  =d$. Then $A=ad$ and
$B=bd$ for some $a,b\in R_{K}$ with $\gcd\!\left(  a,b\right)  =1$ which gives
the lemma.
\end{proof}

\begin{remark}
If we omit the condition that $K$ has class number equal to one, then Lemma
\ref{ch:ss:lemc2} (resp. Lemma \ref{ch:ss:lemc2c2}) still holds with the
omission that $\gcd\!\left(  a,b\right)  $ is squarefree (resp. $\gcd\!\left(
a,b\right)  =1$).
\end{remark}

\subsection{Point of Order \bm{$N=3$}}

Now suppose $E$ is an elliptic curve over $K$ with a $K$-rational torsion
point $P$ of order $N\geq3$. By the discussion following (\ref{ch:SSintwei}),
we may assume that $E$ is given by the Weierstrass model%
\begin{equation}
E:y^{2}+a_{1}xy+a_{3}y=x^{3}+a_{2}x^{2}+a_{4}x \label{ch:ssn3wei}%
\end{equation}
with $P=\left(  0,0\right)  $ the point of order $N$.

\begin{lemma}
\label{BazLem1.1}Let $E$ be given by the Weierstrass model (\ref{ch:ssn3wei})
and $P=\left(  0,0\right)  $ a torsion point of order $N$.

$\left(  1\right)  $ If $N\geq3$, then $a_{3}\neq0$ and, after a change of
coordinates, we can suppose $a_{4}=0$.

$\left(  2\right)  $ If $a_{3}\neq0$ and $a_{4}=0$, then $P$ is of order $3$
if and only if $a_{2}=0$.
\end{lemma}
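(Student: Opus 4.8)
The plan is to work directly with the Weierstrass model \eqref{ch:ssn3wei}, i.e. $E:y^{2}+a_{1}xy+a_{3}y=x^{3}+a_{2}x^{2}+a_{4}x$ with $P=(0,0)$, and to compute $2P$ and $3P$ via the standard addition formulas, organizing everything in terms of the tangent line at $P$. For part $(1)$, if $a_{3}=0$ then $-P=(0,0)=P$, so $P$ has order dividing $2$, contradicting $N\geq3$; hence $a_{3}\neq0$. To arrange $a_{4}=0$ I would use an admissible change of variables of the form $x\mapsto x$, $y\mapsto y+\lambda x$ (i.e. $u=1$, $r=w=0$, $s=-\lambda$ in the notation of the introduction), which fixes $P=(0,0)$ and sends $a_{1}\mapsto a_{1}+2\lambda$, $a_{3}\mapsto a_{3}$, $a_{2}\mapsto a_{2}-\lambda a_{1}-\lambda^{2}$, $a_{4}\mapsto a_{4}-\lambda a_{3}$. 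Choosing $\lambda=a_{4}/a_{3}$ (legitimate since $a_{3}\neq0$ and we are over a field) kills the $x$-coefficient, giving a $K$-isomorphic model with $a_{4}=0$ and $P$ still at the origin of order $N$.

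For part $(2)$, assume now $a_{3}\neq0$, $a_{4}=0$, so $E:y^{2}+a_{1}xy+a_{3}y=x^{3}+a_{2}x^{2}$. The point $P=(0,0)$ has order $3$ if and only if $2P=-P$, equivalently the tangent line to $E$ at $P$ meets $E$ only at $P$ (with multiplicity $3$). The tangent direction at $P=(0,0)$ is read off from the Weierstrass equation: implicitly differentiating, at $(0,0)$ the line through $P$ tangent to $E$ is $y=0$ (the coefficient of $x$ on both sides forces slope $0$, since the lowest-degree terms are $a_{3}y=a_{2}\cdot 0 + \cdots$ — more precisely substituting a line $y=mx$ and requiring a triple root at $x=0$). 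Substituting $y=0$ into the equation gives $0=x^{3}+a_{2}x^{2}=x^{2}(x+a_{2})$, whose roots are $x=0$ (double) and $x=-a_{2}$. So the tangent at $P$ is $y=0$, which already meets $E$ with multiplicity $2$ at $P$; it meets with multiplicity $3$ at $P$ (equivalently the third intersection point is again $P$) precisely when $-a_{2}=0$, i.e. $a_{2}=0$. Conversely, if $a_{2}\neq 0$ then $2P=(-a_{2},0)\neq P$ (after reflecting, $2P=-(-a_2,0)$, still distinct from $P$), so $P$ does not have order $3$; and since $a_{3}\neq 0$ it has order $\geq 3$ by part $(1)$, hence order $>3$. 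This proves the equivalence.

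The main thing to be careful about is the tangent-line computation at $P=(0,0)$ for a general (long) Weierstrass equation: one must verify that the slope of the tangent is $0$, which is where the hypothesis $a_{4}=0$ is used (if $a_{4}\neq 0$ the tangent slope would be $-a_{4}/a_{3}$ and the order-$3$ condition would read differently). Concretely, writing the line as $y=t x$ and substituting into $y^{2}+a_{1}xy+a_{3}y-x^{3}-a_{2}x^{2}-a_{4}x=0$ gives $-x^{3}+(t^{2}+a_{1}t-a_{2})x^{2}+(a_{3}t-a_{4})x=0$; a triple root at $x=0$ forces $a_{3}t-a_{4}=0$ and $t^{2}+a_{1}t-a_{2}=0$, so with $a_4=0$ we get $t=0$ and then $a_2=0$. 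This algebraic reformulation avoids any subtlety about formal differentiation and makes the ``if and only if'' transparent; the rest is the bookkeeping of the admissible change of variables in part $(1)$, which is routine.
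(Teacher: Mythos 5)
Your proof is correct. Note that the paper does not actually prove this lemma; it simply cites \cite[Lemma 1.1]{MR2684370}, so your self-contained argument is a genuine addition rather than a retracing of the paper's steps. The argument you give is the standard one and it works: $a_3=0$ forces $-P=(0,-a_3)=P$, hence $2P=O$, contradicting $N\geq 3$; the shear $y\mapsto y+\lambda x$ with $\lambda=a_4/a_3$ fixes the origin, preserves $a_6=0$, and kills $a_4$; and the line-substitution computation $-x^3+(t^2+a_1t-a_2)x^2+(a_3t-a_4)x=0$ cleanly identifies the tangent direction at $P$ as $t=a_4/a_3=0$ and shows the third intersection point of that tangent is $(-a_2,0)$, so $2P=-P$ (equivalently $3P=O$, hence order exactly $3$ since $P\neq O$) precisely when $a_2=0$. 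Two cosmetic slips worth fixing: your parenthetical ``$s=-\lambda$'' is inconsistent with the transformation formulas you then use (which correspond to $s=\lambda$) — the formulas themselves, and hence the choice $\lambda=a_4/a_3$, are right; and in the converse direction of $(2)$ you attribute ``order $\geq 3$'' to part $(1)$, whereas what you are really using is the converse observation that $a_3\neq 0$ implies $P\neq -P$, which you have already established directly. Neither affects the validity of the proof.
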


\begin{proof}
See \cite[Lemma 1.1]{MR2684370}.
\end{proof}

\begin{corollary}
\label{sscorC3}Let $E$ be an elliptic curve over a field $K$ with a $K$-rational
torsion point of order~$3$. If the $j$-invariant of $E$ is non-zero, then there is a $t\in K^{\times}$ such that $E$ is $K$-isomorphic to the elliptic curve%
\[
\mathcal{X}_{t}\!\left(  C_{3}\right)  :y^{2}+xy+ty=x^{3}.
\]
\end{corollary}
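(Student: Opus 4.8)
The plan is to start from Lemma \ref{BazLem1.1}, which tells us that an elliptic curve $E/K$ with a $K$-rational torsion point $P$ of order $3$ at the origin can be put in the form $y^{2}+a_{1}xy+a_{3}y=x^{3}$ with $a_{3}\neq 0$ (here $a_{2}=a_{4}=0$ by parts (1) and (2) of the lemma). So the entire problem reduces to eliminating one of the two remaining parameters $a_{1},a_{3}$ by an admissible change of variables, while keeping $P=(0,0)$ fixed. First I would observe that to preserve the point $(0,0)$ and the shape of the model, the natural scaling to use is $x\longmapsto u^{2}x$, $y\longmapsto u^{3}y$ (with $r=s=w=0$); under this substitution $a_{1}\longmapsto u^{-1}a_{1}$ and $a_{3}\longmapsto u^{-3}a_{3}$. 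The idea is to choose $u$ so that the coefficient of $xy$ becomes $1$, i.e. $u=a_{1}$, provided $a_{1}\neq 0$; then $a_{3}$ is rescaled to $t:=a_{3}/a_{1}^{3}\in K^{\times}$ and we land on $\mathcal{X}_{t}(C_{3}):y^{2}+xy+ty=x^{3}$, as desired.

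The one genuine obstacle is the case $a_{1}=0$, where the scaling above is unavailable. In that case $E:y^{2}+a_{3}y=x^{3}$ with $a_{3}\neq 0$, and a direct computation from \eqref{basicformulas} gives $c_{4}=0$, hence $j=c_{4}^{3}/\Delta=0$. Thus the hypothesis that $E$ has non-zero $j$-invariant rules this case out entirely, which is precisely why the corollary is stated with that hypothesis. So I would structure the proof as: invoke Lemma \ref{BazLem1.1} to reduce to $y^{2}+a_{1}xy+a_{3}y=x^{3}$ with $a_{3}\neq 0$; note that $a_{1}=0$ forces $j=0$, contrary to hypothesis, so $a_{1}\in K^{\times}$; then apply the admissible change of variables $x\longmapsto a_{1}^{2}x$, $y\longmapsto a_{1}^{3}y$ to obtain $\mathcal{X}_{t}(C_{3})$ with $t=a_{3}/a_{1}^{3}$, and check $t\neq 0$ since $a_{3}\neq 0$.

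I would also briefly confirm that $\mathcal{X}_{t}(C_{3})$ is genuinely an elliptic curve for every $t\in K^{\times}$ (equivalently $\Delta\neq 0$): a short computation yields $\Delta = t^{3}(1-27t)$, so we should also record that $t\neq 1/27$ — or, since we derived $\mathcal{X}_{t}(C_{3})$ from an actual elliptic curve $E$, the discriminant is automatically non-zero, so no extra hypothesis on $t$ is needed in the statement as phrased. The main step is thus entirely routine once Lemma \ref{BazLem1.1} is in hand; the only subtlety worth spelling out is the $j=0$ exclusion, and I expect the write-up to be just a few lines.
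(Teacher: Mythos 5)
Your proposal is correct and follows essentially the same route as the paper: reduce via Lemma \ref{BazLem1.1} to $y^{2}+a_{1}xy+a_{3}y=x^{3}$, use $c_{4}=a_{1}(a_{1}^{3}-24a_{3})$ and the hypothesis $j\neq 0$ to conclude $a_{1}\neq 0$, and then rescale by $x\longmapsto a_{1}^{2}x$, $y\longmapsto a_{1}^{3}y$ to obtain $t=a_{3}/a_{1}^{3}$. Your extra remarks (that $t\neq 0$ follows from $a_{3}\neq 0$, and that $\Delta=t^{3}(1-27t)\neq 0$ automatically) are correct but not needed beyond what the paper records.
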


\begin{proof}
By Lemma \ref{BazLem1.1}, we may assume that $E$ is given by the Weierstrass
model $y^{2}+a_{1}xy+a_{3}y=x^{3}$. Since $c_{4}=a_{1}\left(  a_{1}%
^{3}-24a_{3}\right)  $ and the $j$-invariant of $E$ is $0$ if and only if the
invariant $c_{4}=0$, we may assume that $a_{1}\neq0$ and $a_{1}^{3}%
-24a_{3}\neq0$. The admissible change of variables $x\longmapsto a_{1}^{2}x$
and $y\longmapsto a_{1}^{3}y$ results in the $K$-isomorphic elliptic curve%
\[
y^{2}+xy+\frac{a_{3}}{a_{1}^{3}}y=x^{3}.
\]
Taking $t=\frac{a_{3}}{a_{1}^{3}}$ completes the proof.
\end{proof}

\subsection{Point of Order \bm{$N\geq4$} and Modular Curves}

For an integer $N\geq2$, the modular curve $X_{1}\!\left(  N\right)  $ (with
cusps removed) parameterizes isomorphism classes of pairs $\left(  E,P\right)
$ where $E$ is an elliptic curve and $P$ is a torsion point of order $N$ on
$E$. Two isomorphism classes of pairs $\left(  E,P\right)  $ and $\left(
E^{\prime},P^{\prime}\right)  $ are isomorphic if there exists an isomorphism
$\varphi:E\rightarrow E^{\prime}$ such that $\varphi\!\left(  P\right)
=P^{\prime}$ \cite{MR772569}. If $\varphi$ is a $K$-isomorphism for some field
$K$, we say that the pairs $\left(  E,P\right)  $ and $\left(  E^{\prime
},P^{\prime}\right)  $ are $K$-isomorphic and the $K$-isomorphism class of the
pair $\left(  E,P\right)  $ is a $K$-rational point of $X_{1}\!\left(
N\right)  $. We denote the set of $K$-rational points of $X_{1}\!\left(
N\right)  $ by $X_{1}\!\left(  N\right)  \!\left(  K\right)  $.

\begin{lemma}
[Tate Normal Form]\label{LemmaTNF}Let $K$ be a field and let $E$ be an
elliptic curve over $K$ with a $K$-rational torsion point of order $N\geq4$.
Then every $K$-isomorphism class of pairs $\left(  E,P\right)  $ with $E$ an
elliptic curve over $K$ and $P\in E\!\left(  K\right)  $ a torsion point of
order $N$ contains a unique Weierstrass model%
\begin{equation}
E_{\text{TNF}}:y^{2}+\left(  1-g\right)  xy-fy=x^{3}-fx^{2} \label{bazlem1.4}%
\end{equation}
with $f\in K^{\times}$ and $g\in K$. We call $E_{\text{TNF}}$ the \textit{Tate
Normal Form} of $E$.
\end{lemma}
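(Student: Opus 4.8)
The plan is to build the Tate normal form from the model supplied by Lemma~\ref{BazLem1.1} using one further admissible change of variables that fixes the marked point, and then to establish uniqueness by checking that the only admissible change of variables relating two models of the shape \eqref{bazlem1.4}, each with marked point the origin, is the identity. For existence: since $N\geq 4\geq 3$, Lemma~\ref{BazLem1.1}(1) lets me assume $E$ is given by $y^{2}+a_{1}xy+a_{3}y=x^{3}+a_{2}x^{2}$ with $a_{3}\in K^{\times}$ and $P=(0,0)$ --- the reduction $a_{4}=0$ there can be effected by $y\mapsto y+(a_{4}/a_{3})x$, which fixes the origin, so the $K$-isomorphism class of the pair $(E,P)$ is unchanged. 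Because $P$ does not have order $3$, Lemma~\ref{BazLem1.1}(2) forces $a_{2}\in K^{\times}$. I would then apply $x\mapsto u^{2}x$, $y\mapsto u^{3}y$ with $u=a_{3}/a_{2}\in K^{\times}$; writing $a_{i}'$ for the new Weierstrass coefficients, the standard transformation formulas \cite{MR2514094} give $a_{4}'=a_{6}'=0$, $a_{2}'=a_{3}'=a_{2}^{3}/a_{3}^{2}$, and $a_{1}'=a_{1}a_{2}/a_{3}$. Setting $f=-a_{2}^{3}/a_{3}^{2}\in K^{\times}$ and $g=1-a_{1}a_{2}/a_{3}\in K$, the resulting model is precisely $E_{\text{TNF}}$ of \eqref{bazlem1.4}, and each change of variables used fixes $(0,0)$, so $E_{\text{TNF}}$ represents the pair $(E,P)$.

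For uniqueness, suppose $E_{\text{TNF}}(f,g)$ and $E_{\text{TNF}}(f',g')$, each with marked point the origin, represent $K$-isomorphic pairs. The isomorphism is an admissible change of variables $x\mapsto u^{2}x+r$, $y\mapsto u^{3}y+u^{2}sx+w$, and since it must carry the origin to the origin, evaluation forces $r=w=0$. Feeding $r=w=0$ and $a_{4}=a_{6}=0$ into the transformation formulas, the condition that the image model again have vanishing $a_{4}$-coefficient reads $sf=0$, so $s=0$ because $f\neq 0$; then the condition that its $a_{2}$- and $a_{3}$-coefficients agree reads $-f/u^{2}=-f/u^{3}$, so $u=1$. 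Hence the change of variables is the identity, $f'=f$, $g'=g$, and each $K$-isomorphism class of pairs contains exactly one Tate normal form.

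The only place the hypothesis $N\geq 4$ enters --- and the one step requiring any real care --- is the appeal to Lemma~\ref{BazLem1.1}(2) to guarantee $a_{2}\neq 0$, which is exactly what makes $u=a_{3}/a_{2}$ a unit and $f$ nonzero; when $N=3$ one has $a_{2}=0$ and recovers only the one-parameter family of Corollary~\ref{sscorC3}. Everything else amounts to routine bookkeeping with the Weierstrass transformation formulas, together with the repeated check that each change of variables employed fixes $P=(0,0)$ and is therefore an isomorphism of pairs.
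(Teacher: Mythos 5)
Your proof is correct. Note that the paper does not actually prove this lemma --- its ``proof'' is a citation to \cite[Proposition 1.3]{MR2684370} --- so you have supplied the standard normalization argument that the paper outsources. The details check out: starting from $y^{2}+a_{1}xy+a_{3}y=x^{3}+a_{2}x^{2}$ with $P=(0,0)$, $a_{3}\neq 0$, and $a_{2}\neq 0$ (the latter exactly by Lemma~\ref{BazLem1.1}(2) and $N\neq 3$), the scaling $u=a_{3}/a_{2}$ does give $a_{2}'=a_{3}'=a_{2}^{3}/a_{3}^{2}$ and $a_{1}'=a_{1}a_{2}/a_{3}$, which is \eqref{bazlem1.4} with $f=-a_{2}^{3}/a_{3}^{2}\in K^{\times}$ and $g=1-a_{1}a_{2}/a_{3}$; and your uniqueness computation ($r=w=0$ from fixing the origin, then $s=0$ from $a_{4}'=0$ and $f\neq 0$, then $u=1$ from $a_{2}'=a_{3}'$) is the standard one and is complete, granted the standard fact that any isomorphism of Weierstrass models fixing the point at infinity is an admissible change of variables. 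You also correctly flag the only genuinely load-bearing use of $N\geq 4$. The one cosmetic gap is that you lean on Lemma~\ref{BazLem1.1} as stated, which itself presupposes the translation of $P$ to the origin carried out in the discussion surrounding \eqref{ch:SSintwei}; that is harmless since the paper establishes it there.
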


\begin{proof}
See \cite[Proposition 1.3]{MR2684370}.
\end{proof}

A consequence of Lemma \ref{LemmaTNF} is that the non-cuspidal points of
$X_{1}\!\left(  N\right)  \!\left(  K\right)  $ for $N\geq4$ are in one-to-one
correspondence with the set of $K$-isomorphism classes of pairs $\left(
E_{\text{TNF}},\left(  0,0\right)  \right)  $ where $E_{\text{TNF}}$ is an
elliptic curve over $K$ given by a Tate normal form and $\left(  0,0\right)  $
is its point of order $N$.

{\renewcommand*{\arraystretch}{1.35} \begin{table}[h]
\caption{The Universal Elliptic Curve $\mathcal{X}_{t}\!\left(  T\right)
:y^{2}+\left(  1-g\right)  xy-fy=x^{3}-fx^{2}$}%
\label{ta:universalell}
\begin{center}%
\begin{tabular}
[c]{ccc}\hline
$T$ & $f$ & $g$\\\hline
$C_{4}$ & $t$ & $0$\\\hline
$C_{5}$ & $t$ & $t$\\\hline
$C_{6}$ & $t^{2}+t$ & $t$\\\hline
$C_{7}$ & $t^{3}-t^{2}$ & $t^{2}-t$\\\hline
$C_{8}$ & $2t^{3}-3t+1$ & $\frac{2t^{2}-3t+1}{t}$\\\hline
$C_{9}$ & $t^{5}-2t^{4}+2t^{3}-t^{2}$ & $t^{3}-t^{2}$\\\hline
$C_{10}$ & $\frac{2t^{5}-3t^{4}+t^{3}}{\left(  t^{2}-3t+1\right)  ^{2}}$ &
$\frac{-2t^{3}+3t^{2}-t}{t^{2}-3t+1}$\\\hline
$C_{12}$ & $\frac{12t^{6}-30t^{5}+34t^{4}-21t^{3}+7t^{2}-t}{\left(
t-1\right)  ^{4}}$ & $\frac{-6t^{4}+9t^{3}-5t^{2}+t}{\left(  t-1\right)  ^{3}%
}$\\\hline
$C_{2}\times C_{4}$ & $4t^{2}+t$ & $0$\\\hline
$C_{2}\times C_{6}$ & $\frac{-2t^{3}+14t^{2}-22t+10}{\left(  t+3\right)
^{2}\left(  t-3\right)  ^{2}}$ & $\frac{-2t+10}{\left(  t+3\right)  \left(
t-3\right)  }$\\\hline
$C_{2}\times C_{8}$ & $\frac{16t^{3}+16t^{2}+6t+1}{\left(  8t^{2}-1\right)
^{2}}$ & $\frac{16t^{3}+16t^{2}+6t+1}{2t\left(  4t+1\right)  \left(
8t^{2}-1\right)  }$\\\hline
\end{tabular}
\end{center}
\end{table}}

For an integer $M\geq1$, the modular curve $X_{1}\!\left(  2,2M\right)  $
parameterizes isomorphism classes of triples $\left(  E,P,Q\right)  $ where
$E$ is an elliptic curve and $\left\langle P,Q\right\rangle \cong C_{2}\times
C_{2M}$ with $e_2\!\left(  P,M\cdot Q\right)  =-1$ where~$e_{2}$ is the Weil
pairing \cite[III.8]{MR2514094}. It is well known that the modular curves
$X_{1}\!\left(  N\right)  $ and $X_{1}\!\left(  2,2M\right)  $ have genus $0$
if and only if $N=1,2,\ldots,10,12$ or $M=1,2,3,4$ \cite[Proposition~3.7]{MR589086}. 
These modular curves are fine moduli spaces for $N=5,\ldots,10,12$ or $M=2,3,4$. Consequently, they are parameterizable by a single parameter $t$ \cite[Table 3]{MR0434947}. The same holds for $X_1(4)$ upon restricting to its non-cuspidal points.
More precisely, for these values of $N$ and $M$, we consider the abelian
groups $T=C_{N}$ and $T=C_{2}\times C_{2M}$. For $t\in\mathbb{P}^{1}$, define
$\mathcal{X}_{t}$ as the mapping which takes $T$ to the elliptic curve
$\mathcal{X}_{t}\!\left(  T\right)  $ where the Weierstrass model of
$\mathcal{X}_{t}\!\left(  T\right)  $ is given in\ Table \ref{ta:universalell}%
\footnote{Our parameterizations differ slightly from \cite[Table 3]%
{MR0434947}. We instead use \cite[Table 3]{MR1748483} which expands the
implicit expressions for the parameters $b$ and $c$ in \cite[Table
3]{MR0434947} to express the universal elliptic curves in terms of a single
parameter $t$.}. Then $\mathcal{X}_{t}\!\left(  T\right)  $ is a one-parameter
family of elliptic curves with the property that if $t\in K$ for some field
$K$, then $\mathcal{X}_{t}\!\left(  T\right)  $ is an elliptic curve over $K$
and $T\hookrightarrow\mathcal{X}_{t}\!\left(  T\right)  \!\left(  K\right)
_{\text{tors}}$. For $T\neq C_4$, the Weierstrass model of $\mathcal{X}_{t}\!\left(  T\right)
$ is known as the \textit{universal elliptic curve%
\index{Elliptic Curve!Universal Elliptic Curve}%
} over $X_{1}\!\left(  N\right)  $ (resp. $X_{1}\!\left(  2,2M\right)  $) if
$T=C_{N}$ (resp. $T=C_{2}\times C_{2M}$). We note that when $T=C_4$, $\mathcal{X}_{t}\!\left(  T\right)$ is the universal elliptic curve over the non-cuspidal points of $X_1(4)$.  The following Proposition summarizes
this discussion for the eleven groups $T$ considered above. In particular, it is a consequence of Lemma \ref{LemmaTNF} and \cite[Table~3]{MR0434947}.

\begin{proposition}
\label{LemmaunivincT}\label{PropSSuniv}Let $K$ be a field. If $t\in K$ such
that $\mathcal{X}_{t}\!\left(  T\right)  $ is an elliptic curve, then
$T\hookrightarrow\mathcal{X}_{t}\!\left(  T\right)  \!\left(  K\right)  $.
Moreover, if $E$ is an elliptic curve over $K$ with $T\hookrightarrow
E\!\left(  K\right)  $, then there is a $t\in K$ such that $E$ is
$K$-isomorphic to $\mathcal{X}_{t}\!\left(  T\right)  $.
\end{proposition}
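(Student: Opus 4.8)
The plan is to prove Proposition~\ref{LemmaunivincT} by separating the two directions and, for the nontrivial direction, reducing everything to the Tate Normal Form established in Lemma~\ref{LemmaTNF} together with the classical one-parameter parameterizations of the genus-zero modular curves recorded in \cite[Table~3]{MR0434947} (equivalently \cite[Table~3]{MR1748483}, as used to produce Table~\ref{ta:universalell}).

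First I would dispatch the forward direction. Fix $t \in K$ with $\mathcal{X}_t(T)$ an elliptic curve (i.e.\ $\Delta \neq 0$ for the model in Table~\ref{ta:universalell}). For $T = C_N$ with $N \geq 4$, the model $\mathcal{X}_t(T)$ is a Tate Normal Form $y^2 + (1-g)xy - fy = x^3 - fx^2$ with the specific $f,g \in K(t)$ from the table, and by Lemma~\ref{LemmaTNF} (or rather by the construction underlying it, due to \cite[Proposition~1.3]{MR2684370}) the point $(0,0)$ is a torsion point of order exactly $N$ whenever the defining polynomial relations that cut out $X_1(N)$ inside the $(f,g)$-plane hold; those relations are precisely what the table's substitution $f = f(t)$, $g = g(t)$ parameterizes. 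Hence $(0,0)$ has order $N$ and $C_N = \langle (0,0)\rangle \hookrightarrow \mathcal{X}_t(T)(K)$. For $T = C_2 \times C_{2M}$ one argues similarly: the table's $f,g$ trace out $X_1(2,2M)$, so $\mathcal{X}_t(T)$ carries a point of order $2M$ at $(0,0)$ together with a complementary $2$-torsion point whose existence is guaranteed by the defining equations of $X_1(2,2M)$, and the Weil-pairing condition $e_2(P, M\cdot Q) = -1$ ensures the subgroup they generate is all of $C_2 \times C_{2M}$ rather than $C_{4M}$. This gives $T \hookrightarrow \mathcal{X}_t(T)(K)$ in every case.

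For the converse, let $E/K$ satisfy $T \hookrightarrow E(K)$. When $T = C_N$, pick a point $P$ of order $N$; by Lemma~\ref{LemmaTNF} the pair $(E,P)$ is $K$-isomorphic to a unique Tate Normal Form $E_{\mathrm{TNF}}$ with parameters $(f_0, g_0) \in K^\times \times K$, and since $(0,0)$ has order $N$ on $E_{\mathrm{TNF}}$, the point $(f_0,g_0)$ lies on (the affine model of) $X_1(N)$. Because $X_1(N)$ has genus zero and — for $N = 5,\dots,10,12$, and for $N=4$ after removing cusps — is a fine moduli space rationally parameterized by $t$ via \cite[Table~3]{MR0434947}, the parameterizing map $t \mapsto (f(t), g(t))$ is a birational isomorphism $\mathbb{P}^1 \dashrightarrow X_1(N)$ defined over $\mathbb{Q}$; in particular every $K$-rational point of the affine curve is hit by some $t \in K$. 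Choosing such a $t$ with $(f(t), g(t)) = (f_0, g_0)$ yields $\mathcal{X}_t(T) = E_{\mathrm{TNF}} \cong_K E$. The case $T = C_2 \times C_{2M}$ is identical with $X_1(2,2M)$ in place of $X_1(N)$: a choice of $(P,Q)$ generating $T$ with the correct Weil pairing puts $E$ in Tate Normal Form relative to $P$ (which has order $2M$), placing the corresponding modular parameter on $X_1(2,2M)$, and the rational parameterization then produces the desired $t \in K$.

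The main obstacle — and the one point deserving care — is the surjectivity of the $K$-point map $\mathbb{P}^1(K) \to X_1(N)(K)$ (resp.\ $X_1(2,2M)(K)$). A genus-zero curve with a rational point is $\mathbb{P}^1$ over its field of definition, so this is automatic \emph{provided} one knows the parameterization in \cite[Table~3]{MR0434947} is an isomorphism (not merely a dominant map of some degree) onto the relevant open subvariety, and that the few excluded points — the cusps, and for $N=4$ or the small $C_2\times C_{2M}$ cases any extra non-fine locus — do not obstruct hitting a genuine pair $(E,P)$. I would handle this by citing that these parameterizations are classical and that the curves in question are fine moduli spaces for exactly the listed values (as recorded just before the Proposition, following \cite[Proposition~3.7]{MR589086}), so that non-cuspidal $K$-points correspond bijectively to $K$-isomorphism classes of pairs; the map $t \mapsto (f(t),g(t))$ being the inverse of the natural coordinate on $X_1(N)$ then gives surjectivity onto non-cuspidal points, which is all the converse needs since $(E,P)$ with $P$ of finite order $\geq 4$ is never cuspidal. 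The $\Delta \neq 0$ bookkeeping — i.e.\ that the values of $t$ producing a genuine elliptic curve are exactly the non-cuspidal ones — is routine and I would not grind through it. Everything is then a matter of transcribing the statement of Lemma~\ref{LemmaTNF} and the table, so the proof is short: \emph{this is a consequence of Lemma~\ref{LemmaTNF} and} \cite[Table~3]{MR0434947}.
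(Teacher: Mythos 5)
Your argument is correct and is essentially the paper's own: the Proposition is stated there without a separate proof, being declared "a consequence of Lemma~\ref{LemmaTNF} and \cite[Table~3]{MR0434947}," which is exactly the reduction you carry out. Your write-up simply makes explicit the Tate-normal-form and genus-zero-parameterization steps that the paper leaves implicit.
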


\subsection[The Elliptic Curves $E_{T}\!\left(  a,b\right)  $ and
$E_{T}\!\left(  a,b,d\right)  $]{The Elliptic Curves
\bm{$E_{T}\!\left(  a,b\right)  $} and \bm{$E_{T}\!\left(  a,b,d\right)  $}}

Mazur's Torsion Theorem \cite{MR488287} states that there are fifteen
possibilities for the torsion subgroup of a rational elliptic curve.

\begin{theorem}
[Mazur's Torsion Theorem]\label{MazurTorThm}Let $E$ be a rational elliptic
curve. Then%
\[
E\!\left(
\mathbb{Q}
\right)  _{\text{tors}}\cong\left\{
\begin{array}
[c]{ll}%
C_{N} & \text{for }N=1,2,\ldots,10,12,\\
C_{2}\times C_{2N} & \text{for }N=1,2,3,4.
\end{array}
\right.
\]

\end{theorem}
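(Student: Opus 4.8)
The honest plan is to invoke Theorem~\ref{MazurTorThm} as a known, deep input: it is Mazur's celebrated theorem from \cite{MR488287}, and in this paper it is used only as a black box, so I would simply cite \cite{MR488287} and move on. For completeness, though, let me record the shape of the argument, since the families $E_T$ built in this section furnish precisely the \emph{existence} half of the statement.

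First, separate the easy direction: every group on the list must actually occur as $E(\mathbb{Q})_{\text{tors}}$ for some rational $E$. This is exactly what the parameterizations deliver. For $N \in \{1,2,\dots,10,12\}$ and for $C_2\times C_{2N}$ with $N \in \{1,2,3,4\}$, the relevant modular curve $X_1(N)$ or $X_1(2,2N)$ has genus $0$ with a rational point (the genus computation cited from \cite[Proposition~3.7]{MR589086}), hence infinitely many rational points; specializing the universal curve $\mathcal{X}_t(T)$ of Table~\ref{ta:universalell} at a suitable $t\in\mathbb{Q}$ (together with the $N=2,3$ discussion for the small cases) gives, via Proposition~\ref{LemmaunivincT}, a rational $E$ with $T \hookrightarrow E(\mathbb{Q})$, and for generic $t$ one checks there is no forced extra torsion, so $E(\mathbb{Q})_{\text{tors}}\cong T$.

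The substance is the hard direction: no other finite abelian group occurs. By the structure theorem for torsion, $E(\mathbb{Q})_{\text{tors}}\cong C_m\times C_n$ with $m\mid n$, so it suffices to bound the cyclic and the ``full'' parts. A rational point of order $N$ on $E$ yields a non-cuspidal rational point of $X_1(N)$, and a $\mathbb{Q}$-rational cyclic subgroup of order $N$, hence a point of $X_0(N)$. I would dispose of the cases in increasing difficulty: (i) the genus-one curves $X_1(11)$, $X_1(14)$, $X_1(15)$ are shown by a $2$-descent to be elliptic curves of Mordell--Weil rank $0$ over $\mathbb{Q}$, so their finitely many rational points are all cusps; (ii) the remaining composite and prime-power orders (e.g. $16,18$ and $C_2\times C_{10}$) reduce to a smaller case by passing to a quotient isogeny or to a multiple of the point, or are eliminated by a direct genus/descent argument on the corresponding $X_1(N)$; (iii) prime orders $p$, for $p$ beyond the small range, are the heart of Mazur's work: one passes to $X_0(p)$, whose Jacobian $J_0(p)$ has an Eisenstein quotient with finite Mordell--Weil group over $\mathbb{Q}$, and a formal-immersion argument at an auxiliary prime of good reduction then forces every rational point of $X_0(p)$ to be a cusp, the possible complex-multiplication points being ruled out separately. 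Together, (i)--(iii) exclude every torsion subgroup outside the list, and with the existence half this gives the theorem.

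The main obstacle, by a wide margin, is (iii): Mazur's analysis of the Eisenstein ideal, the finiteness of the Eisenstein quotient of $J_0(p)$, and the formal-immersion technique form a genuinely deep piece of arithmetic geometry with no shortcut. By contrast, the genus-zero existence statements and the rank-zero descents for $X_1(11)$, $X_1(14)$, $X_1(15)$ are routine. Since nothing in the present paper requires reproving any of this, I would cite \cite{MR488287} for the theorem and proceed.
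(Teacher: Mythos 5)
The paper gives no proof of this statement at all---it is stated purely as a citation to Mazur's work \cite{MR488287}---and your primary plan of invoking it as a black box is exactly what the paper does. Your supplementary sketch of the existence half via the genus-zero parameterizations and of Mazur's Eisenstein-quotient/formal-immersion argument for the hard direction is accurate as an outline, but none of it is needed or attempted in the paper.
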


For the fourteen non-trivial torsion subgroups $T$ allowed by Theorem \ref{MazurTorThm}, let $E_T$ be as defined in Table~\ref{ta:ETmodel}. We note that the table makes reference to an additional family, namely $E_{C_3^{0}}$, which corresponds to elliptic curves with a $3$-torsion point and $j$-invariant equal to $0$ (see Proposition~\ref{rationalmodels}). Now observe that when $T=C_{2},C_{2}\times C_{2}$, $E_{T}=E_{T}\!\left(
a,b,d\right)  $ is the three parameter family of elliptic curves which was the
subject of Lemmas \ref{ch:ss:lemc2} and \ref{ch:ss:lemc2c2}. For $T=C_{3}$,
let $\mathcal{X}_{t}\!\left(  T\right)  $ denote the elliptic curve defined in
Corollary \ref{sscorC3} and for the remaining $T$'s, let $\mathcal{X}%
_{t}\!\left(  T\right)  $ be as defined in Table \ref{ta:universalell}. For
$T\neq C_{2},C_{2}\times C_{2}$, we show that $E_{T}=E_{T}\!\left(
a,b\right)  $ is $K$-isomorphic to $\mathcal{X}_{b/a}\!\left(  T\right)  $.
For the following lemma, let $\alpha_{T},\beta_{T},$ and $\gamma_{T}$ be as
defined in Tables \ref{ta:alpT}, \ref{ta:betT}, and \ref{ta:gamT}, respectively.

\begin{lemma}
\label{ss:Lem1}Let $K$ be a number field or a local field and let $R_{K}$ denote its ring of
integers. Then for $T\neq C_{2},C_{2}\times C_{2}$, the elliptic curves
$\mathcal{X}_{b/a}\!\left(  T\right)  $ and $E_{T}=E_{T}\!\left(  a,b\right)
$ are $K$-isomorphic for $a,b\in R_{K}$. Moreover, $E_{T}$ is given by an
integral Weierstrass model and its discriminant is $\gamma_{T}$ and the
invariants $c_{4}$ and $c_{6}$ of $E_{T}$ are $\alpha_{T}$ and $\beta_{T}$, respectively.
\end{lemma}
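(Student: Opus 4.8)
The plan is to prove Lemma~\ref{ss:Lem1} by an explicit change of variables for each of the relevant torsion subgroups $T \neq C_2, C_2 \times C_2$, homogenizing the one-parameter family $\mathcal{X}_t(T)$ with respect to $t = b/a$. First I would observe that for $T = C_3$ the curve $\mathcal{X}_t(C_3): y^2 + xy + ty = x^3$ of Corollary~\ref{sscorC3} has the parameter $t$ appearing linearly, so substituting $t = b/a$ and applying the admissible change of variables $x \mapsto a^2 x$, $y \mapsto a^3 y$ (possibly with an extra factor to clear denominators uniformly, i.e.\ $u = a^{-1}$) yields an integral Weierstrass model in $a, b$; one then checks this is precisely the model $E_{C_3}(a,b)$ listed in Table~\ref{ta:ETmodel}. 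The same strategy applies to each remaining $T$: the entries $f, g$ in Table~\ref{ta:universalell} are rational functions of $t$ with denominators that are powers of a fixed polynomial $p(t)$ (e.g.\ $t$, $t-1$, $t^2-3t+1$, $(t+3)(t-3)$, $2t(4t+1)(8t^2-1)$, etc.), so writing $t = b/a$ and choosing $u$ to be an appropriate monomial in $a$ times $p(b/a)$ (cleared to a polynomial in $a, b$) produces integral $a_i \in R_K$.

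The key steps, carried out $T$ by $T$, are: (1) substitute $t = b/a$ into the Tate normal form \eqref{bazlem1.4} with the tabulated $f, g$; (2) identify the correct scaling factor $u \in K^\times$ so that the admissible change of variables $x \mapsto u^2 x$, $y \mapsto u^3 y$ clears all denominators and gives a model with coefficients in $R_K[a,b]$ — here $u$ is a product of a power of $a$ (to homogenize) and the denominator polynomial evaluated at $(a,b)$; (3) verify the resulting Weierstrass model coincides with the one recorded as $E_T(a,b)$ in Table~\ref{ta:ETmodel}; and (4) compute $c_4, c_6, \Delta$ for this model directly from \eqref{basicformulas} and check that they equal $\alpha_T, \beta_T, \gamma_T$ from Tables~\ref{ta:alpT}, \ref{ta:betT}, \ref{ta:gamT}. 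Since these tables were presumably defined to make these identities hold, step (4) is essentially a matter of confirming that the change of variables in step (2) scales the invariants of $\mathcal{X}_{b/a}(T)$ correctly: if $\mathcal{X}_{b/a}(T)$ has invariants $c_4^{\mathcal{X}}, c_6^{\mathcal{X}}$ (rational functions of $b/a$), then $E_T$ has $c_4 = u^4 c_4^{\mathcal{X}}$... wait, under $x \mapsto u^2 x + r$, $y \mapsto u^3 y + \ldots$ with the isomorphism going \emph{from} $\mathcal{X}$ \emph{to} $E_T$, one has $c_4^{E_T} = u^{-4} c_4^{\mathcal{X}}$ in the convention of the excerpt, so one picks $u$ (of the form $a^{-k} p(b/a)^{-\ell}$ or its reciprocal, as dictated by which direction makes everything integral) accordingly; this determines $\alpha_T, \beta_T, \gamma_T$ as the cleared-denominator versions of $c_4^{\mathcal{X}}, c_6^{\mathcal{X}}, \Delta^{\mathcal{X}}$.

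The main obstacle I anticipate is purely bookkeeping: there is no single uniform formula for $u$ across all eleven groups, so one must handle $C_3, C_4, C_5, C_6, C_7, C_8, C_9, C_{10}, C_{12}, C_2\times C_4, C_2\times C_6, C_2\times C_8$ case by case, and the degrees of the homogenizing factors grow (e.g.\ for $C_{12}$ one has $(t-1)^4$ in the denominator of $f$ and $(t-1)^3$ in $g$, requiring careful tracking of powers of $a$). The potential subtlety is ensuring that the $a_i$ land in $R_K$ rather than merely in $R_K[1/N]$ for some bad $N$ — one must check that the chosen $u$ genuinely clears \emph{all} denominators, including any stray integer denominators hidden in the $f, g$ expressions (such as the $2$ in the $C_{12}$ entry $-6t^4 + 9t^3 - \ldots$ over $(t-1)^3$, or the $2t$ in the $C_2 \times C_8$ entry). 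A clean way to organize this is to note that each $\mathcal{X}_t(T)$ is, by construction from \cite{MR0434947} and \cite{MR1748483}, already a model over $\mathbb{Z}[t, 1/p(t)]$, so the homogenization $t = b/a$ followed by clearing $p(b/a)$ and the correct power of $a$ lands in $\mathbb{Z}[a,b] \subseteq R_K$; thus the integrality claim reduces to checking, for each $T$, that the exponent of $a$ in the chosen $u$ is large enough to absorb the degree of $p$. Once integrality is settled, the identification with Table~\ref{ta:ETmodel} and the invariant computations are mechanical.
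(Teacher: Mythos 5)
Your proposal is correct and follows essentially the same route as the paper: the paper's proof simply exhibits, for each $T$, an explicit polynomial $w_T\in\mathbb{Z}[a,b]$ (a power of $a$ times the homogenized denominator of $f$ and $g$, e.g.\ $w_{C_{10}}=a(a^2-3ab+b^2)$, $w_{C_{2}\times C_{8}}=2b(a+4b)(-a^2+8b^2)$) and applies $x\mapsto w_T^{-2}x$, $y\mapsto w_T^{-3}y$ to $\mathcal{X}_{b/a}(T)$, then verifies the resulting model and its invariants $\alpha_T,\beta_T,\gamma_T$ by direct (computer-assisted) computation. Your case-by-case denominator-clearing plan, including the attention to stray integer factors such as the $2$ in the $C_2\times C_8$ entry, is exactly this argument.
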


\begin{proof}
Let%
\[
w_{T}=\left\{
\begin{array}
[c]{ll}%
a & \text{if }T=C_{3},C_{4},C_{5},C_{6},C_{2}\times C_{4},\\
a^{2} & \text{if }T=C_{7},\\
ab & \text{if }T=C_{8},\\
a^{3} & \text{if }T=C_{9},\\
a\left(  a^{2}-3ab+b^{2}\right)  & \text{if }T=C_{10},\\
a\left(  -a+b\right)  ^{3} & \text{if }T=C_{12},\\
-9a^{2}+b^{2} & \text{if }T=C_{2}\times C_{6},\\
2b\left(  a+4b\right)  \left(  -a^{2}+8b^{2}\right)  & \text{if }T=C_{2}\times
C_{8}.
\end{array}
\right.
\]
Then the admissible change of variables $x\longmapsto w_{T}^{-2}x$ and
$y\longmapsto w_{T}^{-3}y$ gives a $K$-isomorphism from $\mathcal{X}%
_{b/a}\!\left(  T\right)  $ onto $E_{T}=E_{T}\!\left(  a,b\right)  $. It is
now verified via the formulas in $\left(  \ref{basicformulas}\right)  $ that
the discriminant of $E_{T}$ is $\gamma_{T}$ and that the invariants $c_{4}$
and $c_{6}$ of $E_{T}$ are $\alpha_{T}$ and $\beta_{T}$, respectively.
SageMath~\cite{sagemath} worksheets containing these verifications are also found in \cite{GitHubMinimalModels}.
\end{proof}

\section{Explicit Frey and Flexor-Oesterl\'{e}\label{SecFreyFlex}}

Let $E_{T}$ be the elliptic curve defined in Table \ref{ta:ETmodel}. In the
previous section, we saw that these families of elliptic curves parameterize
all elliptic curves over $K$ with $T\hookrightarrow E\!\left(  K\right)  $
where $T\neq C_{2},C_{3},C_{2}\times C_{2}$ is one of the remaining eleven
non-trivial torsion subgroups allowed by Theorem~\ref{MazurTorThm}. This also
holds for $T=C_{3}$ under the assumption that $E$ has non-zero $j$-invariant.
We also saw that if $K$ has class number one, then for $T=C_{2},C_{2}\times
C_{2}$, the family $E_{T}=E_{T}\!\left(  a,b,d\right)  $ parameterizes all
elliptic curves over $K$ with $T\hookrightarrow E\!\left(  K\right)  $. Note
that for $T=C_{2}$, we must assume that $C_{2}\times C_{2}%
\not \hookrightarrow E\!\left(  K\right)  $. Moreover, the discriminant of
$E_{T}$ is given by $\gamma_{T}$ and the invariants $c_{4}$ and $c_{6}$ are
$\alpha_{T}$ and $\beta_{T}$, respectively. In the following lemma, we
consider $\alpha_{T},\beta_{T}$, and $\gamma_{T}$ as polynomials in $%
\mathbb{Z}
\left[  a,b,d,r,s\right]  $. In particular, we view $a,b,d,r,s$ as indeterminates.

\begin{lemma}
\label{polynomials}Let $\alpha_{T},\beta_{T},\gamma_{T}$ be as given in Tables
\ref{ta:alpT}, \ref{ta:betT}, and \ref{ta:gamT}, respectively. Then for each
$T$, the identity $\alpha_{T}^{3}-\beta_{T}^{2}=1728\gamma_{T}$ holds in $%
\mathbb{Z}
\left[  a,b,d,r,s\right]  $ and there exist $\mu_{T}^{\left(  j\right)
},\nu_{T}^{\left(  j\right)  }\in%
\mathbb{Z}
\left[  a,b,d,r,s\right]  $ for $j=1,2,3$ such that the following identities
hold in $%
\mathbb{Z}
\left[  a,b,d,r,s\right]  $:
\end{lemma}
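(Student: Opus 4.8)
The plan is to treat the two assertions separately, the first being formal and the second carrying the real content. For $\alpha_T^3-\beta_T^2=1728\gamma_T$: by Lemma~\ref{ss:Lem1} the polynomials $\alpha_T$ and $\beta_T$ are exactly the invariants $c_4$ and $c_6$ attached to the integral Weierstrass model $E_T$, and $\gamma_T$ is its discriminant; since the coefficients of $E_T$ are themselves elements of $\mathbb{Z}[a,b,d,r,s]$, the universal relation $c_4^3-c_6^2=1728\Delta$ of~\eqref{basicformulas} is, after specialization, already an equality in $\mathbb{Z}[a,b,d,r,s]$, which is the first identity. (Equivalently one expands $\alpha_T^3-\beta_T^2$ and $1728\gamma_T$ directly from Tables~\ref{ta:alpT},~\ref{ta:betT},~and~\ref{ta:gamT} and compares; this, like the polynomial manipulations below, is carried out in SageMath~\cite{sagemath}, with worksheets in~\cite{GitHubMinimalModels}.)

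For the remaining three identities the guiding principle is that $\alpha_T$ and $\gamma_T$ are coprime as polynomials, so elimination between them produces $\mathbb{Z}[a,b,d,r,s]$-linear combinations of the two equal to a monomial in the parameters times a nonzero integer. First I would record the coprimality: a common non-unit factor of $\alpha_T$ and $\gamma_T$ would describe a locus on which $\mathcal{X}_t(T)$ is singular ($\gamma_T=0$) while $c_4=\alpha_T=0$, but $\gamma_T$, read in the single parameter $t=b/a$, factors into forms supported at the finitely many cusps of $X_1(N)$ (resp.\ $X_1(2,2M)$), and at each of these the $c_4$-invariant of the degenerate fiber is nonzero — which one checks directly from Table~\ref{ta:universalell}. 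Granting this, $\mathrm{Res}_b(\alpha_T,\gamma_T)$ lies in the ideal $(\alpha_T,\gamma_T)$ and, by homogeneity in $(a,b)$, is a monomial in $a$ with nonzero integer coefficient (for $T=C_2,C_2\times C_2$ the coefficient is a polynomial in $d$, and one eliminates once more to isolate a power of $d$); writing this resultant explicitly as a combination of $\alpha_T$ and $\gamma_T$ — equivalently, running the extended Euclidean algorithm over $\mathbb{Q}(d)[a,b]$ — produces cofactors $\mu_T^{(1)},\nu_T^{(1)}\in\mathbb{Z}[a,b,d,r,s]$, and interchanging the roles of $a$ and $b$, together with the $d$-elimination, yields $\mu_T^{(j)},\nu_T^{(j)}$ for $j=2,3$.

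The substantive step is to check the explicit right-hand sides — that the integer appearing in each identity is supported only on a small explicit set of primes (at most $\{2,3,5,7\}$, and for the $T$ occurring in Theorem~\ref{Thm1} exactly the corresponding $S$). This amounts to factoring the resultants, and up to sign and powers of leading coefficients $\mathrm{Res}_b(\alpha_T,\gamma_T)$ is the product of the values of $\alpha_T=c_4$ at the roots of $\gamma_T$, i.e.\ the product over the cusps of $X_1(N)$ of the $c_4$-invariant of the degenerate fiber — small explicit rationals whose prime factorizations one reads off. With the three identities in place, for the ten torsion subgroups of Theorem~\ref{Thm1} the coprimality of $a$ and $b$ forces any prime dividing $\gcd(\alpha_T,\gamma_T)$ into the set $S$; since $\gamma_T$ is the discriminant and $\alpha_T$ the $c_4$-invariant of the integral model $E_T$, this locates — via the criterion for additive reduction recalled in the introduction — the primes at which $E_T$ has additive reduction, which is exactly the input needed for Theorem~\ref{Thm1} (for the remaining $T$ the identities instead exhibit the additive primes explicitly in terms of the parameters).

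The main obstacle is scale rather than substance: there are fourteen families, three identities apiece, and for $T=C_{10},C_{12},C_2\times C_6,C_2\times C_8$ the discriminants $\gamma_T$ have large degree and bulky coefficients, so producing reasonably compact cofactors $\mu_T^{(j)},\nu_T^{(j)}$ and correctly determining the prime support of the resultants takes care. Conceptually, once the coprimality of $\alpha_T$ and $\gamma_T$ is in hand the existence of the identities is automatic, and verifying the explicit forms recorded in the statement is a finite computation, which we carry out by direct expansion in SageMath.
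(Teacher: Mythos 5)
Your proposal is correct and follows essentially the same route as the paper: the cofactors $\mu_T^{(j)},\nu_T^{(j)}$ are produced by the extended Euclidean algorithm applied to pairs of $\alpha_T,\beta_T,\gamma_T$ viewed as polynomials in $b$ over $\mathbb{Q}[a]$ and in $a$ over $\mathbb{Q}[b]$ (combined via the indeterminates $r,s$), the explicit identities are checked by direct expansion in SageMath, and the final containment in $\delta_T R_K$ follows from $ra^n+sb^m=1$ for coprime $a,b$. Your added framing via resultants and the nonvanishing of $c_4$ at the cusps is a pleasant conceptual gloss on why the elimination terminates in a monomial times a small integer, but the substance of the argument is the same finite symbolic verification the paper performs.
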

{\renewcommand*{\arraystretch}{1.15} \begin{longtable}{cccc}
\hline
$T$ & $\mu_{T}^{\left(  1\right)  }\alpha_{T}+\nu_{T}^{\left(  1\right)
}\beta_{T}$ & $\mu_{T}^{\left(  2\right)  }\alpha_{T}+\nu_{T}^{\left(
2\right)  }\gamma_{T}$ & $\mu_{T}^{\left(  3\right)  }\beta_{T}+\nu
_{T}^{\left(  3\right)  }\gamma_{T}$\\
\hline
\endfirsthead
\hline
$T$ & $\mu_{T}^{\left(  1\right)  }\alpha_{T}+\nu_{T}^{\left(  1\right)
}\beta_{T}$ & $\mu_{T}^{\left(  2\right)  }\alpha_{T}+\nu_{T}^{\left(
2\right)  }\gamma_{T}$ & $\mu_{T}^{\left(  3\right)  }\beta_{T}+\nu
_{T}^{\left(  3\right)  }\gamma_{T}$\\
\hline
\endhead
\hline
\multicolumn{2}{r}{\emph{continued on next page}}
\endfoot
\hline
\endlastfoot
	
$C_{2}$ & \multicolumn{1}{l}{$2^{8}3^{2}\left(  rb^{4}d^{2}+sa^{3}\right)  $}
& \multicolumn{1}{l}{$2^{10}\left(  rb^{6}d^{3}+sa^{6}\right)  $} &
\multicolumn{1}{l}{$2^{12}\left(  rb^{8}d^{4}+sa^{7}\right)  $}\\\hline
$C_{3}$ & \multicolumn{1}{l}{$2^{6}3^{3}a^{3}\left(  ra^{3}+sb^{3}\right)  $}
& \multicolumn{1}{l}{$2^{15}3^{6}a^{3}\left(  ra^{9}+sb^{9}\right)  $} &
\multicolumn{1}{l}{$2^{6}3^{9}a^{4}\left(  ra^{9}+sb^{9}\right)  $}\\\hline
$C_{4}$ & \multicolumn{1}{l}{$2^{8}3^{2}a^{2}\left(  ra^{5}+sb^{5}\right)  $}
& \multicolumn{1}{l}{$2^{12}a^{2}\left(  ra^{12}+sb^{11}\right)  $} &
\multicolumn{1}{l}{$2^{18}a^{3}\left(  ra^{11}+sb^{11}\right)  $}\\\hline
$C_{5}$ & \multicolumn{1}{l}{$2^{4}3^{2}5\left(  ra^{9}+sb^{9}\right)  $} &
\multicolumn{1}{l}{$5\left(  ra^{15}+sb^{15}\right)  $} &
\multicolumn{1}{l}{$5^{3}\left(  ra^{17}+sb^{17}\right)  $}\\\hline
$C_{6}$ & \multicolumn{1}{l}{$2^{7}3^{4}\left(  ra^{9}+sb^{9}\right)  $} &
\multicolumn{1}{l}{$2^{4}3^{4}\left(  ra^{15}+sb^{5}\right)  $} &
\multicolumn{1}{l}{$2^{9}3^{3}\left(  ra^{17}+sb^{17}\right)  $}\\\hline
$C_{7}$ & \multicolumn{1}{l}{$2^{4}3^{2}7\left(  ra^{19}+sb^{19}\right)  $} &
\multicolumn{1}{l}{$7^{2}\left(  ra^{31}+sb^{31}\right)  $} &
\multicolumn{1}{l}{$7\left(  ra^{35}+sb^{35}\right)  $}\\\hline
$C_{8}$ & \multicolumn{1}{l}{$2^{7}3^{2}\left(  ra^{18}+sb^{19}\right)  $} &
\multicolumn{1}{l}{$2^{4}\left(  ra^{30}+sb^{31}\right)  $} &
\multicolumn{1}{l}{$2^{9}\left(  ra^{34}+sb^{35}\right)  $}\\\hline
$C_{9}$ & \multicolumn{1}{l}{$2^{4}3^{4}\left(  ra^{29}+sb^{29}\right)  $} &
\multicolumn{1}{l}{$3^{4}\left(  ra^{47}+sb^{47}\right)  $} &
\multicolumn{1}{l}{$3^{3}\left(  ra^{53}+sb^{53}\right)  $}\\\hline
$C_{10}$ & \multicolumn{1}{l}{$2^{7}3^{2}5\left(  ra^{29}+sb^{29}\right)  $} &
\multicolumn{1}{l}{$2^{4}5\left(  ra^{47}+sb^{47}\right)  $} &
\multicolumn{1}{l}{$2^{9}5^3\left(  ra^{53}+sb^{47}\right)  $}\\\hline
$C_{12}$ & \multicolumn{1}{l}{$2^{7}3^{4}\left(  ra^{38}+sb^{39}\right)  $} &
\multicolumn{1}{l}{$2^{4}3^{4}\left(  ra^{62}+sb^{63}\right)  $} &
\multicolumn{1}{l}{$2^{9}3^{3}\left(  ra^{70}+sb^{71}\right)  $}\\\hline
$C_{2}\times C_{2}$ & \multicolumn{1}{l}{$2^{5}3^{2}d^{4}\left(  ra^{4}%
+sb^{4}\right)  $} & \multicolumn{1}{l}{$2^{4}d^{6}\left(  ra^{6}%
+sb^{6}\right)  $} & \multicolumn{1}{l}{$2^{7}d^{8}\left(  ra^{8}%
+sb^{8}\right)  $}\\\hline
$C_{2}\times C_{4}$ & \multicolumn{1}{l}{$2^{14}3^{2}\left(  ra^{8}%
+sb^{8}\right)  $} & \multicolumn{1}{l}{$2^{16}\left(  ra^{14}+sb^{12}\right)
$} & \multicolumn{1}{l}{$2^{24}\!\left(  ra^{16}+sb^{16}\right)  $}\\\hline
$C_{2}\times C_{6}$ & \multicolumn{1}{l}{$2^{31}3^{4}\left(  ra^{18}%
+sb^{19}\right)  $} & \multicolumn{1}{l}{$2^{45}3^{4}\left(  ra^{30}%
+sb^{31}\right)  $} & \multicolumn{1}{l}{$2^{56}3^{3}\left(  ra^{34}%
+sb^{35}\right)  $}\\\hline
$C_{2}\times C_{8}$ & \multicolumn{1}{l}{$2^{49}3^{2}\left(  ra^{38}%
+sb^{38}\right)  $} & \multicolumn{1}{l}{$2^{74}\left(  ra^{62}+sb^{62}%
\right)  $} & \multicolumn{1}{l}{$2^{90}\left(  ra^{70}+sb^{70}\right)  $%
}
\end{longtable}}\addtocounter{table}{-1}
\textit{In particular, if $K$ is a number field or a local field with ring of integers $R_{K}$ and $a,b\in R_K$ are coprime elements, then the ideals
$\left(  \alpha_{T}\!\left(  a,b\right)  \right)  +\left(  \gamma_{T}\!\left(
a,b\right)  \right)  $ and $  \left(\beta_{T}\!\left(  a,b\right)\right)  +\left(\gamma
_{T}\!\left(  a,b\right)  \right)   $ are contained in the principal ideal
$\delta_{T}R_{K}$ where $\delta_{T}$ is as given below:}
\[%
\begin{tabular}
[c]{cccccccccccccc}\hline
$T$ & $C_{3}$ & $C_{4}$ & $C_{5}$ & $C_{6}$ & $C_{7}$ & $C_{8}$ & $C_{9}$ &
$C_{10}$ & $C_{12}$& $C_{2}\times C_{2}$ & $C_{2}\times C_{4}$ & $C_{2}\times C_{6}$ & $C_{2}\times
C_{8}$\\\hline
$\delta_{T}$ & $6a$ & $2a$ & $5$ & $6$ & $7$ & $2$ & $3$ & $10$ & $6$ & $2d$ & $2$ &
$6$ & $2$\\\hline
\end{tabular}
\]

\begin{proof}
Let $a,b,d,r,s$ be indeterminates. The polynomials $\mu_{T}^{\left(  j\right)  },\nu_{T}^{\left(  j\right)  }$
for $j=1,2,3$ are constructed by applying the Euclidean Algorithm to pairs of
$\alpha_{T},\beta_{T},\ $and $\gamma_{T}$ over the rings $%
\mathbb{Q}
\!\left[  a\right]  \left(  b\right)  $ and $%
\mathbb{Q}
\!\left[  b\right]  \left(  a\right)  $. Explicit equations for these
polynomials are found in \cite[Appendix D]{MR3828473} or \cite{GitHubMinimalModels}. The latter has the verification of these identities. We note that the identities were verified using SageMath~\cite{sagemath}.

For the second statement, note that the ideals generated by $a^{n}$ and
$b^{m}$ are coprime for any positive integers $n$ and $m$ since the ideals
generated by $a$ and $b$ are coprime. In particular, there exist $r,s\in
R_{K}$ such that $ra^{n}+sb^{m}=1$. The second statement now follows.
\end{proof}

\begin{lemma}
\label{silverconverse}\label{intweier}\label{intweierpara}Let $K$ be a number
field and let $\mathfrak{p}$ be a prime of $K$. If $E$ is an elliptic curve
over $K_{\mathfrak{p}}$ given by an integral Weierstrass model, then any
admissible change of variables $x\longmapsto u^{2}x+r$ and $y\longmapsto
u^{3}y+u^{2}sx+w$ used to produce a minimal model at $\mathfrak{p}$ satisfies $u,r,s,w\in
R_{K_{\mathfrak{p}}}$.

In particular, if $E$ is a rational elliptic curve given by an integral
Weierstrass model with invariants $c_{4}$ and $c_{6}$ and discriminant
$\Delta$, then there is a unique positive integer $u$ such that
\[
c_{4}^{\prime}=u^{-4}c_{4},\qquad c_{6}^{\prime}=u^{-6}c_{6},\qquad
\text{and}\qquad\Delta_{E/\mathbb{Q}}^{\text{min}}=u^{-12}\Delta
\]
where $\Delta_{E/\mathbb{Q}}^{\text{min}}$ is the minimal discriminant of $E$ and
$c_{4}^{\prime}$ and $c_{6}^{\prime}$ are the invariants associated to a
global minimal model of $E$.
\end{lemma}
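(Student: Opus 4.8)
The plan is to prove the two assertions in turn, beginning with the statement about minimal models over a local field $K_{\mathfrak{p}}$. Suppose $E$ is given by an integral Weierstrass model over $K_{\mathfrak{p}}$ and that an admissible change of variables $x\longmapsto u^2x+r$, $y\longmapsto u^3y+u^2sx+w$ produces a minimal model $E'$ at $\mathfrak{p}$. Since $E'$ is itself integral, the transformation formulas for the Weierstrass coefficients (see \cite{MR2514094}, the formulas relating $a_i'$ to $a_i,u,r,s,w$) show that $ua_1'=a_1+2s$, $u^2a_2'=a_2-sa_1+3r-s^2$, and so on. The first step is to observe that $\Delta'=u^{-12}\Delta$ with both $\Delta,\Delta'\in R_{K_{\mathfrak p}}$ forces $v_{\mathfrak p}(u)\le 0$ to fail, i.e. $v_{\mathfrak{p}}(u)\geq 0$, so $u\in R_{K_{\mathfrak p}}$. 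Then I would run through the transformation formulas in order — first solving $2s=ua_1'-a_1$ to get $s\in R_{K_{\mathfrak p}}$ (using $u,a_1',a_1\in R_{K_{\mathfrak p}}$, with the factor of $2$ causing no trouble since we only need $s$ itself integral and $2s$ is visibly integral, hence... ), here one must be slightly careful: the cleaner route is to use $c_4'=u^{-4}c_4$ and $c_6'=u^{-6}c_6$ to fix $u$, then recover $r$ from $u^2(12 a_2'+a_1'^2)=12a_2+a_1^2+12r\cdot(\text{unit})$ type identities, then $s$, then $w$; I would cite the standard fact (\cite{MR2514094}, VII or the discussion of minimal models) that an admissible change of variables between two integral models over a local ring necessarily has $u,r,s,w$ integral, of which this is essentially a restatement. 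The key point to get right is the ordering: once $u$ is integral, the equations for $r$, $s$, $w$ are linear with integral data and leading coefficients that are powers of $u$ times small constants, and one checks integrality of each in sequence.

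For the global (rational) statement, I would argue as follows. Let $E/\mathbb{Q}$ be given by an integral Weierstrass model with invariants $c_4,c_6$ and discriminant $\Delta$, and let $E'$ be a global minimal model, which exists since $\mathbb{Q}$ has class number one. By definition $v_p(\Delta')=v_p(\Delta_{E/\mathbb Q}^{\min})$ for every prime $p$, and $E'$ arises from $E$ by an admissible change of variables with some $u\in\mathbb{Q}^\times$; then $\Delta'=u^{-12}\Delta$, $c_4'=u^{-4}c_4$, $c_6'=u^{-6}c_6$. The first local statement applied at each prime $p$ shows $v_p(u)\geq 0$ for all $p$ (the change of variables from the integral model $E$ to a minimal model at $p$ — which $E'$ is, at $p$ — has integral parameters), hence $u\in\mathbb{Z}$; replacing $u$ by $|u|$ changes none of $u^{-4},u^{-6},u^{-12}$, so we may take $u$ a positive integer. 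Uniqueness of $u$ is immediate: if $u_1,u_2$ both work, then $u_1^{-4}c_4=u_2^{-4}c_4$ and $u_1^{-12}\Delta=u_2^{-12}\Delta$; since $\Delta\neq 0$ the latter forces $u_1^{12}=u_2^{12}$, and as both are positive integers, $u_1=u_2$.

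The main obstacle I anticipate is purely bookkeeping rather than conceptual: in the local step one must be careful that the presence of the constants $2$, $3$, $12$ in the transformation formulas does not obstruct integrality of $r,s,w$ at the primes $2$ and $3$. This is handled by not trying to read $r,s,w$ off the naive formulas but instead invoking the standard result that between two integral Weierstrass models over a discrete valuation ring the change of variables is automatically integral — equivalently, one appeals to the uniqueness-up-to-unit structure of minimal models established via Tate's or Kraus's criteria. Everything else — deducing $u\in\mathbb{Z}$ by working prime-by-prime, normalizing the sign, and proving uniqueness from $\Delta\neq 0$ — is routine. I would therefore structure the write-up as: (i) cite/prove the local integrality of $(u,r,s,w)$; (ii) globalize to get $u\in\mathbb{Z}_{>0}$; (iii) one line for uniqueness.
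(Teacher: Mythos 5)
Your proposal is correct and follows essentially the same route as the paper, whose entire proof is a citation of \cite[Proposition VII.1.3 and Corollary VIII.8.3]{MR2514094}: the local integrality of $u,r,s,w$ is exactly Silverman's Proposition VII.1.3(d) (which you ultimately invoke rather than re-deriving, and rightly so, since the bookkeeping with the constants $2,3,12$ is genuinely delicate), and the globalization and uniqueness steps are as you describe. One small correction: integrality of $\Delta$ and $\Delta'$ alone does not force $v_{\mathfrak{p}}(u)\geq 0$ (if $v_{\mathfrak{p}}(u)<0$ then $v_{\mathfrak{p}}(\Delta')=v_{\mathfrak{p}}(\Delta)-12v_{\mathfrak{p}}(u)$ is still nonnegative); what you need is the \emph{minimality} of $E'$ at $\mathfrak{p}$, which gives $v_{\mathfrak{p}}(\Delta')\leq v_{\mathfrak{p}}(\Delta)$ and hence $12v_{\mathfrak{p}}(u)=v_{\mathfrak{p}}(\Delta)-v_{\mathfrak{p}}(\Delta')\geq 0$.
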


\begin{proof}
See \cite[Proposition VII.1.3 and Corollary VIII.8.3]{MR2514094}.
\end{proof}

\begin{theorem}
\label{Thm1}Let $E$ be an elliptic curve over a number field $K$ with ring of
integers $R_{K}$. Suppose further that $T\hookrightarrow E\!\left(  K\right)
$ where $T$ is one of the torsion subgroups appearing in Theorem~\ref{MazurTorThm} with $\left\vert T\right\vert >4$. If $E$ has additive
reduction at a prime $\mathfrak{p}$ of $K$, then the residue characteristic of
$\mathfrak{p}$ is a rational prime in the set $S$ given below:
\[%
\begin{tabular}
[c]{ccccccccccc}\hline
$T$ & $C_{5}$ &$C_{6}$& $C_{7}$ & $C_{8}$ & $C_{9}$ & $C_{10}$ & $C_{12}$ &
$C_{2}\times C_{4}$ & $C_{2}\times C_{6}$ & $C_{2}\times C_{8}$\\\hline
$S$ & $\left\{  5\right\}$& $\left\{  2,3\right\}  $ & $\left\{  7\right\}  $ & $\left\{  2\right\}  $
& $\left\{  3\right\}  $ & $\left\{  5\right\}  $ & $\left\{  2,3\right\}  $ &
$\left\{  2\right\}  $ & $\left\{  2,3\right\}  $ & $\left\{  2\right\}
$\\\hline
\end{tabular}
\]
\end{theorem}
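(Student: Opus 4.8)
The plan is to reduce the claim to a purely local statement about the parameterized family $E_T$ and then exploit the coprimality identities of Lemma~\ref{polynomials}. First, by Proposition~\ref{LemmaunivincT}, since $|T|>4$ and $T\neq C_2, C_3, C_2\times C_2$, we may assume $E$ is $K$-isomorphic to $\mathcal{X}_t(T)$ for some $t\in K$, and after clearing denominators (writing $t=b/a$ with $a,b\in R_K$, and then dividing out by the gcd so that $a$ and $b$ are coprime) we may, by Lemma~\ref{ss:Lem1}, assume $E = E_T(a,b)$ is given by the stated integral Weierstrass model with invariants $c_4=\alpha_T(a,b)$, $c_6=\beta_T(a,b)$, and discriminant $\Delta=\gamma_T(a,b)$. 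Since additive reduction is a local property, it suffices to show that if $\mathfrak p$ is a prime of $K$ with residue characteristic \emph{not} in $S$, then $E_T$ is semistable at $\mathfrak p$.

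The heart of the argument is then: having additive reduction at $\mathfrak p$ forces $v_{\mathfrak p}(c_4')>0$ and $v_{\mathfrak p}(\Delta^{\min})>0$, where $c_4'$ and $\Delta^{\min}$ are the invariant and discriminant of a minimal model at $\mathfrak p$. By Lemma~\ref{silverconverse}, there is a $u\in R_{K_{\mathfrak p}}$ with $c_4' = u^{-4}\alpha_T(a,b)$ and $\Delta^{\min}=u^{-12}\gamma_T(a,b)$, so in particular $\mathfrak p$ divides \emph{both} $\alpha_T(a,b)$ and $\gamma_T(a,b)$; that is, $\mathfrak p$ lies in the prime ideal $(\alpha_T(a,b))+(\gamma_T(a,b))$. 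But Lemma~\ref{polynomials} (the ``second statement'') tells us precisely that this ideal is contained in $\delta_T R_K$, and for the relevant $T$ the value $\delta_T$ is supported exactly on the primes in $S$ (namely $\delta_T\in\{5,6,7,2,3,10,6,2,6,2\}$ for $T=C_5,C_6,C_7,C_8,C_9,C_{10},C_{12},C_2\times C_4,C_2\times C_6,C_2\times C_8$ respectively, whose prime divisors are exactly the sets $S$ in the table). Hence the residue characteristic of $\mathfrak p$ divides $\delta_T$, so it lies in $S$, as claimed.

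A point to be careful about is that the coprimality identities in Lemma~\ref{polynomials} are stated over $R_K$, but additive reduction is detected over the completion $K_{\mathfrak p}$; this is harmless because divisibility of the element $\gcd(\alpha_T(a,b),\gamma_T(a,b))$ by a prime $\mathfrak p$ is equivalent whether checked in $R_K$ or $R_{K_{\mathfrak p}}$, and the fact that $\gcd(a,b)=1$ in $R_K$ gives $\gcd(a,b)=1$ in $R_{K_{\mathfrak p}}$ as well. I also need the passage from an arbitrary $K$-isomorphic model of $E$ to the integral model $E_T(a,b)$: this is exactly the content of Proposition~\ref{LemmaunivincT} together with Lemma~\ref{ss:Lem1}, and since $K$-isomorphic curves have the same reduction type at every prime, no information is lost. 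The main obstacle is thus not conceptual but bookkeeping: one must confirm, for each of the ten groups $T$, that the explicit $\delta_T$ from Lemma~\ref{polynomials} has exactly the prime divisors listed in the set $S$ of the theorem — but this is immediate from the two displayed tables, so the proof is short once the reduction to $E_T(a,b)$ and the invocation of Lemma~\ref{polynomials} are in place.
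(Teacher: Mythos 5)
Your overall strategy is exactly the one the paper uses: reduce to the integral model $E_{T}(a,b)$ with $a,b$ coprime, note that additive reduction at $\mathfrak{p}$ forces $\mathfrak{p}$ to divide both $\alpha_{T}(a,b)$ and $\gamma_{T}(a,b)$ (since the minimal model's $c_{4}$ and discriminant differ from these only by powers of a unit-or-integral $u$), and then invoke the containment $(\alpha_{T})+(\gamma_{T})\subseteq\delta_{T}R_{K_{\mathfrak{p}}}$ from Lemma~\ref{polynomials}. However, there is a genuine gap in your final bookkeeping step: you assert that the prime divisors of $\delta_{T}$ are ``exactly the sets $S$ in the table,'' and this is false for $T=C_{10}$. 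There $\delta_{C_{10}}=10$, whose prime divisors are $\{2,5\}$, whereas the theorem claims $S=\{5\}$. Your argument as written only proves that an elliptic curve with a point of order $10$ has additive reduction confined to primes of residue characteristic $2$ or $5$; it does not rule out characteristic $2$. The paper closes this gap with an extra step: since $C_{10}\hookrightarrow E(K_{\mathfrak{p}})$ implies $C_{5}\hookrightarrow E(K_{\mathfrak{p}})$, the same curve is $K_{\mathfrak{p}}$-isomorphic to some $E_{C_{5}}(a',b')$ with $a',b'$ coprime, and the already-established $C_{5}$ case ($\delta_{C_{5}}=5$) then forces the residue characteristic to be $5$. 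You need to add this (or an equivalent) argument.

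A smaller point worth tightening: over a general number field $K$ you cannot always arrange $t=b/a$ with $a,b\in R_{K}$ generating the unit ideal, since the relevant ideal need not be principal; the paper sidesteps this by choosing $a,b$ coprime in the local ring $R_{K_{\mathfrak{p}}}$ (a PID), where this is always possible, and running the entire divisibility argument there. Your remark that coprimality in $R_{K}$ descends to $R_{K_{\mathfrak{p}}}$ is fine as far as it goes, but the cleaner (and correct in all cases) formulation is to work locally from the start.
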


\begin{proof}
Let $E$ be an elliptic curve with $T\hookrightarrow E\!\left(  K\right)  $ and
assume that $E$ has additive reduction at a prime $\mathfrak{p}$ of $K$. Let
$R_{K_{\mathfrak{p}}}$ denote the ring of integers of $K_{\mathfrak{p}}$. By
Proposition~\ref{LemmaunivincT}, there is a $t\in K$ such that $E$ is
$K$-isomorphic to $\mathcal{X}_{t}\!\left(  T\right)  $. In particular, $t\in
K_{\mathfrak{p}}$ and there are $a,b\in R_{K_{\mathfrak{p}}}$ such that
$t=\frac{b}{a}$ and coprime $a,b$. By Lemma~\ref{ss:Lem1},
$E$ is $K_{\mathfrak{p}}$-isomorphic to $E_{T}=E_{T}\!\left(  a,b\right)  $.
Next, let $x\longmapsto u_{T}^{2}x+r_{T}$ and $y\longmapsto u_{T}^{3}%
y+u_{T}^{2}s_{T}x+w_{T}$ be an admissible change of variables resulting in a
minimal model $E_{T}^{\prime}$ at $\mathfrak{p}$ for $E_{T}$. Since $E_{T}$ is given by an
integral Weierstrass model, we have by Lemma \ref{silverconverse} that
$u_{T},r_{T},s_{T},w_{T}\in R_{K_{\mathfrak{p}}}$. In particular, the minimal
discriminant of $E_{T}$ at $\mathfrak{p}$ is $\Delta_{E/K_{\mathfrak{p}}}^{\text{min}}=u_{T}%
^{-12}\gamma_{T}$. Moreover, the invariants $c_{4}$ and $c_{6}$ associated to
$E_{T}^{\prime}$ satisfy $c_{4}=u_{T}^{-4}\alpha_{T}$ and $c_{6}=u_{T}%
^{-6}\beta_{T}$. By Lemma \ref{polynomials}, $c_{4}R_{K_{\mathfrak{p}}}+\Delta_{E/K_{\mathfrak{p}}}^{\text{min}}R_{K_{\mathfrak{p}}}%
\subseteq\alpha_{T}R_{K_{\mathfrak{p}}}+\gamma_{T}R_{K_{\mathfrak{p}}%
}\subseteq\delta_{T}R_{K_{\mathfrak{p}}}$
where $\delta_{T}$ is as given in Lemma \ref{polynomials}. 
Since $E_T$ has additive reduction at $\mathfrak{p}$, it is the case that
$v_{\mathfrak{p}}(  \Delta_{E/K_{\mathfrak{p}}}^{\text{min}})  ,v_{\mathfrak{p}%
}\!\left(  c_{4}\right)  >0$. Hence $v_{\mathfrak{p}}\!\left(  \delta
_{T}\right)  >0$ and thus the residue characteristic of $\mathfrak{p}$ divides
$\delta_{T}$. This shows all cases claimed except for $T=C_{10}$. Indeed, by
Lemma \ref{polynomials}, $E_{C_{10}}$ may have additive reduction at $2$. Now
observe that if $C_{10}\hookrightarrow E\!\left(  K_{\mathfrak{p}}\right)  $,
then $C_{5}\hookrightarrow E\!\left(  K_{\mathfrak{p}}\right)  $ and thus
$E_{C_{10}}\!\left(  a,b\right)  $ is $K_{\mathfrak{p}}$-isomorphic to
$E_{C_{5}}\!\left(  a^{\prime},b^{\prime}\right)  $ for two coprime elements
$a^{\prime},b^{\prime}\in R_{K_{\mathfrak{p}}}$. In particular, if $E_{C_{10}%
}$ has additive reduction at a prime $\mathfrak{p}$ of $R_{K}$, then the
residue characteristic of $\mathfrak{p}$ is $5$.
\end{proof}

As a consequence, we get Frey's Theorem \cite{MR0457444} in the case when
$\ell=5,7$. Next, we prove Flexor and Oesterl\'{e}'s Theorem \cite{MR1065153}
by assuming Frey's Theorem.

\begin{theorem}\label{FOes}
Let $E$ be an elliptic curve over a number field $K$. If $E\!\left(  K\right)
$ contains a point of order $N$ and $E$ has additive reduction at a prime
$\mathfrak{p}$ of $K$ whose residue characteristic does not divide $N$, then
$N\leq4$. Moreover, if $E$ has additive reduction at at least two primes of
$K$ with different residue characteristics, then $N$ divides $12$.
\end{theorem}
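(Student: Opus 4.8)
The plan is to reduce both assertions to elementary bookkeeping on the prime factorization of $N$, using two inputs: Frey's Theorem for the prime divisors of $N$ exceeding $3$, and Theorem~\ref{Thm1} for the subgroups $C_6$, $C_8$, $C_9$, and $C_{12}$. I would use throughout the trivial fact that if $P\in E(K)$ has order $N$ and $d\mid N$, then $(N/d)P$ has order $d$, so that $C_d\hookrightarrow E(K)$ for every divisor $d$ of $N$; in particular Theorem~\ref{Thm1} is applicable whenever one of $6,8,9,12$ divides $N$, and Frey's Theorem is applicable whenever a prime $\ell\geq5$ divides $N$.

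For the first assertion I would write $N=2^{a}3^{b}m$ with $\gcd(m,6)=1$, and let $\mathfrak{p}$ be the prime of additive reduction, of residue characteristic $p$ with $p\nmid N$. If $m>1$, pick a prime $\ell\geq5$ dividing $m$: then $E(K)$ has a point of order $\ell$, so Frey's Theorem makes $E$ semistable at every prime of residue characteristic different from $\ell$; since $p\nmid N$ forces $p\neq\ell$, this contradicts additive reduction at $\mathfrak{p}$. Hence $N=2^{a}3^{b}$. If $a\geq3$, then $C_8\hookrightarrow E(K)$ and Theorem~\ref{Thm1} forces $p=2$, contradicting $p\nmid N$; similarly $b\geq2$ gives $C_9\hookrightarrow E(K)$ and forces $p=3$. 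The only values $N=2^{a}3^{b}\geq5$ left uncovered are $N=6$ and $N=12$, and for each Theorem~\ref{Thm1} forces $p\in\{2,3\}$, both of which divide $N$ --- again a contradiction. Therefore $N\leq4$.

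For the second assertion I would run the same analysis with two primes $\mathfrak{p}_1,\mathfrak{p}_2$ of additive reduction whose residue characteristics $p_1,p_2$ are distinct. Writing $N=2^{a}3^{b}m$ with $\gcd(m,6)=1$: if $m>1$ and $\ell\geq5$ divides $m$, Frey's Theorem makes $E$ semistable away from residue characteristic $\ell$, and since at most one of $p_1,p_2$ equals $\ell$, $E$ is semistable at the other prime --- contradicting additive reduction there; so $N=2^{a}3^{b}$. If $a\geq3$, then $C_8\hookrightarrow E(K)$ and Theorem~\ref{Thm1} confines all additive reduction to residue characteristic $2$, which is incompatible with two distinct residue characteristics of additive reduction; likewise $b\geq2$ confines it (via $C_9$) to residue characteristic $3$. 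Hence $a\leq2$ and $b\leq1$, that is, $N\mid12$.

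I do not expect a genuine obstacle: once Theorem~\ref{Thm1} and Frey's Theorem are in hand, the argument is a finite case check, and all the real work has already gone into constructing the families $E_T$ and proving Theorem~\ref{Thm1}. The two points needing a little care are, first, verifying that the integers $N=2^{a}3^{b}$ with $N\geq5$ are exhausted by the cases $a\geq3$, $b\geq2$, and $N\in\{6,12\}$; and second, in the two-prime case, making the contradiction rest on the distinctness of the residue characteristics rather than on a divisibility condition on a single prime.
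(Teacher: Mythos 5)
Your proposal is correct and follows essentially the same route as the paper: Frey's Theorem eliminates prime divisors $\ell\geq5$ of $N$, Theorem~\ref{Thm1} applied to $C_8$ and $C_9$ bounds the powers of $2$ and $3$, and Theorem~\ref{Thm1} applied to $C_6$ and $C_{12}$ handles the residual cases of the first assertion. The paper's own proof is terser but rests on exactly the same case analysis; your explicit verification that the surviving values $N=2^a3^b\geq5$ are only $6$ and $12$ is the detail the paper leaves implicit.
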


\begin{proof}
Let $E$ be an elliptic curve over $K$ with a $K$-rational torsion point of
order $N$. First, suppose $E$ has additive reduction at a prime $\mathfrak{p}$
of $K$. If $\ell>3$ is a rational prime which divides~$N$, then by Frey's
Theorem the residue characteristic of $\mathfrak{p}$ must divide~$N$. If$\ 8$
(resp. $9$) divides $N$, then the residue characteristic of $\mathfrak{p}$ is
$2$ (resp. $3$) by Theorem \ref{Thm1}.

Next, suppose $E$ has additive reduction at at least two primes of $K$ with
different residue characteristics. By Frey's Theorem, the only primes dividing
$N$ are $2$ and $3$. By Theorem \ref{Thm1}, neither $8$ nor $9$ divide $N$ and so
$N=1,2,3,4,6$, or $12$. In particular, $N$ divides $12$. Lastly, if $E$ has additive
reduction at a prime $\mathfrak{p}$ of $K$ whose residue characteristic does
not divide $N$, then by Theorem~\ref{Thm1} we conclude that $N\leq4$.
\end{proof}

\section{Classification of Minimal Discriminants\label{sec:class}}

In this section, we restrict our attention to rational elliptic curves. In
Proposition \ref{ssremarkapositive}, we show that the study of rational
elliptic curves with a non-trivial torsion point is equivalent to studying the
parameterized families $E_{T}$ where $E_{T}$ is as given in Table
\ref{ta:ETmodel}. Consequently, an explicit classification of the minimal
discriminant of rational elliptic curves with a non-trivial torsion subgroup
is reduced to finding necessary and sufficient conditions on the parameters of~$E_{T}$ to determine the minimal discriminant. This is the context of our main
theorem, Theorem~\ref{ch:ss:mainthm}.

\begin{lemma}
\label{ch:sslemaeven}Let $T=C_{2}\times C_{2}$ and suppose $E$ is a rational
elliptic curve with $T\hookrightarrow E\!\left(
\mathbb{Q}
\right)  $. Then there are integers $a,b,d$ with $a$ and $b$ relatively prime
integers, $a$ even, and $d$ a positive squarefree integer such that $E$ is $%
\mathbb{Q}
$-isomorphic to $E_{T}=E_{C_{2}\times C_{2}}\!\left(  a,b,d\right)  $.
\end{lemma}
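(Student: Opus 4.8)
The plan is to bootstrap from Lemma~\ref{ch:ss:lemc2c2} and then make two elementary adjustments to the parameter triple $(a,b,d)$.

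First I would apply Lemma~\ref{ch:ss:lemc2c2} with $K=\mathbb{Q}$: since $\mathbb{Q}$ has class number one and $E$ has a rational $2$-torsion point with $C_2\times C_2\hookrightarrow E(\mathbb{Q})$, that lemma produces nonzero integers $a,b,d$ with $\gcd(a,b)=1$ and $d$ squarefree such that $E$ is $\mathbb{Q}$-isomorphic to $E_{C_2\times C_2}(a,b,d):y^2=x(x+ad)(x+bd)$. The first adjustment forces $d>0$: replacing $(a,b,d)$ by $(-a,-b,-d)$ leaves each of $ad$, $bd$, and $abd^2$ unchanged, hence leaves the Weierstrass model unchanged, so we may assume $d>0$. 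I would also record that, since the model defines an elliptic curve, the roots $0$, $-ad$, $-bd$ of its defining cubic are pairwise distinct; in particular $a\neq b$.

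The second adjustment forces $a$ to be even. Coprimality of $a$ and $b$ means at most one of them is even, and the symmetry $E_{C_2\times C_2}(a,b,d)=E_{C_2\times C_2}(b,a,d)$ lets me interchange $a$ and $b$; so I may assume either that $a$ is even, in which case the proof is complete, or that $a$ and $b$ are both odd. In that remaining case I would translate the $2$-torsion point $(-bd,0)$ to the origin by the admissible change of variables $x\longmapsto x-bd$, which turns the model into $y^2=x\bigl(x+(a-b)d\bigr)(x-bd)$, that is, $E_{C_2\times C_2}(a',b',d)$ with $a'=a-b$ and $b'=-b$. Since $a\neq b$ and $b\neq 0$, both $a'$ and $b'$ are nonzero; $\gcd(a',b')=\gcd(a-b,b)=\gcd(a,b)=1$; $d$ remains a positive squarefree integer; and crucially $a'=a-b$ is even because $a$ and $b$ are both odd. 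This gives a model of the required shape.

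The lemma carries no genuine obstacle beyond bookkeeping. The only steps that merit a little care are recognizing the two symmetries of the family used above — invariance of $E_{C_2\times C_2}(a,b,d)$ under $(a,b,d)\mapsto(-a,-b,-d)$ and under swapping $a\leftrightarrow b$ — verifying that translation by $(-bd,0)$ returns a model in exactly the normal form $E_{C_2\times C_2}(a',b',d)$, and excluding the degenerate possibility $a=b$, which cannot occur for an elliptic curve.
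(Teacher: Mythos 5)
Your proof is correct and follows essentially the same route as the paper's: invoke Lemma~\ref{ch:ss:lemc2c2}, arrange $d>0$, swap $a\leftrightarrow b$ if $b$ is even, and otherwise translate a $2$-torsion point to the origin to replace the first parameter by the even difference of the two odd parameters (the paper translates by $-ad$ where you translate by $-bd$, which is just the symmetric choice). Your explicit sign-flip $(a,b,d)\mapsto(-a,-b,-d)$ to force $d>0$ is a slightly more self-contained justification than the paper's appeal to the proof of Lemma~\ref{ch:ss:lemc2c2}, but the argument is the same in substance.
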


\begin{proof}
By Lemma \ref{ch:ss:lemc2c2}, $E$ is $%
\mathbb{Q}
$-isomorphic to $E_{T}:y^{2}=x^{3}+\left(  ad+bd\right)  x^{2}+abd^{2}$ where
$a,b,d$ are integers such that $a$ and $b$ are relatively prime and $d$ is a
squarefree integer. By the proof of Lemma \ref{ch:ss:lemc2c2}, $d$ may be
assumed to be positive. It remains to show that $a$ may be assumed to be even.
Observe that if $b$ were even, then we can interchange $a$ and $b$. So suppose
$a$ and $b$ are odd. Then $c=b-a$ is even, and the admissible change of
variables $x\longmapsto x-ad$ gives a $%
\mathbb{Q}
$-isomorphism from $E_{T}$ onto the elliptic curve given by the Weierstrass
model%
\[
E_{T}\!\left(  c,-a,d\right)  :y^{2}=x^{3}+\left(  cd-ad\right)  x^{2}%
-acd^{2}x.
\]
Thus, after relabeling, we may assume that $a$ is even.
\end{proof}

\begin{lemma}
\label{Lemposnega}For $T\neq C_{2},C_{2}\times C_{2}$, we have that
$E_{T}\!\left(  -a,b\right)  $ is $%
\mathbb{Q}
$-isomorphic to $E_{T}\!\left(  a,-b\right)  $.
\end{lemma}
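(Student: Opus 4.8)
The plan is to exhibit an explicit admissible change of variables carrying $E_{T}(-a,b)$ to $E_{T}(a,-b)$. Since Lemma~\ref{ss:Lem1} already establishes that $E_{T}(a,b)$ is $K$-isomorphic to $\mathcal{X}_{b/a}(T)$ via the map $x\mapsto w_{T}^{-2}x$, $y\mapsto w_{T}^{-3}y$, it suffices to observe that both $E_{T}(-a,b)$ and $E_{T}(a,-b)$ are $K$-isomorphic to the \emph{same} curve $\mathcal{X}_{-b/a}(T)=\mathcal{X}_{b/(-a)}(T)$, because the ratio $b/a$ is what determines the Tate normal form (or, for $T=C_3$, the parameter $t=a_3/a_1^3$ in Corollary~\ref{sscorC3}). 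So the first step is to note that $E_{T}(-a,b)$ corresponds to the parameter $t = b/(-a) = -b/a$, while $E_{T}(a,-b)$ corresponds to $t = (-b)/a = -b/a$; these coincide, so both families pass through $\mathcal{X}_{-b/a}(T)$.

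From here the cleanest route is to compose isomorphisms. By Lemma~\ref{ss:Lem1}, there is an admissible change of variables $\phi_{1}\colon \mathcal{X}_{-b/a}(T)\to E_{T}(-a,b)$ given by $x\mapsto w_{T}(-a,b)^{-2}x$, $y\mapsto w_{T}(-a,b)^{-3}y$, and similarly an admissible change of variables $\phi_{2}\colon \mathcal{X}_{-b/a}(T)\to E_{T}(a,-b)$ given by $x\mapsto w_{T}(a,-b)^{-2}x$, $y\mapsto w_{T}(a,-b)^{-3}y$. Then $\phi_{2}\circ\phi_{1}^{-1}\colon E_{T}(-a,b)\to E_{T}(a,-b)$ is the desired $\mathbb{Q}$-isomorphism; concretely it is the scaling $x\mapsto u^{2}x$, $y\mapsto u^{3}y$ with $u = w_{T}(-a,b)/w_{T}(a,-b)$. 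One should check that this $u$ is a nonzero element of $\mathbb{Q}$ (indeed of $\mathbb{Q}(a,b)$ viewing $a,b$ as indeterminates), which it is since each $w_{T}$ is a nonzero polynomial in its two arguments; inspecting the list of $w_{T}$ in the proof of Lemma~\ref{ss:Lem1} shows that for most $T$ one has $w_{T}(-a,b) = \pm w_{T}(a,-b)$ outright (e.g. $w_{C_4} = a$ gives $u = -a/a = -1$), so $u = \pm 1$ in those cases, and in the remaining cases $u$ is an explicit ratio of the tabulated polynomials.

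Alternatively — and this may be the more transparent writeup — one can simply cite Proposition~\ref{LemmaunivincT} (for $T\neq C_3$) and Corollary~\ref{sscorC3} (for $T=C_3$): both $E_{T}(-a,b)$ and $E_{T}(a,-b)$ are elliptic curves over $\mathbb{Q}(a,b)$ admitting $T$ as a subgroup of their rational points, and both have the same Tate normal form parameter (resp.\ the same $t=a_3/a_1^3$), hence by the uniqueness clause in Lemma~\ref{LemmaTNF} (resp.\ Corollary~\ref{sscorC3}) they are $\mathbb{Q}$-isomorphic. This avoids writing down $w_{T}$ explicitly at all.

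The main obstacle is essentially bookkeeping rather than conceptual: one must confirm that the sign flip $(a,b)\mapsto(-a,b)$ versus $(a,b)\mapsto(a,-b)$ genuinely produces the same value of the Tate parameter $t$ for every $T$ in the table, and that the resulting scaling factor $u = w_{T}(-a,b)/w_{T}(a,-b)$ is well-defined and nonzero (not identically zero, no spurious cancellation forcing $u\notin\mathbb{Q}(a,b)$). For $T=C_2, C_2\times C_2$ the statement is excluded anyway, so one only deals with the thirteen remaining families, and a quick pass through Table~\ref{ta:universalell} together with the $w_{T}$ list suffices. I expect the whole argument to be three or four lines once the right lemma is invoked.
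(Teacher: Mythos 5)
Your proof is correct, but it takes a genuinely different route from the paper's. The paper argues via the invariants: it first notes that two rational elliptic curves whose Weierstrass models have the same $c_4$ and $c_6$ are $\mathbb{Q}$-isomorphic (both being $\mathbb{Q}$-isomorphic to $y^2=x^3-27c_4x-54c_6$), and then checks the polynomial identities $\alpha_T(-a,b)=\alpha_T(a,-b)$ and $\beta_T(-a,b)=\beta_T(a,-b)$ for each $T$. You instead observe that $E_T(-a,b)$ and $E_T(a,-b)$ both arise from the \emph{same} fiber $\mathcal{X}_{-b/a}(T)$ via the scalings of Lemma~\ref{ss:Lem1}, and compose to get an explicit isomorphism $x\mapsto u^2x$, $y\mapsto u^3y$ with $u=w_T(-a,b)/w_T(a,-b)$. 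Your approach buys an explicit change of variables (and in fact $u=\pm1$ in every case, since $u^4=\alpha_T(-a,b)/\alpha_T(a,-b)=1$ forces this whenever $\alpha_T\neq0$); the paper's buys uniformity, since it never has to worry about whether $w_T$ vanishes. Your bookkeeping obligation — that $w_T(-a,b)$ and $w_T(a,-b)$ are nonzero — is genuine but harmless: inspection of Table~\ref{ta:gamT} shows each irreducible factor of $w_T$ divides $\gamma_T$, so $w_T=0$ would force the curve to be singular. Two small caveats on your "alternative" writeup: the uniqueness clause of Lemma~\ref{LemmaTNF} only covers $N\geq4$, and Corollary~\ref{sscorC3} assumes nonzero $j$-invariant, so for $T=C_3$ with $j=0$ (i.e.\ $a=\mp24b$) you should fall back on the explicit composition, which needs no such hypothesis. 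Neither issue affects your primary argument.
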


\begin{proof}
Let $E$ and $E^{\prime}$ be rational elliptic curves. Suppose further that the
invariants~$c_{4}$ and~$c_{6}$ of their Weierstrass models coincide. Then $E$
and $E^{\prime}$ are $%
\mathbb{Q}
$-isomorphic since they are both $%
\mathbb{Q}
$-isomorphic to the elliptic curve $y^{2}=x^{3}-27c_{4}x-54c_{6}$. In
particular, the invariants~$c_{4}$ and~$c_{6}$ of a Weierstrass model
determine an elliptic curve up to $%
\mathbb{Q}
$-isomorphism. The result now follows since $\alpha_{T}\!\left(  a,b\right)  $
and $\beta_{T}\!\left(  a,b\right)  $ are the invariants $c_{4}$ and $c_{6}$,
respectively, of the Weierstrass model of $E_{T}\!\left(  a,b\right)  $ and
the following equality holds for each $T$: $\alpha_{T}\!\left(  -a,b\right)
=\alpha_{T}\!\left(  a,-b\right)  $ and $\beta_{T}\!\left(  -a,b\right)
=\beta_{T}\!\left(  a,-b\right)  $.
\end{proof}

\begin{proposition}
\label{rationalmodels}\label{ssremarkapositive}\label{sscorc3j0}Let $E$ be a
rational elliptic curve and suppose further that $T\hookrightarrow E\!\left(
\mathbb{Q}
\right)  _{\text{tors}}$ where $T$ is one of the fourteen non-trivial torsion
subgroups allowed by Theorem~\ref{MazurTorThm}. Then there are integers
$a,b,d$ such that

$\left(  1\right)  $ If $T\neq C_{2},C_{3},C_{2}\times C_{2}$, then $E$ is
$\mathbb{Q}$-isomorphic to $E_{T}\!\left(  a,b\right)  $ with $\gcd\!\left(
a,b\right)  =1$ and $a$ is positive.

$\left(  2\right)  $ If $T=C_{2}$ and $C_{2}\times C_{2}\not \hookrightarrow
E(\mathbb{Q})$, then $E$ is $\mathbb{Q}$-isomorphic to $E_{T}\!\left(
a,b,d\right)  $ with $d\neq1,b\neq0$ such that $d$ and $\gcd\!\left(
a,b\right)  $ are positive squarefree integers.

$\left(  3\right)  $ If $T=C_{3}$ and the $j$-invariant of $E$ is not $0$,
then $E$ is $\mathbb{Q}$-isomorphic to $E_{T}\!\left(  a,b\right)  $ with
$\gcd\!\left(  a,b\right)  =1$ and $a$ is positive.

$\left(  4\right)  $ If $T=C_{3}$ and the $j$-invariant of $E$ is $0$, then
$E$ is either $\mathbb{Q}$-isomorphic to $E_{T}\!\left(  24,1\right)  $ or to
the curve $E_{C_{3}^{0}}\!\left(  a\right)  :y^{2}+ay=x^{3}$ for some positive
cubefree integer $a$.

$\left(  5\right)  $ If $T=C_{2}\times C_{2}$, then $E$ is $\mathbb{Q}%
$-isomorphic to $E_{T}\!\left(  a,b,d\right)  $ with $\gcd\!\left(
a,b\right)  =1$, $d$ positive squarefree, and $a$ is even.
\end{proposition}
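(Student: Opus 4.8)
The plan is to assemble Proposition~\ref{rationalmodels} by collecting the parameterizations established earlier and then, in each case, massaging the parameters so that the stated normalizations (coprimality, positivity, squarefreeness, $a$ even, the $j=0$ dichotomy) hold. The backbone is Proposition~\ref{LemmaunivincT} for $|T|>4$ and $T=C_4$, Corollary~\ref{sscorC3} for $C_3$ with $j\neq 0$, and Lemmas~\ref{ch:ss:lemc2}~and~\ref{ch:ss:lemc2c2} (with $K=\mathbb{Q}$, which has class number one) for $T=C_2$ and $T=C_2\times C_2$. Lemma~\ref{ss:Lem1} then lets me pass from $\mathcal{X}_t(T)$ to the integral model $E_T(a,b)$ once $t$ is written as $b/a$ in lowest terms.

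For part (1) and part (3), I would argue as follows. Given $E/\mathbb{Q}$ with $T\hookrightarrow E(\mathbb{Q})$ and $T\neq C_2,C_3,C_2\times C_2$ (or $T=C_3$ with $j(E)\neq 0$), Proposition~\ref{LemmaunivincT} (resp. Corollary~\ref{sscorC3}) gives a $t\in\mathbb{Q}$ with $E\cong_{\mathbb{Q}}\mathcal{X}_t(T)$. Write $t=b/a$ with $a,b\in\mathbb{Z}$, $\gcd(a,b)=1$; by Lemma~\ref{ss:Lem1}, $E\cong_{\mathbb{Q}}E_T(a,b)$. If $a<0$, replace $(a,b)$ by $(-a,-b)$: this leaves $t=b/a$ unchanged, hence gives the same curve $\mathcal{X}_t(T)$, and by Lemma~\ref{ss:Lem1} again $E_T(-a,-b)\cong_{\mathbb{Q}}E_T(a,b)$; alternatively one invokes Lemma~\ref{Lemposnega} together with $E_T(-a,-b)\cong E_T(a,b)$ directly from the $\alpha_T,\beta_T$ formulas. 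Either way we may take $a>0$. (When $a=0$ one handles $t=\infty$ separately, but for $|T|>4$ the relevant fibers force $a\neq 0$ after this normalization; for $C_3$, $t\in K^\times$ in Corollary~\ref{sscorC3} already excludes the degenerate value.) Part (3) is identical since Corollary~\ref{sscorC3} produces exactly an $\mathcal{X}_t(C_3)$ with $t\in\mathbb{Q}^\times$.

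Parts (2) and (5) are Lemmas~\ref{ch:ss:lemc2}~and~\ref{ch:ss:lemc2c2} applied with $K=\mathbb{Q}$, which has class number one, so the full statements (including ``$\gcd(a,b)$ squarefree'' resp. ``$\gcd(a,b)=1$'') apply. For (2) one additionally notes that the sign of $d$ can be absorbed into $b$ (since only $b^2 d$ appears in $E_{C_2}(a,b,d)$), and the sign of $\gcd(a,b)$ can be fixed by the change of variables $x\mapsto x$, $y\mapsto -y$ composed with rescaling, so $d$ and $\gcd(a,b)$ may be taken positive. For (5), positivity of $d$ is exactly the observation in the proof of Lemma~\ref{ch:ss:lemc2c2} (the scaling by $g^2$ can be taken with $g$ real, or one replaces $d$ by $|d|$ and absorbs the sign into $a$ or $b$), and the reduction to $a$ even is Lemma~\ref{ch:sslemaeven}. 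Part (4) is the genuinely new case: here I would first recall that $E_{C_3}(24,1)$ has $j$-invariant $0$ (a direct check from $\alpha_{C_3}(24,1)$: the invariant $c_4$ vanishes), and then argue that any $E/\mathbb{Q}$ with a rational $3$-torsion point and $j(E)=0$ is $\mathbb{Q}$-isomorphic either to this one specific twist or to a curve of the form $y^2+ay=x^3$. Concretely, by Lemma~\ref{BazLem1.1} such an $E$ has a model $y^2+a_1xy+a_3y=x^3$ with $a_3\neq 0$ and, since $j=0$, $c_4=a_1(a_1^3-24a_3)=0$; thus either $a_1=0$, giving $y^2+a_3 y=x^3$ which after scaling $y\mapsto w^{-3}y,x\mapsto w^{-2}x$ and clearing denominators becomes $E_{C_3^0}(a)$ with $a$ a positive cubefree integer (cubefree because a cube factor can be absorbed by the scaling, positive because $a_3<0$ can be flipped by $y\mapsto -y$ after completing the square, i.e. replacing $y$ by $-y-a_3$), or $a_1^3=24a_3$, in which case one rescales to $a_1=24$, $a_3=1$ — but one must check that every $j=0$ curve with a $3$-torsion point and $a_1\neq 0$ is $\mathbb{Q}$-isomorphic to this single model rather than to a family of quadratic twists.

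The main obstacle is precisely this last point in part (4): showing that the ``$a_1\neq 0$'' branch collapses to the \emph{unique} curve $E_{C_3}(24,1)$ up to $\mathbb{Q}$-isomorphism, rather than to infinitely many twists. The resolution is that the admissible change of variables $x\mapsto a_1^2 x$, $y\mapsto a_1^3 y$ from the proof of Corollary~\ref{sscorC3} turns $y^2+a_1xy+a_3y=x^3$ into $y^2+xy+(a_3/a_1^3)y=x^3$, and when $j=0$ the constraint $a_1^3=24a_3$ forces $a_3/a_1^3=1/24$, so there is literally only one value of the parameter $t=1/24$; via Lemma~\ref{ss:Lem1} (with $t=b/a=1/24$, i.e. $a=24,b=1$) this is $E_{C_3}(24,1)$. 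The only subtlety is that this change of variables requires $a_1\in\mathbb{Q}^\times$, which is automatic in the $a_1\neq 0$ branch, and that it is defined over $\mathbb{Q}$, which it is. So the argument is: write down the two branches of $c_4=0$, rescale each, observe the $a_1\neq 0$ branch has no free parameter left while the $a_1=0$ branch retains one parameter which is normalized to positive cubefree by the remaining scaling freedom $w\in\mathbb{Q}^\times$.
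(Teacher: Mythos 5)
Your proposal follows the paper's proof essentially step for step: part (1) from Proposition \ref{LemmaunivincT}, Lemma \ref{ss:Lem1}, and Lemma \ref{Lemposnega}; parts (2) and (5) from Lemmas \ref{ch:ss:lemc2}, \ref{ch:ss:lemc2c2}, and \ref{ch:sslemaeven}; and parts (3)--(4) by splitting the model $y^{2}+a_{1}xy+a_{3}y=x^{3}$ of Lemma \ref{BazLem1.1} according to whether $a_{1}\neq0$ or $a_{1}=0$, with the $j=0$, $a_{1}\neq0$ branch collapsing to the single curve $E_{C_{3}}\!\left(24,1\right)$ because $\alpha_{C_3}=a^{3}\left(a-24b\right)=0$ together with $\gcd\!\left(a,b\right)=1$ forces $\left(a,b\right)=\left(24,1\right)$. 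That is exactly the paper's argument, and your identification of the "one value of $t$" point as the crux of (4) is correct.

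Two slips occur in your sign normalizations, neither of which affects the architecture. First, in (2) you claim the sign of $d$ can be absorbed into $b$ "since only $b^{2}d$ appears"; this is backwards, since $b^{2}d$ has the same sign as $d$, so no choice of $b$ converts a negative $d$ into a positive one (e.g. $y^{2}=x^{3}+x$ genuinely requires $d=-1$). In fact the paper's own proof does not establish $d>0$ either — it simply cites Lemma \ref{ch:ss:lemc2}, which asserts only squarefreeness — and positivity of $d$ in part (2) is inconsistent with the paper's later use of $\left(a,b,d\right)=\left(3b,b,-3\right)$ in Lemma \ref{sselmmac2result}; that clause should be read as applying to $\gcd\!\left(a,b\right)$ only, so nothing needs to be proved here. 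Second, in (4) the substitution $y\longmapsto -y-a_{3}$ is an automorphism of $y^{2}+a_{3}y=x^{3}$ (it interchanges the two nontrivial $3$-torsion points) and leaves $a_{3}$ unchanged; positivity of $a$ is instead obtained by allowing $c$ to be negative in the factorization $a_{3}=c^{3}a$, or equivalently by the admissible change of variables with $u=-1$. With these two repairs your write-up coincides with the paper's proof.
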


\begin{proof}
Observe that $\left(  1\right)  $ is a direct consequence of Proposition
\ref{PropSSuniv}, Lemma \ref{ss:Lem1}, and Lemma \ref{Lemposnega}. Statements
$\left(  2\right)  $ and $\left(  5\right)  $ are restatements of Lemmas
\ref{ch:ss:lemc2} and \ref{ch:sslemaeven}, respectively.

Now suppose $T=C_{3}$ and $T\hookrightarrow E\!\left(
\mathbb{Q}
\right)  $. By Lemma \ref{BazLem1.1}, we may assume that $E$ is given by the
Weierstrass model $E:y^{2}+a_{1}xy+a_{3}y=x^{3}$ for some integers $a_{1}$ and
$a_{2}$.

First, suppose $a_{1}\neq0$, then there are relatively prime integers $a$ and
$b$ with $a$ positive such that $E$ is $%
\mathbb{Q}
$-isomorphic to $E_{T}\!\left(  a,b\right)  $ by Corollary \ref{sscorC3},
Lemma \ref{ss:Lem1}, and Lemma \ref{Lemposnega}. Next, observe that
$E_{T}\!\left(  a,b\right)  $ has $j$-invariant $0$ if and only if $\alpha
_{T}=a^{3}\left(  a-24b\right)  =0$. This is equivalent to $a=24b$. Since $a$
and $b$ are relatively prime, we conclude that $E$ is $%
\mathbb{Q}
$-isomorphic to $E_{C_{3}}\!\left(  24,1\right)  $.

Next, suppose $a_{1}=0$ so that $E$ is given by the Weierstrass model
$E:y^{2}+a_{3}y=x^{3}$. In particular, $E$ has $j$-invariant $0$. Write
$a_{3}=c^{3}a$ where $a$ is a cubefree positive integer and observe that the
admissible change of variables $x\longmapsto c^{2}x$ and $y\longmapsto c^{3}y$
gives a $%
\mathbb{Q}
$-isomorphism from $E$ onto $E_{C_{3}^{0}}\!\left(  a\right)  :y^{2}+ay=x^{3}$.
\end{proof}

\begin{theorem}
\label{propQ}\label{semistablecondthm}\label{ch:ss:mainthm}Let $E_{T}$ be as
given in Table~\ref{ta:ETmodel} and let
\begin{equation}
a=\left\{
\begin{array}
[c]{ll}%
c^{3}d^{2}e\text{ with }d,\text{ }e\text{ positive squarefree integers such that }\gcd\!\left(
d,e\right)  =1 & \text{if }T=C_{3},\\
c^{2}d\text{ with }d\text{ a squarefree integer} & \text{if }T=C_{4}.
\end{array}
\right.  \label{Cdef}%
\end{equation}
If the parameters of $E_{T}$ satisfy the
conclusion of Proposition~\ref{rationalmodels}, then the minimal discriminant
of $E_{T}$ is $u_{T}^{-12}\gamma_{T}$ where
\[
u_{T}=\left\{
\begin{array}
[c]{cl}%
1 & \text{if }T=C_{5},C_{7},C_{9},C_{3}^{0},\\
1\ \text{or }2 & \text{if }T=C_{6},C_{8},C_{10},C_{12},C_{2}\times C_{2},\\
1,2,\ \text{or\ }4 & \text{if }T=C_{2},C_{2}\times C_{4},\\
1,4,\ \text{or\ }16 & \text{if }T=C_{2}\times C_{6},\\
1,16,\ \text{or\ }64 & \text{if }T=C_{2}\times C_{8},\\
c^{2}d & \text{if }T=C_{3},\\
c\ \text{or\ }2c & \text{if }T=C_{4}.
\end{array}
\right.
\]
Moreover, when $T\neq C_{5},C_{7},C_{9},C_{3}^{0}$, there are necessary and sufficient conditions on the parameters of
$E_{T}$ to determine the exact value of $u_{T}$. Table \ref{ta:uTmin}
summarizes these necessary and sufficient conditions.
\end{theorem}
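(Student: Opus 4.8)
The plan is to reduce the computation of $u_T$ to a purely local question at each prime dividing $\delta_T$ (the quantity from Lemma~\ref{polynomials}), and then to resolve that local question using Kraus's criterion together with the explicit factorizations of $\alpha_T,\beta_T,\gamma_T$ recorded in the tables. First I would invoke Lemma~\ref{silverconverse}: since $E_T$ is given by an integral Weierstrass model with invariants $\alpha_T,\beta_T$ and discriminant $\gamma_T$, there is a unique positive integer $u_T$ with $\Delta_{E_T/\mathbb{Q}}^{\text{min}}=u_T^{-12}\gamma_T$, and the associated global minimal model has invariants $u_T^{-4}\alpha_T$ and $u_T^{-6}\beta_T$. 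By Lemma~\ref{polynomials}, both $\gcd(\alpha_T,\gamma_T)$ and $\gcd(\beta_T,\gamma_T)$ divide $\delta_T$ (using that $\gcd(a,b)=1$ from Proposition~\ref{rationalmodels}); since $u_T^4\mid\gcd(\alpha_T,u_T^{-4}\alpha_T\cdot u_T^4)$ — more precisely $u_T^4\mid\alpha_T$ and $u_T^{12}\mid\gamma_T$ — the only primes dividing $u_T$ are those dividing $\delta_T$. This already pins down the \emph{set} of allowed values listed for each $T$ except that one must still bound the exponent: for $p\nmid 6$ the bound $v_p(\delta_T)\le 1$ forces $v_p(u_T)=0$, while for $p\in\{2,3\}$ one reads off the exponent of $p$ in $\delta_T$ and the shape of $\alpha_T,\beta_T,\gamma_T$ to get the crude bounds $u_T\in\{1,2,4\}$, $\{1,4,16\}$, etc. The families $T=C_5,C_7,C_9,C_3^0$ have $\delta_T$ prime to $6$ (or $E_{C_3^0}$ is already minimal by \cite{MR491734}), so $u_T=1$; this handles the first line.

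Next I would treat the case division for $T=C_3$ and $T=C_4$, where $u_T$ genuinely depends on more than the residues of $a,b$. Here the point is that $a$ is \emph{not} squarefree-free: one extracts the cube part (for $C_3$) or square part (for $C_4$) of $a$ as in~\eqref{Cdef}. For $T=C_3$ one checks directly from Table~\ref{ta:alpT}--\ref{ta:gamT} that $\alpha_{C_3}=a^3(a-24b)$, $\beta_{C_3}$, $\gamma_{C_3}$ are each divisible by the appropriate power of $c^2d$ exactly, so that the change of variables scaling by $w=c^2d$ produces an integral model whose invariants are $w^{-4}\alpha_{C_3}$, etc., and one verifies this model is minimal at every prime by Kraus's criterion; hence $u_{C_3}=c^2d$. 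The $C_4$ case is analogous but with a residual factor of $1$ or $2$ coming from the prime $2$, handled as in the generic even-torsion cases below.

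For the remaining families the main work is the local analysis at $p=2$ (and $p=3$ for $C_6,C_{12},C_2\times C_6$). The strategy is Kraus's theorem: an integral pair $(c_4,c_6)$ with $1728\mid c_4^3-c_6^2$ arises from a global minimal model iff certain congruence conditions hold at $2$ and $3$ (namely $v_3(c_6)\ne 2$, and at $2$ either $v_2(c_4)=0$, or $v_2(c_6)=3$ with $-c_6/8\equiv$ a square suitably, or $v_2(c_4)\ge 4$ and $v_2(c_6)\ge 6$ with $c_6\equiv 0$ or $8\pmod{32}$). One applies this to $(\alpha_T,\beta_T)$ and, when it fails, to $(2^{-4}\alpha_T,2^{-6}\beta_T)$, etc., each time translating the Kraus congruences into congruence conditions on $a,b,d$ (and the auxiliary $c,d,e$ for $C_3,C_4$) via the explicit polynomial formulas in Tables~\ref{ta:alpT}--\ref{ta:gamT}. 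Collecting these congruences for each of the $15$ families is exactly the content of Table~\ref{ta:uTmin}. The bookkeeping is extensive but mechanical once the framework is set up; I would organize it as one lemma per value of $p$ dividing $\delta_T$, reducing modulo a fixed power of $p$ and casing on the residues of $a,b$. I expect the \textbf{main obstacle} to be the family $T=C_2\times C_8$ (and to a lesser extent $C_2\times C_6$): here $\delta_T=2$ yet $u_T$ can be as large as $64$, so $v_2$ of the invariants is large and one must carry the Kraus analysis through several successive divisions by $2$, keeping track of congruences modulo $2^{10}$ or higher in the parameters — the degrees in Table~\ref{ta:alpT} being as large as $38$, this is where the computation is most delicate and where a computer-algebra verification (as referenced in \cite{GitHubMinimalModels}) is essentially indispensable.
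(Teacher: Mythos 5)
Your overall strategy is the paper's: Lemma~\ref{silverconverse} gives a unique $u_T$ with $\Delta^{\text{min}}=u_T^{-12}\gamma_T$, Lemma~\ref{polynomials} bounds the primes that can divide $u_T$, the cube/square part of $a$ is extracted for $C_3,C_4$, and Kraus's criterion (plus explicit integral models) pins down the $2$- and $3$-adic contribution case by case. Two steps in your outline, however, would fail as written. First, your blanket claim that ``for $p\nmid 6$ the bound $v_p(\delta_T)\le 1$ forces $v_p(u_T)=0$'' does not dispose of the odd primes for $T=C_2$ or $T=C_2\times C_2$. For $C_2$ the coprimality hypothesis of the second part of Lemma~\ref{polynomials} is not available (Proposition~\ref{rationalmodels} only makes $\gcd(a,b)$ squarefree), and the paper must introduce the pairwise-coprime decomposition $a=mnl\tilde a$, $b=mn\tilde b$, $d=ml\tilde d$ and then separately rule out $p=3$ and $p>3$. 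For $C_2\times C_2$ one has $\delta_T=2d$ with $d$ an arbitrary squarefree integer and $\gcd(\beta_T,\gamma_T)\mid 2^7d^8$, so the crude bound leaves open $v_p(u_T)=1$ for odd $p\mid d$; the paper eliminates this by a direct argument with $\alpha_T=16d^2(a^2-ab+b^2)$ and $\gcd(a,b)=1$. Your outline needs these arguments supplied.

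Second, you never isolate the curves with $\alpha_T\beta_T=0$ (equivalently $j\in\{0,1728\}$), which the paper treats first via Lemmas~\ref{sslemc4j0}, \ref{sselmmac2result}, and \ref{sselmmac2c2result}. This matters because your Kraus-based bookkeeping conditions on the residue of $\beta_T$ (or of $2^{-6}\beta_T$, etc.), and when $\beta_T=0$ those congruences degenerate ($0\equiv 0\bmod 32$ holds vacuously), so the generic congruence-on-$(a,b)$ translation breaks down; these finitely many families ($C_2$ with $(a,b,d)=(3b,b,-3)$ or $(0,b,d)$, $C_4(8,-1)$, $C_6(3,-1)$, $C_{2}\times C_{2}(2,1,d)$, $C_3(24,1)$) must be checked by hand. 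With those two repairs your proposal reproduces the paper's proof; the remaining congruence bookkeeping, including the $C_2\times C_6$ and $C_2\times C_8$ cases you flag as delicate, is carried out in the paper exactly as you describe, partly by Kraus and partly by exhibiting explicit integral models after the rescaling by $u_T$.
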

{\renewcommand{\arraystretch}{1.2} \begin{longtable}{ccl}
\caption{Necessary and sufficient conditions on $a,b,d$ to determine $u_{T}$}
\label{ta:uTmin}
\\
	
\hline
$T$  & $u_{T}$ & Conditions on $a,b,d$ \\
\hline
\endfirsthead
\caption[]{\emph{continued}}\\
\hline
$T$  & $u_{T}$ & Conditions on $a,b,d$\\
\hline
\endhead
\hline
\multicolumn{3}{r}{\emph{continued on next page}}
\endfoot
\hline
\endlastfoot
	
$C_{2}$ & $4$ & $v_{2}\left(  b^{2}d-a^{2}\right)  \geq8$ with
$v_{2}\!\left(  a\right)  =v_{2}\!\left(  b\right)  =1$ and $a\equiv2\ \operatorname{mod}8$ \\\cmidrule(lr){2-3}
& $2$  &  $v_{2}\left(  b^{2}d-a^{2}\right)  \geq8$ with $v_{2}\!\left(
a\right)  =v_{2}\!\left(  b\right)  =1$ and $a\equiv6\ \operatorname{mod}8$ \\\cmidrule(lr){3-3}
& & $4 \leq v_{2}\left(  b^{2}d-a^{2}\right)  \leq7$ with $v_{2}\!\left(
a\right)  =v_{2}\!\left(  b\right)  =1$ \\\cmidrule(lr){3-3}
&&   $v_{2}\!\left(  b\right)  \geq3$ and $a\equiv
3\ \operatorname{mod}4$ \\\cmidrule(lr){2-3}
& $1$ & The previous conditions are not satisfied \\\hline
$C_{4}$ & $2c$  & $v_{2}\!\left(  a\right)  \geq8$ is even with $bd\equiv3\ \operatorname{mod}4$\\\cmidrule(lr){2-3}
& $c$  & $v_{2}\!\left(  a\right) \leq7$ or $bd\not \equiv3\ \operatorname{mod}4$ \\\hline
$C_{6}$ & $2$  & $v_{2}\!\left(  a+b\right) \geq3$ \\\cmidrule(lr){2-3}
& $1$ & $v_{2}\!\left(  a+b\right) \leq 2$\\\hline
$C_{8}$ & $2$  & $v_{2}\!\left(a\right)  = 1$ \\\cmidrule(lr){2-3}
& $1$ & $v_{2}\!\left(a\right)  \neq 1$ \\\hline
$C_{10}$ & $2$  & $v_{2}\!\left(a\right)\geq 1$ \\\cmidrule(lr){2-3}
& $1$ & $v_{2}\!\left(a\right)=0$ \\\hline
$C_{12}$ & $2$  & $v_{2}\!\left(a\right)\geq 1$ \\\cmidrule(lr){2-3}
& $1$ & $v_{2}\!\left(a\right)=0$ \\\hline
$C_{2} \times C_{2}$ & $2$  & $v_{2}\!\left(a\right) \geq 4 $ and $bd\equiv 1\ \operatorname{mod}4$ \\\cmidrule(lr){2-3}
& $1$ & $v_{2}\!\left(  a\right) \leq3$ or $bd\not \equiv1\ \operatorname{mod}4$ \\\hline
$C_{2} \times C_{4}$ & $4$  & $v_{2}\!\left(  a\right) =2$ and $v_{2}\!\left(  a+4b\right) \geq 4 $ \\\cmidrule(lr){2-3}
& $2$ & $v_{2}\!\left(  a\right) \geq 2 $ and $v_{2}\!\left(  a+4b\right) \leq 3$ \\\cmidrule(lr){2-3}
& $1$ & $v_{2}\!\left(  a\right) \leq 1 $ \\\hline
$C_{2} \times C_{6}$ & $16$  & $v_{2}\!\left(  a+b\right)  =1$ \\\cmidrule(lr){2-3}
& $4$ & $v_{2}\!\left(  a+b\right)  \geq2$ \\\cmidrule(lr){2-3}
& $1$ & $v_{2}\!\left(  a+b\right) =0$ \\\hline
$C_{2} \times C_{8}$ & $64$  & $v_{2}\!\left(  a\right)  \geq2$ \\\cmidrule(lr){2-3}
& $16$ & $v_{2}\!\left(  a\right)  =1$ \\\cmidrule(lr){2-3}
& $1$ & $v_{2}\!\left(  a\right)  =0$ \\\hline
\end{longtable}}

In the proof of Theorem \ref{propQ}, we will consider the following two cases separately: $\alpha_T\beta_T=0$ and $\alpha_T\beta_T\neq 0$. Observe that the former occurs if and only if the $j$-invariant of $E_T$ is $0$ or $1728$. Consequently, we need knowledge of when $E_{T}$ has $j$-invariant~$0$~or~$1728$ under
the assumptions that the parameters of $E_{T}$ satisfy the conclusion of
Proposition~\ref{rationalmodels}. Below we prove a series of lemmas so that we can
distinguish those $E_{T}$'s whose $j$-invariant is $0$ or $1728$.

\begin{lemma}
\label{sslemc4j0}Let $E$ be a rational elliptic curve with a rational torsion
point of order $N\geq4$. If $E$ has $j$-invariant $0$, then $E$ is $%
\mathbb{Q}
$-isomorphic to $E_{C_{6}}\!\left(  3,-1\right)  $. If $E$ has $j$-invariant
$1728$, then $E$ is $%
\mathbb{Q}
$-isomorphic to $E_{C_{4}}\!\left(  8,-1\right)  $.
\end{lemma}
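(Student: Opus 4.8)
The plan is to rephrase the two hypotheses on the $j$-invariant as vanishing conditions on $\alpha_T$ and $\beta_T$, and then to solve those conditions in the coprime parameters. Since $\alpha_T$ and $\beta_T$ are the invariants $c_4$ and $c_6$ of $E_T$, the relations in \eqref{basicformulas}, together with $1728\Delta=c_4^3-c_6^2$, show that $E_T$ has $j$-invariant $0$ precisely when $\alpha_T=0$ and $j$-invariant $1728$ precisely when $\beta_T=0$. Thus the lemma reduces to finding all coprime integer pairs $(a,b)$, normalized as in Proposition~\ref{rationalmodels}, for which $\alpha_T(a,b)=0$ or $\beta_T(a,b)=0$.

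First I would pin down which family $E$ lies in. A rational elliptic curve with $j$-invariant $0$ has the form $y^2=x^3+D$, and its torsion subgroup is well known to be a subgroup of $C_6$; one with $j$-invariant $1728$ has the form $y^2=x^3+Dx$, whose torsion subgroup is contained in $C_4$ or equal to $C_2\times C_2$. Since $E$ has a rational point of order $N\ge 4$, this forces $E(\mathbb{Q})_{\text{tors}}\cong C_6$ when $j=0$ and $E(\mathbb{Q})_{\text{tors}}\cong C_4$ when $j=1728$; in particular $C_6\hookrightarrow E(\mathbb{Q})$ (resp.\ $C_4\hookrightarrow E(\mathbb{Q})$), so by Proposition~\ref{rationalmodels}(1) the curve $E$ is $\mathbb{Q}$-isomorphic to $E_{C_6}(a,b)$ (resp.\ $E_{C_4}(a,b)$) for coprime integers $a,b$ with $a>0$.

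It then remains to carry out two short factorizations. When $j=0$ one computes $\alpha_{C_6}(a,b)=(a+3b)(a^3+9a^2b+3ab^2+3b^3)$; the cubic factor has no rational zero (its dehomogenization $a^3+9a^2+3a+3$ is Eisenstein at $3$), so $\alpha_{C_6}(a,b)=0$ with $\gcd(a,b)=1$ and $a>0$ forces $a+3b=0$, i.e.\ $(a,b)=(3,-1)$; hence $E$ is $\mathbb{Q}$-isomorphic to $E_{C_6}(3,-1)$. When $j=1728$ one computes $\beta_{C_4}(a,b)=-a^3(a+8b)(a^2+16ab-8b^2)$; the quadratic factor has no rational zero (its discriminant $288$ is not a square), so together with $a>0$ the condition $\beta_{C_4}(a,b)=0$ forces $a+8b=0$, i.e.\ $(a,b)=(8,-1)$; hence $E$ is $\mathbb{Q}$-isomorphic to $E_{C_4}(8,-1)$.

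I expect the second step to be the main obstacle, since it rests on the classical determination of the torsion subgroups of rational elliptic curves with $j$-invariant $0$ or $1728$. If one prefers to stay within the framework of the parameterized families, this can be replaced by Mazur's Torsion Theorem (Theorem~\ref{MazurTorThm}), which restricts $N$ to $\{4,5,6,7,8,9,10,12\}$ and hence produces a point of order $m\in\{4,5,6,7,9\}$ with $E$ $\mathbb{Q}$-isomorphic to $E_{C_m}(a,b)$, followed by a check that $\alpha_{C_m}$ has no linear factor over $\mathbb{Q}$ for $m\in\{4,5,7,9\}$ and $\beta_{C_m}$ has no linear factor over $\mathbb{Q}$ for $m\in\{5,6,7,9\}$; for $m=7,9$ this is a nontrivial polynomial factorization, of the sort recorded in the accompanying SageMath worksheets. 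The two factorizations needed at the end, by contrast, are entirely routine.
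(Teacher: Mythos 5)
Your proposal is correct, and it ends exactly where the paper's proof does: reduce $j=0$ (resp.\ $j=1728$) to $\alpha_T=0$ (resp.\ $\beta_T=0$) and solve in coprime integers. The difference is in how you narrow down $T$. The paper simply invokes Proposition~\ref{rationalmodels} to write $E\cong E_T(a,b)$ for whichever $T$ occurs, and then disposes of all the families with a point of order $N\geq 4$ at once ("by inspection, $\alpha_T=0$ has integer solutions only for $T=C_6$ with $(a,b)=(3,-1)$", and likewise for $\beta_T$). You instead import the classical classification of torsion for curves of the form $y^2=x^3+D$ and $y^2=x^3+Dx$ to conclude immediately that $T$ must be $C_6$ (resp.\ $C_4$), so that only two factorizations remain; this buys a much shorter computation at the cost of a result external to the paper's framework, whereas the paper's route is self-contained but requires inspecting $\alpha_T$ and $\beta_T$ for every family with $|T|>4$. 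Your fallback via Mazur's theorem and a point of order $m\in\{4,5,6,7,9\}$ is essentially the paper's argument, just organized around divisors of $N$. A small merit of your write-up is that you make explicit the irreducibility checks (Eisenstein at $3$ for the cubic factor of $\alpha_{C_6}$, non-square discriminant $288$ for the quadratic factor of $\beta_{C_4}$) that the paper leaves to "inspection"; both computations are correct against Tables~\ref{ta:alpT} and~\ref{ta:betT}.
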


\begin{proof}
From (\ref{basicformulas}) it is checked that $j=0$ if and only if $c_{4}=0$.
Similarly, $j=1728$ if and only if $c_{6}=0$. By Proposition \ref{rationalmodels}, $E$ is $\mathbb{Q}$-isomorphic to $E_{T}=E_{T}\!\left(  a,b\right)  $ for some $T$ with $a$
positive. We now consider the cases when $j=0$ and $j=1728$.

\textbf{Case 1.} Suppose $j=0$. Then the invariant $c_{4}$ of $E_{T}$ is $0$
and thus $\alpha_{T}=0$. It suffices to check when there are integer solutions
to the equations $\alpha_{T}=0$. By inspection, this only occurs for $T=C_{6}$
with $\left(  a,b\right)  =\left(  3,-1\right)  $.

\textbf{Case 2.} Suppose $j=1728$. Then the invariant $c_{6}$ of $E$ is $0$
and by inspection, the equation $\beta_{T}=0$ has integer solutions only when
$T=C_{4}$ with $\left(  a,b\right)  =\left(  8,-1\right)  $.
\end{proof}

\begin{lemma}
\label{sselmmac2result}Let $E_{T}=E_{C_{2}}\!\left(  a,b,d\right)  $ with
$d\neq1$, $b\neq0$ such that $d$ and $\gcd\!\left(  a,b\right)  $ are squarefree.

$\left(  1\right)  $ If $E_{T}$ has $j$-invariant $0$, then $\left(
a,b,d\right)  =\left(  3b,b,-3\right)  $ for $b$ a squarefree integer.

$\left(  2\right)  $ If $E_{T}$ has $j$-invariant $1728$, then $E_{T}$ is $%
\mathbb{Q}
$-isomorphic to $E_{T}\!\left(  0,m,d\right)  $ where $m$ is the squarefree
part of $b$.
\end{lemma}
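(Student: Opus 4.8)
The plan is to translate both assertions into elementary Diophantine statements about the vanishing of the invariants $c_4$ and $c_6$ of $E_{C_2}\!\left(a,b,d\right):y^2=x^3+2ax^2+\left(a^2-b^2d\right)x$. First I would record, via the formulas in \eqref{basicformulas} (equivalently, reading off $\alpha_{C_2}$ and $\beta_{C_2}$ from Tables \ref{ta:alpT} and \ref{ta:betT}), that $c_4=16\left(a^2+3b^2d\right)$ and $c_6=64a\left(a^2-9b^2d\right)$. Since $j=0$ exactly when $c_4=0$ and $j=1728$ exactly when $c_6=0$, part $(1)$ becomes the problem of solving $a^2+3b^2d=0$ and part $(2)$ that of solving $a\left(a^2-9b^2d\right)=0$, in both cases over $\mathbb{Z}$ subject to $b\neq 0$, $d\neq 1$ squarefree, and $\gcd\!\left(a,b\right)$ squarefree.

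For part $(1)$ I would set $g=\gcd\!\left(a,b\right)$ and write $a=ga_{0}$, $b=gb_{0}$ with $\gcd\!\left(a_{0},b_{0}\right)=1$, turning $a^2+3b^2d=0$ into $a_{0}^{2}=-3b_{0}^{2}d$. Then $b_{0}^{2}\mid a_{0}^{2}$ forces $b_{0}=\pm 1$, hence $a_{0}^{2}=-3d$; writing $d=-e$ with $e>0$ squarefree, $3\mid a_{0}^{2}$ gives $3\mid a_{0}$, and substituting $a_{0}=3k$ yields $3k^{2}=e$, so squarefreeness of $e$ forces $e=3$ and $k=\pm 1$. Thus $d=-3$, $a_{0}=\pm 3$, $b_{0}=\pm 1$, so $a=\pm 3b$ with $\left|b\right|=g$ squarefree. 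Since $b$ enters the Weierstrass model only through $b^2$, $E_{C_2}\!\left(a,b,d\right)=E_{C_2}\!\left(a,-b,d\right)$, so replacing $b$ by $-b$ if necessary I may take $a=3b$; conversely every triple $\left(3b,b,-3\right)$ with $b$ squarefree lies in the family and has $c_4=0$. Hence this case is precisely $\left(a,b,d\right)=\left(3b,b,-3\right)$ with $b$ squarefree.

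For part $(2)$ the factorization $a\left(a^2-9b^2d\right)=0$ splits into two cases. If $a=0$, then $E_{C_2}\!\left(0,b,d\right):y^2=x^3-b^2dx$; writing $b=mn^{2}$ with $m$ the squarefree part of $b$ and $n>0$, the admissible change of variables $x\longmapsto n^{2}x$, $y\longmapsto n^{3}y$ carries $E_{C_2}\!\left(0,b,d\right)$ onto $y^2=x^3-m^2dx=E_{C_2}\!\left(0,m,d\right)$ (note $c_4=48b^2d\neq 0$, so this is genuinely an elliptic curve with $j=1728$). If instead $a\neq 0$ and $a^2=9b^2d$, then the same $\gcd$-reduction as above gives $a_{0}^{2}=9d$ with $d$ squarefree, which forces $d$ to be a perfect square and hence $d=1$, contradicting $d\neq 1$. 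So only $a=0$ can occur, which is the assertion.

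Throughout, the invariant computations and the sign bookkeeping are routine; the one point that deserves care is the use of the squarefreeness hypotheses to pin down $d$ exactly — it equals $-3$ in part $(1)$, whereas in part $(2)$ the only alternative to $a=0$ would force the excluded value $d=1$ — but I do not anticipate any real obstacle here.
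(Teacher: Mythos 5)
Your proposal is correct and follows essentially the same route as the paper: set $\alpha_{C_2}=16\left(3b^{2}d+a^{2}\right)=0$ for $j=0$ and $\beta_{C_2}=-64a\left(9b^{2}d-a^{2}\right)=0$ for $j=1728$, use the squarefreeness of $d$ and $\gcd\!\left(a,b\right)$ to pin down the solutions, and rescale by the square part of $b$ in the $a=0$ case. Your write-up merely spells out the $\gcd$ reduction and sign bookkeeping that the paper leaves implicit.
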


\begin{proof}
$\left(  1\right)  $ If $E_{T}$ has $j$-invariant $0$, then $\alpha
_{T}=16\left(  3b^{2}d+a^{2}\right)  =0$. In particular, $a^{2}=-3b^{2}d$.
Since $\gcd\!\left(  a,b\right)  $ and $d$ are squarefree, it follows that
$\left(  a,d\right)  =\left(  3b,-3\right)  $ with $b$ a squarefree integer.

$\left(  2\right)  $ If $E_{T}$ has $j$-invariant $1728$, then $\beta
_{T}=-64a\left(  9b^{2}d-a^{2}\right)  =0$. Therefore $a=0$ or $a^{2}=9b^{2}%
d$. Since $d\neq1$, it follows that the latter cannot occur. Consequently,
$a=0$. Now suppose $b=k^{2}m$ for $m$ a squarefree integer. Then the
admissible change of variables $x\longmapsto k^{2}x$ and $y\longmapsto k^{3}y$
gives a $%
\mathbb{Q}
$-isomorphism from $E_{T}$ onto $E_{T}\left(  0,m,d\right)  :y^{2}=x^{3}%
-m^{2}dx$.
\end{proof}

\begin{lemma}
\label{sselmmac2c2result}Let $E_{T}=E_{C_{2}\times C_{2}}\!\left(
a,b,d\right)  $ with $\gcd\!\left(  a,b\right)  =1,$ $d$ squarefree, and $a$
even. Then the $j$-invariant of $E_{T}$ is non-zero. Moreover, if $E_{T}$ has
$j$-invariant $1728$, then $E_{T}$ is $%
\mathbb{Q}
$-isomorphic to $E_{T}\!\left(  2,1,d\right)  $ for some positive squarefree integer
$d$.
\end{lemma}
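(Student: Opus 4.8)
The plan is to begin by reading off the invariants $c_4$ and $c_6$ of $E_{C_2\times C_2}(a,b,d)\colon y^2 = x^3 + (a+b)dx^2 + abd^2x$ directly from \eqref{basicformulas} (equivalently, from Tables \ref{ta:alpT} and \ref{ta:betT}). A short computation, using the identity $2(a+b)^2 - 9ab = (2a-b)(a-2b)$, gives $c_4 = 16d^2(a^2-ab+b^2)$ and $c_6 = -32d^3(a+b)(2a-b)(a-2b)$. Since $d$ is squarefree it is nonzero, and recall from \eqref{basicformulas} that the $j$-invariant is $0$ exactly when $c_4 = 0$ and is $1728$ exactly when $c_6 = 0$. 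Everything then reduces to analyzing when these two polynomials vanish, subject to $\gcd(a,b)=1$ and $a$ even.

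For the first assertion, $j = 0$ would force $a^2-ab+b^2 = 0$; but this binary quadratic form has discriminant $-3$ and positive leading coefficient, hence is positive definite, so it vanishes only at $(a,b) = (0,0)$, which contradicts $\gcd(a,b)=1$. Thus $c_4 \neq 0$ and the $j$-invariant of $E_T$ is nonzero.

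For the second assertion, suppose $j = 1728$, so that $(a+b)(2a-b)(a-2b) = 0$. If $a + b = 0$ then $\gcd(a,b) = |a|$, forcing $a = \pm 1$ and contradicting that $a$ is even; the factor $2a-b$ gives the same contradiction, since $2a-b=0$ again forces $\gcd(a,b) = |a|$. Hence $a - 2b = 0$, so $a = 2b$ and $\gcd(a,b) = |b| = 1$, which leaves only $(a,b) = (2,1)$ or $(a,b) = (-2,-1)$. In the first case $E_T = E_T(2,1,d)$ and there is nothing more to do. In the second case, $E_T(-2,-1,d)$ has the same invariants $c_4 = 48d^2$, $c_6 = 0$ as $E_T(2,1,d)$, so by the principle recorded in the proof of Lemma \ref{Lemposnega} — that the pair $(c_4,c_6)$ determines a Weierstrass model up to $\mathbb{Q}$-isomorphism — the two curves are $\mathbb{Q}$-isomorphic. (Alternatively, one can exhibit the explicit translation carrying $y^2 = x(x-d)(x-2d)$ to $y^2 = x(x+d)(x+2d)$ and then relabel $a \leftrightarrow b$, exactly as in the proof of Lemma \ref{ch:sslemaeven}.) If one assumes only that $d$ is squarefree rather than positive, the same invariant comparison gives $E_T(2,1,d) \cong_{\mathbb{Q}} E_T(2,1,|d|)$, so the statement holds with the positive squarefree integer $|d|$ (and in the setting of Proposition \ref{rationalmodels}(5) one already has $d>0$).

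I do not anticipate a genuine obstacle: the computation of $c_4$ and $c_6$ is routine and the remainder is elementary number theory. The only points demanding care are invoking the hypothesis that $a$ is even at the right moment to eliminate the factors $a+b$ and $2a-b$, and tracking signs so that the case $(a,b) = (-2,-1)$ is correctly reduced to $(2,1)$.
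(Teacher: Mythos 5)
Your proof is correct and follows essentially the same route as the paper: it rules out $j=0$ via the nonvanishing of the positive definite form $a^{2}-ab+b^{2}$ at coprime integers, and reduces $j=1728$ to $(a,b)=(\pm2,\pm1)$ using the factorization $c_{6}=-32d^{3}(a+b)(2a-b)(a-2b)$ together with the parity and coprimality hypotheses. The only cosmetic difference is that the paper identifies $E_{T}(-2,-1,d)$ with $E_{T}(2,1,d)$ by the explicit translation $x\longmapsto x-2d$, whereas you invoke the principle that the pair $(c_{4},c_{6})$ determines the curve up to $\mathbb{Q}$-isomorphism; both are valid.
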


\begin{proof}
First, observe that $a^{2}-ab+b^{2}=0$ has no non-zero integer solutions. Then
$\alpha_{T}\neq0$ and therefore the $j$-invariant of $E_{T}$ is non-zero.
Next, suppose the $j$-invariant of $E_{T}$ is $1728$. Then%
\[
\beta_{T}=-32d^{3}\left(  a+b\right)  \left(  a-2b\right)  \left(
2a-b\right)  =0.
\]
Since $a$ and $b$ are relatively prime with $a$ even, we have that $\beta
_{T}=0$ if and only if $\left(  a,b\right)  =\left(  2,1\right)  $ or $\left(
a,b\right)  =\left(  -2,-1\right)  $. The admissible change of variables
$x\longmapsto x-2d$ gives a $%
\mathbb{Q}
$-isomorphism from $E_{T}\!\left(  2,1,d\right)  $ onto $E_{T}\!\left(
-2,-1,d\right)  $, which concludes the proof.
\end{proof}

\section{Proof of Theorem \ref{semistablecondthm}\label{pfmainthm}}

The proof of Theorem \ref{propQ} will rely on the following theorem of Kraus \cite{MR1024419}.

\begin{theorem}
\label{kraus}Let $\alpha,\beta,$ and $\gamma$ be integers such that
$\alpha^{3}-\beta^{2}=1728\gamma$ with $\gamma\neq0$. Then there exists a
rational elliptic curve $E$ given by an integral Weierstrass equation having
invariants $c_{4}=\alpha$ and $c_{6}=\beta$ if and only if the following
conditions hold:

$\left(  1\right)  \ v_{3}\!\left(  \beta\right)  \neq2$ and

$\left(  2\right)  \ $either $\beta\equiv3\ \operatorname{mod}4$ or both
$v_{2}\!\left(  \alpha\right)  \geq4$ and $\beta\equiv0$ or $8$
$\operatorname{mod}32$.
\end{theorem}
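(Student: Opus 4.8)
The plan is to read this as a local–global statement about the image of the coordinate map $(a_1,\dots,a_6)\mapsto(c_4,c_6)$ and to localize the obstruction at the primes $2$ and $3$. First I would fix a base model over $\mathbb{Q}$ with the prescribed invariants, namely $E_0:y^2=x^3-\tfrac{\alpha}{48}x-\tfrac{\beta}{864}$, whose coefficients by \eqref{basicformulas} have denominators supported only at $2$ and $3$. Since $c_4'=u^{-4}c_4$ and $c_6'=u^{-6}c_6$, and $\gamma\neq0$ rules out $c_4=c_6=0$, the invariants $(\alpha,\beta)$ are preserved exactly only when $u^4=1$, i.e. $u=\pm1$; hence every integral model with invariants $(\alpha,\beta)$ is obtained from $E_0$ by a substitution $x\mapsto x+r$, $y\mapsto y+sx+w$ with $r,s,w\in\mathbb{Q}$ (up to the harmless sign change $y\mapsto-y$). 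Thus the theorem amounts to deciding when the $(r,s,w)$-orbit of $E_0$ meets the integral Weierstrass models. \textbf{Step 1 (reduction to $p=2,3$):} Because admissible changes with $u=1$ and $r,s,w\in R_{K_{\mathfrak p}}$ preserve integrality (Lemma \ref{silverconverse}), and because $E_0$ is already integral at every prime $p\geq5$, I would show that an integral model over $\mathbb{Z}$ exists if and only if a $\mathbb{Z}_p$-integral model with invariants $(\alpha,\beta)$ exists for $p=2$ and $p=3$. The "only if" is immediate; for the converse I would patch the $2$-adic and $3$-adic choices of $(r,s,w)$ into a single rational triple by weak approximation, keeping its denominators supported at $2,3$ so that integrality at all $p\geq5$ persists automatically.

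\textbf{Step 2 (reformulation via the $b$-invariants):} Writing the standard quantities $b_2=a_1^2+4a_2$, $b_4=2a_4+a_1a_3$, $b_6=a_3^2+4a_6$, the relations $c_4=b_2^2-24b_4$ and $c_6=-b_2^3+36b_2b_4-216b_6$ allow me to eliminate $b_4,b_6$ and reduce $\mathbb{Z}_p$-solvability to the following: there is $b_2\in\mathbb{Z}_p$ with $b_2^2\equiv\alpha\pmod{24}$ and $b_2^3-3b_2\alpha-2\beta\equiv0\pmod{432}$, together with a descent of $(b_2,b_4,b_6)$ to integral coefficients $(a_1,\dots,a_6)$. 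For odd $p$ the descent is automatic, since $2$ is a unit, so at $p=3$ only the two congruences survive; at $p=2$ the descent contributes extra constraints.

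\textbf{Step 3 ($p=3$):} Here $24\mathbb{Z}_3=3\mathbb{Z}_3$ and $432\mathbb{Z}_3=27\mathbb{Z}_3$, and the identity $\alpha^3-\beta^2=1728\gamma$ gives $\alpha^3\equiv\beta^2\pmod{27}$. I would run a short finite case analysis on $v_3(\beta)$. The obstruction is transparent when $v_3(\beta)=2$: then $27\mid\beta^2\equiv\alpha^3$ forces $3\mid\alpha$ and hence $3\mid b_2$, which makes $b_2^3-3b_2\alpha\equiv0\pmod{27}$, so the second congruence collapses to $27\mid2\beta$, contradicting $v_3(\beta)=2$; conversely, a solution $b_2$ is produced whenever $v_3(\beta)\in\{0,1\}$ or $v_3(\beta)\geq3$. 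This yields condition (1). \textbf{Step 4 ($p=2$):} Now $24\mathbb{Z}_2=8\mathbb{Z}_2$ and $432\mathbb{Z}_2=16\mathbb{Z}_2$, the descent forces $b_2\equiv0,1\pmod4$ and ties the parities of $b_4,b_6$ to the chosen $a_1,a_3$, and $\alpha^3-\beta^2=1728\gamma$ gives $\alpha^3\equiv\beta^2\pmod{64}$. Combining these with $b_2^2\equiv\alpha\pmod8$ and $b_2^3-3b_2\alpha-2\beta\equiv0\pmod{16}$, a finite case analysis modulo $32$ yields exactly the dichotomy of condition (2): either $\beta\equiv3\pmod4$, or $v_2(\alpha)\geq4$ with $\beta\equiv0,8\pmod{32}$. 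Assembling Steps 1–4 gives the theorem.

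The hard part is Step 4. The $2$-adic descent from the $b$-invariants back to integral $a$-invariants is genuinely nontrivial over $\mathbb{Z}_2$, and it must be carried out in tandem with the two congruences rather than afterward; it is precisely this interaction that bifurcates into the two branches of condition (2) — the "semistable'' branch $\beta\equiv3\pmod4$ and the "additive'' branch $16\mid\alpha$ with $\beta\equiv0,8\pmod{32}$. By contrast, the $p=3$ and $p\geq5$ analyses are comparatively mechanical, and Step 1's weak-approximation patching, while needing a careful statement, introduces no new arithmetic content.
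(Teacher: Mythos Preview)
The paper does not prove Theorem~\ref{kraus}; it is quoted as a result of Kraus~\cite{MR1024419} and used as a black box throughout Section~\ref{pfmainthm}. So there is no ``paper's own proof'' to compare against.

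That said, your sketch is essentially the standard proof and is sound in outline. A couple of remarks. In Step~1 you invoke weak approximation to glue the $2$-adic and $3$-adic data; this works but is heavier than needed. Since $b_4$ and $b_6$ are determined by $b_2$ once $\alpha,\beta$ are fixed, and the constraints on $b_2$ are congruences modulo $8$ and $27$, the Chinese Remainder Theorem on $b_2$ alone suffices to produce a global integral $b_2$, after which integrality of $b_4,b_6$ and the descent to the $a_i$ are automatic away from $2$. Your Step~3 analysis at $p=3$ is correct as written. Step~4 is indeed the substantive part: the extra constraint at $p=2$ is precisely that $b_2\equiv\square\pmod 4$, i.e.\ $b_2\equiv 0$ or $1\pmod 4$, together with the parity conditions $b_4\equiv a_1a_3\pmod 2$ and $b_6\equiv a_3^2\pmod 4$ coming from the descent; carrying these through the congruences $b_2^2\equiv\alpha\pmod 8$ and $b_2^3-3b_2\alpha-2\beta\equiv0\pmod{16}$ modulo $32$ gives exactly the two branches in condition~(2). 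You have correctly identified where the work lies, and the finite case check, while tedious, presents no conceptual obstacle.
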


The following corollary is automatic by Lemma \ref{silverconverse} and the
definition of an integral Weierstrass model.

\begin{corollary}
\label{CorKraus}Let $E$ be a rational elliptic with discriminant $\Delta$ and let $c_{4}$ and $c_{6}$ be the invariants associated to $E$. Suppose further that $\Delta,c_4,c_6\in \mathbb{Z}$. If $c_6 \not \equiv 3 \operatorname{mod}4$ and either $v_2(c_4)<4$ or $c_6 \not \equiv 0,8 \operatorname{mod}32$, then $\Delta$ is not the minimal discriminant of $E$. 
\end{corollary}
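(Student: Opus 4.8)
The plan is to argue by contraposition using Theorem~\ref{kraus} together with Lemma~\ref{silverconverse}. Suppose, for contradiction, that $\Delta$ \emph{is} the minimal discriminant of $E$; equivalently, the given integral Weierstrass model, with invariants $c_4$ and $c_6$, is already a global minimal model for $E$. The strategy is to show that under this assumption the Kraus conditions $(1)$ and $(2)$ of Theorem~\ref{kraus} must hold for the pair $(c_4,c_6)$, and then to observe that the hypothesis of the corollary directly negates condition $(2)$, yielding the contradiction.

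First I would make precise the reduction to Kraus's theorem. By Lemma~\ref{silverconverse}, since $E$ is given by an integral Weierstrass model, there is a unique positive integer $u$ with $c_4' = u^{-4}c_4$, $c_6' = u^{-6}c_6$, and $\Delta_{E/\mathbb{Q}}^{\text{min}} = u^{-12}\Delta$, where $c_4',c_6'$ are the invariants of a global minimal model of $E$. The assumption that $\Delta$ is the minimal discriminant forces $u^{-12}\Delta = \Delta$, hence $u = 1$, so in fact $c_4' = c_4$ and $c_6' = c_6$. Thus $c_4$ and $c_6$ are the invariants of a global minimal model---in particular of \emph{some} rational elliptic curve given by an integral Weierstrass equation with those exact invariants. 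Applying Theorem~\ref{kraus} to the integers $\alpha = c_4$, $\beta = c_6$, $\gamma = \Delta$ (which satisfy $\alpha^3 - \beta^2 = 1728\gamma$ with $\gamma \neq 0$ since $\Delta \neq 0$ for an elliptic curve), the existence of such a curve forces both Kraus conditions to hold.

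Finally I would extract the contradiction from condition $(2)$. Kraus's condition $(2)$ states: either $c_6 \equiv 3 \pmod 4$, or else both $v_2(c_4) \geq 4$ and $c_6 \equiv 0$ or $8 \pmod{32}$. But the hypothesis of the corollary is precisely that $c_6 \not\equiv 3 \pmod 4$ \emph{and} ($v_2(c_4) < 4$ or $c_6 \not\equiv 0,8 \pmod{32}$), which is the exact logical negation of condition $(2)$. This contradicts the conclusion that condition $(2)$ holds, so the assumption that $\Delta$ is the minimal discriminant is untenable, completing the proof. There is essentially no obstacle here: the only point requiring a moment's care is the bookkeeping that $u=1$ genuinely reduces the situation to an application of Theorem~\ref{kraus} with $\alpha,\beta$ equal to the \emph{given} $c_4,c_6$ rather than to the invariants of some other model; once that is in place the corollary is purely a matter of matching the hypothesis against the negation of Kraus's condition $(2)$.
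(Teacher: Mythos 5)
Your proof is correct and follows exactly the route the paper intends: the paper gives no written proof, declaring the corollary ``automatic'' from Lemma~\ref{silverconverse} and the definition of an integral Weierstrass model, and your argument spells out precisely that chain (minimality forces $u=1$, so $c_4,c_6$ are the invariants of a global minimal model, which is an integral Weierstrass model, and Theorem~\ref{kraus} then forces its condition $(2)$, contradicting the hypothesis). One small caveat worth noting: the corollary assumes only $\Delta,c_4,c_6\in\mathbb{Z}$, not that $E$'s given model is integral (indeed the paper applies the corollary to candidate models of the form $u_T^{-12}\gamma_T$ that may not be integral), so the appeal to Lemma~\ref{silverconverse} for a \emph{positive integer} $u$ is not literally licensed by the hypotheses; but this is harmless, since for any $u\in\mathbb{Q}^{\times}$ relating $E$ to a global minimal model the equation $u^{-12}\Delta=\Delta$ gives $u=\pm1$, hence $c_4'=c_4$ and $c_6'=c_6$, and Theorem~\ref{kraus} is applied to the global minimal model, which is integral by definition.
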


\begin{lemma}
\label{ch:ss:elemlemm}Let $\alpha,\beta,$ and $\gamma$ be integers such that
$\alpha^{3}-\beta^{2}=1728\gamma$ with $\gamma\neq0$. If $v_{2}\!\left(
\alpha\right)  =4k$ and $v_{2}\!\left(  \gamma\right)  \geq12k$ for some
nonnegative integer $k$, then $v_{2}\!\left(  \beta\right)  =6k$.
\end{lemma}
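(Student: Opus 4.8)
The plan is to read off everything from the defining identity $\beta^{2}=\alpha^{3}-1728\gamma$ together with the ultrametric property of the $2$-adic valuation, so that essentially no computation is required.

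First I would record the $2$-adic valuations of the two terms on the right-hand side. Since $v_{2}(\alpha)=4k$, multiplicativity of $v_{2}$ gives $v_{2}(\alpha^{3})=3\cdot 4k=12k$ exactly. On the other hand $1728=2^{6}\cdot 27$, so $v_{2}(1728\gamma)=6+v_{2}(\gamma)\geq 6+12k$; in particular $v_{2}(1728\gamma)>v_{2}(\alpha^{3})$.

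Next I would invoke the strict-triangle-inequality case of the non-archimedean valuation: if two integers have distinct $2$-adic valuations, the valuation of their difference equals the smaller of the two. Applied to $\alpha^{3}$ and $1728\gamma$, this gives $v_{2}(\beta^{2})=v_{2}(\alpha^{3}-1728\gamma)=12k$. In particular $\beta\neq 0$, consistently with $\gamma\neq 0$: were $\beta=0$ we would have $\alpha^{3}=1728\gamma$ and hence $12k=v_{2}(\alpha^{3})=v_{2}(1728\gamma)\geq 6+12k$, a contradiction.

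Finally, since $v_{2}(\beta^{2})=2\,v_{2}(\beta)$, I would conclude $2\,v_{2}(\beta)=12k$, i.e. $v_{2}(\beta)=6k$, as claimed. There is no genuine obstacle here; the only point requiring care is confirming that $v_{2}(\alpha^{3})$ and $v_{2}(1728\gamma)$ are genuinely unequal — which is precisely where the hypothesis $v_{2}(\gamma)\geq 12k$ (rather than a weaker lower bound) is needed — so that the valuation of the difference is forced to equal $12k$ rather than merely bounded below by it.
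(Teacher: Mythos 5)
Your proof is correct and follows exactly the paper's argument: compute $v_{2}(\alpha^{3})=12k$, observe $v_{2}(1728\gamma)=6+v_{2}(\gamma)>12k$, and conclude $v_{2}(\beta^{2})=12k$ by the ultrametric inequality. Your version is simply a more detailed write-up of the same three-line proof, including the worthwhile sanity check that $\beta\neq 0$.
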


\begin{proof}
The assumption implies that $v_{2}(  \alpha^{3})  =12k$. Then
$v_{2}(  \alpha^{3}-1728\gamma)  =12k$. Thus $v_{2}(
\beta)=6k$.
\end{proof}

By Lemma \ref{sslemc4j0}, we have that if a rational elliptic curve with a non-trivial torsion subgroup $T$ has $j$-invariant $0$ or $1728$, then $T= C_{2},C_{3},C_{4},C_{6},$ and $C_{2}\times C_{2}$. We will implicitly assume this result in the proof of Theorem \ref{semistablecondthm}.

\subsection[Proof of Theorem \ref{semistablecondthm} for $T=C_{3}^{0}%
,C_{5},C_{7},C_{9}$]{Proof of Theorem \ref{semistablecondthm} for
\bm{$T=C_{3}^{0},C_{5},C_{7},C_{9}$}.}

\begin{claim}
[Theorem \ref{semistablecondthm} for \bm{$T=C_{3}^{0},C_{5},C_{7},C_{9}$}.]If
the parameters of $E_{T}$ satisfy the conclusion of
Proposition~\ref{rationalmodels} for $T=C_{3}^{0},C_{5},C_{7},C_{9}$, then the
minimal discriminant of $E_{T}$ is~$\gamma_{T}$.
\end{claim}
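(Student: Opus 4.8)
The claim asserts that for $T \in \{C_3^0, C_5, C_7, C_9\}$, when the parameters satisfy Proposition~\ref{rationalmodels}, the discriminant $\gamma_T$ of the integral model $E_T$ is already the minimal discriminant. The strategy is to show there is no prime $p$ at which the model $E_T$ fails to be minimal, i.e., the integer $u_T$ from Lemma~\ref{silverconverse} equals $1$. For a prime $p$ to divide $u_T$, it must divide $\gcd(\alpha_T, \gamma_T)$ (and also $\gcd(\beta_T, \gamma_T)$), so the first step is to invoke Lemma~\ref{polynomials}: for $T = C_5, C_7, C_9$ the relevant $\delta_T$ is $5$, $7$, $3$ respectively, so the only candidate prime is $p = 5, 7, 3$. (For $C_3^0$ I will argue separately, since $E_{C_3^0}(a): y^2 + ay = x^3$ has $c_4 = 0$, $\Delta = -27a^2$, and $a$ cubefree, so I must show $u = 1$ directly from Kraus's criterion / minimality at $2$ and $3$.)

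**Main step: ruling out the one bad prime via Kraus.** Fix $T = C_\ell$ with $\ell \in \{5,7,3\}$ the unique prime in $\delta_T$. Suppose for contradiction that $p = \ell$ divides $u_T$, so a strictly smaller model exists with invariants $c_4' = u_T^{-4}\alpha_T$, $c_6' = u_T^{-6}\beta_T$, which forces $\ell^4 \mid \alpha_T$ and $\ell^6 \mid \beta_T$ and $\ell^{12} \mid \gamma_T$. I will then examine the reductions of $\alpha_T, \beta_T, \gamma_T \bmod \ell$ (or appropriate powers of $\ell$) as polynomials in $a, b$ with $\gcd(a,b) = 1$. The contradiction should come either (a) by showing $\ell \nmid \gamma_T$ outright whenever $\gcd(a,b)=1$ — using the explicit factorization of $\gamma_T$ from Table~\ref{ta:gamT} and that $a,b$ are coprime — or, if $\ell$ can divide $\gamma_T$, (b) by checking that whenever $\ell \mid \gcd(\alpha_T, \gamma_T)$ one has $v_\ell(\alpha_T) < 4$ or $v_\ell(\beta_T) < 6$, contradicting the existence of the smaller model. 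For $\ell = 5, 7$ (primes $> 3$) the model $E_\ell = E_{C_\ell}(a,b)$ is semistable at $\ell$ by Theorem~\ref{Thm1} applied with $K = \mathbb{Q}$ once we know $\ell \nmid \delta$... but more cleanly: I expect the computation to show $v_\ell(\alpha_T) \le 3$ (or $=0$) whenever $v_\ell(\gamma_T) > 0$, since the resultant-type relations in Lemma~\ref{polynomials} pin down exactly the power of $\ell$ that can be shared. Concretely, from $\mu_T^{(1)}\alpha_T + \nu_T^{(1)}\beta_T = 2^?3^?\ell\,(ra^n + sb^n)$ one gets $v_\ell(\gcd(\alpha_T,\beta_T)) \le 1$, which is incompatible with $\ell^4 \mid \alpha_T$ and $\ell^6 \mid \beta_T$ — this immediately kills $p = \ell$. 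For $\ell = 3$ and $T = C_9$, the same identity gives $3^4 \mid \mu\alpha_T + \nu\beta_T$ as a polynomial, so I must be more careful and combine it with a direct mod-$9$ analysis of $\alpha_{C_9} = $ (the explicit quartic-ish polynomial in $a,b$) to conclude $v_3(\alpha_{C_9}) \le 3$ whenever $3 \mid \gamma_{C_9}$.

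**Handling $p = 2, 3$ (and $C_3^0$).** For $\ell = 5, 7$ I still need to rule out $p = 2$ and $p = 3$ dividing $u_T$, but these don't appear in $\delta_T$, so $2, 3 \nmid \gcd(\alpha_T, \gamma_T)$ and hence $2, 3 \nmid u_T$ — that case is free. For $C_9$ the analogous argument handles $p = 2$. For $C_3^0$, since $c_4 = \alpha = 0$ we cannot use $\gcd(\alpha,\gamma)$; instead $E_{C_3^0}(a): y^2 + ay = x^3$ has $\Delta = -27a^2$ with $a$ cubefree, so I check minimality at each $p$: at $p \ne 2, 3$ minimality is clear since $p^{12} \nmid \Delta$ (as $v_p(\Delta) = 2v_p(a) \le 4$); at $p = 2, 3$ I invoke Theorem~\ref{kraus} / Corollary~\ref{CorKraus} with $c_4 = 0$, $c_6 = -216a$, verifying the Kraus conditions $v_3(c_6) \ne 2$ and the mod-$4$/mod-$32$ condition hold for the given model so no smaller model exists — or simply note $v_3(\Delta) = 2v_3(a) \le 4 < 12$ and $v_2(\Delta) = 2v_2(a)$, and if $v_2(a) \ge 6$ one would need $c_6 = -216a$ with $v_2(-216a) = 3 + v_2(a) \ge 9$, check against Kraus.

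**Expected obstacle.** The routine-but-delicate part is the prime $p = 3$ for $T = C_9$ (and to a lesser extent $p = 5$ for $C_5$, $p = 7$ for $C_7$): I must verify that $3 \mid \gcd(\alpha_{C_9}, \gamma_{C_9})$ can never be promoted to $3^4 \mid \alpha_{C_9}$, $3^6 \mid \beta_{C_9}$. The clean way is: the identity from Lemma~\ref{polynomials} shows $v_3$ of any common factor of $\alpha_{C_9}$ and $\beta_{C_9}$ is at most the exponent appearing on the $3$-power there (namely $3^4$, so this alone is not enough!) — so I genuinely need the explicit mod-$9$ reduction of $\alpha_{C_9}(a,b) = 3b\,(\ldots)$ (reading off Table~\ref{ta:alpT}) together with $\gcd(a,b) = 1$ to conclude $v_3(\alpha_{C_9}) \le 1$ whenever $3 \mid b$ and $v_3(\alpha_{C_9}) = 0$ whenever $3 \nmid b$, after which $3^4 \mid \alpha_{C_9}$ is impossible and we are done. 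I would present this as a short explicit case check rather than a black-box appeal.
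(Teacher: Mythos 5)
Your overall strategy (bound the possible $u_T$ via the B\'ezout-type identities of Lemma~\ref{polynomials}, then handle $C_3^0$ by a direct valuation bound) is sound, and it succeeds for $C_3^0$, $C_5$, and $C_7$. For $C_3^0$ your arithmetic is off — the model $y^2+ay=x^3$ has $c_6=-216a^2$ and $\Delta=-27a^4$, so $v_p(\Delta)=4v_p(a)\le 8$ for $p\ne 3$ and $v_3(\Delta)\le 11$ — but since $a$ is cubefree the correct values still give $v_p(\Delta)<12$ for every $p$, which is all you need (and is exactly the paper's one-line argument; no appeal to Kraus is necessary). For $C_5$ and $C_7$ your use of $v_\ell(\gcd(\alpha_T,\beta_T))\le 1$ against $\ell^4\mid\alpha_T$ works.

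The genuine gap is the case $T=C_9$, $p=3$, which you correctly flag as the delicate point but do not resolve. The first-column identity only gives $v_3(\gcd(\alpha_{C_9},\beta_{C_9}))\le 4$, which is compatible with $v_3(\alpha_{C_9})=4$ and $v_3(\beta_{C_9})\ge 6$, and your proposed rescue is based on a misreading of Table~\ref{ta:alpT}: $\alpha_{C_9}$ is not of the form $3b(\cdots)$; from the table one computes $\alpha_{C_9}\equiv(a+b)^{12}\bmod 3$, so the relevant dichotomy is $3\mid a+b$ versus $3\nmid a+b$, not $3\mid b$ versus $3\nmid b$, and the "short explicit case check" as you describe it would not go through. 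The repair is available inside your own toolbox: use the \emph{third} column of Lemma~\ref{polynomials}, which gives $\gcd(\beta_T,\gamma_T)\mid 3^3$ for $C_9$ (and $\mid 5^3$, $\mid 7$ for $C_5$, $C_7$). Since Lemma~\ref{silverconverse} forces $u_T^6\mid\gcd(\beta_T,\gamma_T)$ and none of $5^3$, $7$, $3^3$ is divisible by a nontrivial sixth power, $u_T=1$ follows immediately and uniformly for all three groups. This is precisely the paper's proof, and it avoids both the prime-by-prime case analysis and the mod-$9$ computation entirely.
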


\begin{proof}
First, suppose $T=C_{3}^{0}$ so that $E_{T}=E_{C_{3}^{0}}\!\left(  a\right)  $
with $a$ cubefree. Then $\gamma_{T}=-27a^{4}$ and thus $v_{p}\!\left(
\gamma_{T}\right)  \leq11$ for each prime $p$. It now follows that the minimal
discriminant of $E_{T}$ is $\gamma_{T}$ since $E_{T}$ is given by an integral
Weierstrass model.

Next, suppose $T=C_{5},C_{7},$ or $C_{9}$ and let $E_{T}=E_{T}\!\left(
a,b\right)  $ for relatively prime integers $a$ and $b$. Since $E_{T}$ is
given by an integral Weierstrass model, Lemma \ref{silverconverse} implies
that there is a unique positive integer $u_{T}$ such that $\Delta_{E_{T}%
}^{\text{min}}=u_{T}^{-12}\gamma_{T}$. In particular, $u_{T}^{6}$ divides
$\gcd\!\left(  \beta_{T},\gamma_{T}\right)  $. Since $a$ and $b$ are
relatively prime, we have by Lemma \ref{polynomials}, that $\gcd\!\left(
\beta_{T},\gamma_{T}\right)  $ divides $d_{T}$ where%
\[
d_{T}=\left\{
\begin{array}
[c]{ll}%
5^{3} & \text{if }T=C_{5},\\
7 & \text{if }T=C_{7},\\
3^{3} & \text{if }T=C_{9}.
\end{array}
\right.
\]
In particular, $u_{T}=1$ which shows that $\gamma_{T}$ is the minimal
discriminant of $E_{T}$.
\end{proof}

\subsection[Proof of Theorem \ref{semistablecondthm} for $T=C_{2}$]{Proof of
Theorem \ref{semistablecondthm} for \bm{$T=C_{2}$}}

\begin{claim}
[Theorem \ref{semistablecondthm} for \bm{$T=C_{2}$}.]Let $E_{T}=E_{T}\!\left(
a,b,d\right)  $ where $a,b,d$ are integers with $d\neq1,b\neq0$ such that
$\gcd\!\left(  a,b\right)  $ and $d$ are squarefree. Then the minimal
discriminant of $E_{T}$ is $u_{T}^{-12}\gamma_{T}$ with $u_{T}\in\left\{
1,2,4\right\}  $. Moreover,

$\left(  1\right)  $ $u_{T}=4$ if and only if $v_{2}\left(  b^{2}%
d-a^{2}\right)  \geq8$ with $v_{2}\!\left(  a\right)  =v_{2}\!\left(
b\right)  =1$ and $a\equiv2\ \operatorname{mod}8$;

$\left(  2\right)  $ $u_{T}=2$ if and only if $\left(  i\right)
\ v_{2}\left(  b^{2}d-a^{2}\right)  \geq8$ with $v_{2}\!\left(  a\right)
=v_{2}\!\left(  b\right)  =1$ and $a\equiv6\ \operatorname{mod}8$, $\left(
ii\right)  \ 4\leq v_{2}\left(  b^{2}d-a^{2}\right)  \leq7$ with
$v_{2}\!\left(  a\right)  =v_{2}\!\left(  b\right)  =1$, or $\left(
iii\right)  $ $v_{2}\!\left(  b\right)  \geq3$ and $a\equiv
3\ \operatorname{mod}4$;

$\left(  3\right)  $ $u_{T}=1$ if and only if the above conditions do not hold.
\end{claim}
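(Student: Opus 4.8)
The plan is to work prime by prime, exploiting Lemma \ref{silverconverse}, which tells us that $\Delta_{E_T}^{\text{min}} = u_T^{-12}\gamma_T$ for a unique positive integer $u_T$, and that $u_T^4 \mid \alpha_T$, $u_T^6 \mid \beta_T$, $u_T^{12}\mid \gamma_T$. For $T=C_2$ one has $\alpha_T = 16(a^2+3b^2d)$, $\beta_T = -64a(a^2-9b^2d)$, and $\gamma_T = 2^6 b^2 d (a^2 - b^2 d)^2$ (up to checking the exact normalization against Tables \ref{ta:alpT}--\ref{ta:gamT}). First I would show that no odd prime can divide $u_T$. By Lemma \ref{polynomials}, for coprime $a,b$ the content of $\alpha_T$ and $\gamma_T$ together divides $2d$, and similarly for $\beta_T,\gamma_T$; since $d$ is squarefree, $v_p(\gcd(\beta_T,\gamma_T)) \le 1 < 6$ for every odd $p$ when $\gcd(a,b)$ is squarefree and coprime to... — more carefully, one must track the factor $\gcd(a,b)$ (which is squarefree, not necessarily $1$ as for $C_2\times C_2$), but a squarefree common factor contributes valuation at most $1$ to each of $a,b$ and hence cannot produce a sixth power dividing $\gcd(\beta_T,\gamma_T)$. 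Thus $u_T$ is a power of $2$, and since $v_2(\alpha_T)\ge 4$ forces $v_2(a^2+3b^2d)\ge 0$ with the extra factor $16$, the constraint $u_T^4 \mid \alpha_T$ already limits $u_T \in \{1,2,4,8,\dots\}$; a finer count using $v_2(\beta_T)$ and $v_2(\gamma_T)$ will cap $u_T$ at $4$.

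Next I would split into cases according to the $2$-adic valuations of $a$ and $b$ (noting $\gcd(a,b)$ squarefree means $a,b$ are not both even beyond valuation $1$, so the relevant cases are: both odd; exactly one even; both of valuation exactly $1$). In each case one computes $v_2(\alpha_T)$, $v_2(\beta_T)$, $v_2(\gamma_T)$ explicitly and then invokes Corollary \ref{CorKraus} (the Kraus descent criterion): $u_T \ge 2$ exactly when the candidate model with invariants $2^{-4}\alpha_T, 2^{-6}\beta_T$ still has integral invariants \emph{and} passes Kraus's congruence conditions mod $4$ and mod $32$, and $u_T \ge 4$ requires the same one step further down. The condition ``$v_2(b^2d - a^2)\ge 8$ with $v_2(a)=v_2(b)=1$'' is precisely what makes $v_2(\gamma_T)$ large enough to permit a twelfth power; the refinements $a\equiv 2$ vs. $a\equiv 6 \bmod 8$, and the sub-case $4\le v_2(b^2d-a^2)\le 7$, come from chasing the mod-$32$ clause in Theorem \ref{kraus} applied to $2^{-6}\beta_T$ (resp. $2^{-12}\cdot(\text{third descent})$). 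The clause ``$v_2(b)\ge 3$ and $a\equiv 3 \bmod 4$'' handles the case where $b$ is highly $2$-divisible and $a$ is odd, making $v_2(\alpha_T)=4$ and $v_2(\beta_T)=6$ on the nose.

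The main obstacle I anticipate is the bookkeeping in the Kraus congruences: once one has divided $\alpha_T,\beta_T$ by $16,64$ (and possibly again by $256, 2^{12}$), one must verify the delicate mod-$4$ and mod-$32$ conditions on the reduced $c_6$, and these depend on $a \bmod 8$ and on $b^2 d - a^2 \bmod 32$ in a way that does not simplify cleanly — this is why the statement has three distinct sufficient conditions for $u_T = 2$. Lemma \ref{ch:ss:elemlemm} will be the right tool to pin down $v_2(\beta_T)$ from $v_2(\alpha_T)$ and $v_2(\gamma_T)$ in the borderline case $v_2(\alpha_T) = 4k$, $v_2(\gamma_T)\ge 12k$, which is exactly where the ambiguity between $u_T = 1$ and $u_T = 2$ (or $2$ and $4$) lives. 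Finally, I would confirm that the three conditions listed are mutually exclusive and exhaustive within the ``$u_T \ge 2$'' regime, and that outside all of them Corollary \ref{CorKraus} obstructs any descent, giving $u_T = 1$; the $j$-invariant $0$ and $1728$ sub-families singled out in Lemmas \ref{sselmmac2result} can be checked directly against these conditions to confirm consistency.
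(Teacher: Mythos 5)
Your overall strategy coincides with the paper's: invoke Lemma \ref{silverconverse} to get a unique $u_T$ with $\Delta^{\text{min}}=u_T^{-12}\gamma_T$, rule out odd primes, cap $u_T$ at $4$ via $2$-adic valuations of $\alpha_T,\beta_T,\gamma_T$, treat the $j=0,1728$ subfamilies of Lemma \ref{sselmmac2result} separately, and then run a case analysis on $v_2(a),v_2(b)$ using Theorem \ref{kraus} and Corollary \ref{CorKraus}. However, there is a concrete gap in your argument that no odd prime divides $u_T$. You claim that a squarefree common factor of $a$ and $b$ ``cannot produce a sixth power dividing $\gcd(\beta_T,\gamma_T)$,'' but this is false: if an odd prime $p$ divides $a$, $b$, and $d$ each to order exactly $1$, then $v_p(b^8d^4)=12$ and $v_p(a^7)=7$, so Lemma \ref{polynomials} only gives $v_p\bigl(\gcd(\beta_T,\gamma_T)\bigr)\le 7$, which does \emph{not} obstruct $v_p(u_T)=1$. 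The bound that actually works for $p>3$ is the one coming from $\mu_T^{(1)}\alpha_T+\nu_T^{(1)}\beta_T$: writing $\gcd(a,b,d)=m$, $\gcd(a,b)=mn$, $\gcd(a,d)=ml$ with $m,n,l$ pairwise coprime and squarefree, one gets $\gcd(\alpha_T,\beta_T)\mid 2^83^2m^3n^3l^2$, and since $u_T^4$ divides this, $v_p(u_T)=0$ for $p>3$. The prime $p=3$ then still survives this bound (because of the factor $3^2$) and needs a dedicated argument: one checks that $3\mid u_T$ together with $3\mid m$ forces $v_3(\gamma_T)=9<12$, while $3\mid u_T$ together with $3\mid n$ forces $v_3(\alpha_T)\le 3<4$. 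Your plan addresses neither of these points.

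Beyond that, the $2$-adic portion of your proposal is a program rather than a proof: the statements ``a finer count \ldots will cap $u_T$ at $4$'' and ``the refinements \ldots come from chasing the mod-$32$ clause'' are exactly where the content lies, and they are not carried out. In particular, the inequality $v_2(\gamma_T)\ge 12$ reduces to $v_2(b)+v_2(b^2d-a^2)\ge 3$ (using that $d$ is squarefree), and the correct organization of the $u_T=2$ case is by the value of $v_2(b)\in\{0,1,2,\ge 3\}$; the subcases $v_2(b)=0$ and $v_2(b)=2$ are eliminated by Corollary \ref{CorKraus} because the reduced $c_4$ has $2$-adic valuation exactly $2$, while $v_2(b)=1$ and $v_2(b)\ge 3$ each require a genuine application of Kraus's congruences to the reduced $c_6$. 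Also, a small point: Lemma \ref{ch:ss:elemlemm} is not actually needed here; the discrimination between $u_T=1,2,4$ is done entirely through Theorem \ref{kraus} and explicit congruences for $u_T^{-6}\beta_T$ modulo $4$ and $32$.
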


\begin{proof}
By Lemma \ref{silverconverse}, $\Delta_{E_{T}}^{\text{min}}=u_{T}^{-12}%
\gamma_{T}$ for some positive integer $u_{T}$ and recall that
\[
\alpha_{T}=16\left(  3b^{2}d+a^{2}\right)  ,\qquad\beta_{T}=-64a\left(
9b^{2}d-a^{2}\right)  ,\qquad\gamma_{T}=64b^{2}d\left(  b^{2}d-a^{2}\right)
^{2}.
\]

First, suppose the $j$-invariant of $E_{T}$ is $0$. By Lemma
\ref{sselmmac2result}, we may assume that $\left(  a,b,d\right)  =\left(
3b,b,-3\right)  $ with $b$ a squarefree integer. Then $\gamma_{T}%
=-2^{10}3^{3}b^{6}$. In particular, if $b$ is odd, then $v_{p}\!\left(
\gamma_{T}\right)  <12$ for all primes $p$ and therefore $\gamma_{T}$ is the
minimal discriminant of $E_{T}$. Now suppose $b$ is even. The admissible
change of variables $x\longmapsto4x$ and $y\longmapsto8y$ gives a $%
\mathbb{Q}
$-isomorphism from $E_{T}$ onto%
\[
E_{T}^{\prime}:y^{2}=x^{3}+\frac{3b}{2}x^{2}+\frac{3b^{2}}{4}x.
\]
Then $u_{T}^{-12}\gamma_{T}=-2^{-2}3^{3}b^{6}$ with $u_{T}=2$ and thus
$v_{p}\!\left(  u_{T}^{-12}\gamma_{T}\right)  <12$ for each prime $p$. Hence
$u_{T}^{-12}\gamma_{T}$ is the minimal discriminant of $E_{T}$ since
$E_{T}^{\prime}$ is given by an integral Weierstrass model. Note that this
agrees with the claim since the current assumptions imply that $v_{2}\!\left(
a\right)  =v_{2}\!\left(  b\right)  =1$ with $v_{2}\!\left(  b^{2}%
d-a^{2}\right)  =4$.

Next, suppose the $j$-invariant of $E_{T}$ is $1728$. By Lemma
\ref{sselmmac2result}, $E_{T}$ is $%
\mathbb{Q}
$-isomorphic to $E_{T}\!\left(  0,b,d\right)  $ for squarefree integers $b$
and $d$. Then $\alpha_{T}=2^{4}3b^{2}d$ and $\gamma_{T}=2^{6}b^{6}d^{3}$. In
particular, $v_{p}\!\left(  \gamma_{T}\right)  \leq9$ for each odd prime $p$
and $v_{2}\!\left(  \gamma_{T}\right)  \leq15$. It follows that $u_{T}$
divides $2$. Towards a contradiction, suppose $u_{T}=2$ so that $v_{2}%
\!\left(  \gamma_{T}\right)  \geq12$. This is equivalent to $b$ being even and
so $v_{2}\!\left(  u_{T}^{-4}\alpha_{T}\right)  =2$. Since $\beta_{T}=0$,
Corollary \ref{CorKraus} asserts that $u_{T}^{-12}\gamma_{T}$ is not the
minimal discriminant of $E_{T}$, which is a contradiction. Consequently, the
minimal discriminant of $E_{T}$ is $\gamma_{T}$.

Now suppose that the $j$-invariant of $E_{T}$ is not equal to $0$ or $1728$.
Next, let $\gcd\!\left(  a,b,d\right)  =m$. Then 
$\gcd\!\left(  a,b\right)  =mn$ and $\gcd\!\left(  a,d\right)  =ml$ for some positive integers $n,l$. Note that the assumption that $\gcd\!\left(  a,b\right)$ and $d$ are squarefree implies that the $m,n$, and $l$ are squarefree. In particular,  $\gcd\!\left(  m,n\right)= \gcd\!\left(  m,l\right)=1$. Now observe that  $\gcd\!\left(  n,l\right)$ divides $a,b$ and $d$. Therefore $\gcd\!\left(  n,l\right)$ divides~$m$. Since $\gcd\!\left(  m,l\right)=1$, we conclude that $\gcd\!\left(  n,l\right)=1$. Also note that $\gcd\!\left(  b,l\right)  =1$. In sum, $m,n,$ and $l$ are pairwise relatively prime positive squarefree integers and
\[
a=mnl\tilde{a},\qquad b=mn\tilde{b},\qquad\text{and}\qquad d=ml\tilde{d}%
\]
for some integers $\tilde{a},\tilde{b},$ and $\tilde{d}$. By Lemma
\ref{polynomials},%
\begin{equation}%
\begin{tabular}
[c]{rrl}%
$\gcd\!\left(  \alpha_{T},\beta_{T}\right)  \qquad$divides$\qquad$ &
$2^{8}3^{2}\gcd\!\left(  b^{4}d^{2},a^{3}\right)  $ & \hspace{-1.1em} $=2^{8}3^{2}m^{3}%
n^{3}l^{2},$\\
$\gcd\!\left(  \alpha_{T},\gamma_{T}\right)  \qquad$divides$\qquad$ &
$2^{10}\gcd\!\left(  b^{6}d^{3},a^{6}\right)  $ & \hspace{-1.1em} $=2^{10}m^{6}n^{6}l^{3},$\\
$\gcd\!\left(  \beta_{T},\gamma_{T}\right)  \qquad$divides$\qquad$ &
$2^{12}\gcd\!\left(  b^{8}d^{4},a^{7}\right)  $ & \hspace{-1.1em} $=2^{12}m^{7}n^{7}l^{4}.$%
\end{tabular}
\ \ \label{c2thm1}%
\end{equation}

We claim that $u_{T}$ divides $4$. To this end, suppose $p$ is an odd prime
dividing $u_{T}$. If $p>3$, then $p^{4}$ divides $m^{3}n^{3}l^{2}$ by
(\ref{c2thm1}). But this is impossible since $m,n,$ and $l$ are relatively
prime squarefree integers. So suppose $p=3$. Then $3$ does not divide $l$ by
(\ref{c2thm1}) since this would imply that $3^{4}$ does not divide
$\gcd\!\left(  \alpha_{T},\gamma_{T}\right)  $. We may therefore assume that
$3$ divides either $m$ or $n$.

Suppose $3$ divides $u_{T}$ and $m$. Then $a=3\hat{a},\ b=3\hat{b},$ and
$d=3\hat{d}$ for some integers $\hat{a},\hat{b},\hat{d}$ with $3$ dividing at
most one of $\hat{a}$ and $\hat{b}$. Then$\ 4\leq v_{3}\!\left(  \alpha
_{T}\right)  =2+v_{3}\!\left(  \hat{a}^{2}+9\hat{b}^{2}\hat{d}\right)  $. Thus
$v_{3}\!\left(  \hat{a}\right)  >0$, which implies that $3$ does not divide
$\hat{b}$. These assumptions imply that%
\[
9=v_{3}\!\left(  \gamma_{T}\right)  =v_{3}\!\left(  27\hat{b}^{2}\hat
{d}\right)  +2v_{3}\!\left(  27\hat{b}^{2}\hat{d}-9\hat{a}^{2}\right)
\]
which contradicts the assumption that $3$ divides $u_{T}$.

Now suppose that $3$ divides $u_{T}$ and $n$. In particular, $d$ is not
divisible by $3$. Then $v_{3}\!\left(  \alpha_{T}\right)  \leq3$ which
contradicts the assumption that $u_{T}$ is divisible by $3$.

Since $u_{T}$ is not divisible by odd primes, we conclude that $u_{T}$ divides
$4$ by (\ref{c2thm1}). Indeed, $u_{T}^{4}$ divides $\gcd\!\left(  \alpha
_{T},\beta_{T}\right)  $ and $v_{2}\!\left(  \gcd\!\left(  \alpha_{T}%
,\beta_{T}\right)  \right)  \leq11$. Moreover, $v_{3}\!\left(  u_{T}^{-6}%
\beta_{T}\right)  =v_{3}\!\left(  \beta_{T}\right)  \neq2$ by
Theorem~\ref{kraus} since $E_{T}$ is given by an integral Weierstrass model.
We now show the theorem by considering the cases when $u_{T}$ is $4,2,$ or $1$.

\textbf{Case 1.} Suppose $u_{T}=4$ so that $v_{2}\!\left(  \alpha_{T}\right)
\geq8,\ v_{2}\!\left(  \beta_{T}\right)  \geq12$, and $v_{2}\!\left(
\gamma_{T}\right)  \geq24$. In particular, $v_{2}\!\left(  3b^{2}%
d+a^{2}\right)  \geq4$. This implies that either $\left(  i\right)  $ $abd$ is
odd or $\left(  ii\right)  $ $v_{2}\!\left(  a\right)  =v_{2}\!\left(
b\right)  =1$ with $d\equiv1\ \operatorname{mod}4$.

\qquad\textbf{Subcase 1a.} Assume that $abd$ is odd. Then the assumptions on
$v_{2}\!\left(  \beta_{T}\right)  $ and $v_{2}\!\left(  \gamma_{T}\right)  $
imply that $6\leq v_{2}\!\left(  9b^{2}d-a^{2}\right)  $ and $9\leq
v_{2}\!\left(  b^{2}d-a^{2}\right)  $, respectively. Thus $9b^{2}d-a^{2}%
\equiv0\ \operatorname{mod}64$. But this is a contradiction since
$9b^{2}d-a^{2}\equiv8b^{2}d\ \operatorname{mod}64$ is non-zero.

\qquad\textbf{Subcase 1b.} Assume that $v_{2}\!\left(  a\right)
=v_{2}\!\left(  b\right)  =1$ with $d\equiv1\ \operatorname{mod}4$. Then
$v_{2}\!\left(  \gamma_{T}\right)  \geq24$ implies that $v_{2}\!\left(
b^{2}d-a^{2}\right)  \geq8$. Consequently, $3b^{2}d+a^{2}\equiv2\left(
b^{2}d+a^{2}\right)  \ \operatorname{mod}32$. From this, we conclude that
$v_{2}\!\left(  \alpha_{T}\right)  =8$. Next, observe that $u_{T}^{-6}%
\beta_{T}=-2^{-6}a\left(  8b^{2}d+b^{2}d-a^{2}\right)  $. Therefore
$u_{T}^{-6}\beta_{T}\equiv\frac{-a}{2}\ \operatorname{mod}4$ since
$\frac{b^{2}d}{4}\equiv1\ \operatorname{mod}4$. By Theorem \ref{kraus}, there
is an integral Weierstrass model having $u_{T}^{-4}\alpha_{T}$ and $u_{T}%
^{-6}\beta_{T}$ as its invariants $c_{4}$ and $c_{6}$, respectively, if and
only if $\frac{a}{2}\equiv1\ \operatorname{mod}4$. This shows that $u_{T}=4$
if and only if $a\equiv2\ \operatorname{mod}8$, which completes the proof of
$\left(  1\right)  $.

\textbf{Case 2.} Suppose $u_{T}=2$ so that $v_{2}\!\left(  \alpha_{T}\right)
\geq4,\ v_{2}\!\left(  \beta_{T}\right)  \geq6,$ and $v_{2}\!\left(
\gamma_{T}\right)  \geq12$. In particular, $v_{2}\!\left(  b^{2}d\right)
+2v_{2}\!\left(  b^{2}d-a^{2}\right)  \geq6$. Observe that this is equivalent to $v_2(b)+v_2(d)/2+v_2(b^2d-a^2)\geq3$. Since $d$ is squarefree, we deduce that this inequality holds if and only if $v_{2}\!\left(  b\right)
+v_{2}\!\left(  b^{2}d-a^{2}\right)\geq 3$. We now proceed by cases, by conditioning upon the possible values of $v_2(b)$.

\qquad\textbf{Subcase 2a.} Suppose $v_2(b)=0$, so that $v_{2}\!\left(  b^{2}d-a^{2}\right)  \geq3$. Then the inequality holds if and only if $a$ is odd with
$d\equiv1\ \operatorname{mod}8$. Hence $u_{T}^{-4}\alpha_{T}\equiv
4\ \operatorname{mod}8$ since $u_{T}^{-4}\alpha_{T}=b^{2}d-a^{2}%
+2b^{2}d+2a^{2}$. Since $u_{T}^{-6}\beta_{T}$ is even, we have a contradiction
by Corollary \ref{CorKraus} which asserts that $u_{T}^{-12}\gamma_{T}$ is not
the minimal discriminant of $E_{T}$.

\qquad\textbf{Subcase 2b.} Suppose $v_{2}\!\left(  b\right)  =1$, so that $v_{2}\!\left(  b^{2}d-a^{2}\right)  \geq2$. Then $a$ is even, and thus $u_{T}%
^{-6}\beta_{T}$ is even. By Corollary \ref{CorKraus}, $v_{2}\!\left(
u_{T}^{-4}\alpha_{T}\right)  \geq4$. This inequality holds if and only if
$v_{2}\!\left(  a\right)  =1$ with $d\equiv1\ \operatorname{mod}4$. Under
these assumptions, it is easily verified that $v_{2}\!\left(  b^{2}%
d-a^{2}\right)  \geq4$. Consequently, $9b^{2}d-a^{2}\equiv
0\ \operatorname{mod}16$ and thus $u_{T}^{-6}\beta_{T}\equiv
0\ \operatorname{mod}32$. By Theorem \ref{kraus} we conclude that there is an
integral Weierstrass model having $u_{T}^{-4}\alpha_{T}$ and $u_{T}^{-6}%
\beta_{T}$ as its invariants $c_{4}$ and $c_{6}$, respectively. This and Case
1 above imply that under the assumptions of this subcase, $u_{T}=2$ if and
only if $\left(  i\right)  \ v_{2}\!\left(  b^{2}d-a^{2}\right)  \geq8$ with
$v_{2}\!\left(  a\right)  =v_{2}\!\left(  b\right)  =1$ and $a\equiv
6\ \operatorname{mod}8$ or $\left(  ii\right)  $ $4\leq v_{2}\!\left(
b^{2}d-a^{2}\right)  <8$ with $v_{2}\!\left(  a\right)  =v_{2}\!\left(
b\right)  =1$.

\qquad\textbf{Subcase 2c.} Suppose $v_{2}\!\left(  b\right)  =2$, so that
$v_{2}\!\left(  b^{2}d-a^{2}\right)  \geq1$. Then $v_{2}\!\left(  a\right)  =1$ since
$\gcd\!\left(  a,b\right)  $ is squarefree and thus $v_{2}\!\left(
b^{2}d-a^{2}\right)  =2$. Consequently, $u_{T}^{-6}\beta_{T}$ is even and
$v_{2}\!\left(  u_{T}^{-4}\alpha_{T}\right)  =2$. But this is a contradiction,
since Corollary \ref{CorKraus} implies that $u_{T}^{-12}\gamma_{T}$ is not the
minimal discriminant of $E_{T}$.

\qquad\textbf{Subcase 2d.} Suppose $v_{2}\!\left(  b\right)  \geq3$, so that $v_{2}\!\left(  b^{2}d-a^{2}\right)  \geq0$. If $a$ is even, then $v_{2}\!\left(
a\right)  =1$. Hence $v_{2}\!\left(  b^{2}d-a^{2}\right)  =2$. But this leads
to a contradiction by the argument given in Subcase~2c. So suppose $a$ is odd
so that $v_{2}\!\left(  b^{2}d-a^{2}\right)  =0$. Then by inspection,
$u_{T}^{-4}\alpha_{T}$ is an integer and $u_{T}^{-6}\beta_{T}\equiv
a^{3}\ \operatorname{mod}4$. By Theorem \ref{kraus} we conclude that there is
an integral Weierstrass model having $u_{T}^{-4}\alpha_{T}$ and $u_{T}%
^{-6}\beta_{T}$ as its invariants $c_{4}$ and $c_{6}$, respectively, if and
only if $a\equiv3\ \operatorname{mod}4$, which concludes the proof of $\left(
2\right)  $.

\textbf{Case 3.} Suppose $u_{T}=1$. Since $E_{T}$ is given by an integral
Weierstrass model, we have that $u_{T}=1$ if and only if $u_{T}\neq2,4$, which
concludes the proof.
\end{proof}

\subsection[Proof of Theorem \ref{semistablecondthm} for $T=C_{3}$]{Proof of
Theorem \ref{semistablecondthm} for \bm{$T=C_{3}$}}

\begin{claim}
[Theorem \ref{semistablecondthm} for \bm{$T=C_{3}$}.]Let $E_{T}=E_{C_{3}%
}\!\left(  a,b\right)  $ where $a$ and $b$ are relatively prime integers with
$a$ positive. Write $a=c^{3}d^{2}e$ with $d,e$ positive relatively prime
squarefree integers. Then the minimal discriminant of $E_{T}$ is $u_{T}%
^{-12}\gamma_{T}$ where $u_{T}=c^{2}d$.
\end{claim}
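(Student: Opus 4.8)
The plan is to turn everything into a local, prime-by-prime determination of the scaling factor. Since $E_{C_{3}}(a,b)$ is given by an integral Weierstrass model, Lemma~\ref{silverconverse} yields a unique positive integer $u_{T}$ with $\Delta_{E_{T}}^{\text{min}}=u_{T}^{-12}\gamma_{T}$ and with $u_{T}^{-4}\alpha_{T},\ u_{T}^{-6}\beta_{T}$ the invariants of a global minimal model; as the minimal discriminant is determined place by place, it suffices to compute $v_{p}(u_{T})$ for every prime $p$ and then check that $\prod_{p}p^{v_{p}(u_{T})}=c^{2}d$ against the factorization $a=c^{3}d^{2}e$. Recording the invariants from Tables~\ref{ta:alpT}--\ref{ta:gamT}, one has $\alpha_{C_{3}}=a^{3}(a-24b)$, $\beta_{C_{3}}=-a^{4}(a^{2}-36ab+216b^{2})$, and $\gamma_{C_{3}}=a^{8}b^{3}(a-27b)$, and using $\gcd(a,b)=1$ one sees that for every prime $p\mid a$ the valuations $v_{p}(\alpha_{C_{3}})$, $v_{p}(\beta_{C_{3}})$, $v_{p}(\gamma_{C_{3}})$ depend only on $n:=v_{p}(a)$. (The only pair $(a,b)$ with $\alpha_{C_{3}}\beta_{C_{3}}=0$, equivalently $j_{E_{T}}\in\{0,1728\}$, is $(a,b)=(24,1)$, since $\beta_{C_{3}}$ has no integral zero with $a\neq0$; this single curve is checked directly, so from now on assume $\alpha_{C_{3}}\neq0$.)

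For $p\nmid a$ the claim is $v_{p}(u_{T})=0$: if $p\mid u_{T}$ then $p^{4}\mid\alpha_{C_{3}}$ and $p^{12}\mid\gamma_{C_{3}}$, so $p^{4}\mid\gcd(\alpha_{C_{3}},\gamma_{C_{3}})$; by the identities of Lemma~\ref{polynomials} (choosing $r,s$ with $ra^{9}+sb^{9}=1$) this gcd divides $2^{15}3^{6}a^{3}$, which with $p\nmid a$ forces $p\in\{2,3\}$, and then the elementary facts $v_{2}(a-24b)=0$ (as $a$ is odd) and $v_{3}(a-24b)=0$ (as $3\nmid a$) contradict $p^{4}\mid a^{3}(a-24b)$. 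For $p\mid a$ with $p\geq5$, since the model is integral and $p\geq5$, a model is non-minimal at $p$ exactly when $p^{4}\mid c_{4}$, $p^{6}\mid c_{6}$, $p^{12}\mid\Delta$; iterating gives $v_{p}(u_{T})=\min\bigl(\lfloor v_{p}(\alpha_{C_{3}})/4\rfloor,\lfloor v_{p}(\beta_{C_{3}})/6\rfloor,\lfloor v_{p}(\gamma_{C_{3}})/12\rfloor\bigr)$, and since $p\nmid b$ and $p\nmid 24\cdot27$ one computes $v_{p}(\alpha_{C_{3}})=3n$, $v_{p}(\beta_{C_{3}})=4n$, $v_{p}(\gamma_{C_{3}})=8n$, whence $v_{p}(u_{T})=\lfloor 2n/3\rfloor$.

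The substantive case---and the main obstacle---is $p\in\{2,3\}$ with $p\mid a$, where minimality is governed by Kraus's Theorem~\ref{kraus} and Corollary~\ref{CorKraus}, and the argument splits into subcases according to $n\bmod3$ (and the small values of $n$), exactly as in the proof for $T=C_{2}$. I would first compute the explicit valuation formulas for $\alpha_{C_{3}},\beta_{C_{3}},\gamma_{C_{3}}$ in terms of $n$, observe that $\lfloor v_{p}(\gamma_{C_{3}})/12\rfloor=\lfloor 2n/3\rfloor$ (so $v_{p}(u_{T})\leq\lfloor 2n/3\rfloor$), and then show the bound is attained. The divisibility requirements $p^{4\lfloor 2n/3\rfloor}\mid\alpha_{C_{3}}$ and $p^{6\lfloor 2n/3\rfloor}\mid\beta_{C_{3}}$ follow from those formulas, and the model obtained by scaling by $p^{\lfloor 2n/3\rfloor}$ satisfies the Kraus conditions: for $p=3$ because $v_{3}(\beta_{C_{3}})-6\lfloor 2n/3\rfloor\in\{3,5,6,7\}$ is never $2$; for $p=2$ because the scaled $c_{4}$ has $2$-adic valuation $\geq4$ and the scaled $c_{6}$ is $\equiv0$ or $8\pmod{32}$ --- in the residue classes $n\equiv1,2\pmod3$ the scaled $c_{6}$ has valuation $\geq5$ so this is automatic, while for $3\mid n$ one has $v_{2}(c_{6})=3$ and must invoke $c_{6}/8\equiv1\pmod4$, which comes down to $27b^{2}\equiv3\pmod4$ for odd $b$. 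The Kraus condition at the other of $2,3$ is unchanged by scaling at $p$ and already holds since $E_{C_{3}}(a,b)$ is integral. This gives $v_{p}(u_{T})=\lfloor 2n/3\rfloor$ for $p=2,3$ as well.

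Finally, assembling the local data yields $u_{T}=\prod_{p\mid a}p^{\lfloor 2v_{p}(a)/3\rfloor}$, and comparing exponents with $a=c^{3}d^{2}e$ completes the proof: for each $p\mid a$, $v_{p}(d)=1$ exactly when $v_{p}(a)\equiv2\pmod3$, $v_{p}(e)=1$ exactly when $v_{p}(a)\equiv1\pmod3$, and both vanish when $3\mid v_{p}(a)$, so in all three cases $2v_{p}(c)+v_{p}(d)=\lfloor 2v_{p}(a)/3\rfloor$; hence $u_{T}=c^{2}d$ and $\Delta_{E_{T}}^{\text{min}}=(c^{2}d)^{-12}\gamma_{C_{3}}$. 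I expect the genuinely delicate point to be the $p=2$ bookkeeping --- verifying that the optimal power of $2$ can always be stripped off, i.e.\ that Kraus's mod-$32$ condition holds for the optimally scaled model independently of $b$ --- rather than anything conceptual; the $p\geq5$ and $p\nmid a$ cases are routine.
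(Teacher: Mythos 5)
Your proposal is correct, but it is organized quite differently from the paper's argument. The paper first performs the change of variables $x\mapsto (c^2d)^2x$, $y\mapsto (c^2d)^3y$ and simply exhibits the resulting model $E_T':y^2+cdexy+de^2by=x^3$, whose coefficients are visibly integral; this settles the ``attainability'' direction with no congruence work at all. It then shows the residual factor $w$ is $1$ by bounding $w^6$ by $\gcd(u_T^{-6}\beta_T,u_T^{-12}\gamma_T)\mid 2^63^9d^2e^4$ (killing $p\geq5$), a short case analysis on $v_3(a)$ at $p=3$, and the observation that $v_2(u_T^{-4}\alpha_T)\geq4$ forces $a$ even, hence $a-27b$ odd and $v_2(u_T^{-12}\gamma_T)\leq8$, at $p=2$. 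You instead compute $v_p(u_T)$ locally at every prime: the closed form $v_p(u_T)=\lfloor 2v_p(a)/3\rfloor$ at $p\geq5$ via the floor-of-valuations criterion, and an explicit Kraus verification at $p=2,3$; your claimed facts check out (e.g.\ $v_3(\beta_T)-6\lfloor2n/3\rfloor\in\{3,5,6,7\}$, and in the $3\mid n$ case at $p=2$ the scaled $c_6$ equals $8$ times $-a_0^4(27b^2+4(\cdots))\equiv1\pmod4$, so it is $\equiv8\pmod{32}$). Your route costs more bookkeeping at $2$ and $3$ --- in particular the assertion that the $2$-adic Kraus condition is unaffected by scaling at $3$ deserves the one-line justification $3^6\equiv1\pmod 8$, so dividing $c_6$ by $3^{6m}$ preserves its relevant residue classes --- and it requires invoking the floor-function minimality criterion at $p\geq5$, which the paper never needs. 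In exchange you get the explicit local formula $v_p(u_T)=\lfloor 2v_p(a)/3\rfloor$ for all $p$, and you do not have to guess the answer $c^2d$ in advance; the paper's proof is shorter precisely because writing down $E_T'$ makes one direction free.
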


\begin{proof}
Let $u_{T}=c^{2}d$. Then the admissible change of variables $x\longmapsto
u_{T}^{2}x$ and $y\longmapsto u_{T}^{3}y$ results in a $%
\mathbb{Q}
$-isomorphism between $E_{T}$ and the elliptic curve%
\[
E_{T}^{\prime}:y^{2}+cdexy+de^{2}by=x^{3}.
\]
In particular, $E_{T}^{\prime}$ is given by an integral Weierstrass model and%
\begin{align*}
u_{T}^{-4}\alpha_{T}  &  =cd^{2}e^{3}\left(  a-24b\right)  ,\qquad u_{T}%
^{-6}\beta_{T}=d^{2}e^{4}\left(  -a^{2}+36ab-216b^{2}\right)  , \\
u_{T}^{-12}\gamma_{T}  &  =d^{4}e^{8}b^{3}\left(  a-27b\right)  .
\end{align*}

First, suppose $E_{T}$ has $j$-invariant $0$. Then by Proposition
\ref{rationalmodels}, $\left(  c,d,e,b\right)  =\left(  2,1,3,1\right)  $. The
theorem now follows since $u_{T}^{-12}\gamma_{T}=-3^{9}$ is the minimal
discriminant of $E_{T}\!\left(  24,1\right)  $. Next, observe that $E_{T}$
does not have $j$-invariant $1728$ since $\beta_{T}=0$ does not have solutions
in the positive integers.

Now suppose the $j$-invariant of $E_{T}$ is not equal to $0$. We claim that
$E_{T}^{\prime}$ is a global minimal model for $E_{T}$. By Lemma
\ref{polynomials}, $\gcd\!\left(  \beta_{T},\gamma_{T}\right)  $ divides
$2^{6}3^{9}a^{4}$ since $\gcd\!\left(  a,b\right)  =1$. In particular,
$u_{T}^{-6}\gcd\!\left(  \beta_{T},\gamma_{T}\right)  $ divides $2^{6}%
3^{9}d^{2}e^{4}$. Thus $\gcd\!\left(  u_{T}^{-6}\beta_{T},u_{T}^{-12}%
\gamma_{T}\right)  $ divides $2^{6}3^{9}d^{2}e^{4}$. By Lemma
\ref{silverconverse}, there is a unique positive integer $w$ such that
$\Delta_{E_{T}}^{\text{min}}=w^{-12}u_{T}^{-12}\gamma_{T}$. It suffices to
show that $w=1$. To this end, observe that $w^{6}$ divides $2^{6}3^{9}%
d^{2}e^{4}$. In particular, $v_{p}\!\left(  w\right)  =0$ for all primes
$p\geq5$ since $d$ and $e$ are relatively prime squarefree integers.

If $v_{3}\!\left(  w\right)  >0$, then $12\leq v_{3}\!\left(  u_{T}%
^{-12}\gamma_{T}\right)  =v_{3}\!\left(  d^{4}e^{8}b^{3}\right)
+v_{3}\!\left(  a-27b\right)  $. First, suppose $v_{3}\!\left(  a\right)  >0$
with $v_{3}\!\left(  a\right)  \neq3$. Then $v_{3}\!\left(  a-27b\right)
\leq3$ and $v_{3}\!\left(  d^{4}e^{8}b^{3}\right)  \leq8$ since $a,b$ are
relatively prime and $d,e$ are relatively prime and squarefree. But this is a
contradiction. Suppose instead that $v_{3}\!\left(  a\right)  =3$. In
particular, $v_{3}\!\left(  c\right)  =1$ and $v_{3}\!\left(  de\right)  =0$.
Thus $v_{3}\!\left(  u_{T}^{-4}\alpha_{T}\right)  =2$, which is a
contradiction. Lastly, suppose $v_{3}\!\left(  a\right)  =0$. Then $u_{T}%
^{-4}\alpha_{T}\equiv1\ \operatorname{mod}3$, which contradicts the assumption
that $v_{3}\!\left(  u_{T}^{-4}\alpha_{T}\right)  \geq4$. We conclude that
$v_{3}\!\left(  w\right)  =0$.

If $v_{2}\!\left(  w\right)  >0$, then $v_{2}\!\left(  u_{T}^{-4}\alpha
_{T}\right)  \geq4$. Thus $a$ is even, and consequently, $a-27b$ is odd. But
then $v_{2}\!\left(  u_{T}^{-12}\gamma_{T}\right)  \leq8$, which is our
desired contradiction. Thus $v_{p}\!\left(  w\right)  =0$ for each prime $p$
and hence $w=1$, which shows that $u_{T}^{-12}\gamma_{T}$ is the minimal
discriminant of $E_{T}$.
\end{proof}

\subsection[Proof of Theorem \ref{semistablecondthm} for $T=C_{4}$]{Proof of
Theorem \ref{semistablecondthm} for \bm{$T=C_{4}$}}

\begin{claim}
[Theorem \ref{semistablecondthm} for \bm{$T=C_{4}$}.]Let $E_{T}=E_{C_{4}%
}\!\left(  a,b\right)  $ where $a$ and $b$ are relatively prime integers with
$a$ positive. Write $a=c^{2}d$ for $d$ a positive squarefree integer. Then the
minimal discriminant of $E_{T}$ is $u_{T}^{-12}\gamma_{T}$ where $u_{T}%
\in\left\{  c,2c\right\}  $. Moreover, $u_{T}=2c$ if and only if
$v_{2}\!\left(  a\right)  \geq8$ is even with $bd\equiv3\ \operatorname{mod}4$.
\end{claim}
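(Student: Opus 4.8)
The plan is to imitate the $C_{3}$ argument just given. Write $a=c^{2}d$ with $d$ the positive squarefree part of $a$, and apply the admissible change of variables $x\longmapsto c^{2}x$, $y\longmapsto c^{3}y$ to $E_{T}=E_{C_{4}}\!\left(a,b\right):y^{2}+axy-a^{2}by=x^{3}-abx^{2}$. Since $c^{2}\mid a$ and $c^{3}\mid a^{2}b$, this yields an integral Weierstrass model $E_{T}^{\prime}$ whose invariants are $c^{-4}\alpha_{T}=d^{2}\!\left(a^{2}+16ab+16b^{2}\right)$ and $c^{-6}\beta_{T}=-d^{3}\!\left(a^{3}+24a^{2}b+120ab^{2}-64b^{3}\right)$ and whose discriminant is $c^{-12}\gamma_{T}=c^{2}d^{7}b^{4}\!\left(a+16b\right)$, as read off from Tables~\ref{ta:alpT}--\ref{ta:gamT}. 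By Lemma~\ref{silverconverse} there is a unique positive integer $w$ with $\Delta_{E_{T}}^{\text{min}}=w^{-12}c^{-12}\gamma_{T}$, so $u_{T}=cw$ and it suffices to prove $w\in\left\{1,2\right\}$ and to decide exactly when $w=2$.

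First I would discard the degenerate $j$-invariants: by Lemma~\ref{sslemc4j0} no member of the $C_{4}$ family has $j=0$, and $j=1728$ only for $\left(a,b\right)=\left(8,-1\right)$ (where $c=2$ and $u_{T}=c=2$), a case checked by direct computation as in the $C_{3}$ argument. So assume $j\neq0,1728$. From $\gcd\!\left(a,b\right)=1$ and Lemma~\ref{polynomials} we have $\gcd\!\left(\alpha_{T},\beta_{T}\right)\mid2^{8}3^{2}a^{2}$ and $\gcd\!\left(\beta_{T},\gamma_{T}\right)\mid2^{18}a^{3}$; dividing out $c^{4}\mid\alpha_{T}$, $c^{6}\mid\beta_{T}$, $c^{12}\mid\gamma_{T}$ gives $\gcd\!\left(c^{-4}\alpha_{T},c^{-6}\beta_{T}\right)\mid2^{8}3^{2}d^{2}$ and $\gcd\!\left(c^{-6}\beta_{T},c^{-12}\gamma_{T}\right)\mid2^{18}d^{3}$. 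As $w^{4}$ divides the first, $w^{6}$ the second, and $d$ is squarefree, $w$ must be a power of $2$ with $v_{2}\!\left(w\right)\leq2$; and a short case analysis on $v_{2}\!\left(a\right)$ applied to the factorisation $c^{-4}\alpha_{T}=d^{2}\!\left(a^{2}+16ab+16b^{2}\right)$ shows $v_{2}\!\left(c^{-4}\alpha_{T}\right)\leq6<8$ in all cases, which excludes $w=4$. Hence $w\in\left\{1,2\right\}$.

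It remains to decide when $E_{T}^{\prime}$ is non-minimal at $2$, that is, when $w=2$. By Theorem~\ref{kraus} (equivalently Corollary~\ref{CorKraus}) this occurs precisely when $v_{2}\!\left(c^{-4}\alpha_{T}\right)\geq4$, $v_{2}\!\left(c^{-6}\beta_{T}\right)\geq6$, $v_{2}\!\left(c^{-12}\gamma_{T}\right)\geq12$, and the pair $\left(2^{-4}c^{-4}\alpha_{T},\,2^{-6}c^{-6}\beta_{T}\right)$ satisfies Kraus's $2$-adic condition, its $3$-adic condition being automatic since $E_{T}^{\prime}$ is a genuine model. I would run this through the cases $v_{2}\!\left(a\right)=0,1,\dots,7$ and $v_{2}\!\left(a\right)\geq8$. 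For $v_{2}\!\left(a\right)=0$ one has $v_{2}\!\left(c^{-4}\alpha_{T}\right)=0$, so $w=1$. For $1\leq v_{2}\!\left(a\right)\leq7$, either $v_{2}\!\left(c^{-12}\gamma_{T}\right)<12$ --- which is the case for $v_{2}\!\left(a\right)\in\left\{1,2,6\right\}$, and for $v_{2}\!\left(a\right)=4$ when $v_{2}\!\left(a+16b\right)<8$ --- or $v_{2}\!\left(2^{-4}c^{-4}\alpha_{T}\right)<4$ and $2^{-6}c^{-6}\beta_{T}\not\equiv3\ \operatorname{mod}4$ (the latter being even for $v_{2}\!\left(a\right)\in\left\{3,5,7\right\}$), so Kraus's condition fails and $w=1$; the remaining subcase, $v_{2}\!\left(a\right)=4$ with $v_{2}\!\left(a+16b\right)\geq8$, is the delicate one: writing $a=2^{4}m$ with $m$ odd, the facts that $d$ is the squarefree part of $a$ and $v_{2}\!\left(a+16b\right)\geq8$ force $d\equiv m\ \operatorname{mod}4$ and $b\equiv-m\ \operatorname{mod}16$, so $bd\equiv-m^{2}\equiv3\ \operatorname{mod}4$, and reducing $a^{3}+24a^{2}b+120ab^{2}-64b^{3}$ modulo $2^{8}$ gives $2^{-6}c^{-6}\beta_{T}\equiv bd-2\equiv1\ \operatorname{mod}4$, again violating Kraus's condition. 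Finally, for $v_{2}\!\left(a\right)\geq8$ even one computes $v_{2}\!\left(c^{-4}\alpha_{T}\right)=4$, $v_{2}\!\left(c^{-6}\beta_{T}\right)=6$, $v_{2}\!\left(c^{-12}\gamma_{T}\right)=v_{2}\!\left(a\right)+4\geq12$, and, since $a^{3}+24a^{2}b+120ab^{2}-64b^{3}\equiv-64b^{3}\ \operatorname{mod}2^{8}$, $2^{-6}c^{-6}\beta_{T}\equiv\left(bd\right)^{3}\equiv bd\ \operatorname{mod}4$; as $2^{-4}c^{-4}\alpha_{T}$ is then odd, Kraus's $2$-adic condition holds exactly when $bd\equiv3\ \operatorname{mod}4$. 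Thus $w=2$ if and only if $v_{2}\!\left(a\right)\geq8$ is even and $bd\equiv3\ \operatorname{mod}4$, as asserted.

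The crux --- and the step I expect to be the main obstacle --- is the $2$-adic bookkeeping of the last paragraph: pinning down the threshold $v_{2}\!\left(a\right)\geq8$ and the residue condition on $bd$ requires interlacing the valuation estimates with Kraus's existence criterion, and it crucially uses that $d$ is exactly the squarefree part of $a$, which is what rigidly links $bd\bmod4$ to the $2$-adic data in the borderline case $v_{2}\!\left(a\right)=4$.
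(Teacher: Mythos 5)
Your proposal follows essentially the same route as the paper: rescale by $c$ to get an integral model, use Lemma~\ref{polynomials} to force the residual scaling factor $w$ to be a power of $2$ with $w\mid 4$, exclude $w=4$ by bounding $v_{2}\!\left(c^{-4}\alpha_{T}\right)$, and then decide $w=2$ versus $w=1$ by Kraus's criterion at $2$; your computations in the decisive subcases ($v_{2}\!\left(a\right)=4$ with $v_{2}\!\left(a+16b\right)\geq8$, and $v_{2}\!\left(a\right)\geq8$ even) agree with the paper's. Your uniform bound $v_{2}\!\left(c^{-4}\alpha_{T}\right)\leq6$ is in fact a slightly cleaner way to kill $w=4$ than the paper's split argument.

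One small hole in the case enumeration: you run through $v_{2}\!\left(a\right)=0,1,\dots,7$ and then treat only ``$v_{2}\!\left(a\right)\geq8$ even,'' so odd valuations $v_{2}\!\left(a\right)\geq9$ (which do occur, e.g.\ $a=2^{9}\cdot(\text{odd})$, forcing $v_{2}\!\left(d\right)=1$) are never addressed; since $v_{2}\!\left(c^{-12}\gamma_{T}\right)=2v_{2}\!\left(c\right)+7+4\geq12$ there, the discriminant test does not dispose of them. The fix is immediate and identical to your $v_{2}\!\left(a\right)\in\{3,5,7\}$ argument: for any odd $v_{2}\!\left(a\right)\geq3$ one has $v_{2}\!\left(2^{-4}c^{-4}\alpha_{T}\right)=2<4$ and $v_{2}\!\left(2^{-6}c^{-6}\beta_{T}\right)=3$, so Kraus fails and $w=1$. (The paper avoids this by splitting on the parity of $d$ rather than on the size of $v_{2}\!\left(a\right)$, which handles all odd $v_{2}\!\left(a\right)$ in one stroke.) With that sentence added, your proof is complete.
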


\begin{proof}
Let $u_{T}=c$. Then the admissible change of variables $x\longmapsto u_{T}%
^{2}x$ and $y\longmapsto u_{T}^{3}y$ results in a $%
\mathbb{Q}
$-isomorphism between $E_{T}$ and the elliptic curve%
\begin{equation}
E_{T}^{\prime}:y^{2}+cdxy-cd^{2}by=x^{3}-bdx^{2}. \label{ETprimeC4}%
\end{equation}
In particular, $E_{T}^{\prime}$ is given by an integral Weierstrass model and%
\begin{align*}
c^{-4}\alpha_{T}  &  =d^{2}\left(  a^{2}+16ab+16b^{2}\right)  ,\qquad
c^{-6}\beta_{T}=d^{3}\left(  a+8b\right)  \left(  -a^{2}-16ab+8b^{2}\right)
,\\
c^{-12}\gamma_{T}  &  =b^{4}c^{2}d^{7}\left(  a+16b\right)  .
\end{align*}

First, suppose $E_{T}$ has $j$-invariant $0$ or $1728$. By Lemma
\ref{sslemc4j0}, we may assume $\left(  c,d,b\right)  =\left(  2,2,-1\right)
$. The theorem now follows since $u_{T}^{-12}\gamma_{T}=-2^{12}$ is the
minimal discriminant of $E_{T}\!\left(  8,-1\right)  $.

Next, suppose that the $j$-invariant of $E_{T}$ is not $0$ or $1728$. By Lemma
\ref{polynomials}, $\gcd\!\left(  \beta_{T},\gamma_{T}\right)  $ divides
$2^{18}a^{3}$ since $a$ and $b$ are relatively prime integers. Since
$\gcd\!\left(  c^{-6}\beta_{T},c^{-12}\gamma_{T}\right)  $ divides $c^{-6}%
\gcd\!\left(  \beta_{T},\gamma_{T}\right)  $, we have that $\gcd\!\left(
c^{-6}\beta_{T},c^{-12}\gamma_{T}\right)  $ divides $2^{18}d^{3}$. By Lemma
\ref{silverconverse}, there is a unique positive integer $w$ such that
$\Delta_{E_{T}}^{\text{min}}=w^{-12}c^{-12}\gamma_{T}$. It follows that
$w^{6}$ divides $2^{18}d^{3}$ and thus $v_{p}\!\left(  w\right)  =0$ for all
odd primes $p$ since $d$ is squarefree. Therefore $w$ divides $8$. We claim
that $w$ divides $2$. Towards a contradiction, suppose $4$ divides $w$. Then
$v_{2}\!\left(  c^{-4}\alpha_{T}\right)  \geq8$ and $v_{2}\!\left(
c^{-6}\beta_{T}\right)  \geq12$. Note that $c^{-4}\alpha_{T}$ is even if and
only if $a$ is even. If $a$ is even with $v_{2}\!\left(  a\right)  \neq2$,
then $v_{2}\!\left(  c^{-4}\alpha_{T}\right)  \leq6$ which is a contradiction.
But if $v_{2}\!\left(  a\right)  =2$, then $v_{2}\!\left(  c^{-6}\beta
_{T}\right)  \leq8$ which shows that $4$ does not divide $w$. Consequently,
$w$ divides $2$ as claimed.

Now suppose $w=2$. Then $v_{2}\!\left(  c^{-4}\alpha_{T}\right)  \geq
4,\ v_{2}\!\left(  c^{-6}\beta_{T}\right)  \geq6,$ and $v_{2}\!\left(
c^{-12}\gamma_{T}\right)  \geq12$. Note that $c^{-4}\alpha_{T}$ is even if and
only if $a$ is even.

\textbf{Case 1.} Suppose $d$ is even. Then $v_{2}\!\left(  c^{-12}\gamma
_{T}\right)  \geq12$ is equivalent to $2v_{2}\!\left(  c\right)
+v_{2}\!\left(  c^{2}d+16b\right)  \geq5$. This inequality holds if and only
if $c$ is even. Then $w^{-6}c^{-6}\beta_{T}$ is even and $w^{-4}c^{-4}%
\alpha_{T}\equiv4b^{2}\ \operatorname{mod}8$. In particular, $v_{2}\!\left(
w^{-4}c^{-4}\alpha_{T}\right)  =2$, which is our desired contradiction since
Corollary \ref{CorKraus} implies that $w^{-12}c^{-12}\gamma_{T}$ is not the
minimal discriminant of $E_{T}$.

\textbf{Case 2.} Suppose $d$ is odd. Then $v_{2}\!\left(  c^{-12}\gamma
_{T}\right)  \geq12$ is equivalent to $\left(  i\right)  \ v_{2}\!\left(
a\right)  =4$ with $v_{2}\!\left(  c^{2}d+16b\right)  \geq8$ or $\left(
ii\right)  $ $v_{2}\!\left(  a\right)  \geq8$ is even. We consider these cases separately.

\qquad\textbf{Subcase 2a.} Suppose $v_{2}\!\left(  a\right)  =4$ with
$v_{2}\!\left(  c^{2}d+16b\right)  \geq8$. In particular, $\frac{c^{2}d}%
{16}+b\equiv0\ \operatorname{mod}4$. But this is equivalent to $bd\equiv
3\ \operatorname{mod}4$ since $\frac{c^{2}}{16}$ is an odd square. Then
\begin{equation}
w^{-6}c^{-6}\beta_{T}=d^{3}\left(  \frac{c^{2}d}{8}+b\right)  \left(
\frac{-c^{4}d^{2}}{8}-2c^{2}db+b^{2}\right)  \label{c6forC4}%
\end{equation}
implies that $w^{-6}c^{-6}\beta_{T}\equiv2+b^{3}d^{3}\ \operatorname{mod}4$,
which is $1\ \operatorname{mod}4$. This is a contradiction since Corollary
\ref{CorKraus} implies that $w^{-12}c^{-12}\gamma_{T}$ is not the minimal
discriminant of $E_{T}$.

\qquad\textbf{Subcase 2b.} Suppose $v_{2}\!\left(  a\right)  \geq8$ is even.
Then $d$ is odd and from (\ref{c6forC4}), we deduce that $w^{-6}c^{-6}%
\beta_{T}\equiv b^{3}d^{3}\ \operatorname{mod}4$. Moreover, $v_{3}\!\left(
w^{-6}c^{-6}\beta_{T}\right)  =v_{3}\!\left(  c^{-6}\beta_{T}\right)  \neq2$
by Theorem \ref{kraus} since $E_{T}^{\prime}$ is given by an integral
Weierstrass model. We conclude by Theorem \ref{kraus} that $w^{-12}%
c^{-12}\gamma_{T}$ is the minimal discriminant of $E_{T}$ if and only if
$v_{2}\!\left(  a\right)  \geq8$ is even and $bd\equiv3\ \operatorname{mod}4$.
Equivalently, $u_{T}^{-12}\gamma_{T}$ with $u_{T}=2c$ is the minimal
discriminant of $E_{T}$.

Since $E_{T}^{\prime}$ is given by an integral Weierstrass model, we conclude
by the above that $u_{T}^{-12}\gamma_{T}$ with $u_{T}=c$ is the minimal
discriminant of $E_{T}$ if and only if it is not the case that $v_{2}\!\left(
a\right)  \geq8$ is even with $bd\equiv3\ \operatorname{mod}4$.
\end{proof}

\subsection[Proof of Theorem \ref{semistablecondthm} for $T=C_{6}$]{Proof of
Theorem \ref{semistablecondthm} for \bm{$T=C_{6}$}}

\begin{claim}
[Theorem \ref{semistablecondthm} for \bm{$T=C_{6}$}.]Let $E_{T}=E_{C_{6}%
}\!\left(  a,b\right)  $ where $a$ and $b$ are relatively prime integers with
$a$ positive. Then the minimal discriminant of $E_{T}$ is $u_{T}^{-12}%
\gamma_{T}$ with $u_{T}\in\left\{  1,2\right\}  $. Moreover, $u_{T}=2$ if and
only if $v_{2}\!\left(  a+b\right)  \geq3$.
\end{claim}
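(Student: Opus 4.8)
The plan is to mirror the template used in the $C_4$ and $C_6$ predecessors: first dispose of the degenerate case $\alpha_{C_6}\beta_{C_6}=0$, then treat the generic case by combining Lemma~\ref{silverconverse}, the gcd identities of Lemma~\ref{polynomials}, and Kraus's criterion (Theorem~\ref{kraus}), and finally translate the resulting $2$-adic conditions into the single condition $v_2(a+b)\geq 3$. Writing $E_T=E_{C_6}(a,b)$, the key input from Tables~\ref{ta:alpT}--\ref{ta:gamT} is the factored shape
\[
\alpha_{C_6}=(a+3b)\bigl(a^{3}+9a^{2}b+3ab^{2}+3b^{3}\bigr),\qquad
\gamma_{C_6}=a^{2}b^{6}(a+b)^{3}(a+9b).
\]
The cubic factor of $\alpha_{C_6}$ is, after division by $b^{3}$, Eisenstein at $3$, so it has no rational zero; hence for coprime $a,b$ with $a>0$ one has $\alpha_{C_6}=0$ (equivalently $j=0$) exactly when $(a,b)=(3,-1)$. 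Reducing $\beta_{C_6}(a,b)$ modulo $3$ and then modulo $27$ shows $\beta_{C_6}(a,b)\neq 0$ for $ab\neq 0$, so $E_{C_6}$ never has $j$-invariant $1728$. In the remaining degenerate case, $\gamma_{C_6}(3,-1)=-2^{4}\cdot 3^{3}$ has $v_p(\gamma_{C_6})<12$ for every prime $p$, so $E_{C_6}(3,-1)$ is already a global minimal model and $u_T=1$ — consistent with the claim since $v_2(3+(-1))=1<3$.

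Assume now $\alpha_{C_6}\beta_{C_6}\neq 0$. By Lemma~\ref{silverconverse}, $\Delta_{E_T}^{\mathrm{min}}=u_T^{-12}\gamma_{C_6}$ for a unique positive integer $u_T$, with $u_T^{4}\mid\alpha_{C_6}$, $u_T^{6}\mid\beta_{C_6}$, and $u_T^{12}\mid\gamma_{C_6}$. Since $\gcd(a,b)=1$, the $C_6$-row of Lemma~\ref{polynomials} gives $\gcd(\beta_{C_6},\gamma_{C_6})\mid 2^{9}3^{3}$ (and $\gcd(\alpha_{C_6},\gamma_{C_6})\mid 2^{4}3^{4}$); as $u_T^{6}\mid\gcd(\beta_{C_6},\gamma_{C_6})$ this forces $v_3(u_T)=0$, $v_p(u_T)=0$ for $p\geq 5$, and $v_2(u_T)\leq 1$. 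Hence $u_T\in\{1,2\}$, and, since $v_2(u_T)\leq 1$, $u_T=2$ precisely when $(\alpha_{C_6}/16,\ \beta_{C_6}/64,\ \gamma_{C_6}/2^{12})$ is the $(c_4,c_6,\Delta)$ of an integral Weierstrass model, i.e.\ (Theorem~\ref{kraus}) precisely when $2^{4}\mid\alpha_{C_6}$, $2^{12}\mid\gamma_{C_6}$, and either $\beta_{C_6}/64\equiv 3\operatorname{mod}4$ or ($v_2(\alpha_{C_6})\geq 8$ and $\beta_{C_6}/64\equiv 0,8\operatorname{mod}32$); the hypothesis $v_3(\beta_{C_6})\neq 2$ is automatic because $E_T$ is an integral model.

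It remains to show these conditions hold exactly when $v_2(a+b)\geq 3$. If $ab$ is even, reducing $\alpha_{C_6}$ modulo $2$ shows it is odd, so $u_T=1$; here $v_2(a+b)=0$. If $ab$ is odd with $1\leq v_2(a+b)\leq 2$, then $a+9b=(a+b)+8b$ with $v_2(8b)=3>v_2(a+b)$, so the factorization of $\gamma_{C_6}$ gives $v_2(\gamma_{C_6})=4v_2(a+b)\leq 8<12$ and again $u_T=1$. If $v_2(a+b)\geq 3$, then $a,b$ are both odd and the same computation gives $v_2(\gamma_{C_6})\geq 13$; writing $a=-b+8m$ and expanding $\alpha_{C_6},\beta_{C_6}$ as polynomials in $a$ about $a=-b$ — where $\alpha_{C_6}(-b,b)=16b^{4}$ and $\beta_{C_6}(-b,b)=-64b^{6}$ — the higher Taylor terms carry enough powers of $2$ that $v_2(\alpha_{C_6})=4$ and $\beta_{C_6}=64(-b^{6}+\cdots)$ with $-b^{6}+\cdots\equiv -b^{6}\equiv 3\operatorname{mod}4$. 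Thus $2^{4}\mid\alpha_{C_6}$, $2^{12}\mid\gamma_{C_6}$, and the first alternative of Kraus's condition holds, so $u_T=2$. Combining the three cases yields $u_T=2\iff v_2(a+b)\geq 3$, which is the claim.

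The main obstacle is the last case: establishing $v_2(\alpha_{C_6})=4$ and $\beta_{C_6}/64\equiv 3\operatorname{mod}4$ precisely when $8\mid a+b$. The substitution $a=-b+8m$ is what makes this routine, since it reduces the question to the values and low-order $a$-derivatives of $\alpha_{C_6}$ and $\beta_{C_6}$ at $a=-b$, each of which has exactly the required $2$-adic valuation, while all remaining Taylor contributions are visibly divisible by large powers of $2$; without this device one is left with an unstructured congruence computation in $a$ and $b$ modulo $256$.
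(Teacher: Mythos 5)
Your proof is correct, and its overall skeleton matches the paper's: dispose of the $j\in\{0,1728\}$ case (the paper cites Lemma~\ref{sslemc4j0} where you rederive the classification via an Eisenstein-at-$3$ argument), bound $u_T\mid 2$ using the $C_6$ row of Lemma~\ref{polynomials}, and get necessity of $v_2(a+b)\geq 3$ from $v_2(\gamma_{C_6})=3v_2(a+b)+v_2(a+9b)\geq 12$. Where you genuinely diverge is the sufficiency direction. The paper simply exhibits the integral model obtained from $E_T$ by $x\mapsto 4x$, $y\mapsto 8y$, namely
\[
y^{2}+\tfrac{a-b}{2}xy-\tfrac{ab(a+b)}{8}y=x^{3}-\tfrac{b(a+b)}{4}x^{2},
\]
whose integrality is immediate once $8\mid a+b$ (and $a\equiv b\bmod 2$); this settles $u_T=2$ in one line. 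You instead invoke Kraus's criterion (Theorem~\ref{kraus}), which obliges you to pin down $v_2(\alpha_{C_6})=4$ and $\beta_{C_6}/64\equiv 3\bmod 4$; your device of expanding about $a=-b$ (where $\alpha_{C_6}=16b^{4}$, $\beta_{C_6}=-64b^{6}$, and the first-order contributions $-16b^{3}\cdot 8m$ and $96b^{5}\cdot 8m$ have $2$-adic valuations $7$ and $8$, the latter just barely sufficient) does make this routine, and it is exactly the strategy the paper reserves for the cases where no clean integral model is at hand ($C_2$, $C_4$, $C_2\times C_2$). The trade-off is that the paper's explicit change of variables also feeds directly into Theorem~\ref{GlobalMinModel}, whereas your Kraus-based argument certifies minimality of $u_T^{-12}\gamma_T$ without producing the minimal Weierstrass model itself.
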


\begin{proof}
First, suppose $E_{T}$ has $j$-invariant $0$ or $1728$. By Lemma
\ref{sslemc4j0}, $\left(  a,b\right)  =\left(  3,-1\right)  $. Then
$\gamma_{T}=-2^{4}3^{3}$ and therefore it is the minimal discriminant of
$E_{T}$.

Next, suppose the $j$-invariant of $E_{T}$ is not equal to $0$ or $1728$.
Since $E_{T}$ is given by an integral Weierstrass model, we have by Lemma
\ref{silverconverse} that there is a unique positive integer $u_{T}$ such that
$\Delta_{E_{T}}^{\text{min}}=u_{T}^{-12}\gamma_{T}$. In particular, $u_{T}%
^{6}$ divides $\gcd\!\left(  \beta_{T},\gamma_{T}\right)  $. By Lemma
\ref{polynomials}, $\gcd\!\left(  \beta_{T},\gamma_{T}\right)  $ divides
$2^{9}3^{3}$ since $\gcd\!\left(  a,b\right)  =1$. It follows that $u_{T}$
divides $2$.

Suppose $u_{T}=2$. It is then verified that $\alpha_{T}\equiv\left(
a+b\right)  ^{4}\ \operatorname{mod}2$ and thus $a+b$ is even. Since $a$ and
$b$ are relatively prime, we deduce that $a$ and $b$ are both odd. Moreover,%
\[
12\leq v_{2}\!\left(  \gamma_{T}\right)  =v_{2}\!\left(  a+9b\right)
+3v_{2}\!\left(  a+b\right)  .
\]
If $v_{2}\!\left(  a+b\right)  \leq2$, then $v_{2}\!\left(  a+9b\right)
\leq2$ since $a+9b\equiv a+b\ \operatorname{mod}8$. But this is a
contradiction since $v_{2}\!\left(  \gamma_{T}\right)  \leq8$.

Now suppose $v_{2}\!\left(  a+b\right)  \geq3$ and consider the admissible
change of variables $x\longmapsto u_{T}^{2}x$ and $y\longmapsto u_{T}^{3}y$
from $E_{T}$ onto%
\[
E_{T}^{\prime}:y^{2}+\frac{a-b}{2}xy-\frac{ab\left(  a+b\right)  }{8}%
y=x^{3}-\frac{b\left(  a+b\right)  }{4}x^{2}.
\]
Since $v_{2}\!\left(  a+b\right)  \geq3$, it follows that $E_{T}^{\prime}$ is
given by an integral Weierstrass model with discriminant $2^{-12}\gamma_{T}$.
This shows that $u_{T}=2$ if and only if $v_{2}\!\left(  a+b\right)  \geq3$.
Consequently, $E_{T}$ is a global minimal model for $E_{T}$ if and only if
$v_{2}\!\left(  a+b\right)  <3$ since $u_{T}$ divides $2$.
\end{proof}

\subsection[Proof of Theorem \ref{semistablecondthm} for $T=C_{8}$]{Proof of
Theorem \ref{semistablecondthm} for \bm{$T=C_{8}$}}

\begin{claim}
[Theorem \ref{semistablecondthm} for \bm{$T=C_{8}$}.]Let $E_{T}=E_{C_{8}%
}\!\left(  a,b\right)  $ where $a$ and $b$ are relatively prime integers with
$a$ positive. Then the minimal discriminant of $E_{T}$ is $u_{T}^{-12}%
\gamma_{T}$ with $u_{T}\in\left\{  1,2\right\}  $. Moreover, $u_{T}=2$ if and
only if $v_{2}\!\left(  a\right)  =1$.
\end{claim}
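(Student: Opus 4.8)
The plan is to follow closely the template just used for $T=C_{6}$, the only genuinely new ingredient being a finer $2$-adic analysis of the invariants $\alpha_{C_{8}},\beta_{C_{8}},\gamma_{C_{8}}$.

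I would first dispose of the degenerate $j$-invariants: since $C_{8}$ is not one of $C_{2},C_{3},C_{4},C_{6},C_{2}\times C_{2}$, Lemma~\ref{sslemc4j0} shows that $E_{T}$ has $j$-invariant different from $0$ and $1728$, so $\alpha_{T}\beta_{T}\gamma_{T}\neq 0$. Because $E_{T}$ is given by an integral Weierstrass model, Lemma~\ref{silverconverse} furnishes a unique positive integer $u_{T}$ with $\Delta_{E_{T}}^{\text{min}}=u_{T}^{-12}\gamma_{T}$ and associated invariants $u_{T}^{-4}\alpha_{T}$, $u_{T}^{-6}\beta_{T}$; in particular $u_{T}^{6}\mid\gcd(\beta_{T},\gamma_{T})$ and $u_{T}^{4}\mid\gcd(\alpha_{T},\beta_{T})$. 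Since $\gcd(a,b)=1$, Lemma~\ref{polynomials} gives $\gcd(\beta_{T},\gamma_{T})\mid 2^{9}$ and $\gcd(\alpha_{T},\beta_{T})\mid 2^{7}3^{2}$, so no odd prime divides $u_{T}$ and $6v_{2}(u_{T})\le 9$; hence $u_{T}\in\{1,2\}$.

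It then remains to decide when $u_{T}=2$. For the forward implication, assume $u_{T}=2$, so that $v_{2}(\alpha_{T})\ge 4$, $v_{2}(\beta_{T})\ge 6$, and $v_{2}(\gamma_{T})\ge 12$. A case analysis on the parities of $a$ and $b$ and on the value of $v_{2}(a)$ — carried out by reducing the explicit formulas for $\alpha_{C_{8}},\beta_{C_{8}},\gamma_{C_{8}}$ of Tables~\ref{ta:alpT},~\ref{ta:betT},~\ref{ta:gamT} modulo small powers of $2$, and invoking Corollary~\ref{CorKraus} to discard the cases in which $2^{12}\mid\gamma_{T}$ but no integral model with invariants $\alpha_{T}/16,\beta_{T}/64$ can exist (there $u_{T}^{-6}\beta_{T}$ is even while $v_{2}(u_{T}^{-4}\alpha_{T})<4$ or $u_{T}^{-6}\beta_{T}\not\equiv 0,8\pmod{32}$) — shows that these three inequalities can hold only when $v_{2}(a)=1$. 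For the converse, assuming $v_{2}(a)=1$, I would exhibit the admissible change of variables with $u=2$ (the scaling $x\longmapsto 4x$, $y\longmapsto 8y$, composed if necessary with a translation and a shear so as to keep all coefficients integral, just as in the $C_{6}$ argument) and verify directly that the resulting model $E_{T}'$ is integral with discriminant $2^{-12}\gamma_{T}$; this forces $u_{T}\ge 2$, hence $u_{T}=2$.

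The main obstacle is the $2$-adic bookkeeping in the previous paragraph: one must pin down $v_{2}(\alpha_{C_{8}})$, $v_{2}(\beta_{C_{8}})$, $v_{2}(\gamma_{C_{8}})$ — together with the residues of $\alpha_{C_{8}}$ and $\beta_{C_{8}}$ modulo $16$ and $32$ — as functions of $v_{2}(a)$ with $b$ odd, and then apply Kraus's criterion (Theorem~\ref{kraus}, via Corollary~\ref{CorKraus}) correctly so as to rule out the spurious range $v_{2}(a)\ge 2$. Everything else is formal and parallels the $C_{6}$ argument already carried out.
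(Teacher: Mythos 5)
Your proposal is correct and follows essentially the same route as the paper: bound $u_{T}$ to $\{1,2\}$ via Lemma~\ref{polynomials} (using $\gcd(\beta_{T},\gamma_{T})\mid 2^{9}$), rule out $v_{2}(a)\geq 2$ by computing $u_{T}^{-6}\beta_{T}\equiv b^{12}\equiv 1\pmod 4$ and invoking Corollary~\ref{CorKraus}, and confirm $u_{T}=2$ when $v_{2}(a)=1$ by exhibiting the integral model obtained from $x\longmapsto 4x$, $y\longmapsto 8y$ (no translation or shear turns out to be needed, since $v_{2}(a-2b)\geq 2$ makes all coefficients integral).
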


\begin{proof}
By Lemma \ref{silverconverse}, there is a unique positive integer $u_{T}$ such
that $\Delta_{E_{T}}^{\text{min}}=u_{T}^{-12}\gamma_{T}$ since $E_{T}$ is
given by an integral Weierstrass model. In particular, $u_{T}^{6}$ divides
$\gcd\!\left(  \beta_{T},\gamma_{T}\right)  $ and by Lemma~\ref{polynomials},
$\gcd\!\left(  \beta_{T},\gamma_{T}\right)  $ divides $2^{9}$. It follows that
$u_{T}$ divides $2$.

Suppose $u_{T}=2$. Then $v_{2}\!\left(  \alpha_{T}\right)  \geq4$. Since $\alpha_{T}\equiv a^{8}\ \operatorname{mod}2$, we have that $a$ is even. We now proceed by cases and observe that
\begin{equation}
12\leq v_{2}\!\left(  \gamma_{T}\right)  =2v_{2}\!\left(  a\right)
+4v_{2}\!\left(  a-2b\right)  +v_{2}\!\left(  a^{2}-8ab+8b^{2}\right)  .
\label{ch:ss:c8in1}%
\end{equation}

\textbf{Case 1.} Suppose $v_{2}\!\left(  a\right)  \geq2$. Then $v_{2}%
\!\left(  a^{2}-8ab+8b^{2}\right)  =3$ and by (\ref{ch:ss:c8in1}), we deduce
that $v_{2}\!\left(  a\right)  \geq3$. By inspection, $2^{-6}\beta_{T}\equiv
b^{12}\ \operatorname{mod}4=1\ \operatorname{mod}4$ since $b$ is odd. By
Corollary \ref{CorKraus}, we have that $u_{T}^{-12}\gamma_{T}$ is not the
minimal discriminant of $E_{T}$, which is a contradiction.

\textbf{Case 2.} Suppose $v_{2}\!\left(  a\right)  =1$. Then the admissible
change of variables $x\longmapsto u_{T}^{2}x$ and $y\longmapsto u_{T}^{3}y$
from $E_{T}$ onto%
\[
E_{T}^{\prime}:y^{2}-\frac{a^{2}-4ab+2b^{2}}{2}xy-\frac{ab^{3}\left(
a-2b\right)  \left(  a-b\right)  }{8}y=x^{3}-\frac{\left(  a-2b\right)
\left(  a-b\right)  b^{2}}{4}x^{2}%
\]
gives an integral Weierstrass model since $v_{2}\!\left(  a-2b\right)  \geq2$.
This shows that $u_{T}=2$ if and only if $v_{2}\!\left(  a\right)  =1$.
Consequently, $E_{T}$ is a global minimal model for $E_{T}$ if and only if
$v_{2}\!\left(  a\right)  \neq1$.
\end{proof}

\subsection[Proof of Theorem \ref{semistablecondthm} for $T=C_{10}$]{Proof of
Theorem \ref{semistablecondthm} for \bm{$T=C_{10}$}}

\begin{claim}
[Theorem \ref{semistablecondthm} for \bm{$T=C_{10}$}.]Let $E_{T}=E_{C_{10}%
}\!\left(  a,b\right)  $ where $a$ and $b$ are relatively prime integers with
$a$ positive. Then the minimal discriminant of $E_{T}$ is $u_{T}^{-12}%
\gamma_{T}$ with $u_{T}\in\left\{  1,2\right\}  $. Moreover, $u_{T}=2$ if and
only if $a$ is even.
\end{claim}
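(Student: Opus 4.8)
The plan is to follow verbatim the template used above for $T=C_6$ and $T=C_8$. First I would dispatch the curves with extra automorphisms: since $C_{10}$ does not appear among the torsion subgroups $C_2,C_3,C_4,C_6,C_2\times C_2$ singled out just after Lemma~\ref{ch:ss:elemlemm} (a consequence of Lemma~\ref{sslemc4j0}), the family $E_T=E_{C_{10}}(a,b)$ never has $j$-invariant $0$ or $1728$; equivalently $\alpha_T\beta_T\neq0$ for all admissible parameters, so the special-fiber analysis that complicated the $C_6$ and $C_8$ arguments is vacuous here. Next, because $E_T$ is given by an integral Weierstrass model, Lemma~\ref{silverconverse} produces a unique positive integer $u_T$ with $\Delta_{E_T}^{\mathrm{min}}=u_T^{-12}\gamma_T$ and $u_T^{6}\mid\gcd(\beta_T,\gamma_T)$. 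Feeding the coprimality of $a$ and $b$ into the $C_{10}$ row of Lemma~\ref{polynomials} — whose $\mu_T^{(3)}\beta_T+\nu_T^{(3)}\gamma_T$ entry is $2^{9}5^{3}(ra^{53}+sb^{47})$ — gives $\gcd(\beta_T,\gamma_T)\mid 2^{9}5^{3}$, hence $u_T^{6}\mid 2^{9}5^{3}$, and therefore $u_T\in\{1,2\}$.

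Then I would prove the equivalence using the explicit integral model of $E_{C_{10}}(a,b)$ obtained from $\mathcal{X}_{b/a}(C_{10})$ via the scaling $w_{C_{10}}=a(a^{2}-3ab+b^{2})$ of Lemma~\ref{ss:Lem1}. A quick reduction mod~$2$ of this model shows that its Weierstrass coefficient $a_1$ is congruent to $a^{3}\pmod 2$; so if $a$ is odd then $c_4=\alpha_T\equiv a_1^{4}\equiv1\pmod2$, forcing $v_2(\alpha_T)=0<4$, which is incompatible with $u_T=2$, so $u_T=1$. Conversely, if $a$ is even (hence $b$ odd, by coprimality), a short $2$-adic valuation count on the same model gives $v_2(a_1)=1$, $v_2(a_2)\ge2$, and $v_2(a_3)\ge3$; consequently the admissible change of variables $x\longmapsto4x,\ y\longmapsto8y$ carries $E_T$ onto the \emph{integral} Weierstrass model $E_T':y^{2}+\tfrac{a_1}{2}xy+\tfrac{a_3}{8}y=x^{3}+\tfrac{a_2}{4}x^{2}$, whose discriminant equals $2^{-12}\gamma_T$. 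Since any integral model has discriminant divisible at every prime by the minimal discriminant, and since $u_T$ divides $2$, comparing $2$-adic valuations forces $u_T=2$. Combining the two implications gives $u_T=2$ if and only if $a$ is even, and $u_T=1$ otherwise.

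The only genuinely laborious step is writing down the explicit Weierstrass coefficients $a_1,a_2,a_3$ of $E_{C_{10}}(a,b)$ and checking the valuation bounds, since $w_{C_{10}}$ is a cubic form in $a,b$ and the resulting coefficients are somewhat bulky polynomials in $a$ and $b$; this, however, is a finite mechanical computation (of the same flavour as the ones already recorded in the SageMath worksheets accompanying the paper), and once it is in hand the remainder is a routine invocation of Lemmas~\ref{silverconverse} and~\ref{polynomials}, entirely parallel to the $C_6$ and $C_8$ cases. In particular no appeal to Kraus's criterion (Theorem~\ref{kraus}) is needed, because the forward implication is handled purely by the $2$-adic valuation of $\alpha_T$.
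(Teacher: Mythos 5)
Your proposal is correct and follows essentially the same route as the paper: bound $u_T\in\{1,2\}$ via Lemma~\ref{polynomials} and Lemma~\ref{silverconverse}, rule out $u_T=2$ when $a$ is odd by reducing $\alpha_T$ mod $2$, and when $a$ is even exhibit the integral model obtained from $x\mapsto 4x$, $y\mapsto 8y$ (the paper writes this transformed Weierstrass equation out explicitly, which is exactly the mechanical computation you defer). The only cosmetic differences are that you phrase the forward direction contrapositively and make explicit the (vacuous) $j=0,1728$ check that the paper handles by its blanket remark after Lemma~\ref{ch:ss:elemlemm}.
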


\begin{proof}
Since $E_{T}$ is given by an integral Weierstrass model, Lemma
\ref{silverconverse} implies that there is a unique positive integer $u_{T}$
such that $\Delta_{E_{T}}^{\text{min}}=u_{T}^{-12}\gamma_{T}$. In particular,
$u_{T}^{6}$ divides $\gcd\!\left(  \beta_{T},\gamma_{T}\right)  $ and by Lemma
\ref{polynomials}, $\gcd\!\left(  \beta_{T},\gamma_{T}\right)  $ divides
$2^{8}5$. Therefore $u_{T}$ divides $2$.

Suppose $u_{T}=2$. Then $v_{2}\!\left(  \alpha_{T}\right)  \geq4$. Since
$\alpha_{T}\equiv a^{12}\ \operatorname{mod}2$, we deduce that $a$ is even.
Next, consider the admissible change of variables $x\longmapsto u_{T}^{2}x$
and $y\longmapsto u_{T}^{3}y$ from $E_{T}$ onto%
\begin{align*}
E_{T}^{\prime}:y^{2}  &  +\frac{a^{3}-2a^{2}b-2ab^{2}+2b^{3}}{2}xy-\frac
{a^{2}b^{3}\left(  a-2b\right)  \left(  a-b\right)  \left(  a^{2}%
-3ab+b^{2}\right)  }{8}\\
&  =x^{3}-\frac{a\left(  a-2b\right)  \left(  a-b\right)  b^{3}}{4}x^{2}.
\end{align*}
It follows that $E_{T}^{\prime}$ is given by 
an integral Weierstrass model since $a$
is even. Therefore $E_{T}^{\prime}$ is a global minimal model for $E_{T}$ if
and only if $a$ is even. Lastly, if $a$ is odd, then $E_{T}$ is a global
minimal model.
\end{proof}

\subsection[Proof of Theorem \ref{semistablecondthm} for $T=C_{12}$]{Proof of
Theorem \ref{semistablecondthm} for \bm{$T=C_{12}$}}

\begin{claim}
[Theorem \ref{semistablecondthm} for \bm{$T=C_{12}$}.]Let $E_{T}=E_{C_{12}%
}\!\left(  a,b\right)  $ where $a$ and $b$ are relatively prime integers with
$a$ positive. Then the minimal discriminant of $E_{T}$ is $u_{T}^{-12}%
\gamma_{T}$ with $u_{T}\in\left\{  1,2\right\}  $. Moreover, $u_{T}=2$ if and
only if $a$ is even.
\end{claim}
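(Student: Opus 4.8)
The plan is to run the argument already used above for $T=C_{6}$, $C_{8}$, and $C_{10}$. Since $E_{T}=E_{C_{12}}(a,b)$ is given by an integral Weierstrass model, Lemma~\ref{silverconverse} provides a unique positive integer $u_{T}$ with $\Delta_{E_{T}}^{\text{min}}=u_{T}^{-12}\gamma_{T}$, and necessarily $u_{T}^{6}\mid\gcd(\beta_{T},\gamma_{T})$. By Lemma~\ref{polynomials}, the hypothesis $\gcd(a,b)=1$ gives that $\gcd(\beta_{T},\gamma_{T})$ divides $2^{9}3^{3}$, so $v_{2}(u_{T})\le 1$, $v_{3}(u_{T})=0$, and $v_{p}(u_{T})=0$ for every prime $p\ge 5$; hence $u_{T}\in\{1,2\}$. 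Just as in the $C_{8}$ and $C_{10}$ cases, $E_{C_{12}}(a,b)$ has a rational point of order $12$ and therefore cannot have $j$-invariant $0$ or $1728$, so no separate case for those $j$-values is needed.

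It then remains to pin down $u_{T}$. For the forward implication, if $u_{T}=2$ then $u_{T}^{-4}\alpha_{T}$ is the $c_{4}$-invariant of a minimal model, hence an integer, so $v_{2}(\alpha_{T})\ge 4$; since a direct reduction gives $\alpha_{C_{12}}(a,b)\equiv a^{16}\ \operatorname{mod}2$, this forces $a$ to be even. For the reverse implication, suppose $a$ is even, and apply the admissible change of variables $x\longmapsto 4x$, $y\longmapsto 8y$ (that is, the one with $u_{T}=2$) to $E_{C_{12}}(a,b)\colon y^{2}+a_{1}xy+a_{3}y=x^{3}+a_{2}x^{2}$. This produces a Weierstrass model $E_{T}'$ with coefficients $a_{1}/2$, $a_{2}/4$, $a_{3}/8$, and a short congruence computation on the coefficient polynomials $a_{1},a_{2},a_{3}$ of $E_{C_{12}}(a,b)$ shows that each of these is an integer precisely because $a$ is even. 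Then $E_{T}'$ is an integral Weierstrass model with discriminant $2^{-12}\gamma_{T}$, so comparing $2$-adic valuations with $\Delta_{E_{T}}^{\text{min}}=u_{T}^{-12}\gamma_{T}$ forces $v_{2}(u_{T})\ge 1$, whence $u_{T}=2$ and $E_{T}'$ is a global minimal model. If instead $a$ is odd, then $u_{T}=2$ is excluded by the forward implication, so $u_{T}=1$ and $E_{C_{12}}(a,b)$ is itself a global minimal model.

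The reduction to $u_{T}\in\{1,2\}$ is dictated entirely by Lemmas~\ref{silverconverse} and~\ref{polynomials} and is immediate. The substantive step is the $2$-adic bookkeeping: computing $\alpha_{C_{12}}\bmod 2$ and, more importantly, verifying the divisibilities $2\mid a_{1}$, $4\mid a_{2}$, $8\mid a_{3}$ for the coefficients of $E_{C_{12}}(a,b)$ under the single assumption that $a$ is even. I expect this verification to be the main---though entirely routine---obstacle; in particular, it is what shows that no condition stronger than ``$a$ even'' is required. Unlike the $C_{8}$ case, where $v_{2}(a)=1$ must hold and the extra obstruction is detected via Corollary~\ref{CorKraus}, no such Kraus-type obstruction arises here.
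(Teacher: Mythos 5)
Your proposal is correct and follows essentially the same route as the paper: bound $u_T\mid 2$ via Lemma~\ref{silverconverse} and the divisor $2^93^3$ of $\gcd(\beta_T,\gamma_T)$ from Lemma~\ref{polynomials}, deduce $a$ even from $\alpha_{C_{12}}\equiv a^{16}\ \operatorname{mod}2$ when $u_T=2$, and conversely exhibit the integral model obtained from $x\longmapsto 4x$, $y\longmapsto 8y$ when $a$ is even. The only difference is cosmetic — the paper writes out the coefficients $a_1,a_2,a_3$ of $E_T'$ explicitly, while you defer that (entirely routine) divisibility check — and your observation that no $j=0,1728$ case arises is exactly the paper's implicit appeal to Lemma~\ref{sslemc4j0}.
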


\begin{proof}
By Lemma \ref{silverconverse}, there is a unique positive integer $u_{T}$ such
that $\Delta_{E_{T}}^{\text{min}}=u_{T}^{-12}\gamma_{T}$ since $E_{T}$ is
given by an integral Weierstrass model. In particular, $u_{T}^{6}$ divides
$\gcd\!\left(  \beta_{T},\gamma_{T}\right)  $. Then $\gcd\!\left(  \beta
_{T},\gamma_{T}\right)  $ divides $2^{9}3^{3}$ by Lemma \ref{polynomials}.
Thus $u_{T}$ divides $2$.

Suppose $u_{T}=2$. Then $v_{2}\!\left(  \alpha_{T}\right)  \geq4$ and it
follows that $a$ is even since $\alpha_{T}\equiv a^{16}\ \operatorname{mod}2$.
The admissible change of variables $x\longmapsto u_{T}^{2}x$ and $y\longmapsto
u_{T}^{3}y$ from $E_{T}$ onto $E_{T}^{\prime}:y^{2}+a_{1}xy+a_{3}y=x^{3}%
+a_{2}x^{2}$ where%
\begin{align*}
a_{1}  &  =-\frac{1}{2}\left(  a^{4}-2a^{3}b-2a^{2}b^{2}+8ab^{3}%
-6b^{4}\right)  ,\\
a_{2}  &  =\frac{1}{4}b\left(  a-2b\right)  \left(  a-b\right)  ^{2}\left(
a^{2}-3ab+3b^{2}\right)  \left(  a^{2}-2ab+2b^{2}\right)  ,\\
a_{3}  &  =-\frac{1}{8}ab\left(  a-2b\right)  \left(  a-b\right)  ^{5}\left(
a^{2}-3ab+3b^{2}\right)  \left(  a^{2}-2ab+2b^{2}\right)  .
\end{align*}
Since $a$ is even, $E_{T}^{\prime}$ is given by an integral Weierstrass model.
Therefore $E_{T}^{\prime}$ is a global minimal model for $E_{T}$ if and only
if $a$ is even. Lastly, if $a$ is odd, then $E_{T}$ is a global minimal model.
\end{proof}

\subsection[Proof of Theorem \ref{semistablecondthm} for $T=C_{2}\times C_{2}%
$]{Proof of Theorem \ref{semistablecondthm} for \bm{$T=C_{2}\times C_{2}$}}

\begin{claim}
[Theorem \ref{semistablecondthm} for \bm{$T=C_{2}\times C_{2}$}.]Let
$E_{T}=E_{C_{2}\times C_{2}}\!\left(  a,b,d\right)  $ with $\gcd\!\left(
a,b\right)  =1$, $d$ squarefree, and $a$ even. Then the minimal discriminant
of $E_{T}$ is $u_{T}^{-12}\gamma_{T}$ with $u_{T}\in\left\{  1,2\right\}  $.
Moreover, $u_{T}=2$ if and only if $v_{2}\!\left(  a\right)  \geq4$ and
$bd\equiv1\ \operatorname{mod}4$.
\end{claim}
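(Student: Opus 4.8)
The plan is to mimic the structure of the preceding cases. Recall from Tables~\ref{ta:alpT},~\ref{ta:betT},~and~\ref{ta:gamT} that
\[
\alpha_T=16d^2(a^2-ab+b^2),\qquad \beta_T=-32d^3(a+b)(a-2b)(2a-b),\qquad \gamma_T=16a^2b^2d^6(a-b)^2,
\]
and observe that, since $\gcd(a,b)=1$ and $a$ is even, $b$ is odd, so $a^2-ab+b^2$, $a-b$, $a+b$, and $2a-b$ are all odd. First I dispose of the curves with $j$-invariant $0$ or $1728$: by Lemma~\ref{sselmmac2c2result} the $j$-invariant is never $0$, and if it equals $1728$ then $(a,b)\in\{(2,1),(-2,-1)\}$, so $\beta_T=0$ and $\gamma_T=64d^6$; then $v_p(\gamma_T)\le 6<12$ for odd $p$ and $v_2(\gamma_T)\le 12$, so Lemma~\ref{silverconverse} gives $u_T\in\{1,2\}$, and $u_T=2$ would force $d$ even, hence $v_2(c_4')=v_2(\alpha_T/16)=2v_2(d)=2<4$ while $c_6'=0\not\equiv 3\bmod 4$, contradicting Corollary~\ref{CorKraus}; thus $u_T=1$, which agrees with the claimed criterion because $v_2(a)=1$ here.

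Now assume $\alpha_T\beta_T\gamma_T\ne 0$. By Lemma~\ref{silverconverse} there is a unique positive integer $u_T$ with $\Delta_{E_T}^{\mathrm{min}}=u_T^{-12}\gamma_T$, and since $E_T$ is an integral model, $u_T^4\mid\alpha_T$, $u_T^6\mid\beta_T$, and $u_T^{12}\mid\gamma_T$ (alternatively, Lemma~\ref{polynomials} bounds $\gcd(\alpha_T,\beta_T)\mid 2^5 3^2 d^4$ and $\gcd(\beta_T,\gamma_T)\mid 2^7 d^8$, using $\gcd(a,b)=1$). I claim $u_T\in\{1,2\}$. If an odd prime $p$ divided $u_T$, then $p^4\mid\alpha_T$ and $p^{12}\mid\gamma_T$; since $\gcd(a,b)=1$, at most one of $v_p(a),v_p(b),v_p(a-b)$ is positive, and in each of those three subcases $v_p(a^2-ab+b^2)=0$, forcing $v_p(\alpha_T)=2v_p(d)\le 2$, a contradiction, while if $v_p(a)=v_p(b)=v_p(a-b)=0$ then $v_p(\gamma_T)=6v_p(d)\le 6$, again a contradiction. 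So $u_T$ is a power of $2$; and since $a^2-ab+b^2$ is odd and $d$ is squarefree, $v_2(\alpha_T)=4+2v_2(d)\le 6$, so $u_T^4\mid\alpha_T$ gives $v_2(u_T)\le 1$. Hence $u_T\in\{1,2\}$.

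It remains to decide when $u_T=2$. Because $u_T\in\{1,2\}$, this holds precisely when the pair $(c_4',c_6')=(\alpha_T/16,\beta_T/64)$ is realised by an integral Weierstrass model. Theorem~\ref{kraus} applied to $E_T$ gives $v_3(\beta_T)\ne 2$, hence $v_3(c_6')\ne 2$; and $v_2(c_4')=2v_2(d)\le 2<4$, so Kraus's criterion for $(c_4',c_6')$ reduces to $c_6'\equiv 3\bmod 4$. Now argue by cases. If $d$ is even, then $a-2b$ is even, so $v_2(c_6')=v_2(\beta_T)-6=2+v_2(a-2b)\ge 3$, whence $c_6'$ is even, Kraus's criterion fails, and $u_T=1$ --- consistent with the claim, since $bd\equiv 1\bmod 4$ is impossible for even $d$. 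If $d$ is odd and $v_2(a)\le 3$, then $v_2(\gamma_T)=4+2v_2(a)\le 10<12$, so $u_T^{12}\nmid\gamma_T$ and $u_T=1$. Finally, if $d$ is odd and $v_2(a)\ge 4$, then $(a-2b)/2$ is odd, and reducing $c_6'=-d^3(a+b)\cdot\tfrac{a-2b}{2}\cdot(2a-b)$ modulo $4$ yields $c_6'\equiv -d^3b^3\equiv -bd\bmod 4$; thus $c_6'\equiv 3\bmod 4$ exactly when $bd\equiv 1\bmod 4$. When that congruence holds, the admissible change of variables $x\longmapsto 4x$, $y\longmapsto 8y+4x$ carries $E_T$ onto $y^2+xy=x^3+\tfrac{ad+bd-1}{4}x^2+\tfrac{abd^2}{16}x$, an integral model of discriminant $2^{-12}\gamma_T$, forcing $u_T=2$; otherwise Kraus's criterion fails and $u_T=1$. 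Collecting the cases, $u_T=2$ if and only if $v_2(a)\ge 4$ and $bd\equiv 1\bmod 4$.

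The delicate point is the last paragraph: since $v_2(\gamma_T)\ge 12$ can occur with $u_T=1$ (for instance whenever $d$ is even), the divisibility constraints on $u_T$ do not by themselves pin down its value, and one must invoke Kraus's criterion via Corollary~\ref{CorKraus} to exclude $u_T=2$ in exactly the cases where $v_2(a)\le 3$ or $bd\not\equiv 1\bmod 4$; the remainder is routine $2$-adic bookkeeping, greatly simplified by the fact that $a$ even and $b$ odd make $a^2-ab+b^2$ odd.
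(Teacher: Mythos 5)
Your proof is correct and follows essentially the same route as the paper: handle the $j=1728$ case via Lemma~\ref{sselmmac2c2result} and Corollary~\ref{CorKraus}, bound $u_T\in\{1,2\}$ by the coprimality of $a,b$ together with $v_2(\alpha_T)\le 6$, and then decide between $1$ and $2$ with Kraus's criterion, the key congruence $c_6'\equiv-bd\bmod 4$ matching the paper's $u_T^{-6}\beta_T\equiv-b^3d^3\bmod 4$. The only (harmless) differences are that you obtain $u_T\le 2$ slightly more directly and that you exhibit the explicit integral model $y^2+xy=x^3+\tfrac{ad+bd-1}{4}x^2+\tfrac{abd^2}{16}x$ where the paper merely invokes Theorem~\ref{kraus} for existence.
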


\begin{proof}
Since $E_{T}$ is given by an integral Weierstrass model, Lemma
\ref{silverconverse} implies that there is a unique positive integer $u_{T}$
such that $\Delta_{E_{T}}^{\text{min}}=u_{T}^{-12}\gamma_{T}$. Moreover,%
\[
\alpha_{T}=16d^{2}\left(  a^{2}-ab+b^{2}\right)  ,\quad\beta_{T}%
=-32d^{3}\left(  a+b\right)  \left(  a-2b\right)  \left(  2a-b\right)
,\quad\gamma_{T}=16a^{2}b^{2}d^{6}\left(  a-b\right)  ^{2}.
\]
Now suppose $E_{T}$ has $j$-invariant equal to $0$ or $1728$. By Lemma
\ref{sselmmac2c2result}, we may assume that $\left(  a,b,d\right)  =\left(
2,1,d\right)  $. Since $\gamma_{T}=64d^{6}$ and $d$ is squarefree, it follows
that $u_{T}$ divides $2$. If $u_{T}=2$, then $v_{2}\!\left(  u_{T}^{-4}%
\alpha_{T}\right)  =2$ and $u_{T}^{-6}\beta_{T}=0$. This is a contradiction
since Corollary \ref{CorKraus} implies that $u_{T}^{-12}\gamma_{T}$ is not the
minimal discriminant of $E_{T}$. Thus $u_{T}=1$ which concludes this case.

Next, suppose the $j$-invariant of $E_{T}$ is not $0$ or $1728$. In
particular, $u_{T}^{6}$ divides $\gcd\!\left(  \beta_{T},\gamma_{T}\right)  $
and by Lemma \ref{polynomials}, $\gcd\!\left(  \beta_{T},\gamma_{T}\right)  $
divides $2^{7}d^{8}$. Therefore $u_{T}$ divides $2d$ since $d$ is a squarefree
integer. We claim that $v_{p}\!\left(  u_{T}\right)  =0$ for all odd primes
$p$. Towards a contradiction, suppose an odd prime $p$ divides $u_{T}$. Then $v_{p}\!\left(  \gamma_{T}\right)  \geq12$ which implies that
$v_{p}\!\left(  ab\left(  a-b\right)  \right)  \geq3$. Since $\gcd\!\left(
a,b\right)  =1$, it follows that $p$ divides exactly one of $a,b,$ or $a-b$.
If $p$ divides one of $a$ or $b$, then $p$ does not divide $a^{2}-ab+b^{2}$
which contradicts the assumption that $p^{4}$ divides $\alpha_{T}$. Therefore
$p$ divides $a-b$ and $a^{2}-ab+b^{2}$. But then $p$ divides $a^{2}%
-ab+b^{2}-\left(  a-b\right)  ^{2}=ab$, which is a contradiction. Hence
$v_{p}\!\left(  u_{T}\right)  =0$ for all odd primes $p$.

It follows that $u_{T}$ divides $4$. Now observe that if $u_{T}=4$, then $d$
is even and $v_{2}\!\left(  \alpha_{T}\right)  \geq8$. But this is a
contradiction since $v_{2}\!\left(  \alpha_{T}\right)  =6$. Hence $u_{T}$
divides $2$.

So suppose $u_{T}=2$ and note that the assumption that $a$ is even implies
that%
\begin{equation}
12\leq v_{2}\!\left(  \gamma_{T}\right)  =4+6v_{2}\!\left(  d\right)
+2v_{2}\!\left(  a\right)  . \label{ch:ss:c2c2ine2}%
\end{equation}

\textbf{Case 1.} Suppose $d$ is even. Then, by inspection, $u_{T}^{-6}%
\beta_{T}$ is even and $v_{2}\!\left(  u_{T}^{-4}\alpha_{T}\right)  =2$. This
is a contradiction, since Corollary \ref{CorKraus} implies that $u_{T}%
^{-12}\gamma_{T}$ is not the minimal discriminant of $E_{T}$.

\textbf{Case 2.} Suppose $d$ is odd. Then inequality (\ref{ch:ss:c2c2ine2})
holds if and only if $v_{2}\!\left(  a\right)  \geq4$. Next, observe that
Theorem \ref{kraus} implies that $v_{3}\!\left(  u_{T}^{-6}\beta_{T}\right)
=v_{3}\!\left(  \beta_{T}\right)  \neq2$ since $E_{T}$ is given by an integral
Weierstrass model. Since $u_{T}^{-6}\beta_{T}\equiv-b^{3}d^{3}%
\ \operatorname{mod}4$, we have by Theorem \ref{kraus} that $u_{T}^{-12}%
\gamma_{T}$ is the minimal discriminant of $E_{T}$ if and only if
$v_{2}\!\left(  a\right)  \geq4$ and $bd\equiv1\ \operatorname{mod}4$.
Consequently, $\gamma_{T}$ is the minimal discriminant of $E_T$ if and only if $v_{2}\!\left(  a\right)  \leq3$ or $bd \not \equiv
1\ \operatorname{mod}4$.
\end{proof}

\subsection[Proof of Theorem \ref{semistablecondthm} for $T=C_{2}\times C_{4}%
$]{Proof of Theorem \ref{semistablecondthm} for \bm{$T=C_{2}\times C_{4}$}}

\begin{claim}
[Theorem \ref{semistablecondthm} for \bm{$T=C_{2}\times C_{4}$}.]Let
$E_{T}=E_{C_{2}\times C_{4}}\!\left(  a,b\right)  $ where $a$ and $b$ are
relatively prime integers with $a$ positive. Then the minimal discriminant of
$E_{T}$ is $u_{T}^{-12}\gamma_{T}$ with $u_{T}\in\left\{  1,2,4\right\}  $.

$\left(  1\right)  $ $u_{T}=4$ if and only if $v_{2}\!\left(  a\right)  =2$
and $v_{2}\!\left(  a+4b\right)  \geq4$;

$\left(  2\right)  \ u_{T}=2$ if and only if $v_{2}\!\left(  a\right)  \geq2$
with $v_{2}\!\left(  a+4b\right)  \leq3$;

$\left(  3\right)  \ u_{T}=1$ if and only if $v_{2}\!\left(  a\right)  \leq1$.
\end{claim}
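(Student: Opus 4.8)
The plan is to follow the template of the earlier cases. Since $E_T=E_{C_2\times C_4}(a,b)$ is given by the integral Weierstrass model $y^2+axy-ab(a+4b)y=x^3-b(a+4b)x^2$, Lemma~\ref{silverconverse} yields a unique positive integer $u_T$ with $\Delta^{\mathrm{min}}_{E_T}=u_T^{-12}\gamma_T$ and with $u_T^4\mid\alpha_T$, $u_T^6\mid\beta_T$, and $u_T^{12}\mid\gamma_T$. A direct computation from this model gives $\alpha_T=a^4+16a^3b+80a^2b^2+128ab^3+256b^4$, $\gamma_T=a^2b^4(a+4b)^4(a+8b)^2$, and $\beta_T=64b^3(a+4b)^3-120a^2b^2(a+4b)^2-24a^4b(a+4b)-a^6$. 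Since $\gcd(a,b)=1$, Lemma~\ref{polynomials} gives $\gcd(\alpha_T,\beta_T)\mid 2^{14}3^2$, so $v_p(u_T)=0$ for every odd prime $p$; and a short $2$-adic computation (using $\gcd(a,b)=1$) shows $v_2(\alpha_T)=0$ if $a$ is odd, $v_2(\alpha_T)=4$ if $v_2(a)=1$, and $v_2(\alpha_T)=8$ if $v_2(a)\geq 2$. In all cases $v_2(\alpha_T)\leq 8$, so $u_T^4\mid\alpha_T$ forces $v_2(u_T)\leq 2$ and hence $u_T\in\{1,2,4\}$. By Lemma~\ref{sslemc4j0}, $E_T$ has $j$-invariant neither $0$ nor $1728$, so there is no degenerate sub-case to treat.

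If $a$ is odd, then $\alpha_T$ is odd and $u_T=1$. If $v_2(a)=1$, then $b$, $a+4b$, and $a+8b$ each have $2$-adic valuation $1$, so $v_2(\gamma_T)=2+4+2=8<12$; since $u_T^{12}\mid\gamma_T$, this forces $u_T=1$. Together with the next paragraph this gives statement~(3).

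Now suppose $v_2(a)\geq 2$, so $b$ is odd, $v_2(\alpha_T)=8$, and $v_2(a+4b)\geq 2$. The substitution $x\mapsto 4x$, $y\mapsto 8y$ carries $E_T$ onto
\[
E_T'\colon y^2+\tfrac{a}{2}xy-\tfrac{ab(a+4b)}{8}y=x^3-\tfrac{b(a+4b)}{4}x^2,
\]
which is integral because $2\mid a$, $4\mid b(a+4b)$, and $v_2\!\left(ab(a+4b)\right)\geq 4$; since $\Delta(E_T')=2^{-12}\gamma_T$, we conclude $u_T\geq 2$, so $u_T\in\{2,4\}$. To decide between them, note that $u_T=4$ forces $v_2(\gamma_T)\geq 24$. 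If $v_2(a)=2$, then $v_2(a+8b)=2$ and $v_2(a+4b)\geq 3$, so $v_2(\gamma_T)=8+4v_2(a+4b)$: when $v_2(a+4b)=3$ this is $20<24$, so $u_T=2$; when $v_2(a+4b)\geq 4$, applying $x\mapsto 4x$, $y\mapsto 8y$ again to $E_T'$ produces an integral model with discriminant $2^{-24}\gamma_T$ (its coefficients are integral precisely because $v_2(a)\geq 2$ and $v_2(a+4b)\geq 4$), so $u_T=4$. If instead $v_2(a)\geq 3$, then $v_2(a+4b)=2$, and I claim $u_T\neq 4$: here $v_2(\beta_T)=12$, the term $64b^3(a+4b)^3$ being the unique one of minimal valuation, so were $u_T=4$ both $c_4=u_T^{-4}\alpha_T$ and $c_6=u_T^{-6}\beta_T$ would be odd, while a short congruence computation gives $c_6\equiv 1\pmod 4$; since $v_2(c_4)<4$, Corollary~\ref{CorKraus} then says $u_T^{-12}\gamma_T$ is not the minimal discriminant, a contradiction, so $u_T=2$. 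Assembling the cases, and noting that $v_2(a)\geq 3$ already forces $v_2(a+4b)=2\leq 3$, yields (1), (2), and (3).

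The routine part of the argument is the bookkeeping of the $2$-adic valuations of $\alpha_T$, $\beta_T$, $\gamma_T$ and the verification that the displayed substitutions yield integral models. The one genuinely non-constructive step — and hence the main obstacle — is excluding $u_T=4$ when $v_2(a)\geq 3$: there one cannot exhibit a smaller integral model directly and must instead compute $u_T^{-6}\beta_T\bmod 4$ and appeal to Kraus's criterion via Corollary~\ref{CorKraus}.
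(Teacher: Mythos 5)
Your proof is correct and follows essentially the same route as the paper: bound $u_T$ by combining Lemma~\ref{polynomials} with the computation of $v_2(\alpha_T)$, exhibit the rescaled integral models to realize $u_T=2$ and $u_T=4$ in the stated cases, and invoke Corollary~\ref{CorKraus} via the congruence $u_T^{-6}\beta_T\equiv 1\pmod 4$ to exclude $u_T=4$ when $v_2(a)\geq 3$ (the paper does this in two subcases, $v_2(a)=3$ and $v_2(a)\geq 4$, but the content is identical, and your asserted congruence does check out). The only blemish is the slip where you write that $b$ has $2$-adic valuation $1$ when $v_2(a)=1$ — you mean $a$ there, as your tally $2+4+2=8$ confirms.
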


\begin{proof}
By Lemma \ref{silverconverse}, there is a unique positive integer $u_{T}$ such
that $\Delta_{E_{T}}^{\text{min}}=u_{T}^{-12}\gamma_{T}$ since $E_{T}$ is
given by an integral Weierstrass model. In particular, $u_{T}^{4}$ divides
$\gcd\!\left(  \alpha_{T},\beta_{T}\right)  $ and by Lemma~\ref{polynomials},
$\gcd\!\left(  \alpha_{T},\beta_{T}\right)  $ divides $2^{14}3^{2}$. Therefore
$u_{T}$ divides $8$. Note that
\[
v_{2}\!\left(  \gamma_{T}\right)  =2v_{2}\!\left(  a\right) +4v_{2}\!\left(  b\right)  +2v_{2}\!\left(
a+8b\right)  +4v_{2}\!\left(  a+4b\right)
\]
and%
\begin{equation}
v_{2}\!\left(  \alpha_{T}\right)  =v_{2}\!\left(  a^{4}+16a^{3}b+80a^{2}%
b^{2}+128ab^{3}+256b^{4}\right)  =\left\{
\begin{array}
[c]{ll}%
0 & \text{if }v_{2}\!\left(  a\right)  =0,\\
4 & \text{if }v_{2}\!\left(  a\right)  =1,\\
8 & \text{if }v_{2}\!\left(  a\right)  \geq2.
\end{array}
\right.  \label{ch:ss:c2c4ine}%
\end{equation}
Consequently, $u_{T}$ divides $4$. For the cases below, we will consider the
admissible change of variables $x\longmapsto u_{T}^{2}x$ and $y\longmapsto
u_{T}^{3}y$ from $E_{T}$ onto%
\begin{equation}
E_{T,u_{T}}^{\prime}:y^{2}+\frac{a}{u_{T}}xy-\frac{ab\left(  a+4b\right)
}{u_{T}^{3}}y=x^{3}-\frac{b\left(  a+4b\right)  }{u_{T}^{2}}x^{2}.
\label{WeierforC2C4}%
\end{equation}

\textbf{Case 1.} Suppose $u_{T}=4$. Then $v_{2}\!\left(  \alpha_{T}\right)
\geq8$ and $v_{2}\!\left(  \gamma_{T}\right)  \geq24$. By (\ref{ch:ss:c2c4ine}%
), $v_{2}\!\left(  \alpha_{T}\right)  \geq8$ if and only if $v_{2}\!\left(
a\right)  \geq2$. We proceed by cases.

\qquad\textbf{Subcase 1a.} Suppose $v_{2}\!\left(  a\right)  \geq4$. Then
$v_{2}\!\left(  \gamma_{T}\right)  =2v_{2}\!\left(  a\right)  +14$ and
$v_{2}\!\left(  \gamma_{T}\right)  \geq24$ holds if and only if $v_{2}%
\!\left(  a\right)  \geq5$. By inspection, $u_{T}^{-6}\beta_{T}\equiv
b^{6}\ \operatorname{mod}4$ which is $1\ \operatorname{mod}4$. This is a
contradiction since Corollary \ref{CorKraus} implies that $u_{T}^{-12}%
\gamma_{T}$ is not the minimal discriminant of $E_{T}$.

\qquad\textbf{Subcase 1b.} Suppose $v_{2}\!\left(  a\right)  =3$. Then
$v_{2}\!\left(  \gamma_{T}\right)  =20+2v_{2}\!\left(  \frac{a}{8}+b\right)  $
and therefore $v_{2}\!\left(  \gamma_{T}\right)  \geq24$ holds if and only if
$v_{2}\!\left(  \frac{a}{8}+b\right)  \geq2$. It is then verified that
$u_{T}^{-6}\beta_{T}\equiv3+\frac{ab}{4}\ \operatorname{mod}4$, which is
$1\ \operatorname{mod}4$. By Corollary \ref{CorKraus}, $u_{T}^{-12}\gamma_{T}$
is not the minimal discriminant of $E_{T}$ which is our desired contradiction.

\qquad\textbf{Subcase 1c.} Suppose $v_{2}\!\left(  a\right)  =2$. Then
$v_{2}\!\left(  \gamma_{T}\right)  =8+4v_{2}\!\left(  a+4b\right)  $ and so
$v_{2}\!\left(  \gamma_{T}\right)  \geq24$ holds if and only if $v_{2}%
\!\left(  a+4b\right)  \geq4$. Under this assumption, we observe by
(\ref{WeierforC2C4}) that $E_{T,u_{T}}^{\prime}$ is given by an integral
Weierstrass model and thus $u_{T}^{-12}\gamma_{T}$ is the minimal discriminant
of $E_{T}$ if and only if $v_{2}\!\left(  a\right)  =2$ with $v_{2}\!\left(
a+4b\right)  \geq4$, which concludes the proof of $\left(  1\right)  $.

\textbf{Case 2.} Suppose $u_{T}=2$. Then $v_{2}\!\left(  \alpha_{T}\right)
\geq4$ and $v_{2}\!\left(  \gamma_{T}\right)  \geq12$. By Case 1 and
(\ref{ch:ss:c2c4ine}), we have that $v_{2}\!\left(  a\right)  \geq1$ such that
$v_{2}\!\left(  a+4b\right)  <4$. We proceed by cases.

\qquad\textbf{Subcase 2a.} Suppose $v_{2}\!\left(  a\right)  =1$. Then
$v_{2}\!\left(  \gamma_{T}\right)  =8$, which is a contradiction.

\qquad\textbf{Subcase 2b.} Suppose $v_{2}\!\left(  a\right)  \geq2$ with
$v_{2}\!\left(  a+4b\right)  <4$. By (\ref{WeierforC2C4}) and Case 1, we have
that $u_{T}^{-12}\gamma_{T}$ is the minimal discriminant of $E_{T}$ since
$E_{T,u_{T}}^{\prime}$ is given by an integral Weierstrass model. This
concludes the proof of $\left(  2\right)  $.

Lastly, by Cases 1 and 2 and the fact that $E_{T}$ is given by an integral
Weierstrass model, we conclude that $E_{T}$ is a global minimal model if and
only if $v_{2}\!\left(  a\right)  \leq1$.
\end{proof}

\subsection[Proof of Theorem \ref{semistablecondthm} for $T=C_{2}\times C_{6}%
$]{Proof of Theorem \ref{semistablecondthm} for \bm{$T=C_{2}\times C_{6}$}}

\begin{claim}
[Theorem \ref{semistablecondthm} for \bm{$T=C_{2}\times C_{6}$}.]Let
$E_{T}=E_{C_{2}\times C_{6}}\!\left(  a,b\right)  $ where $a$ and $b$ are
relatively prime integers with $a$ positive. Then the minimal discriminant of
$E_{T}$ is $u_{T}^{-12}\gamma_{T}$ with $u_{T}\in\left\{  1,4,16\right\}  $. Moreover,

$\left(  1\right)  $ $u_{T}=16$ if and only if $v_{2}\!\left(  a+b\right)  =1$;

$\left(  2\right)  $ $u_{T}=4$ if and only if $v_{2}\!\left(  a+b\right)
\geq2$;

$\left(  3\right)  $ $u_{T}=1$ if and only if $v_{2}\!\left(  a+b\right)  =0$.
\end{claim}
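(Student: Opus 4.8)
The plan is to follow the template of the preceding subsections: reduce $u_T$ to a power of $2$, pin down $v_2(u_T)$ from the $2$-adic valuation of $\alpha_T$, and confirm the value by exhibiting an explicit minimal model. To set up, no rational elliptic curve containing $C_2\times C_6$ in its torsion subgroup has $j$-invariant $0$ or $1728$ (this is the remark following Lemma~\ref{ch:ss:elemlemm}), so $\alpha_T\beta_T\gamma_T\neq 0$ for the parameters in question. By Lemma~\ref{silverconverse} there is then a unique positive integer $u_T$ with $\Delta_{E_T}^{\mathrm{min}}=u_T^{-12}\gamma_T$, and the invariants of the minimal model are $u_T^{-4}\alpha_T$ and $u_T^{-6}\beta_T$; in particular $u_T^4\mid\gcd(\alpha_T,\beta_T)$ and $u_T^6\mid\gcd(\beta_T,\gamma_T)$. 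Since $\gcd(a,b)=1$, Lemma~\ref{polynomials} gives $\gcd(\alpha_T,\beta_T)\mid 2^{31}3^4$ and $\gcd(\beta_T,\gamma_T)\mid 2^{56}3^3$; the first forces $p\nmid u_T$ for every prime $p\geq 5$ and $v_3(u_T)\leq 1$, while the second forces $v_3(u_T)=0$. Hence $u_T$ is a power of $2$.

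The heart of the argument is to compute $v_2(\alpha_T)$ as a function of $e:=v_2(a+b)$. Using the coefficients of $E_T=E_{C_2\times C_6}(a,b)$ from Table~\ref{ta:ETmodel}, which factor (via the identity $t^3-7t^2+11t-5=(t-1)^2(t-5)$) as $a_4=a_6=0$, $a_1=(a+b)^2-20a^2$, $a_2=2a(b-a)^2(b-5a)$, and $a_3=a_2\,(b^2-9a^2)$, one argues as follows. If $a+b$ is odd then $a_1$ is odd, so $\alpha_T\equiv a_1^4\equiv 1\pmod 2$ and $u_T=1$, which is case (3). If $a+b$ is even then $a,b$ are both odd, and a short $2$-adic bookkeeping of the valuations of the linear forms $b-a$, $b-5a$, $b\pm 3a$ in terms of $e$ --- using that their pairwise differences are $\pm 2a,\pm 4a,\pm 6a$ --- yields $v_2(a_1)=4$, $v_2(a_2)\geq 8$, $v_2(a_3)\geq 12$ when $e=1$, and $v_2(a_1)=2$, $v_2(a_2)=4$, $v_2(a_3)\geq 6$ when $e\geq 2$. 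Substituting into $\alpha_T=a_1^4+8a_1^2a_2-24a_1a_3+16a_2^2$, the term $a_1^4$ has strictly smaller $2$-adic valuation than the other three terms, so $v_2(\alpha_T)=16$ when $e=1$ and $v_2(\alpha_T)=8$ when $e\geq 2$. Since $u_T$ is a power of $2$ with $u_T^4\mid\alpha_T$, this forces $u_T\mid 16$ when $e=1$ and $u_T\mid 4$ when $e\geq 2$.

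For the matching lower bounds I would exhibit the admissible change of variables $x\mapsto u_T^2x$, $y\mapsto u_T^3y$ with $u_T=16$ when $e=1$ (resp.\ $u_T=4$ when $e\geq 2$): the valuation estimates above make $a_1/u_T$, $a_2/u_T^2$, $a_3/u_T^3$ integers, producing an integral Weierstrass model with discriminant $u_T^{-12}\gamma_T$. Hence the true $u_T$ is divisible by $16$ (resp.\ by $4$), and together with the upper bound this gives $u_T=16$ when $v_2(a+b)=1$ and $u_T=4$ when $v_2(a+b)\geq 2$, i.e.\ (1) and (2). The main obstacle is the $2$-adic case analysis in the previous paragraph: one must split on which of $v_2(b-a)$, $v_2(b-5a)$ attains the generic minimum (at most one can exceed it, since their difference $4a$ has $2$-adic valuation $2$), carry this information through to $v_2(b\pm 3a)$, and check in every subcase that the cross terms $8a_1^2a_2$, $24a_1a_3$, $16a_2^2$ have $2$-adic valuation strictly exceeding $v_2(a_1^4)$. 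Should a boundary subcase resist this direct treatment, Corollary~\ref{CorKraus}, applied to the candidate invariants $u_T^{-4}\alpha_T$ and $u_T^{-6}\beta_T$, is available to rule out a spuriously large $u_T$, exactly as in the $T=C_2\times C_4$ argument.
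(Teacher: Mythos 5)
Your proposal is correct and follows essentially the same route as the paper: reduce $u_T$ to a power of $2$ via Lemma~\ref{polynomials}, establish $v_2(\alpha_T)\in\{0,16,8\}$ according to $v_2(a+b)\in\{0,1,\geq 2\}$ (the paper does this by the substitution $b=2k-a$, you by $2$-adic bookkeeping of the Weierstrass coefficients — the same computation), and then exhibit the integral model obtained from $x\mapsto u_T^2x$, $y\mapsto u_T^3y$ to pin down the exact value. Your claimed valuations ($v_2(a_1)=4$, $v_2(a_2)\geq 8$, $v_2(a_3)\geq 12$ when $v_2(a+b)=1$, and the analogues for $v_2(a+b)\geq 2$) check out and match the paper's Case 1 and Case 2 analysis.
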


\begin{proof}
By Lemma \ref{silverconverse}, there is a unique positive integer $u_{T}$ such
that $\Delta_{E_{T}}^{\text{min}}=u_{T}^{-12}\gamma_{T}$ since $E_{T}$ is
given by an integral Weierstrass model. In particular, $u_{T}^{4}$ divides
$\gcd\!\left(  \alpha_{T},\beta_{T}\right)  $ and by Lemma~\ref{polynomials},
$\gcd\!\left(  \alpha_{T},\beta_{T}\right)  $ divides $2^{31}3^{4}$ and
$\gcd\!\left(  \beta_{T},\gamma_{T}\right)  $ divides $2^{56}3^{3}$. Therefore
$u_{T}$ divides $2^{7}$. Since $\alpha_{T}\equiv a^{8}+b^{8}%
\ \operatorname{mod}2$, we deduce that $\alpha_{T}$ is even if and only if
$a+b$ is even. In particular, $a$ and $b$ are both odd since $a$ and $b$ are
relatively prime. Hence $b=2k-a$ for some integer $k$. In fact, $k$ is odd if
and only if $v_{2}\!\left(  a+b\right)  =1$. With this substitution, it is
checked that%
\begin{equation}
v_{2}\!\left(  \alpha_{T}\right)  =\left\{
\begin{array}
[c]{ll}%
0 & \text{if }v_{2}\!\left(  a+b\right)  =0,\\
16 & \text{if }v_{2}\!\left(  a+b\right)  =1,\\
8 & \text{if }v_{2}\!\left(  a+b\right)  \geq2.
\end{array}
\right.  \label{c2c6inealpt}%
\end{equation}
It follows that $u_{T}$ divides $16$. For the cases below, we will consider
the admissible change of variables $x\longmapsto u_{T}^{2}x$ and $y\longmapsto
u_{T}^{3}y$ from $E_{T}$ onto $E_{T,u_{T}}^{\prime}:y^{2}-\frac{a_{1}}{u_{T}}xy+\frac{2a_{3}}{u_{T}^{3}%
}y=x^{3}+\frac{2a_{2}}{u_{T}^{2}}x^{2}$ where
\begin{align}
a_{1}  &  =20a^{2}-\left(  a+b\right)  ^{2},\qquad a_{2}=a\left(  b-a\right)
^{2}\left(  b-5a\right)  , \label{ch:ss:c2c6Et}\\
a_{3}  &  =a\left(  b-3a\right)  \left(  3a+b\right)  \left(  b-a\right)
^{2}\left(  b-5a\right)  .\nonumber
\end{align}

\textbf{Case 1.} Suppose $u_{T}=16$. By (\ref{c2c6inealpt}), $v_{2}\!\left(
a+b\right)  =1$. Note that this implies that $v_{2}\!\left(  b-a\right)
\geq2$. We claim that $E_{T,u_{T}}^{\prime}$ is given by an integral
Weierstrass model. To this end, observe that $v_{2}\!\left(  a_{1}\right)
\geq4$ since $\frac{a_{1}}{4}\equiv0\ \operatorname{mod}4$. For $a_{2}$ and
$a_{3}$, set $b=2k-a$ for some odd integer $k$. Then $\left(  b-a\right)
^{2}\left(  b-5a\right)  =8\left(  a-k\right)  ^{2}\left(  k-3a\right)  $.
Since $\left(  a-k\right)  ^{2}\left(  k-3a\right)  \equiv
0\ \operatorname{mod}16$, we deduce that $v_{2}(  \left(  b-a\right)
^{2}\left(  b-5a\right)  )  \geq7$. Thus $\frac{2a_{2}}{u_{T}^{2}}$ is
an integer.

Next, observe that $b-3a\equiv b+a\ \operatorname{mod}4$ and thus
$v_{2}\!\left(  b-3a\right)  =1$. Then $\left(  b-3a\right)  \left(
3a+b\right)  $ is divisible by $8$ as it is a difference of odd squares.
Therefore $v_{2} \!\left(  3a+b\right)  \geq2$. Now suppose $v_{2}\!\left(
3a+b\right)  =2$ so that $b=4l-3a$ for some odd integer $l$. Then $\left(
b-a\right)  ^{2}\left(  b-5a\right)  =-64\left(  a-l\right)  ^{2}\left(
2a-l\right)  $ and thus $v_{2}(  \left(  b-a\right)  ^{2}\left(
b-5a\right)  )  \geq8$. Next, suppose $v_{2}\!\left(
3a+b\right)  >2$. A similar argument to the above gives that  $v_{2}(  \left(  b-a\right)  ^{2}\left(
b-5a\right)  )  \geq7$. It follows in both cases that 
$v_{2}\!\left(  a_{3}\right)
\geq11$ and hence $\frac{2a_{3}}{u_{T}^{3}}$ is an integer which shows that
$E_{T,u_{T}}^{\prime}$ is given by an integral Weierstrass model.
This shows (1).


\textbf{Case 2.} By Case 1 and (\ref{c2c6inealpt}), we have that if
$v_{2}\!\left(  a+b\right)  \neq1$, then $u_{T}$ divides $4$. So suppose
$u_{T}=4$. By (\ref{c2c6inealpt}), $v_{2}\!\left(  a+b\right)  \geq2$. Then
$v_{2}\!\left(  b-a\right)  =1$ and by inspection we have that $E_{T,u_{T}%
}^{\prime}$ is given by an integral Weierstrass model. In particular, the
minimal discriminant of $E_{T}$ is $u_{T}^{-12}\gamma_{T}$ if and only only if
$v_{2}\!\left(  a+b\right)  \geq2$.

It remains to consider the case when $v_{2}\!\left(  a+b\right)  =0$. By the
cases above, (\ref{c2c6inealpt}), and the fact that $E_{T}$ is given by an
integral Weierstrass model, we conclude that the minimal discriminant of
$E_{T}$ is $\gamma_{T}$ if and only if $v_{2}\!\left(  a+b\right)  =0$.
\end{proof}

\subsection[Proof of Theorem \ref{semistablecondthm} for $T=C_{2}\times C_{8}%
$]{Proof of Theorem \ref{semistablecondthm} for \bm{$T=C_{2}\times C_{8}$}}

\begin{claim}
[Theorem \ref{semistablecondthm} for \bm{$T=C_{2}\times C_{8}$}.]Let
$E_{T}=E_{C_{2}\times C_{8}}\!\left(  a,b\right)  $ where $a$ and $b$ are
relatively prime integers with $a$ positive. Then the minimal discriminant of
$E_{T}$ is $u_{T}^{-12}\gamma_{T}$ with $u_{T}\in\left\{  1,16,64\right\}  $. Moreover,

$\left(  1\right)  $ $u_{T}=64$ if and only if $v_{2}\!\left(  a\right)
\geq2$;

$\left(  2\right)  $ $u_{T}=16$ if and only if $v_{2}\!\left(  a\right)  =1$;

$\left(  3\right)  $ $u_{T}=1$ if and only if $a$ is odd.
\end{claim}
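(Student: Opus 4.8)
The plan follows the structure of the preceding cases. Since $E_T=E_{C_2\times C_8}(a,b)$ is given by an integral Weierstrass model, Lemma~\ref{silverconverse} produces a unique positive integer $u_T$ with $\Delta_{E_T}^{\mathrm{min}}=u_T^{-12}\gamma_T$; in particular $u_T^4\mid\gcd(\alpha_T,\beta_T)$ and $u_T^6\mid\gcd(\beta_T,\gamma_T)$. Because $\gcd(a,b)=1$, Lemma~\ref{polynomials} gives $\gcd(\alpha_T,\beta_T)\mid 2^{49}3^2$ and $\gcd(\beta_T,\gamma_T)\mid 2^{90}$, so $u_T$ is a power of $2$ with $v_2(u_T)\le 12$. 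Moreover, by Lemma~\ref{sslemc4j0} a rational elliptic curve with $C_2\times C_8\hookrightarrow E(\mathbb{Q})$ has $j$-invariant neither $0$ nor $1728$, so $\alpha_T\beta_T\neq 0$ and we may work throughout with the concrete polynomials of Tables~\ref{ta:alpT}, \ref{ta:betT}, and \ref{ta:gamT}.

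The first step is to compute $v_2(\alpha_T)$. Since $\gcd(a,b)=1$, whenever $a$ is even $b$ is odd. Using the model $y^2+a_1xy+a_3y=x^3+a_2x^2$ obtained from $\mathcal{X}_{b/a}(C_2\times C_8)$ via $x\mapsto w_T^{-2}x$, $y\mapsto w_T^{-3}y$ with $w_T=2b(a+4b)(8b^2-a^2)$ --- so that $a_1=-(a^4+8a^3b+24a^2b^2-64b^4)$ --- one checks
\[
v_2(\alpha_T)=\begin{cases}0 & \text{if }a\text{ is odd},\\ 16 & \text{if }v_2(a)=1,\\ 24 & \text{if }v_2(a)\ge 2,\end{cases}
\]
the point being that $a^4$ (resp.\ $-64b^4$) dominates the $2$-adic valuation of $a_1$ in the last two cases, and then $a_1^4$ dominates in $\alpha_T=a_1^4+8a_1^2a_2-24a_1a_3+16a_2^2$. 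As $u_T^4\mid\alpha_T$, this forces $u_T=1$ when $a$ is odd, $u_T\mid 16$ when $v_2(a)=1$, and $u_T\mid 64$ when $v_2(a)\ge 2$; in particular $u_T\in\{1,16,64\}$ and part~$(3)$ follows.

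It remains to upgrade the divisibilities in parts~$(1)$ and~$(2)$ to equalities, which I would do by exhibiting integral models. Applying $x\mapsto u_T^2x$, $y\mapsto u_T^3y$ with $u_T=16$ when $v_2(a)=1$ (respectively $u_T=64$ when $v_2(a)\ge 2$) turns $E_T$ into $E_T':y^2+\tfrac{a_1}{u_T}xy+\tfrac{a_3}{u_T^3}y=x^3+\tfrac{a_2}{u_T^2}x^2$. Tracking the $2$-adic valuations of the factors $a+4b$, $8b^2-a^2$, $a-2b$ and of the partial sums such as $a^3+6a^2b+16ab^2+16b^3$ occurring in $a_2$ and $a_3$ --- and using that $v_2(8b^2-a^2)=2$ when $v_2(a)=1$ while $v_2(8b^2-a^2)=3$ when $v_2(a)\ge 2$ --- one verifies $v_2(a_1)\ge v_2(u_T)$, $v_2(a_2)\ge 2v_2(u_T)$, and $v_2(a_3)\ge 3v_2(u_T)$, so $E_T'$ is integral; only $2$-integrality is at issue since $u_T$ is a power of $2$ and $E_T$ is already integral. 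Because the minimal discriminant divides the discriminant of any integral model, $\Delta_{E_T}^{\mathrm{min}}=u_T^{-12}\gamma_T$ now forces $v_2(u_T)\ge 4$ (respectively $\ge 6$), and with the bound of the previous paragraph $u_T=16$ (respectively $u_T=64$). The Kraus conditions of Theorem~\ref{kraus} hold automatically for $E_T'$, so Corollary~\ref{CorKraus} need not be invoked separately.

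The main obstacle is this last integrality verification: one must show that dividing $a_1,a_2,a_3$ by $u_T,u_T^2,u_T^3$ leaves integers \emph{uniformly} over all $a$ with $v_2(a)\ge 2$ (and likewise for $v_2(a)=1$), even though the $2$-adic valuation of $a+4b$ is not constant there ($=2$ when $v_2(a)\ge 3$, but $\ge 3$ when $v_2(a)=2$). This is exactly where the shape of the $C_2\times C_8$ parameterization enters: the factor $(8t^2-1)^2$ in the denominator of $f$ and the factor $2t(4t+1)(8t^2-1)$ in the denominator of $g$, once one substitutes $t=b/a$ and clears by $w_T$, account for the required powers of $2$ in $a_1,a_2,a_3$. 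Once this book-keeping is done, the three equivalences are immediate.
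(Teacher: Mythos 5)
Your proposal is correct and follows essentially the same route as the paper: bound $u_T$ via $u_T^4\mid\gcd(\alpha_T,\beta_T)\mid 2^{49}3^2$, compute $v_2(\alpha_T)\in\{0,16,24\}$ according to $v_2(a)$ to force $u_T\mid 1,16,64$ in the three cases, and then pin down the exact value by exhibiting the integral model obtained from $x\mapsto u_T^2x$, $y\mapsto u_T^3y$ (the paper likewise checks $v_2$ of the resulting Weierstrass coefficients directly and never needs Kraus here). The only blemishes are cosmetic: the factor you call $a-2b$ should be $a+2b$, and the membership $u_T\in\{1,16,64\}$ is only established once the integral models are produced, which your argument does supply.
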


\begin{proof}
By Lemma \ref{silverconverse}, there is a unique positive integer $u_{T}$ such
that $\Delta_{E_{T}}^{\text{min}}=u_{T}^{-12}\gamma_{T}$ since $E_{T}$ is
given by an integral Weierstrass model. In particular, $u_{T}^{4}$ divides
$\gcd\!\left(  \alpha_{T},\beta_{T}\right)  $. By Lemma~\ref{polynomials},
$\gcd\!\left(  \alpha_{T},\beta_{T}\right)  $ divides $2^{49}3^{2}$ and thus
$u_{T}$ divides $2^{12}$. Next, observe that $\alpha_{T}\equiv a^{16}%
\ \operatorname{mod}2$ and therefore $\alpha_{T}$ is even if and only if $a$
is even. Now set $a=2k$ for some integer $k$. With this substitution, it is
verified that
\begin{equation}
v_{2}\!\left(  \alpha_{T}\right)  =\left\{
\begin{array}
[c]{ll}%
0 & \text{if }v_{2}\!\left(  a\right)  =0,\\
16 & \text{if }v_{2}\!\left(  a\right)  =1,\\
24 & \text{if }v_{2}\!\left(  a\right)  \geq2.
\end{array}
\right.  \label{valalpt2c2xc8}%
\end{equation}
It follows that $u_{T}$ divides $64$. For the cases below, we will consider
the admissible change of variables $x\longmapsto u_{T}^{2}x$ and $y\longmapsto
u_{T}^{3}y$ from $E_{T}$ onto%
\begin{align*}
E_{T,u_{T}}^{\prime}  &  :y^{2}-\frac{a_{1}}{u_{T}}xy+\frac{8a_{3}}{u_{T}^{3}%
}y=x^{3}-\frac{4a_{2}}{u_{T}^{2}}x^{2}\text{ where}\\
a_{1}  &  =a^{4}+8a^{3}b+24a^{2}b^{2}-64b^{4},\qquad a_{2}=ab^{2}\left(
a+2b\right)  \left(  a+4b\right)  ^{2}\left(  a^{2}+4ab+8b^{2}\right)  ,\\
a_{3}  &  =ab^{3}\left(  a+2b\right)  \left(  a+4b\right)  ^{3}\left(
a^{2}-8b^{2}\right)  \left(  a^{2}+4ab+8b^{2}\right)  .
\end{align*}

\textbf{Case 1.} Suppose $u_{T}=64$. By (\ref{valalpt2c2xc8}), $v_{2}\!\left(
a\right)  \geq2$. By inspection, the Weierstrass coefficients of $E_{T,u_{T}%
}^{\prime}$ satisfy $v_{2}\!\left(  a_{1}\right)  \geq6,\ v_{2}\!\left(
a_{2}\right)  \geq10,$ and $v_{2}\!\left(  a_{3}\right)  \geq15$.
Consequently, $E_{T,u_{T}}^{\prime}$ is given by an integral Weierstrass model
which shows that $u_{T}^{-12}\gamma_{T}$ is the minimal discriminant of
$E_{T}$ if and only if $v_{2}\!\left(  a\right)  \geq2$, which shows $\left(
1\right)  $.

\textbf{Case 2.} By Case 1 and (\ref{valalpt2c2xc8}), we have that if
$v_{2}\!\left(  a\right)  \leq1$, then $u_{T}$ divides $16$. So suppose
$u_{T}=16$. By (\ref{valalpt2c2xc8}), $v_{2}\!\left(  a\right)  =1$. By
inspection, the Weierstrass coefficients of $E_{T,u_{T}}^{\prime}$ satisfy
$v_{2}\!\left(  a_{1}\right)  \geq4,\ v_{2}\!\left(  a_{2}\right)  \geq7,$ and
$v_{2}\!\left(  a_{3}\right)  \geq10$. Consequently, $E_{T,u_{T}}^{\prime}$ is
given by an integral Weierstrass model which shows that $u_{T}^{-12}\gamma
_{T}$ is the minimal discriminant of $E_{T}$ if and only if $v_{2}\!\left(
a\right)  =1$, which shows $\left(  2\right)  $.

It remains to consider the case when $v_{2}\!\left(  a\right)  =0$. By
(\ref{valalpt2c2xc8}) and the fact that $E_{T}$ is given by an integral
Weierstrass model, we conclude that the minimal discriminant of $E_{T}$ is
$\gamma_{T}$ if and only if $v_{2}\!\left(  a\right)  =0$.
\end{proof}

\section{Global Minimal Models\label{sec:GMM}}
By Theorem \ref{semistablecondthm}, there are necessary and sufficient conditions on the parameters of $E_T$ to determine its minimal discriminant. However, the proof of this result did not exhibit a global minimal model for each of the families $E_T$. Indeed, some of the cases relied on Theorem \ref{kraus} to conclude the minimal discriminant in several cases. The following theorem gives sufficient conditions on the parameters of $E_T$ to determine a global minimal model for $E_T$.

\begin{theorem}
\label{GlobalMinModel}Let $u_{T}$ be as given in Theorem
\ref{semistablecondthm} and let $E_{T}^{\prime}$ be the elliptic curve
attained from $E_{T}$ via the admissible change of variables $x\longmapsto
u_{T}^{2}x+r_{T}$ and $y\longmapsto u_{T}^{3}y+u_{T}^{2}s_{T}x$ where
\begin{align*}
r_{T}  &  =\left\{
\begin{array}
[c]{cl}%
1 & \text{if }T=C_{2}\text{ with }v_{2}\!\left(  b\right)  \geq3\text{ and
}a\equiv7\ \operatorname{mod}8,\\
-3 & \text{if }T=C_{2}\text{ with }v_{2}\!\left(  b\right)  \geq3\text{ and
}a\equiv3\ \operatorname{mod}8,\\
0 & \text{otherwise,}%
\end{array}
\right. \\
s_{T}  &  =\left\{
\begin{array}
[c]{cl}%
0 & \text{if }\left(  i\right)  \ T\neq C_{2},C_{4},C_{2}\times C_{2},\text{
}\left(  ii\right)  \ T=C_{4}\ \text{with }u_{T}=c,\ \text{or }\\
& \hspace{1.1em} \left(  iii\right)  \text{\ }T=C_{2},C_{2}\times
C_{2}\ \text{with }u_{T}=1,\\
1 & \text{if }T=C_{2}\times C_{2}\text{ with }u_{T}=2\text{ or }T=C_{2}\text{
with }v_{2}\!\left(  b\right)  \geq3\text{ and }a\equiv3\ \operatorname{mod}%
4,\\
2 & \text{if }T=C_{2}\text{ with }u_{T}\neq1\text{ and }v_{2}\!\left(
b\right)  =1,\\
2c & \text{if }T=C_{4}\text{ with }u_{T}=2c.
\end{array}
\right.
\end{align*}
Then $E_{T}^{\prime}$ is a global minimal model for $E_{T}$.
\end{theorem}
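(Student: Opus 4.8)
The plan is to reduce the statement to a finite integrality check. By Theorem~\ref{semistablecondthm}, the minimal discriminant of $E_T$ is $u_T^{-12}\gamma_T$, and an admissible change of variables with leading coefficient $u_T$ scales the discriminant by $u_T^{-12}$ regardless of the translation parameters; hence the discriminant of $E_T'$ is exactly $u_T^{-12}\gamma_T=\Delta_{E_T/\mathbb{Q}}^{\text{min}}$. Since $E_T'$ is $\mathbb{Q}$-isomorphic to $E_T$, it therefore suffices to show that $E_T'$ is given by an \emph{integral} Weierstrass model: an integral model whose discriminant equals the global minimal discriminant is automatically a global minimal model, because at every prime $p$ its discriminant valuation already coincides with $v_p(\Delta_{E_T/\mathbb{Q}_p}^{\text{min}})$. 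So the whole theorem comes down to verifying, family by family, that the Weierstrass coefficients $a_1',\dots,a_6'$ of $E_T'$ lie in $\mathbb{Z}$.

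First I would dispose of the easy cases. For $T=C_5,C_7,C_9,C_3^0$ one has $u_T=1$ and $r_T=s_T=0$, so $E_T'=E_T$ is already integral. For $T\in\{C_3,C_6,C_8,C_{10},C_{12}\}$, and likewise for the subcases $u_T=1$ of all the remaining $T$, the proof of Theorem~\ref{semistablecondthm} already exhibited explicit integral models obtained from $E_T$ by exactly the substitution in the statement with $r_T=s_T=0$, so nothing new is needed. For $T=C_2\times C_2$ with $u_T=2$ the substitution with $r_T=0$, $s_T=1$ produces $y^2+xy=x^3+\tfrac{ad+bd-1}{4}x^2+\tfrac{abd^2}{16}x$, which is integral because in that case $d$ is odd, $v_2(a)\ge 4$ and $bd\equiv 1\pmod 4$. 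Thus the only families genuinely requiring new computation are $T=C_2$ with $u_T\in\{2,4\}$ and $T=C_4$ with $u_T=2c$.

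For these I would proceed uniformly: write out the standard transformation formulas expressing $a_1',\dots,a_6'$ in terms of the Weierstrass coefficients of $E_T$ and of $u_T,r_T,s_T$, substitute the explicit coefficients of $E_T$ (polynomials in $a,b,d$, with $c$ also entering for $T=C_4$), and check that the $2$-adic valuation of each numerator is at least that of the matching power of $u_T$, invoking the constraints on $v_2(a)$, $v_2(b)$, $v_2(b^2d-a^2)$, etc. that are built into the definition of $u_T$ in Theorem~\ref{semistablecondthm} (for $T=C_4$ one also uses that $v_2(a)\ge 8$ even forces $d$ odd and $v_2(c)\ge 4$). For $T=C_2$ the relevant formulas are $a_1'=2s_T/u_T$, $a_2'=(2a+3r_T-s_T^2)/u_T^2$, $a_3'=0$, $a_4'=\big((a+r_T)(a+3r_T)-b^2d\big)/u_T^4$, and $a_6'=r_T\big((a+r_T)^2-b^2d\big)/u_T^6$, and in the subcase $v_2(b)=1$ (where $u_T\in\{2,4\}$, $r_T=0$, $s_T=2$) the bounds $v_2(a)=1$, $d$ odd, and $v_2(b^2d-a^2)\ge 4$ (resp. $\ge 8$) clear every denominator.

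The step I expect to be the main obstacle is $T=C_2$ in the subcase $v_2(b)\ge 3$, where a nonzero translation $r_T\in\{1,-3\}$ is unavoidable. There $v_2(b^2d)\ge 6$, so integrality of $a_6'=r_T\big((a+r_T)^2-b^2d\big)/64$ forces $64\mid (a+r_T)^2$, i.e. $r_T\equiv -a\pmod 8$; since the hypotheses of this case give $a\equiv 3\pmod 4$, this pins down $r_T=1$ when $a\equiv 7\pmod 8$ and $r_T=-3$ (which is $\equiv 5\pmod 8$) when $a\equiv 3\pmod 8$ --- precisely the split in the statement --- and with either choice $v_2\big((a+r_T)(a+3r_T)\big)\ge 4$ and $v_2(2a+3r_T-1)\ge 2$, so that $a_4'$ and $a_2'$ are integral as well while $a_1'=1$. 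Pinning down these forced congruences, and then carrying out the analogous but less delicate $2$-adic bookkeeping for $T=C_2$ with $v_2(b)=1$ and for $T=C_4$ with $u_T=2c$, is the technical heart of the argument; everything else is either routine bookkeeping or a direct appeal to the integral models already produced in the proof of Theorem~\ref{semistablecondthm}.
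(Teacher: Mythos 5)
Your proposal is correct and follows essentially the same route as the paper: reduce the theorem to checking that $E_{T}^{\prime}$ is an integral Weierstrass model (its discriminant being $u_{T}^{-12}\gamma_{T}=\Delta_{E_{T}/\mathbb{Q}}^{\text{min}}$ by Theorem~\ref{semistablecondthm}), dispose of all cases already settled in the proof of that theorem, and carry out the explicit $2$-adic verification for $T=C_{2}$ with $u_{T}\in\{2,4\}$, $T=C_{4}$ with $u_{T}=2c$, and $T=C_{2}\times C_{2}$ with $u_{T}=2$. Your derivation of the forced congruence $r_{T}\equiv-a\ \operatorname{mod}8$ explaining the split between $r_{T}=1$ and $r_{T}=-3$ is exactly the content of the paper's Subcases~1a and~1b, just presented with the motivation made explicit.
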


\begin{proof}
In the proof of Theorem \ref{semistablecondthm}, it was shown that
$E_{T}^{\prime}$ is a global minimal model for $E_{T}$ if $T\not =C_{2}%
,C_{4},C_{2}\times C_{2}$.  In what follows, we consider these cases separately.

\textbf{Case 1.} Let $T=C_{2}$. If $u_{T}=1$, then $E_{T}^{\prime}=E_{T}$ is a
global minimal model. So it remains to show the cases corresponding to
$u_{T}=2,4$.

\qquad\textbf{Subcase 1a.} Suppose $v_{2}\!\left(  b\right)  \geq3$ and
$a\equiv7\ \operatorname{mod}8$. Then $u_{T}=2$ by Theorem
\ref{semistablecondthm}. Now write $b=8\hat{b}$ and $a=8k+7$ for some integers
$\hat{b}$ and $k$ and observe that%
\[
E_{T}^{\prime}:y^{2}+xy=x^{3}+\left(  4k+4\right)  x^{2}+\left(  4k^{2}%
-4\hat{b}^{2}d+9k+5\right)  x+k^{2}+2k+1-b^{2}%
\]
is an integral Weierstrass model. Thus $E_{T}^{\prime}$ is a global minimal
model for $E_{T}$.

\qquad\textbf{Subcase 1b.} Suppose $v_{2}\!\left(  b\right)  \geq3$ and
$a\equiv3\ \operatorname{mod}8$. Then $u_{T}=2$ by Theorem
\ref{semistablecondthm}. Now write $b=8\hat{b}$ and $a=8k+3$ for some integers
$\hat{b}$ and $k$ and note that%
\[
E_{T}^{\prime}:y^{2}+xy=x^{3}+\left(  4k-1\right)  x^{2}+\left(  4k^{2}%
-4b^{2}d-3k\right)  x+3b^{2}d-3k^{2}%
\]
is an integral Weierstrass model. Hence $E_{T}^{\prime}$ is a global minimal
model for $E_{T}$.

\qquad\textbf{Subcase 1c.} Suppose $v_{2}\!\left(  a\right)  =v_{2}\!\left(
b\right)  =1$ with either $\left(  i\right)  $ $v_{2}\!\left(  b^{2}%
d-a^{2}\right)  \geq8$ with $a\equiv6\ \operatorname{mod}8$ or $\left(
ii\right)  $ $4\leq v_{2}\!\left(  b^{2}d-a^{2}\right)  \leq7$. Then $u_{T}=2$
by Theorem \ref{semistablecondthm} and observe that
\[
E_{T}^{\prime}:y^{2}+2xy=x^{3}+\frac{2a-4}{4}x^{2}-\frac{b^{2}d-a^{2}}{16}x
\]
is an integral Weierstrass model. In particular, $E_{T}^{\prime}$ is a global
minimal model for $E_{T}$.

\qquad\textbf{Subcase 1d.} Suppose $v_{2}\!\left(  b^{2}d-a^{2}\right)  \geq8$
with $v_{2}\!\left(  a\right)  =v_{2}\!\left(  b\right)  =1$ and
$a\equiv2\ \operatorname{mod}8$. Then $u_{T}=4$ by Theorem
\ref{semistablecondthm} and
\[
E_{T}^{\prime}:y^{2}+xy=x^{3}+\frac{a-2}{8}x^{2}-\frac{b^{2}d-a^{2}}{256}x
\]
is an integral Weierstrass model which shows that $E_{T}^{\prime}$ is a global
minimal model for $E_{T}$. This concludes the proof for $T=C_{2}$.

\textbf{Case 2.} Let $T=C_{4}$ and let $a=c^{2}d$ for $d$ a positive
squarefree integer. By the proof of Theorem \ref{semistablecondthm}, we have
that $E_{T}^{\prime}$ is a global minimal model for $E_{T}$ if $u_{T}=c$. So
it remains to show the case when $u_{T}=2c$. By Theorem
\ref{semistablecondthm}, this is equivalent to $v_{2}\!\left(  a\right)
\geq8$ is even with $bd\equiv3\ \operatorname{mod}4$. It follows that%
\[
E_{T}^{\prime}:y^{2}+\left(  \frac{cd}{2}+1\right)  xy-\frac{bcd^{2}}%
{8}y=x^{3}-\frac{bd+cd+1}{4}x^{2}+\frac{bcd^{2}}{16}x
\]
is an integral Weierstrass model since $v_{2}\!\left(  c\right)  \geq4$ and
$bd\equiv3\ \operatorname{mod}4$. Therefore $E_{T}^{\prime}$ is a global
minimal model for $E_{T}$.

\textbf{Case 3.} Let $T=C_{2}\times C_{2}$. If $u_{T}=1$, then $E_{T}^{\prime
}=E_{T}$ is a global minimal model. So it remains to show the case
corresponding to $u_{T}=2$. By Theorem \ref{semistablecondthm}, this is
equivalent to $v_{2}\!\left(  a\right)  \geq4$ and $bd\equiv
1\ \operatorname{mod}4$. Then%
\[
E_{T}^{\prime}:y^{2}+xy=x^{3}+\frac{ad+bd-1}{4}x^{2}+\frac{abd^{2}}{16}x
\]
is an integral Weierstrass model and thus a global minimal model for $E_{T}$.
\end{proof}

\section{Necessary and Sufficient Conditions for
Semistability\label{secaddred}}

In this section, we use Theorems \ref{Thm1} and \ref{semistablecondthm} to
determine the primes at which a rational elliptic curve has additive reduction. Specifically, we find necessary and sufficient conditions on the parameters of $E_{T}$ to
determine the primes at which additive reduction occurs. Consequently, we will
be able to find necessary and sufficient conditions on the parameters of
$E_{T}$ to determine when $E_{T}$ is semistable.

\begin{theorem}
\label{semis}Let $E_{T}$ be as given in Table \ref{ta:ETmodel} and suppose
that the parameters of $E_{T}$ satisfy the conclusion of Proposition
\ref{rationalmodels}. Then $E_{C_{2}\times C_{8}}$ is semistable and for
$T\neq C_{2}\times C_{8}$, $E_{T}$ has additive reduction at a prime $p$ if
and only if $p$ is listed below and the corresponding condition on the
parameters of $E_{T}$ is satisfied.
\end{theorem}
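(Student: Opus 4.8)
The plan is to exploit the fact that a rational elliptic curve $E$ has additive reduction at a prime $p$ if and only if $p$ divides $\gcd(c_4, \Delta_{E/\mathbb{Q}}^{\text{min}})$, where $c_4$ is the invariant attached to a global minimal model. By Theorem \ref{semistablecondthm} and Lemma \ref{silverconverse}, this minimal discriminant is $u_T^{-12}\gamma_T$ and the corresponding invariant is $u_T^{-4}\alpha_T$; hence for each $T$ the problem reduces to determining the primes dividing $\gcd(u_T^{-4}\alpha_T,\, u_T^{-12}\gamma_T)$ in terms of the parameters. When $\alpha_T=0$ (the $j$-invariant $0$ families, which by Lemma \ref{sslemc4j0} together with Lemmas \ref{sselmmac2result} and \ref{sselmmac2c2result} form a finite list of special curves together with $E_{C_3^0}(a)$), additive reduction occurs at precisely the primes dividing $u_T^{-12}\gamma_T$, which is immediate; when $\beta_T=0$ but $\alpha_T\neq0$, the criterion is insensitive to $\beta_T$, so these curves are handled alongside the generic case.

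First I would rule out the primes that cannot contribute. By Lemma \ref{polynomials}, since the parameters satisfy Proposition \ref{rationalmodels} (so $a$ and $b$ are coprime), the ideal $\alpha_T R_K + \gamma_T R_K$ is contained in $\delta_T R_K$; combined with the explicit value of $u_T$, this forces $\gcd(u_T^{-4}\alpha_T,\, u_T^{-12}\gamma_T)$ to be supported on the primes dividing $\delta_T$, that is, on $2$, $3$, and---for $T=C_2, C_2\times C_2, C_3$---the primes dividing $d$. For $T=C_2$ and $C_2\times C_2$ I then establish the converse for $p\mid d$: substituting the explicit $\alpha_T,\gamma_T$ from Tables \ref{ta:alpT} and \ref{ta:gamT} (for instance $\alpha_{C_2\times C_2}=16d^2(a^2-ab+b^2)$ and $\gamma_{C_2\times C_2}=16a^2b^2d^6(a-b)^2$) and using that $u_T$ is prime to $p$, one sees $v_p(u_T^{-4}\alpha_T)>0$ and $v_p(u_T^{-12}\gamma_T)>0$, so $E_T$ has additive reduction at $p$. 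The analogous computation handles the primes $p\mid d$ dividing $a$ when $T=C_3$.

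The core of the argument is the analysis at the remaining small primes: $p=2$ for every $T$, and additionally $p=3$ for $T\in\{C_3,C_6,C_{12},C_2\times C_6\}$, $p=5$ for $T\in\{C_5,C_{10}\}$, and $p=7$ for $T=C_7$. For each such pair $(T,p)$ I would substitute the explicit formulas for $\alpha_T$ and $\gamma_T$, insert the value of $u_T$ dictated by Table \ref{ta:uTmin} (whose determining conditions split the argument into subcases), and compute $v_p(u_T^{-4}\alpha_T)$ and $v_p(u_T^{-12}\gamma_T)$; additive reduction at $p$ holds if and only if both are positive, and reading off when this happens yields the tabulated condition. For the odd primes $3,5,7$, $u_T$ is a power of $2$, so only $v_p(\alpha_T)$ and $v_p(\gamma_T)$ enter and the argument is short. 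I expect the main obstacle to be the case $p=2$ for the families admitting several values of $u_T$---namely $C_2$, $C_2\times C_4$, and $C_2\times C_6$---where one must carefully track how the exactly-computed $2$-adic valuation of $\alpha_T$ (pinned down as in the inequalities \eqref{ch:ss:c2c4ine} and \eqref{c2c6inealpt} from the proof of Theorem \ref{semistablecondthm}) interacts with the exponent $4\log_2 u_T$, and separately whether $2$ survives in $u_T^{-12}\gamma_T$.

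Finally, for $E_{C_2\times C_8}$: Theorem \ref{Thm1} guarantees that the only prime at which additive reduction could occur is $2$, so it suffices to check semistability at $2$. Using \eqref{valalpt2c2xc8} together with the value of $u_T\in\{1,16,64\}$ from Table \ref{ta:uTmin}, one verifies that in each of the three cases $v_2(u_T^{-4}\alpha_T)=0$, whence $2\nmid\gcd(c_4,\Delta_{E/\mathbb{Q}}^{\text{min}})$ and $E_{C_2\times C_8}$ is semistable everywhere.
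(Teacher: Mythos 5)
Your overall strategy is exactly the paper's: reduce to computing $\gcd\!\left(u_T^{-4}\alpha_T,\,u_T^{-12}\gamma_T\right)$ using Theorem \ref{semistablecondthm}, dispose of the $j=0$ and $j=1728$ curves via Lemmas \ref{sslemc4j0}, \ref{sselmmac2result}, \ref{sselmmac2c2result}, bound the possible bad primes by Lemma \ref{polynomials} (equivalently Theorem \ref{Thm1} for $|T|\geq 5$), and then do a prime-by-prime, subcase-by-subcase valuation computation keyed to the value of $u_T$. That is sound, and your treatment of $C_2\times C_8$ via \eqref{valalpt2c2xc8} is the paper's argument verbatim. However, two steps as you have written them would fail. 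First, for $T=C_2$ you assert that the odd bad primes are supported on the primes dividing $d$, and you only propose to verify the converse for $p\mid d$. This is wrong: the tabulated condition is $v_p\!\left(\gcd\!\left(a,bd\right)\right)\geq 1$, and an odd prime dividing $\gcd\!\left(a,b\right)$ but not $d$ genuinely produces additive reduction. For instance $E_{C_2}\!\left(3,3,2\right)$ satisfies the hypotheses of Proposition \ref{rationalmodels} ($\gcd\!\left(a,b\right)=3$ and $d=2$ are squarefree), yet $v_3\!\left(\alpha_T\right)=2$ and $v_3\!\left(\gamma_T\right)=6$ with $u_T$ a power of $2$, so $E$ has additive reduction at $3$ while $3\nmid d$. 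Note also that Lemma \ref{polynomials} assigns no $\delta_{C_2}$ in its final table; since $\gcd\!\left(a,b\right)$ is only squarefree (not $1$) and $a,d$ need not be coprime, you cannot invoke the $\delta_T$ mechanism here. The paper instead bounds $\gcd\!\left(\alpha_T,\gamma_T\right)$ by $2^{10}\gcd\!\left(a^6,b^6d^3\right)$ via the decomposition $a=mnl\tilde a$, $b=mn\tilde b$, $d=ml\tilde d$, which is what you need.

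Second, your enumeration of which families require an odd-prime analysis omits $T=C_9$ at $p=3$: you list $p=3$ only for $C_3,C_6,C_{12},C_2\times C_6$, but $\delta_{C_9}=3$ and the sole entry for $C_9$ in the theorem's table is the condition $v_3\!\left(a+b\right)\geq 1$ at $p=3$ (obtained from $\alpha_{C_9}\equiv\left(a+b\right)^{12}$ and $\gamma_{C_9}\equiv 2a^9b^9\left(a^2-b^2\right)^9$ modulo $3$). As written, your plan would check $C_9$ only at $p=2$ and never derive its tabulated condition. Both issues are repairable within your framework, but they must be fixed for the proof to establish the statement as tabulated.
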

{\renewcommand*{\arraystretch}{1.15} \begin{longtable*}{ccc}
\hline
$T$ & $p$ & Conditions on $a,b,d$\\
\hline
\endfirsthead
\hline
$T$ & $p$ & Conditions on $a,b,d$\\
\hline
\endhead
\hline
\multicolumn{3}{r}{\emph{continued on next page}}
\endfoot
\hline
\endlastfoot
	
$C_{2}$ & $\neq2$ & $v_{p}\!\left(  \gcd\!\left(  a,bd\right)  \right)  \geq
1$\\\cmidrule(lr){2-3}
& $2$ & $v_{2}\!\left(  b^{2}d-a^{2}\right)  \geq8$ with $a\equiv
6\ \operatorname{mod}8$\\\cmidrule(lr){3-3}
&  & $v_{2}\!\left(  b^{2}d-a^{2}\right)  \leq7$ with $v_{2}\!\left(
a\right)  =v_{2}\!\left(  b\right)  =1$\\\cmidrule(lr){3-3}
&  & $v_{2}\!\left(  b\right)  \geq3$ with $a\not \equiv 3\ \operatorname{mod}%
4$\\\cmidrule(lr){3-3}
& & $v_{2}\!\left(  b\right)  =1$ with
$v_{2}\!\left(  a\right)  \neq1$ \\\cmidrule(lr){3-3}
&  & $v_{2}\!\left(  b\right)  =0,2$\\\hline
$C_{3}^{0}$ & $\geq2$ & $v_{p}\!\left(  a\right)=1,2$\\\cmidrule(lr){2-3}
& $3$ & $v_{3}\!\left(  a\right)=0$\\\hline
$C_{3}$ & $\neq3$ & $v_{p}\!\left(  a\right)  \not \equiv
0\ \operatorname{mod}3$\\\cmidrule(lr){2-3}
& $3$ & $v_{3}\!\left(  a\right)  \geq1$\\\hline
$C_{4}$ & $\geq2$ & $v_{p}\!\left(  a\right)  \equiv1\ \operatorname{mod}2$\\\cmidrule(lr){2-3}
& $2$ & $v_{2}\!\left(  a\right)  =2,4,6$\\\cmidrule(lr){3-3}
& & $v_{2}\!\left(  a\right)  \geq8$ is even with $bd\equiv1\ \operatorname{mod}%
4$\\\hline
$C_{5}$ & $5$ & $v_{5}\!\left(  a+3b\right)  \geq1$\\\hline
$C_{6}$ & $2$ & $v_{2}\!\left(  a+b\right)  =1,2$\\\cmidrule(lr){2-3}
& $3$ & $v_{3}\!\left(  a\right)  \geq1$\\\hline
$C_{7}$ & $7$ & $v_{7}\!\left(  a+4b\right)  \geq1$\\\hline
$C_{8}$ & $2$ & $v_{2}\!\left(  a\right)  \geq2$\\\hline
$C_{9}$ & $3$ & $v_{3}\!\left(  a+b\right)  \geq1$\\\hline
$C_{10}$ & $5$ & $v_{5}\!\left(  a+b\right)  \geq1$\\\hline
$C_{12}$ & $3$ & $v_{3}\!\left(  a\right)  \geq1$\\\hline
$C_{2}\times C_{2}$ & $\neq2$ & $v_{p}\!\left(  d\right)  =1$\\\cmidrule(lr){2-3}
& $2$ & $v_{2}\!\left(  a\right)  =1,2,3$ or $bd\not \equiv
1\ \operatorname{mod}4$\\\hline
$C_{2}\times C_{4}$ & $2$ & $v_{2}\!\left(  a\right)  \geq1$ with $v_{2}\!\left(  a+4b\right)
\leq3$\\\hline
$C_{2}\times C_{6}$ & $3$ & $v_{3}\!\left(  b\right)  \geq1$
\end{longtable*}}
\begin{proof}
By Theorem \ref{semistablecondthm}, there are necessary and sufficient
conditions on the parameters of $E_{T}$ to determine its minimal discriminant
$\Delta_{E_{T}}^{\text{min}}$. In particular, $\Delta_{E_{T}}^{\text{min}%
}=u_{T}^{-12}\gamma_{T}$ where $u_{T}$ and $\gamma_{T}$ are as given in
Theorem \ref{semistablecondthm} and Table \ref{ta:gamT}, respectively. In
particular, the invariant $c_{4}$ associated to a global minimal model of
$E_{T}$ is $u_{T}^{-4}\alpha_{T}$ where $\alpha_{T}$ is as given in Table
\ref{ta:alpT}. Consequently, $E_{T}$ has additive reduction at a prime $p$ if
and only if $p$ divides $\gcd\!\left(  u_{T}^{-4}\alpha_{T},u_{T}^{-12}%
\gamma_{T}\right)  $. In particular, if $E_{T}$ has $j$-invariant $0$ or
$1728$, then $E_{T}$ has additive reduction at each prime dividing the minimal
discriminant $u_{T}^{-12}\gamma_{T}$. Moreover, if $E_{T}$ has a point of
order $N\geq4$, then $T$ is either $C_{4}$ or $C_{6}$ by Lemma \ref{sslemc4j0}%
. If $|T|\geq5$, then Theorem \ref{Thm1} implies
that if $E_{T}$ has additive reduction at a prime $p$, then $p$ is in the set $S$
given below:
\begin{equation}%
\begin{tabular}
[c]{ccccccccccc}\hline
$T$ & $C_{5}$ &$C_{6}$& $C_{7}$ & $C_{8}$ & $C_{9}$ & $C_{10}$ & $C_{12}$ &
$C_{2}\times C_{4}$ & $C_{2}\times C_{6}$ & $C_{2}\times C_{8}$\\\hline
$S$ & $\left\{  5\right\}$& $\left\{  2,3\right\}  $ & $\left\{  7\right\}  $ & $\left\{  2\right\}  $
& $\left\{  3\right\}  $ & $\left\{  5\right\}  $ & $\left\{  2,3\right\}  $ &
$\left\{  2\right\}  $ & $\left\{  2,3\right\}  $ & $\left\{  2\right\}
$\\\hline
\end{tabular}
\ \label{set}%
\end{equation}
We now show the theorem by considering each $T$ separately.

\textbf{Case 1.} Let $T=C_{2}$. By Theorem \ref{semistablecondthm}, the
minimal discriminant of $E_{T}$ is $u_{T}^{-12}\gamma_{T}$ where $u_{T}%
\in\left\{  1,2,4\right\}  $.

First, suppose $E_{T}$ has $j$-invariant $0$. By Lemma \ref{sselmmac2result},
we may assume that $\left(  a,b,d\right)  =\left(  3b,b,-3\right)  $ for $b$
a squarefree integer. By Theorem \ref{semistablecondthm}, $\Delta_{E_{T}%
}^{\text{min}}=-u_{T}^{-12}2^{10}3^{3}b^{6}$ where $u_{T}\in\left\{
1,2\right\}  $. In particular, $2$ divides $\Delta_{E_{T}}^{\text{min}}$ and
therefore $E_{T}$ has additive reduction at $2$. Note that the assumptions on
$a,b,d$ imply that either $v_{2}\!\left(  b\right)  =0$ or $v_{2}\!\left(
a\right)  =v_{2}\!\left(  b\right)  =1$ with $v_{2}\!\left(  b^{2}%
d-a^{2}\right)  =4$ which agrees with the claimed conditions. Next, observe
that $E_{T}$ has additive reduction at an odd prime $p$ if and only if $p$
divides $3b$. This shows the claim since $\gcd\!\left(  a,bd\right)  =3b$.

Now suppose $E_{T}$ has $j$-invariant $1728$. By Lemma \ref{sselmmac2result},
we may assume that $\left(  a,b,d\right)  =\left(  0,b,d\right)  $ with $b$
squarefree. By Theorem \ref{semistablecondthm}, $\Delta_{E_{T}}^{\text{min}%
}=2^{6}b^{6}d^{3}$. It follows that $E_{T}$ has additive reduction at each
prime dividing $2bd$. This agrees with the claim since $\gcd\!\left(
a,bd\right)  =bd$ and $v_{2}\!\left(  b\right)  =0,1$ with $v_{2}\!\left(
a\right)  =\infty$.

It remains to consider the case when the $j$-invariant of $E_{T}$ is not $0$
or $1728$. By Lemma~\ref{polynomials}, $\gcd\!\left(  u_{T}^{-4}\alpha
_{T},u_{T}^{-12}\gamma_{T}\right)  $ divides $2^{10}\gcd\!\left(  a^{6}%
,b^{6}d^{3}\right)  $. In particular, if $E_{T}$ has additive reduction at a
prime $p$, then $p=2$ or $p$ is an odd prime dividing $\gcd\!\left(
a,bd\right)  $. It remains to show the converse. If $p$ is an odd prime
dividing $\gcd\!\left(  a,bd\right)  $, then by inspection $p$ divides
$\gcd\!\left(  \alpha_{T},\gamma_{T}\right)  $. Since $v_{p}\!\left(
\Delta_{E_{T}}^{\text{min}}\right)  =v_{p}\!\left(  \gamma_{T}\right)  $ and
$v_{p}\!\left(  u_{T}^{-4}\alpha_{T}\right)  =v_{p}\!\left(  \alpha
_{T}\right)  $ for $p$ odd, we conclude that $E_{T}$ has additive reduction at
an odd prime $p$ if and only if $p$ divides $\gcd\!\left(  a,bd\right)  $. It
remains to consider the case when $E_{T}$ has additive reduction at $2$. We do
this by considering the cases corresponding to $u_{T}=1,2,$ or $4$.

\qquad\textbf{Subcase 1a.} Suppose $u_{T}=4$. Then $v_{2}\left(  b^{2}%
d-a^{2}\right)  \geq8$ with $v_{2}\!\left(  a\right)  =v_{2}\!\left(
b\right)  =1$ and $a\equiv2\ \operatorname{mod}8$. Write $b=2\hat{b}$ for some
odd integer $\hat{b}$. Then $a^{2}=4\hat{b}^{2}d-2^{8}k$ for some integer $k$
and observe that $u_{T}^{-4}\alpha_{T}=\hat{b}^{2}d-16k$ is odd. Thus $E_{T}$
is semistable at $2$.

\qquad\textbf{Subcase 1b.} Suppose $u_{T}=2$. First, suppose that
$v_{2}\!\left(  a\right)  =v_{2}\!\left(  b\right)  =1$ with $\left(
i\right)  \ 4\leq v_{2}\!\left(  b^{2}d-a^{2}\right)  \leq7$ or $\left(
ii\right)  $ $v_{2}\!\left(  b^{2}d-a^{2}\right)  \geq8$ and $a\equiv
6\ \operatorname{mod}8$. Then $a^{2}=b^{2}d-16k$ for some integer $k$. Hence
$u_{T}^{-4}\alpha_{T}=4b^{2}d-16k$ and $u_{T}^{-12}\gamma_{T}=4b^{2}dk^{2}$.
It follows that $E_{T}$ has additive reduction at $2$.

Now suppose $v_{2}\!\left(  b\right)  \geq3$ and $a\equiv3\ \operatorname{mod}%
4$. Write $b=8\hat{b}$ for some integer $\hat{b}$ and observe that $u_{T}%
^{-4}\alpha_{T}=192\hat{b}^{2}d+a^{2}$ is odd. Hence $E_{T}$ is semistable at
$2$.

\qquad\textbf{Subcase 1c.} Suppose $u_{T}=1$. Then $\alpha_{T}$ and
$\gamma_{T}$ are even and it follows that $E_{T}$ has additive reduction at
$2$. Lastly, we note that by Theorem \ref{semistablecondthm}, $u_{T}=1$ if and
only if $\left(  i\right)  $ $v_{2}\!\left(  b^{2}d-a^{2}\right)  \leq3$ with
$v_{2}\!\left(  a\right)  =v_{2}\!\left(  b\right)  =1$, $\left(  ii\right)  $
$v_{2}\!\left(  b\right)  \geq3$ with $a\not \equiv 3\ \operatorname{mod}4$,
$\left(  iii\right)  $ $v_{2}\!\left(  b\right)  =0,2$, or $\left(  iv\right)
$ $v_{2}\!\left(  b\right)  =1$ with $v_{2}\!\left(  a\right)  \neq1$. This
concludes the proof for $T=C_{2}$.

\textbf{Case 2.} Let $T=C_{3}^{0}$. By Theorem \ref{semistablecondthm},
$\Delta_{E_{T}}^{\text{min}}=-27a^{4}$. Moreover, $E_{T}$ has $j$-invariant
$0$ since $\alpha_{T}=0$. It follows that $E_{T}$ has additive reduction at
each prime dividing $3a$. The claim now follows since $a$ is assumed to be cubefree.

\textbf{Case 3.} Let $T=C_{3}$ and write $a=c^{3}d^{2}e$ with $d$ and $e$
relatively prime positive squarefree integers. By Theorem
\ref{semistablecondthm}, the minimal discriminant of $E_{T}$ is $u_{T}%
^{-12}\gamma_{T}$ with $u_{T}=c^{2}d$. In particular,%
\begin{equation}
u_{T}^{-4}\alpha_{T}=cd^{2}e^{3}\left(  a-24b\right)  \qquad\text{and}\qquad
u_{T}^{-12}\gamma_{T}=d^{4}e^{8}b^{3}\left(  a-27b\right)  . \label{qussC3}%
\end{equation}
Now suppose $E_{T}$ has $j$-invariant $0$ or $1728$. Then the $j$-invariant of
$E_{T}$ is $0$ by the proof of Theorem \ref{semistablecondthm}. By Proposition
\ref{rationalmodels}, it follows that $\left(  a,b\right)  =\left(
24,1\right)  $. Then $E_{T}$ has additive reduction at $3$ since $u_{T}%
^{-12}\gamma_{T}=-3^{9}$. The claim now follows that since $v_{3}\!\left(
24\right)  =1$. Now suppose that the $j$-invariant of $E_{T}$ is not $0$ or
$1728$. By Lemma \ref{polynomials} and the proof of Theorem
\ref{semistablecondthm}, $\gcd\!\left(  u_{T}^{-4}\alpha_{T},u_{T}^{-12}%
\gamma_{T}\right)  $ divides $2^{15}3^{6}cde$. It follows that if $E_{T}$ has
additive reduction at a prime $p$, then $p=2,3$ or $p$ divides $cde$.

Now suppose $p\geq5$ divides $cde$. If $p|c$ and $p\nmid de$, then $u_{T}%
^{-4}\alpha_{T}\equiv-24b\ \operatorname{mod}p$ is non-zero since $a$ and $b$
are relatively prime. Consequently, if $E_{T}$ has additive reduction at a
prime $p\geq5$, then $p$ divides $de$. Now observe that if a prime $p$ divides
$de$, then $v_{p}\!\left(  a\right)  \not \equiv 0\ \operatorname{mod}3$.
Moreover, $p$ divides $u_{T}^{-4}\alpha_{T}$ and $u_{T}^{-6}\gamma_{T}$ by
(\ref{qussC3}). In particular, $E_{T}$ has additive reduction at $p$.

It remains to consider the case when $p$ is $2$ or $3$. Since $u_{T}%
^{-4}\alpha_{T}\equiv c^{4}d^{4}e^{4}\ \operatorname{mod}3$ and $u_{T}%
^{-12}\gamma_{T}\equiv c^{3}d^{6}e^{9}b^{3}\ \operatorname{mod}3$, we conclude
that $E_{T}$ has additive reduction at $3$ if and only if $3$ divides $a$.
Similarly, $E_{T}$ has additive reduction at $2$ if and only if $2$ divides
$de$ since $u_{T}^{-4}\alpha_{T}\equiv c^{4}d^{4}e^{4}\ \operatorname{mod}2$
and $u_{T}^{-12}\gamma_{T}\equiv d^{4}e^{8}b^{3}\left(  a+b\right)
\ \operatorname{mod}2$.

\textbf{Case 4.} Let $T=C_{4}$ and write $a=c^{2}d$ for $d$ a positive
squarefree integer. Now suppose $E_{T}$ has $j$-invariant $0$ or $1728$. Then
$\left(  a,b\right)  =\left(  8,-1\right)  $ by Lemma \ref{sslemc4j0}. In
particular, $E_{T}$ has additive reduction at $2$ since $\Delta_{E_{T}%
}^{\text{min}}=-4096$. This agrees with the claim since $v_{2}\!\left(
a\right)  =3$. Now suppose that the $j$-invariant of $E_{T}$ is not $0$ or
$1728$. By Theorem \ref{semistablecondthm}, the minimal discriminant of
$E_{T}$ is $u_{T}^{-12}\gamma_{T}$ where $u_{T}\in\left\{  c,2c\right\}  $. By
Lemma \ref{polynomials} and the proof of Theorem \ref{semistablecondthm},
$\gcd\!\left(  u_{T}^{-4}\alpha_{T},u_{T}^{-12}\gamma_{T}\right)  $ divides
$2^{12}d^{2}$. Consequently, if $E_{T}$ has additive reduction at a prime $p$,
then $p$ divides $2d$. Now observe that if $p$ is an odd prime, then
$v_{p}\!\left(  c^{-4}\alpha_{T}\right)  =v_{p}\!\left(  u_{T}^{-4}\alpha
_{T}\right)  $ and $v_{p}\!\left(  c^{-12}\gamma_{T}\right)  =v_{p}\!\left(
u_{T}^{-12}\gamma_{T}\right)  $. Since
\begin{equation}
c^{-4}\alpha_{T}=d^{2}\left(  a^{2}+16ab+16b^{2}\right)  \qquad\text{and}%
\qquad c^{-12}\gamma_{T}=b^{4}c^{2}d^{7}\left(  a+16b\right)  ,
\label{c4addred}%
\end{equation}
it follows that $E_{T}$ has additive reduction at an odd prime $p$ if and only
if $p$ divides $d$. Note that this is equivalent to $v_{p}\!\left(  a\right)
\equiv1\ \operatorname{mod}2$. For $p=2$, we consider the cases corresponding
to $u_{T}=c$ or $2c$ separately.

\qquad\textbf{Subcase 4a.} Suppose $u_{T}=2c$. Then $v_{2}\!\left(  a\right)
\geq8$ is even with $bd\equiv3\ \operatorname{mod}4$. Now write $c=2^{4}k$ for
some integer $k$ and observe that $E_{T}$ is semistable at $2$ since
\[
u_{T}^{-4}\alpha_{T}=b^{2}d^{2}+256bd^{3}k^{2}+4096d^{4}k^{4}\equiv
1\ \operatorname{mod}4.
\]

\qquad\textbf{Subcase 4b.} Suppose $u_{T}=c$. Then $v_{2}\!\left(  a\right)
\leq7$ or $bd\not \equiv 3\ \operatorname{mod}4$. By (\ref{c4addred}),
$u_{T}^{-4}\alpha_{T}$ is even if and only if $a$ is even. But if $a$ is even,
then $u_{T}^{-12}\gamma_{T}$ is even. Thus $E_{T}$ has additive reduction at
$2$ if and only if $\left(  i\right)  $ $v_{2}\!\left(  a\right)
\equiv1\ \operatorname{mod}2$, $\left(  ii\right)  $ $v_{2}\!\left(  a\right)
=2,4,6$, or $\left(  iii\right)  $ $v_{2}\!\left(  a\right)  \geq8$ with
$bd\equiv1\ \operatorname{mod}4$. This concludes the proof for $T=C_{4}$.

\textbf{Case 5.} Let $T=C_{5}$. By Theorem \ref{semistablecondthm}, the
minimal discriminant of $E_{T}$ is $\gamma_{T}$. By (\ref{set}), if $E_{T}$ has
additive reduction at a prime $p$, then $p=5$. Since $\alpha
_{T}\equiv\left(  a+3b\right)  ^{4}\ \operatorname{mod}5$ and $\gamma
_{T}\equiv4a^{5}b^{5}\left(  a+3b\right)  ^{2}\ \operatorname{mod}5$, we
conclude that $E_{T}$ has additive reduction at $5$ if and only if
$v_{5}\!\left(  a+3b\right)  \geq1$.

\textbf{Case 6.} Let $T=C_{6}$. Now suppose $E_{T}$ has $j$-invariant $0$ or
$1728$. Then $\left(  a,b\right)  =\left(  3,-1\right)  $ by Lemma
\ref{sslemc4j0}. It follows that $E_{T}$ has additive reduction at $2$ and $3$
since $\Delta_{E_{T}}^{\text{min}}=-432$. This agrees with the claim since
$v_{3}\!\left(  a+b\right)  =v_{3}\!\left(  a\right)  =1$. Now suppose that
the $j$-invariant of $E_{T}$ is not $0$ or $1728$. By Theorem
\ref{semistablecondthm}, the minimal discriminant of $E_{T}$ is $u_{T}%
^{-12}\gamma_{T}$ where $u_{T}\in\left\{  1,2\right\}  $. By (\ref{set}), if
$E_{T}$ has additive reduction at a prime $p$, then $p=2,3$. Next,
observe that $v_{3}\!\left(  \alpha_{T}\right)  =v_{3}\!\left(  u_{T}%
^{-4}\alpha_{T}\right)  $ and $v_{3}\!\left(  \gamma_{T}\right)
=v_{3}\!\left(  u_{T}^{-12}\gamma_{T}\right)  $. Since $\alpha_{T}\equiv
a^{4}\ \operatorname{mod}3$ and $\gamma_{T}\equiv a^{3}b^{6}\left(
a+b\right)  \left(  a^{2}-ab+b^{2}\right)  \ \operatorname{mod}3$, we conclude
that $E_{T}$ has additive reduction at $3$ if and only if $3$ divides $a$. For
$p=2$, we proceed by cases.

\qquad\textbf{Subcase 6a.} Suppose $u_{T}=2$. Then $v_{2}\!\left(  a+b\right)
\geq3$ and write $b=8k-a$ for some integer $k$. It is then checked with this
substitution that $u_{T}^{-4}\alpha_{T}\equiv a^{4}\ \operatorname{mod}2$. By
assumption, $a$ is odd, and hence $E_{T}$ is semistable at $2$.

\qquad\textbf{Subcase 6b.} Suppose $u_{T}=1$. Then $v_{2}\!\left(  a+b\right)
\leq2$. Since $\alpha_{T}\equiv\left(  a+b\right)  ^{4}\ \operatorname{mod}2$
and $\gamma_{T}\equiv a^{2}b^{6}\left(  a+b\right)  ^{6}\ \operatorname{mod}%
2$, we conclude that $E_{T}$ has additive reduction at $2$ if and only if
$v_{2}\!\left(  a+b\right)  =1,2$.

\textbf{Case 7.} Let $T=C_{7}$. By Theorem \ref{semistablecondthm}, the
minimal discriminant of $E_{T}$ is $\gamma_{T}$. By (\ref{set}), if $E_{T}$ has
additive reduction at a prime $p$, then $p=7$. Since $\alpha
_{T}\equiv\left(  a+4b\right)  \left(  a+2b\right)  ^{7}\ \operatorname{mod}7$
and $\gamma_{T}\equiv6a^{7}b^{7}\left(  a+4b\right)  ^{3}\left(  a-b\right)
^{7}\ \operatorname{mod}7$, we conclude that $E_{T}$ has additive reduction at
$7$ if and only if $v_{7}\!\left(  a+4b\right)  \geq1$.

\textbf{Case 8.} Let $T=C_{8}$. By Theorem \ref{semistablecondthm}, the
minimal discriminant of $E_{T}$ is $u_{T}^{-12}\gamma_{T}$ where $u_{T}%
\in\left\{  1,2\right\}  $. By (\ref{set}), if $E_{T}$ has additive reduction at
a prime $p$, then $p=2$. We now proceed by cases.

\qquad\textbf{Subcase 8a.} Suppose $u_{T}=2$. Then $v_{2}\!\left(  a\right)
=1$. It follows that $u_{T}^{-4}\alpha_{T}\equiv b^{8}\ \operatorname{mod}2$
and thus $E_{T}$ is semistable at $2$ since $b$ is odd.

\qquad\textbf{Subcase 8b.} Suppose $u_{T}=1$. Then $v_{2}\!\left(  a\right)
\neq1$. Since $\alpha_{T}\equiv a^{8}\ \operatorname{mod}2$ and $\gamma
_{T}\equiv a^{8}b^{8}\left(  a+b\right)  ^{8}\ \operatorname{mod}2$, we
conclude that $E_{T}$ has additive reduction at $2$ if and only if
$v_{2}\!\left(  a\right)  \geq2$.

\textbf{Case 9.} Let $T=C_{9}$. By Theorem \ref{semistablecondthm}, the
minimal discriminant of $E_{T}$ is $\gamma_{T}$. By (\ref{set}), if $E_{T}$ has
additive reduction at a prime $p$, then $p=3$. Since $\alpha
_{T}=\left(  a+b\right)  ^{12}\ \operatorname{mod}3$ and $\gamma_{T}%
=2a^{9}b^{9}\left(  a^{2}-b^{2}\right)  ^{9}\ \operatorname{mod}3$, we
conclude that $E_{T}$ has additive reduction at $3$ if and only if
$v_{3}\!\left(  a+b\right)  \geq1$.

\textbf{Case 10.} Let $T=C_{10}$. By Theorem \ref{semistablecondthm}, the
minimal discriminant of $E_{T}$ is $u_{T}^{-12}\gamma_{T}$ where $u_{T}%
\in\left\{  1,2\right\}  $. By (\ref{set}), if $E_{T}$ has additive reduction at
a prime $p$, then $p=5$. Next, observe that $v_{5}\!\left(
u_{T}^{-4}\alpha_{T}\right)  =v_{5}\!\left(  \alpha_{T}\right)  $ and
$v_{5}\!\left(  u_{T}^{-12}\gamma_{T}\right)  =v_{5}\!\left(  \gamma
_{T}\right)  $. We conclude that $E_{T}$ has additive reduction at $5$ if and
only if $v_{5}\!\left(  a+b\right)  \geq1$ since $\alpha_{T}\equiv\left(
a+b\right)  ^{12}\ \operatorname{mod}5$ and $\gamma_{T}\equiv a^{5}%
b^{10}\left(  a+b\right)  ^{6}\left(  a^{15}+a^{10}b^{5}+3b^{15}\right)
\ \operatorname{mod}5$.

\textbf{Case 11.} Let $T=C_{12}$. By Theorem \ref{semistablecondthm}, the
minimal discriminant of $E_{T}$ is $u_{T}^{-12}\gamma_{T}$ where $u_{T}%
\in\left\{  1,2\right\}  $. By (\ref{set}), if $E_{T}$ has additive reduction at
a prime $p$, then $p=2,3$. Now observe that $\alpha_{T}\equiv
a^{16}\ \operatorname{mod}3$ and $a$ is a factor of $\gamma_{T}$. Thus $E_{T}$
has additive reduction at $3$ if and only if $v_{3}\!\left(  a\right)  \geq1$
since $v_{3}\!\left(  u_{T}^{-4}\alpha_{T}\right)  =v_{3}\!\left(  \alpha
_{T}\right)  $ and $v_{3}\!\left(  u_{T}^{-12}\gamma_{T}\right)
=v_{3}\!\left(  \gamma_{T}\right)  $. For $p=2$, we proceed by cases.

\qquad\textbf{Subcase 11a.} Suppose $u_{T}=2$. Then $a$ is even and
$u_{T}^{-4}\alpha_{T}\equiv b^{16}\ \operatorname{mod}2$. Since $b$ is odd, we
deduce that $E_{T}$ is semistable at $2$.

\qquad\textbf{Subcase 11b.} Suppose $u_{T}=1$. Then $a$ is odd, and
$\alpha_{T}$ is odd since $\alpha_{T}\equiv a^{16}\ \operatorname{mod}2$. Thus
$E_{T}$ is semistable at $2$.

\textbf{Case 12.} Let $T=C_{2}\times C_{2}$. By Theorem
\ref{semistablecondthm}, the minimal discriminant of $E_{T}$ is $u_{T}%
^{-12}\gamma_{T}$ where $u_{T}\in\left\{  1,2\right\}  $. Now suppose $E_{T}$
has $j$-invariant $0$ or $1728$. Then by Lemma \ref{sselmmac2c2result},
$\left(  a,b,d\right)  =\left(  2,1,d\right)  $ for $d$ a squarefree integer.
By Theorem \ref{semistablecondthm}, the minimal discriminant of $E_{T}$ is
$64d^{6}$. Consequently, $E_{T}$ has additive reduction at each prime dividing
$2d$ which verifies with the claim. Now suppose that the $j$-invariant of
$E_{T}$ is not $0$ or $1728$. By Lemma \ref{polynomials}, $\gcd\!\left(
u_{T}^{-4}\alpha_{T},u_{T}^{-12}\gamma_{T}\right)  $ divides $2^{4}d^{6}$. It
follows that if $E_{T}$ has additive reduction at a prime $p$, then $p$
divides $2d$. Now observe that if $p$ is an odd prime, then $v_{p}\!\left(
\alpha_{T}\right)  =v_{p}\!\left(  u_{T}^{-4}\alpha_{T}\right)  $ and
$v_{p}\!\left(  \gamma_{T}\right)  =v_{p}\!\left(  u_{T}^{-12}\gamma
_{T}\right)  $. Since $d$ divides both $\alpha_{T}$ and $\gamma_{T}$, we
conclude that $E_{T}$ has additive reduction at an odd prime $p$ if and only
if $p$ divides $d$. For $p=2$, we proceed by cases.

\qquad\textbf{Subcase 12a.} Suppose $u_{T}=2$. Then $v_{2}\!\left(  a\right)
\geq4$ and $bd\equiv1\ \operatorname{mod}4$. It follows that $E_{T}$ is
semistable at $2$ since $u_{T}^{-4}\alpha_{T}=d^{2}\left(  a^{2}%
-ab+b^{2}\right)  \equiv1\ \operatorname{mod}2$.

\qquad\textbf{Subcase 12b.} Suppose $u_{T}=1$. Then $v_{2}\!\left(  a\right)
\leq3$ or $bd\not \equiv 1\ \operatorname{mod}4$. Since $\alpha_{T}$ and
$\gamma_{T}$ are even, we conclude that $E_{T}$ has additive reduction at $2$.

\textbf{Case 13.} Let $T=C_{2}\times C_{4}$. By Theorem
\ref{semistablecondthm}, the minimal discriminant of $E_{T}$ is $u_{T}%
^{-12}\gamma_{T}$ where $u_{T}\in\left\{  1,2,4\right\}  $. By (\ref{set}),
if $E_{T}$ has additive reduction at a prime $p$, then $p=2$. We now
proceed by cases.

\qquad\textbf{Subcase 13a.} Suppose $u_{T}=4$. Then $v_{2}\!\left(  a\right)
=2$ and $v_{2}\!\left(  a+4b\right)  \geq4$. Write $a=16k-4b$ for some integer
$k$ and observe that $u_{T}^{-4}\alpha_{T}\equiv b^{4}\ \operatorname{mod}2$.
Since $b$ is odd, it follows that $E_{T}$ is semistable at $2$.

\qquad\textbf{Subcase 13b.} Suppose $u_{T}=2$. Then $v_{2}\!\left(  a\right)
\geq2$ and $v_{2}\!\left(  a+4b\right)  \leq3$. Write $a=4k$ for some integer
$k$ and observe that $u_{T}^{-4}\alpha_{T}$ and $u_{T}^{-12}\gamma_{T}$ are
even. Thus $E_{T}$ has additive reduction at $2$.

\qquad\textbf{Subcase 13c.} Suppose $u_{T}=1$. Then $v_{2}\!\left(  a\right)
\leq1$. Observe that $\alpha_{T}$ is even if and only if $a$ is even. But if
$a$ is even, then $\gamma_{T}$ is even. Consequently, $E_{T}$ has additive
reduction at $2$ if and only if $v_{2}\!\left(  a\right)  =1$.

\textbf{Case 14. }Let $T=C_{2}\times C_{6}$. By Theorem
\ref{semistablecondthm}, the minimal discriminant of $E_{T}$ is $u_{T}%
^{-12}\gamma_{T}$ where $u_{T}\in\left\{  1,4,16\right\}  $. By (\ref{set}), if
$E_{T}$ has additive reduction at a prime $p$, then $p=2,3$. Now
observe that $v_{3}\!\left(  \alpha_{T}\right)  =v_{3}\!\left(  u_{T}%
^{-4}\alpha_{T}\right)  $ and $v_{3}\!\left(  \gamma_{T}\right)
=v_{3}\!\left(  u_{T}^{-12}\gamma_{T}\right)  $. Hence $E_{T}$ has additive
reduction at $3$ if and only if $3$ divides $b$ since $\alpha_{T}\equiv
b^{8}\ \operatorname{mod}3$ and $b$ is a factor of $\gamma_{T}$. For $p=2$, we
observe that by the proof of Theorem \ref{semistablecondthm}, $v_{2}\!\left(
u_{T}^{-4}\alpha_{T}\right)  =0$. Therefore, $E_{T}$ is semistable at $2$.

\textbf{Case 15. }Let $T=C_{2}\times C_{8}$. By Theorem
\ref{semistablecondthm}, the minimal discriminant of $E_{T}$ is $u_{T}%
^{-12}\gamma_{T}$ where $u_{T}\in\left\{  1,16,64\right\}  $. By (\ref{set}),
if $E_{T}$ has additive reduction at a prime $p$, then $p=2$. By the
proof of Theorem \ref{semistablecondthm}, $v_{2}\!\left(  u_{T}^{-4}\alpha
_{T}\right)  =0$. Therefore, $E_{T}$ is semistable.
\end{proof}

\begin{corollary}\label{semistableconditions}
Suppose that the parameters of $E_{T}$ satisfy the conclusion of Proposition
\ref{rationalmodels}. Then $E_{T}$ is semistable if and only if one of the
conditions on the parameters of $E_{T}$ below is satisfied. In particular,
$E_{C_{2}\times C_{8}}$ is semistable.
\end{corollary}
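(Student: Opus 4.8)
The plan is to deduce this immediately from Theorem~\ref{semis}. Recall that a rational elliptic curve is semistable exactly when it has no prime of additive reduction. Hence, fixing one of the torsion subgroups $T$ and parameters $a,b,d$ satisfying the conclusion of Proposition~\ref{rationalmodels}, the curve $E_T$ is semistable if and only if, for every prime $p$ listed for $T$ in the table of Theorem~\ref{semis}, the corresponding condition on $a,b,d$ \emph{fails}. Thus the proof reduces to negating, one prime at a time, each of the reduction conditions recorded in Theorem~\ref{semis}, and then collecting the resulting constraints into the table accompanying the corollary. I would begin with $T=C_2\times C_8$: Theorem~\ref{semis} states outright that $E_{C_2\times C_8}$ is semistable with no hypothesis on its parameters, so there is nothing to check and the last sentence of the corollary is immediate.

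For the remaining families I would argue case by case, in the order of Theorem~\ref{semis}. For each $T$ one reads off a finite list of pairs $(p,\mathcal C_p)$ with the property that $E_T$ has additive reduction at $p$ if and only if $\mathcal C_p$ holds; since the primes entering these pairs act independently, $E_T$ is semistable if and only if $\bigwedge_p \neg\mathcal C_p$ holds, and this conjunction is precisely what the table records. When only finitely many fixed primes are involved this is routine: for instance for $T=C_5$ additive reduction occurs only at $5$ and only when $v_5(a+3b)\geq 1$, so $E_{C_5}$ is semistable precisely when $5\nmid a+3b$, and the same style of negation handles $C_7,C_9,C_{10},C_{12},C_6,C_8,C_2\times C_4$, and $C_2\times C_6$ (negating the listed condition at each of the at most two fixed primes). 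The families $C_2,C_3,C_4,C_2\times C_2$ involve, in addition, a \emph{variable} prime, and negating ``$\mathcal C_p$ holds for some prime $p$ of that type'' produces a structural constraint on the parameters: for $T=C_3$ one obtains that $a$ must be a perfect cube coprime to $3$ (equivalently $d=e=1$ and $3\nmid a$ in the factorization $a=c^3d^2e$ of Theorem~\ref{semistablecondthm}); for $T=C_4$ one obtains that $a$ must be a perfect square, subject to a further $2$-adic restriction; for $T=C_2\times C_2$ one obtains $d=1$ (as $d$ is a positive squarefree integer with no odd prime divisor $p$ satisfying $v_p(d)=1$) together with $v_2(a)\geq 4$ and $bd\equiv 1\bmod 4$, recovering the statement quoted in the introduction; and for $T=C_2$ one obtains that $\gcd(a,bd)$ is a power of $2$, hence $1$ or $2$.

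The one place requiring genuine care is the $2$-adic analysis for $T=C_2$ (and, to a lesser extent, $T=C_4$), where Theorem~\ref{semis} partitions the parameters for which $E_T$ acquires additive reduction at $2$ into several disjoint subcases. There I would return to the trichotomy for $u_T$ in Theorem~\ref{semistablecondthm} and check that the complement of those subcases is exactly the union of the case $u_T=4$ and the case ``$v_2(b)\geq 3$ with $a\equiv 3\bmod 4$'', so that $E_{C_2}$ is semistable at $2$ precisely when either $v_2(a)=v_2(b)=1$ with $a\equiv 2\bmod 8$ and $v_2(b^2d-a^2)\geq 8$, or $v_2(b)\geq 3$ with $a\equiv 3\bmod 4$; this confirms the $2$-adic line of the table, and the $C_4$ analysis is analogous. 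Finally I would remark that $E_{C_3^0}$ is never semistable — Theorem~\ref{semis} shows it has additive reduction at $3$ for every value of $a$ — so it contributes no row. Assembling the case-by-case negations yields the table and completes the proof. The work is entirely bookkeeping; the main obstacle is organizing the $C_2$ and $C_4$ subcases cleanly enough that one can be confident the complement has been computed correctly.
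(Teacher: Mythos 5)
Your proposal is correct and is exactly the derivation the paper intends: the corollary is stated as an immediate consequence of Theorem~\ref{semis}, obtained by negating, prime by prime, the additive-reduction conditions in that theorem's table (including the translation of the ``for all primes $p$'' conditions into the structural statements that $a$ is a cube for $C_3$, a square for $C_4$, that $d=1$ for $C_2\times C_2$, and that $\gcd(a,bd)\in\{1,2\}$ for $C_2$). Your complement computations for the delicate $2$-adic subcases of $C_2$ and $C_4$, and your observation that $E_{C_3^0}$ is never semistable and hence contributes no row, all check out against the table in the corollary.
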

{\renewcommand*{\arraystretch}{1.15}%
\begin{longtable*}{cc}
\hline
$T$ & Conditions on $a,b,d$ \\
\hline
\endfirsthead
{\emph{continued}}\\
\hline
$T$ & Conditions on $a,b,d$\\
\hline
\endhead
\hline
\multicolumn{2}{r}{\emph{continued on next page}}
\endfoot
\hline
\endlastfoot
	
$C_{2}$ & $\gcd\!\left(  a,bd\right)  =2$, $v_{2}\left(  b^{2}d-a^{2}\right)  \geq8$ with
$v_{2}\!\left(  a\right)  =v_{2}\!\left(  b\right)  =1$ and $a\equiv2\ \operatorname{mod}8$
\\\cmidrule(lr){2-2}
& $\gcd\!\left(  a,bd\right)  =1$ and $v_{2}\!\left(  b\right)  \geq3$ with $a\equiv3\ \operatorname{mod}4$\\\hline
$C_{3}$ & $a$ is a cube with  $v_{3}\!\left(  a\right) =0$\\\hline
$C_{4}$ & $a$ is an odd square \\\cmidrule(lr){2-2}
& $a$ is a square and $v_{2}\!\left(  a\right)  \geq8$ with $b\equiv
3\ \operatorname{mod}4$\\\hline
$C_{5}$ & $v_{5}\!\left(  a+3b\right)  =0$ \\\hline
$C_{6}$ & $v_{3}\!\left(  a\right)  =0$ and $v_{2}\!\left(  a+b\right)  \neq1,2$\\\hline
$C_{7}$ & $v_{7}\!\left(  a+4b\right)  =0$ \\\hline
$C_{8}$ & $v_{2}\!\left(  a\right)  \leq1$ \\\hline
$C_{9}$ & $v_{3}\!\left(  a+b\right)  =0$ \\\hline
$C_{10}$ & $v_{5}\!\left(  a+b\right)  =0$ \\\hline
$C_{12}$ & $v_{3}\!\left(  a\right)  =0$ \\\hline
$C_{2}\times C_{2}$ & $d=1$ and $v_{2}\!\left(  a\right)  \geq4$ with $b\equiv1\ \operatorname{mod}4$\\\hline
$C_{2}\times C_{4}$ & $a$ is odd \\\cmidrule(lr){2-2}
& $v_{2}\!\left(  a\right)  =2$ with $v_{2}\!\left(  a+4b\right)  \geq4$\\\hline
$C_{2}\times C_{6}$ & $v_{3}\!\left(  b\right)  =0$\\\hline
$C_{2}\times C_{8}$ & $v_{2}\!\left(  a\right)  \geq 0$
\label{ta:ssET}	
\end{longtable*}}

\section{Corollaries and Examples\label{sec:corexa}}

The following statement is an automatic consequence of the proof of Theorem
\ref{semistablecondthm}.

\begin{corollary}
Let $E$ be a rational elliptic curve with a rational point of order $3,5,$ or
$7$. Then the discriminant $\Delta$ of $E$ is minimal if and only if
$v_{p}\!\left(  \Delta\right)  <12$ or $v_{p}\!\left(  c_{4}\right)  <4$ for
all primes $p$ where $c_{4}$ is the invariant of the Weierstrass model of $E$.
\end{corollary}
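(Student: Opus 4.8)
The plan is to prove the two implications separately. The easy one is that if $v_p(\Delta)<12$ or $v_p(c_4)<4$ for every prime $p$, then $\Delta$ is minimal, and this holds for every rational elliptic curve: by Lemma~\ref{silverconverse} there is a unique positive integer $u$ with $\Delta_{E/\mathbb{Q}}^{\text{min}}=u^{-12}\Delta$ and with $u^{-4}c_4$ the $c_4$-invariant of a global minimal model, and if $u>1$ then any prime $p\mid u$ would satisfy both $v_p(\Delta)\geq 12$ and $v_p(c_4)\geq 4$, contrary to hypothesis, so $u=1$ and $\Delta$ is minimal. The content is in the converse: assuming $\Delta$ is minimal, we must show $v_p(\Delta)<12$ or $v_p(c_4)<4$ for every prime $p$, and this is where the torsion hypothesis is used.

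For the converse, assume $\Delta=\Delta_{E/\mathbb{Q}}^{\text{min}}$, so also $c_4=c_4^{\text{min}}$ and $c_6=c_6^{\text{min}}$, and suppose for contradiction that some prime $p$ satisfies $v_p(\Delta)\geq 12$ and $v_p(c_4)\geq 4$. From the identity $c_4^3-c_6^2=1728\Delta$ both terms $c_4^3$ and $1728\Delta$ have $p$-adic valuation $\geq 12$, so $v_p(c_6^2)\geq 12$ and hence $v_p(c_6)\geq 6$. Since $E$ has a rational point of order $3$, $5$, or $7$, Proposition~\ref{rationalmodels} shows $E$ is $\mathbb{Q}$-isomorphic to $E_{C_3^0}(a)$ with $a$ a positive cubefree integer, to $E_{C_3}(24,1)$, or to $E_T(a,b)$ with $\gcd(a,b)=1$, $a>0$ and $T\in\{C_3,C_5,C_7\}$. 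In the first two cases the proof of Theorem~\ref{semistablecondthm} computes $\Delta_{E/\mathbb{Q}}^{\text{min}}=-27a^4$, respectively $-3^9$, so cubefreeness of $a$ forces $v_p(-27a^4)\leq 11$ and $v_p(-3^9)\leq 9$; thus $v_p(\Delta)\geq 12$ is impossible.

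It remains to handle $E_T(a,b)$ for $T\in\{C_3,C_5,C_7\}$. For $T=C_5$ (resp.\ $T=C_7$) the proof of Theorem~\ref{semistablecondthm} shows $E_T(a,b)$ is itself a global minimal model and, via Lemma~\ref{polynomials} and $\gcd(a,b)=1$, that $\gcd(c_6,\Delta)=\gcd(\beta_T,\gamma_T)$ divides $5^3$ (resp.\ $7$); but $v_p(c_6)\geq 6$ and $v_p(\Delta)\geq 6$ give $p^6\mid\gcd(\beta_T,\gamma_T)$, a contradiction. For $T=C_3$, write $a=c^3d^2e$ with $d,e$ coprime positive squarefree integers; the proof of Theorem~\ref{semistablecondthm} exhibits the global minimal model $E_T^{\prime}:y^2+cde\,xy+de^2b\,y=x^3$, for which $c_4^{\text{min}}=u_T^{-4}\alpha_T$, $\Delta_{E/\mathbb{Q}}^{\text{min}}=u_T^{-12}\gamma_T$, and $\gcd(c_6,\Delta)$ divides $2^6 3^9 d^2 e^4$. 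Since $v_p(d^2e^4)\leq 4$, the case $p\geq 5$ again contradicts $p^6\mid\gcd(c_6,\Delta)$; and the cases $p=2$ and $p=3$ are precisely the two paragraphs of that proof ruling out $v_p(u_T^{-12}\gamma_T)\geq 12$ together with $v_p(u_T^{-4}\alpha_T)\geq 4$, which (being $\Delta$ and $c_4$) is the needed contradiction. Hence no such prime $p$ exists and the corollary follows. The one genuinely delicate point is that in the two cases with $j$-invariant $0$ one has $c_4^{\text{min}}=0$, so $v_p(c_4)\geq 4$ is vacuous and the contradiction must come instead from the sharp bound on $v_p(\Delta_{E/\mathbb{Q}}^{\text{min}})$; that is why the cubefree normalization for $C_3^0$ does real work. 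Everything else is already contained in the proof of Theorem~\ref{semistablecondthm}, so the main obstacle is organizational---tracking which of the six shapes from Proposition~\ref{rationalmodels} supplies the contradiction---with the $T=C_3$, $p\in\{2,3\}$ subcases being the fiddliest to cite precisely.
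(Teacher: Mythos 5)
Your proof is correct and takes essentially the same route as the paper, which states the corollary is ``an automatic consequence of the proof of Theorem \ref{semistablecondthm}'' and gives no further details: you reduce via Proposition \ref{rationalmodels} to the families $E_{C_3^0}$, $E_{C_3}$, $E_{C_5}$, $E_{C_7}$ and then reuse the $\gcd(\beta_T,\gamma_T)$ bounds and the $p=2,3$ case analysis already carried out there. Your explicit handling of the $j=0$ cases (where $c_4=0$ makes the $v_p(c_4)<4$ alternative vacuous) is a worthwhile point that the paper leaves implicit.
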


This is only true for an arbitrary elliptic curve for primes $p\geq5$
\cite[Remark VII.1.1]{MR2514094}. Next, we recall that if $E$ is an elliptic
curve over a number field $K$ which has additive reduction at two primes with
distinct residue characteristics, then $E\!\left(  K\right)  _{\text{tors}}$
divides $12$ by Flexor and Oesterl\'{e}'s Theorem \cite{MR1065153}. Moreover,
Flexor and Oesterl\'{e} showed that this divisibility condition is sharp since
the elliptic curve $E:y^{2}-2y=x^{3}$ over $K=\mathbb{Q}\!\left(  \sqrt{-3}\right)  $ has $E\!\left(  K\right)
_{\text{tors}}\cong C_{2}\times C_{6}$ with additive reduction at two primes and their
residue characteristic are $2$ and $3$. Our next result, shows that over $\mathbb{Q}$ the divisibility condition is not sharp.

\begin{corollary}
Let $E$ be a rational elliptic curve. If $E$ has additive reduction at three
or more primes, then $E\!\left(
\mathbb{Q}
\right)  _{\text{tors}}\cong C_{N}$ for $N\leq4$ or $E\!\left(
\mathbb{Q}
\right)  _{\text{tors}}\cong C_{2}\times C_{2}$. If $E$ has additive reduction
at two primes, then  $E\!\left(
\mathbb{Q}
\right)  _{\text{tors}}$ divides $4$ or $6$.
\end{corollary}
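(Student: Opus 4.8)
The plan is to read both assertions off Theorem~\ref{semis}, which records, for every admissible torsion subgroup $T$, exactly the primes at which the parameterized family $E_{T}$ can have additive reduction. The first step is to reduce to that parameterized setting. If $E(\mathbb{Q})_{\text{tors}}$ is trivial there is nothing to prove, so let $T=E(\mathbb{Q})_{\text{tors}}$ be one of the fourteen nontrivial subgroups of Theorem~\ref{MazurTorThm}. Applying Proposition~\ref{rationalmodels} with this $T$, the curve $E$ is $\mathbb{Q}$-isomorphic to $E_{T}$ (or, when $T=C_{3}$ and $j(E)=0$, possibly to $E_{C_{3}^{0}}$) with parameters satisfying the conclusion of that proposition. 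Since $\mathbb{Q}$-isomorphic elliptic curves have the same set of primes of additive reduction, it suffices to bound, for each $T$, the number of primes at which $E_{T}$ can be additive.

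The second step is a case-by-case inspection of the table of Theorem~\ref{semis}. For $T\in\{C_{5},C_{7},C_{8},C_{9},C_{10},C_{12},C_{2}\times C_{4},C_{2}\times C_{6},C_{2}\times C_{8}\}$ the table permits additive reduction at at most one prime---namely $5,7,2,3,5,3,2,3$ respectively, and never for $C_{2}\times C_{8}$---so none of these can occur for a curve that is additive at two or more primes. For $T=C_{6}$ the only candidates are $2$ and $3$, so $E_{C_{6}}$ is additive at at most two primes, and both occur: e.g.\ $E_{C_{6}}(3,1)$ is additive at exactly $2$ and $3$. For the remaining subgroups $T\in\{C_{2},C_{3},C_{4},C_{2}\times C_{2}\}$---together with $C_{3}^{0}$, whose torsion is $C_{3}$---the condition of Theorem~\ref{semis} at a prime $p$ is that $p$ divide one of the squarefree or cubefree parameters $\gcd(a,bd)$, $de$, $a$, or $d$, so choosing that parameter to be a product of many distinct primes yields curves that are additive at arbitrarily many primes.

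The corollary is then immediate bookkeeping. If $E$ has additive reduction at three or more primes, then $T\notin\{C_{5},C_{6},C_{7},C_{8},C_{9},C_{10},C_{12},C_{2}\times C_{4},C_{2}\times C_{6},C_{2}\times C_{8}\}$, so $T$ is $C_{1},C_{2},C_{3},C_{4}$ or $C_{2}\times C_{2}$; that is, $E(\mathbb{Q})_{\text{tors}}\cong C_{N}$ with $N\leq 4$ or $E(\mathbb{Q})_{\text{tors}}\cong C_{2}\times C_{2}$. If $E$ has additive reduction at two or more primes, the same argument excludes every $T$ except $C_{1},C_{2},C_{3},C_{4},C_{2}\times C_{2}$ and $C_{6}$, whose orders $1,2,3,4,4,6$ all divide $4$ or $6$.

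The argument is entirely mechanical once Theorem~\ref{semis} is in hand; the one place requiring care is the two torsion subgroups of order $12$. For each of $C_{12}$ and $C_{2}\times C_{6}$, Theorem~\ref{Thm1} only confines the residue characteristic to $\{2,3\}$, which by itself would merely reproduce the Flexor--Oesterl\'{e} bound $N\mid 12$; the improvement comes from the sharper table of Theorem~\ref{semis} (Cases~11 and~14 of its proof), where one sees that $E_{C_{12}}$ and $E_{C_{2}\times C_{6}}$ are in fact semistable at $2$ and hence additive at a single prime only. Discarding these is exactly what upgrades the divisibility from $12$ to $4$ or $6$. For sharpness I would also record a curve with torsion $C_{2}\times C_{2}$ or $C_{4}$ that is additive at three or more primes, together with the curve $E_{C_{6}}(3,1)$ above, showing that both conclusions of the corollary are best possible over $\mathbb{Q}$.
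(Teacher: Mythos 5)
Your proof is correct and takes essentially the same route as the paper: the paper's own proof consists of the single example $y^{2}=x^{3}+30$ (trivial torsion, additive at $2,3,5$) followed by the remark that the remaining cases follow from Proposition~\ref{rationalmodels} and Theorem~\ref{semis}. Your write-up simply makes explicit the case-by-case reading of the table in Theorem~\ref{semis} that the paper leaves to the reader, and your accounting of which $T$ admit additive reduction at one, two, or arbitrarily many primes is accurate.
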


\begin{proof}
The elliptic curve $y^{2}=x^{3}+30$ has additive reduction at the primes
$2,3,$ and $5$ and has trivial torsion subgroup. Consequently, the case of trivial torsion does occur over $\mathbb{Q}$. The Corollary now holds for
the remaining $T$'s by Proposition~\ref{rationalmodels} and Theorem~\ref{semis}.
\end{proof}

\begin{corollary}
\label{oddc4c6}Let $T=C_{7},C_{9},C_{10},C_{12},C_{2}\times C_{6},$ or
$C_{2}\times C_{8}$. If $E$ is a rational elliptic curve with
$T\hookrightarrow E\!\left(
\mathbb{Q}
\right)  $, then the invariants $c_{4}$ and $c_{6}$ associated to a global
minimal model of $E$ are odd.
\end{corollary}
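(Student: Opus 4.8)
The plan is to reduce to the parameterized families $E_T(a,b)$ and use the fact, established in the proof of Theorem \ref{semistablecondthm}, that for each of these six torsion subgroups the quantity $u_T$ is odd. Concretely: let $E$ be a rational elliptic curve with $T \hookrightarrow E(\mathbb{Q})$. By Proposition \ref{rationalmodels}, $E$ is $\mathbb{Q}$-isomorphic to $E_T = E_T(a,b)$ for coprime integers $a,b$ with $a$ positive, and by Lemma \ref{ss:Lem1} this model is integral with invariants $\alpha_T, \beta_T$ and discriminant $\gamma_T$. The invariants of a global minimal model are then $c_4 = u_T^{-4}\alpha_T$ and $c_6 = u_T^{-6}\beta_T$, where by Theorem \ref{semistablecondthm} the positive integer $u_T$ lies in $\{1\}$ for $T = C_7, C_9$; in $\{1,2\}$ for $T = C_{10}, C_{12}$; in $\{1,4,16\}$ for $T = C_2\times C_6$; and in $\{1,16,64\}$ for $T = C_2\times C_8$. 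In every case $u_T$ is a power of $2$, so it suffices to show that $v_2(c_4) = 0$, i.e. that $v_2(\alpha_T) = 4\,v_2(u_T)$ exactly (the analogous statement for $c_6$ then follows from $1728\gamma_T = \alpha_T^3 - \beta_T^2$ together with Lemma \ref{ch:ss:elemlemm}, which gives $v_2(\beta_T) = 6\,v_2(u_T)$ whenever $v_2(\alpha_T) = 4\,v_2(u_T)$ and $v_2(\gamma_T) \geq 12\,v_2(u_T)$, the latter being automatic since $u_T^{12}$ divides $\gamma_T$).

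The heart of the argument is therefore a case check on $v_2(\alpha_T)$ for each $T$, and for four of the six cases the needed computation is already recorded verbatim in the proof of Theorem \ref{semistablecondthm}. For $T = C_{12}$, the $C_{12}$-claim shows $\alpha_T \equiv a^{16} \bmod 2$, hence $v_2(\alpha_T) = 0$ when $a$ is odd and $v_2(\alpha_T) \geq 4$ (forcing $u_T = 2$) when $a$ is even; in the latter case the explicit global minimal model $E_T'$ exhibited there has invariants with $v_2(c_4) = v_2(u_T^{-4}\alpha_T) = 0$, since otherwise Corollary \ref{CorKraus} would be violated. The same structure applies to $T = C_{10}$. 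For $T = C_2\times C_6$, equation (\ref{c2c6inealpt}) gives $v_2(\alpha_T) \in \{0, 8, 16\}$ according to whether $v_2(a+b)$ is $0$, $\geq 2$, or $1$, matched exactly by $u_T \in \{1, 4, 16\}$; and for $T = C_2\times C_8$, equation (\ref{valalpt2c2xc8}) gives $v_2(\alpha_T) \in \{0, 16, 24\}$ matched by $u_T \in \{1, 16, 64\}$. In each instance $v_2(u_T^{-4}\alpha_T) = 0$. For $T = C_7$ and $T = C_9$ we have $u_T = 1$ unconditionally, so we need only check $v_2(\alpha_T) = 0$ directly: reducing the formula for $\alpha_T$ in Table \ref{ta:alpT} modulo $2$ and using $\gcd(a,b) = 1$ (so that $a,b$ are not both even) shows $\alpha_T$ is odd — indeed for $C_9$ one has $\alpha_T \equiv (a+b)^{12} \bmod 3$ and a similar mod-$2$ reduction, while for $C_7$ one reduces $\alpha_T \bmod 2$ and notes the result is a nonzero monomial-type expression in $a,b$.

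Once $v_2(c_4) = 0$ is established in all cases, $v_2(c_6) = 0$ follows as above, and the corollary is proved. The main obstacle is essentially bookkeeping: assembling the six separate parity computations and verifying in the $v_2(a)$-even (or $v_2(a+b) \geq 1$) subcases that the admissible change of variables by $u_T$ genuinely removes all the $2$-power from $\alpha_T$ and not just part of it — but this has already been done inside the proof of Theorem \ref{semistablecondthm} for $C_{10}, C_{12}, C_2\times C_6, C_2\times C_8$, so the corollary is largely a matter of extracting and citing those sub-results. I would phrase the write-up as: "This follows from the proof of Theorem \ref{semistablecondthm}. For $T = C_7, C_9$ one has $u_T = 1$ and $\alpha_T$ is odd by inspection of Table \ref{ta:alpT}. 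For the remaining $T$, the proof of Theorem \ref{semistablecondthm} shows $v_2(\alpha_T) = 4\,v_2(u_T)$, whence $v_2(c_4) = 0$; that $v_2(c_6) = 0$ then follows from Lemma \ref{ch:ss:elemlemm} applied to $1728\gamma_T = \alpha_T^3 - \beta_T^2$."
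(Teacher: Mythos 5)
Your proposal matches the paper's proof, which likewise reduces to $E_T(a,b)$ via Proposition \ref{rationalmodels}, verifies $v_2\!\left(u_T^{-4}\alpha_T\right)=0$ case by case, and deduces $v_2\!\left(u_T^{-6}\beta_T\right)=0$ from Lemma \ref{ch:ss:elemlemm} applied to $\alpha_T^3-\beta_T^2=1728\gamma_T$. One small caveat: for $T=C_{10},C_{12}$ your appeal to Corollary \ref{CorKraus} only excludes $1\leq v_2(c_4)\leq 3$ (it does not exclude $v_2(c_4)\geq 4$), so the exact equality $v_2(\alpha_T)=4$ when $a$ is even still requires a short direct check --- for instance, the coefficient $a_1$ of the exhibited integral model $E_T'$ is odd in these cases, whence $c_4\equiv a_1^4\equiv 1\ \operatorname{mod}2$.
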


\begin{proof}
For the given $T$, let $u_{T}$ be as given in Theorem \ref{semistablecondthm}.
Then the invariants $c_{4}$ and $c_{6}$ associated to a global minimal model
of $E_{T}$ are $u_{T}^{-4}\alpha_{T}$ and $u_{T}^{-6}\beta_{T}$, respectively.
It is then verified that $v_{2}\!\left(  u_{T}^{-4}\alpha_{T}\right)  =0$ and
by Lemma \ref{ch:ss:elemlemm} we conclude that $v_{2}\!\left(  u_{T}^{-6}%
\beta_{T}\right)  =0$. The result now follows by Proposition
\ref{rationalmodels}.
\end{proof}

\begin{corollary}
Let $T=C_{12}$, $C_{2}\times C_{6}$, or $C_{2}\times C_{8}$. If $E$ is a rational elliptic
curve with $T\hookrightarrow E\!\left(
\mathbb{Q}
\right)  $, then the minimal discriminant $\Delta_{E}^{\text{min}}$ of $E$ is
divisible by $2,3,$ and $5$. Moreover, if $T=C_{2}\times C_{8}$, then
$\Delta_{E}^{\text{min}}$ is divisible by $7$.
\end{corollary}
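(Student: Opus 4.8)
The plan is to prove the statement by contradiction, combining the Hasse bound with the behavior of the torsion subgroup under reduction; notably this route uses none of the explicit tables of $\alpha_T,\beta_T,\gamma_T$ and is essentially classical. Fix one of the three groups $T$. Then $T\hookrightarrow E(\mathbb Q)$ forces $N:=\bigl|E(\mathbb Q)_{\mathrm{tors}}\bigr|\ge|T|$, so $N\ge 12$ in all three cases and $N\ge 16$ when $T=C_2\times C_8$. Suppose, for one of the primes $p$ in question, that $E$ has good reduction at $p$; write $\widetilde E/\mathbb F_p$ for the reduction and $\rho\colon E(\mathbb Q)_{\mathrm{tors}}\to\widetilde E(\mathbb F_p)$ for the reduction homomorphism.

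First I would recall the relevant structure of $\rho$, all contained in \cite{MR2514094}: the kernel of $\rho$ is the torsion subgroup of the formal group $\widehat E(p\mathbb Z_p)$, and since $\widehat E(p^r\mathbb Z_p)$ is torsion-free for $r>\tfrac1{p-1}$, this kernel is trivial when $p$ is odd and has order at most $2$ when $p=2$ (for $p=2$ the $2$-torsion of $\widehat E(2\mathbb Z_2)$ injects into $\widehat E(2\mathbb Z_2)/\widehat E(4\mathbb Z_2)\cong\mathbb F_2$). Hence $\#\widetilde E(\mathbb F_p)\ge N$ for odd $p$ and $\#\widetilde E(\mathbb F_2)\ge N/2$. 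On the other hand, the Hasse bound gives $\#\widetilde E(\mathbb F_p)\le p+1+\lfloor 2\sqrt p\rfloor$, which equals $5,\,7,\,10,\,13$ for $p=2,3,5,7$ respectively.

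Now I would just read off the contradictions prime by prime. For $p=2$: $N/2\le 5$, i.e.\ $N\le 10$, contradicting $N\ge 12$. For $p=3$: $12\le N\le\#\widetilde E(\mathbb F_3)\le 7$. For $p=5$: $12\le N\le\#\widetilde E(\mathbb F_5)\le 10$. And for $p=7$, when $T=C_2\times C_8$: $16\le N\le\#\widetilde E(\mathbb F_7)\le 13$. Each is impossible, so $E$ cannot have good reduction at $2$, $3$ or $5$, nor at $7$ when $T=C_2\times C_8$; equivalently $v_p\!\bigl(\Delta_E^{\mathrm{min}}\bigr)>0$ at these primes, which is exactly the assertion of the corollary.

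The only step with any subtlety — the "hard part", such as it is — is the prime $2$, where $\rho$ need not be injective on the $2$-primary part of the torsion and one must replace injectivity by the bound $|\ker\rho|\le 2$ coming from the structure of the formal group over $\mathbb Z_2$; all the other cases reduce to a one-line numerical comparison, so no genuine obstacle is anticipated. (As a sanity check, this is consistent with Theorem~\ref{Thm1}: for $T=C_{12},C_2\times C_6,C_2\times C_8$ the reduction at $5$ and $7$ produced here can only be multiplicative, and for $T=C_{12},C_2\times C_8$ even the reduction at $2$ is multiplicative, matching $2\notin S$ there.)
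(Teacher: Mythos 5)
Your argument is correct, and it takes a genuinely different route from the paper's. The paper deduces the corollary from its own machinery: by Proposition~\ref{rationalmodels} one writes $E$ as $E_T(a,b)$ with $\gcd(a,b)=1$, Theorem~\ref{semistablecondthm} gives $\Delta_E^{\text{min}}=u_T^{-12}\gamma_T$ with $v_p(\Delta_E^{\text{min}})=v_p(\gamma_T)$ for odd $p$, and one then reads off from the factored polynomials in Table~\ref{ta:gamT} that $\gamma_T$ (resp.\ $u_T^{-12}\gamma_T$) is always divisible by the primes in question. Your proof instead uses nothing from the parameterization: injectivity of reduction on torsion for odd $p$ of good reduction, the bound $|\ker\rho|\le 2$ at $p=2$ coming from the formal group over $\mathbb{Z}_2$, and the Hasse bound, yielding $N\le 5\cdot 2=10$, $N\le 7$, $N\le 10$, $N\le 13$ at $p=2,3,5,7$ respectively, each contradicting $N\ge 12$ (or $\ge 16$). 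All the ingredients are correctly quoted, including the one delicate point that the torsion of $\widehat{E}(2\mathbb{Z}_2)$ injects into $\widehat{E}(2\mathbb{Z}_2)/\widehat{E}(4\mathbb{Z}_2)\cong\mathbb{F}_2$. What the classical route buys is brevity and independence from the tables; what the paper's route buys is that it is uniform with the rest of Section~\ref{sec:corexa} and in principle yields the exact power of each prime dividing $\Delta_E^{\text{min}}$, not merely positivity of the valuation.

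One small inaccuracy in your closing parenthetical (which is not part of the proof): you assert that $2\notin S$ for $T=C_{12}$ and $C_2\times C_8$ in Theorem~\ref{Thm1}, but in fact $S=\{2,3\}$ for $C_{12}$ and $S=\{2\}$ for $C_2\times C_8$. The statement that the reduction at $2$ is multiplicative for these $T$ over $\mathbb{Q}$ is nevertheless true, but it follows from Theorem~\ref{semis} (the rational case), not from Theorem~\ref{Thm1}.
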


\begin{proof}
By Theorem \ref{semistablecondthm}, $\Delta_{E_{T}}^{\text{min}}=u_{T}%
^{-12}\gamma_{T}$ and $v_{p}\!\left(  \Delta_{E_{T}}^{\text{min}}\right)
=v_{p}\!\left(  \gamma_{T}\right)  $ for odd primes $p$. It is then verified
for the given $T$ that it is always the case that $u_{T}^{-12}\gamma_{T}%
\equiv0\ \operatorname{mod}2$ and $\gamma_{T}\equiv0\ \operatorname{mod}p$ for
$p=2,3,5$. Moreover, $\gamma_{C_{2}\times C_{8}}\equiv
0\ \operatorname{mod}7$. The result now follows by Proposition~\ref{rationalmodels}.
\end{proof}

\begin{example}
\label{example1}The elliptic curve%
\[
E:y^{2}=x^{3}-1900650154752x+990015042347311104
\]
has torsion subgroup $E\!\left(
\mathbb{Q}
\right)  _{\text{tors}}\cong C_{2}\times C_{4}$. The point $P=\left(
222288,760596480\right)  $ has order $4$, and placing $E$ in Tate normal form
with respect to $P$ results in the elliptic curve%
\[
E_{\text{TNF}}:y^{2}+xy-\frac{4585}{36864}y=x^{3}-\frac{4585}{36864}x^{2}.
\]
Now consider the elliptic curve $\mathcal{X}_{t}\!\left(  C_{4}\right)  .$ It
is clear that if $t=\frac{4585}{36864},$ then $\mathcal{X}_{t}\!\left(
C_{4}\right)  $ is $E_{\text{TNF}}$. Therefore $E$ is $%
\mathbb{Q}
$-isomorphic to $E_{C_{4}}\!\left(  36864,4585\right)  $. Moreover,
$36864=2^{12}\cdot3^{2}$ and $4585\equiv1\ \operatorname{mod}4$. By Theorem
\ref{semis}, $E$ has additive reduction at $2$ and is semistable at each odd
prime $p$ since $c=2^{6}\cdot3$ with the notation of Theorem
\ref{semistablecondthm}. We also have that the minimal discriminant
$\Delta_{E}^{\text{min}}$ of $E$ and associated invariants $c_{4}$ and $c_{6}$
are%
\begin{align*}
\Delta_{E}^{\text{min}} &  =\left(  2^{6}\cdot3\right)  ^{-12}\gamma_{C_{4}%
}\!\left(  36864,4585\right)  =2^{16}\cdot3^{2}\cdot5^{4}\cdot7^{4}\cdot
83^{2}\cdot131^{4}\\
c_{4} &  =\left(  2^{6}\cdot3\right)  ^{-4}\alpha_{C_{4}}\!\left(
36864,4585\right)  =2^{4}\cdot274978321\\
c_{6} &  =\left(  2^{6}\cdot3\right)  ^{-4}\beta_{C_{4}}\!\left(
36864,4585\right)  =-2^{6}\cdot23\cdot29\cdot47\cdot313\cdot317\cdot1439.
\end{align*}
By Theorem \ref{GlobalMinModel}, $E^{\text{min}}:y^{2}+192xy-880320y=x^{3}-4585x^{2}$
is a global minimal model for $E$.
\end{example}

\begin{example}
\label{ex:ss1}Consider the elliptic curve $E$ given by the Weierstrass
equation%
\[
E:y^{2}=x^{3}-19057987954261048752x+31955359661403338940204703104.
\]
The point $P=\left(  2365794828,10458914400000\right)  $ is a torsion point of
order $12$ on $E$. Placing $E$ in Tate normal form with respect to $P$ yields
the Weierstrass equation%
\[
E_{\text{TNF}}:y^{2}+\frac{6021}{125}xy-\frac{430408}{1875}y=x^{3}%
-\frac{430408}{1875}x^{2}.
\]
In particular, $E_{\text{TNF}}$ is equal to $\mathcal{X}_{t}\!\left(
C_{12}\right)  $ for some $t$. Therefore, we solve for $t$ and attain%
\[
\frac{12t^{6}-30t^{5}+34t^{4}-21t^{3}+7t^{2}-t}{\left(  t-1\right)  ^{4}%
}=\frac{430408}{1875}\text{ and }1-\frac{-6t^{4}+9t^{3}-5t^{2}+t}{\left(
t-1\right)  ^{3}}=\frac{6021}{125}.
\]
Observe that the common rational solution to both equations is $t=\frac{11}%
{6}$ and therefore $E$ is isomorphic over $%
\mathbb{Q}
$ to $E_{C_{12}}\!\left(  6,11\right)  $. Since $v_{3}\!\left(  6\right)  >0$,
we have by Theorem \ref{semis} that $E$ only has additive reduction at $3$.
Moreover, by Theorem \ref{semistablecondthm} its minimal discriminant and
invariants $c_{4}$ and $c_{6}$ associated to a global minimal model are
\begin{align*}
\Delta_{C_{12}}^{\text{min}} &  =2^{-12}\gamma_{C_{12}}\!\left(  6,11\right)
=2^{18}\cdot3^{7}\cdot5^{12}\cdot11^{12}\cdot61\cdot67^{4}\cdot73^{3}\\
c_{4} &  =2^{-4}\alpha_{C_{12}}\!\left(  6,11\right)  =3^{2}\cdot
23\cdot107\cdot227\cdot27361\cdot320687\\
c_{6} &  =2^{-6}\beta_{C_{12}}\!\left(  6,11\right)  =-3^{3}\cdot
503\cdot769\cdot47221\cdot18748939480561.
\end{align*}
Lastly,
\[
E^{\text{min}}:y^{2}+18063xy-12105225000y=x^{3}-32280600x^{2}%
\]
is a global minimal model for $E$ by\ Theorem \ref{GlobalMinModel}.
\end{example}

\begin{remark}
In Examples \ref{example1} and \ref{ex:ss1}, we considered rational elliptic curves with a torsion point of order at least $4$. This allowed us to consider the Tate normal form and thereby attain the parameters $a$ and $b$, which related these curves to our parameterized family $E_T(a,b)$. If instead, we had started with an arbitrary rational elliptic curve $E$, the algorithm in \cite{MR1931194} can be used to determine whether $E$ has a point of order at least $4$, and if it does, the algorithm returns the Tate normal form of $E$.
\end{remark}

\noindent \textbf{Acknowledgments.} The author is grateful to Edray Goins, Manami Roy, and the referees for their helpful comments and suggestions.


\section{\texorpdfstring{$E_{T}$}{} and its Associated Invariants}\label{SectionTables}
{\renewcommand*{\arraystretch}{1.2} 
\begin{longtable}{C{0.6in}C{1in}C{1.5in}C{1.5in}C{0.5in}}
    \caption[Weierstrass Model of $E_{T}$]{The Weierstrass Model $E_{T}:y^{2}+a_{1}xy+a_{3}y=x^{3}+a_{2}x^{2}+a_{4}x$} \label{ta:ETmodel}	\\
    \hline
    $T$ & $a_{1}$ & $a_{2}$ & $a_{3}$ & $a_{4}$ \\
    \hline
  \endfirsthead
    \caption[]{\emph{continued}}\\
    \hline
    $T$ & $a_{1}$ & $a_{2}$ & $a_{3}$ & $a_{4}$\\
    \hline
  \endhead
    \hline
    \multicolumn{4}{r}{\emph{continued on next page}}
  \endfoot
    \hline
  \endlastfoot
  
$C_{2}$ & $0$ & $2a$ & $0$ & $a^{2}-b^{2}d$ \\\hline
$C_{3}^{0}$ & $0$ & $0$ & $a$ & $0$ \\\hline
$C_{3}$ & $a$ & $0$ & $a^{2}b$ & $0$ \\\hline
$C_{4}$ & $a$ & $-ab$ & $-a^{2}b$ & $0$ \\\hline
$C_{5}$ & $a-b$ & $-ab$ & $-a^{2}b$ & $0$ \\\hline
$C_{6}$ & $a-b$ & $-ab-b^{2}$ & $-a^{2}b-ab^{2}$ & $0$\\\hline
$C_{7}$ & $a^{2}+ab-b^{2}$ & $a^{2}b^{2}-ab^{3}$ & $a^{4}b^{2}-a^{3}b^{3}$ & $0$ \\\hline
$C_{8}$ & $-a^{2}+4ab-2b^{2}$ & $-a^{2}b^{2}+3ab^{3}-2b^{4}$ & $-a^{3}b^{3}+3a^{2}
b^{4}-2ab^{5}$ & $0$ \\\hline
$C_{9}$ & $a^{3}+ab^{2}-b^{3}$ & $
a^{4}b^{2}-2a^{3}b^{3}+
2a^{2}b^{4}-ab^{5}
$ & $a^{3}\cdot a_{2}$
& $0$ \\\hline
$C_{10}$ &$
a^{3}-2a^{2}b-
2ab^{2}+2b^{3}
$ & $-a^{3}b^{3}+3a^{2}b^{4}-2ab^{5}$ & $(a^{3}-3a^{2}b+ab^{2})\cdot a_{2}$ & $0$\\\hline
$C_{12}$ & $
-a^{4}+2a^{3}b+2a^{2}b^{2}-
8ab^{3}+6b^{4}
$ & $b(a-2b)(a-b)^{2}(a^{2}-3ab+3b^{2})(a^{2}-2ab+2b^{2})
$ & $a(b-a)^3 \cdot a_{2} $& $0$ \\\hline
$C_{2}\times C_{2}$ & $0$ & $ad+bd$ & $0$ & $abd^{2}$ \\\hline
$C_{2}\times C_{4}$ & $a$ & $-ab-4b^{2}$ & $-a^{2}b-4ab^{2}$ & $0$ \\\hline
$C_{2}\times C_{6}$ & $-19a^{2}+2ab+b^{2}$ & $
-10a^{4}+22a^{3}b-
14a^{2}b^{2}+2ab^{3}
$ & $
90a^{6}-198a^{5}b+116a^{4}b^{2}+
4a^{3}b^{3}-14a^{2}b^{4}+2ab^{5}
$ & $0$ \\\hline
$C_{2}\times C_{8}$ & $
-a^{4}-8a^{3}b-
24a^{2}b^{2}+64b^{4}$ & $-4ab^{2}(a+2b)(a+4b)^{2}(a^{2}+4ab+8b^{2}) $ & $ -2b(a+4b)(a^{2}-8b^{2}) \cdot a_{2}
$ & $0$
\end{longtable}}

{\renewcommand*{\arraystretch}{1.2} 
\begin{longtable}{C{0.6in}C{4.9in}}
    \caption{The Polynomials $\alpha_{T}$}\\
    \hline
    $T$ &$\alpha_{T}$\\
    \hline
  \endfirsthead
    \caption[]{\emph{continued}}\\
    \hline
    $T$ & $\alpha_{T}$\\
    \hline
  \endhead
    \hline
    \multicolumn{2}{r}{\emph{continued on next page}}
  \endfoot
    \hline
  \endlastfoot

$C_{2}$ & $16(  3b^{2}d+a^{2})  $ \\\hline
$C_{3}^{0}$ &  $0$  \\\hline
$C_{3}$ & $a^{3}(  a-24b)  $ \\\hline
$C_{4}$ & $a^{2}(  a^{2}+16ab+16b^{2})  $ \\\hline
$C_{5}$ & $  a^{4}+12a^{3}b+14a^{2}b^{2}-12ab^{3}%
+b^{4}  $ \\\hline
$C_{6}$ & $(  a+3b)  (  a^{3}+9a^{2}b+3ab^{2}%
+3b^{3})  $ \\\hline
$C_{7}$ & $(  a^{2}-ab+b^{2})  (  a^{6}%
+5a^{5}b-10a^{4}b^{2}-15a^{3}b^{3}+30a^{2}b^{4}-11ab^{5}+b^{6})  $ \\\hline
$C_{8}$ & $  a^{8}-16a^{7}b+96a^{6}b^{2}-288a^{5}%
b^{3}+480a^{4}b^{4}-448a^{3}b^{5}+224a^{2}b^{6}-64ab^{7}+16b^{8}  $ \\\hline
$C_{9}$ & $(  a^{3}-3ab^{2}+b^{3})  (  a^{9}%
-9a^{7}b^{2}+27a^{6}b^{3}-45a^{5}b^{4}+54a^{4}b^{5}-48a^{3}b^{6}+27a^{2}%
b^{7}-9ab^{8}+b^{9})  $ \\\hline
$C_{10}$ & $a^{12}-8a^{11}b+16a^{10}b^{2}+40a^{9}b^{3}-240a^{8}b^{4}+432a^{7}%
b^{5}-256a^{6}b^{6}-288a^{5}b^{7}+720a^{4}b^{8}-720a^{3}b^{9}+416a^{2}b^{10}-
128ab^{11}+16b^{12}$ 
\\\hline
$C_{12}$ & $(  a^{4}-6a^{3}b+12a^{2}b^{2}-12ab^{3}+6b^{4})  (a^{12}
-18a^{11}b+144a^{10}b^{2}-684a^{9}b^{3}+2154a^{8}b^{4}-4728a^{7}b^{5}+7368a^{6}b^{6}-
8112a^{5}b^{7}+6132a^{4}b^{8}-3000a^{3}
b^{9}+864a^{2}b^{10}-144ab^{11}+24b^{12})$ \\\hline
$C_{2}\times
C_{2}$ & $16d^{2}(  a^{2}-ab+b^{2})  $ \\\hline
$C_{2}\times C_{4}$ & $a^{4}+16a^{3}b+80a^{2}b^{2}+128ab^{3}+256b^{4}$ \\\hline
$C_{2}\times C_{6}$ & $(  21a^{2}-6ab+b^{2})  (  6861a^{6}
-2178a^{5}b-825a^{4}b^{2}+180a^{3}b^{3}+75a^{2}b^{4}-18ab^{5}+b^{6})  $
\\\hline
$C_{2}\times C_{8}$ & $a^{16}+32a^{15}b+448a^{14}b^{2}+3584a^{13}b^{3}+17664a^{12}b^{4}
+51200a^{11}b^{5}+51200a^{10}b^{6}-237568a^{9}b^{7}- 1183744a^{8}b^{8}-1900544a^{7}b^{9}+3276800a^{6}
b^{10}+26214400a^{5}b^{11}+
72351744a^{4}b^{12}+117440512a^{3}b^{13}+ 117440512a^{2}b^{14}+67108864ab^{15}+16777216b^{16}$
\label{ta:alpT}	
\end{longtable}}

{\renewcommand*{\arraystretch}{1.2} 
\begin{longtable}{C{0.6in}C{4.9in}}
    \caption{The Polynomials $\beta_{T}$}\\
    \hline
    $T$ & $\beta_{T}$\\
    \hline
  \endfirsthead
    \caption[]{\emph{continued}}\\
    \hline
    $T$ & $\beta_{T}$ \\
    \hline
  \endhead
    \hline
    \multicolumn{2}{r}{\emph{continued on next page}}
  \endfoot
    \hline
  \endlastfoot
	
$C_{2}$ & $-64a(  9b^{2}d-a^{2})  $ \\\hline
$C_{3}^{0}$ &  $-216a^{2}$  \\\hline
$C_{3}$ & $a^{4}(  -a^{2}+36ab-216b^{2})  $ \\\hline
$C_{4}$ & $a^{3}(  a+8b)  (  -a^{2}-16ab+8b^{2})  $\\\hline
$C_{5}$ & $-(  a^{2}+b^{2})  (  a^{4}%
+18a^{3}b+74a^{2}b^{2}-18ab^{3}+b^{4})  $ \\\hline
$C_{6}$ & $-(  a^{2}+6ab-3b^{2}) (  a^{4}%
+12a^{3}b+30a^{2}b^{2}+36ab^{3}+9b^{4})  $ \\\hline

$C_{7}$ & $-(a^{12}+6a^{11}b-15a^{10}b^{2}-46a^{9}b^{3}+174a^{8}b^{4}-222a^{7}
b^{5}+273a^{6}b^{6}-486a^{5}b^{7}+570a^{4}b^{8}-354a^{3}b^{9}+
117a^{2}b^{10}-18ab^{11}+b^{12})$
\\\hline
$C_{8}$ & $
-(  a^{4}-8a^{3}b+16a^{2}b^{2}-16ab^{3}+8b^{4})  (a^{8}%
-16a^{7}b+96a^{6}b^{2}-288a^{5}b^{3}+456a^{4}b^{4}-352a^{3}b^{5}+
80a^{2}b^{6}+32ab^{7}-8b^{8})$ \\\hline
$C_{9}$ & $
-(a^{18}-18a^{16}b^{2}+42a^{15}b^{3}+27a^{14}b^{4}-306a^{13}b^{5}%
+735a^{12}b^{6}-1080a^{11}b^{7}+1359a^{10}b^{8}-
2032a^{9}b^{9}+ 3240a^{8}b^{10}- 4230a^{7}b^{11}+4128a^{6}b^{12}-2970a^{5}%
b^{13}1359a^{10}b^{8}-570a^{3}b^{15}+
135a^{2}b^{16}-18ab^{17}+b^{18})
$ \\\hline
$C_{10}$ & $-(  a^{2}-2ab+2b^{2})  (  a^{4}-2a^{3}b+2b^{4})
(  a^{4}-2a^{3}b-6a^{2}b^{2}+12ab^{3}-4b^{4})  (a^{8}-6a^{7}b
+4a^{6}b^{2}+48a^{5}b^{3}- 146a^{4}b^{4}+176a^{3}b^{5}-104a^{2}b^{6}
+32ab^{7}-4b^{8})$ \\\hline
$C_{12}$ & $-(  a^{8}-12a^{7}b+60a^{6}b^{2}-168a^{5}b^{3}+288a^{4}b^{4}-312a^{3}
b^{5}+216a^{2}b^{6}-96ab^{7}+24b^{8})  (a^{16}-
24a^{15}b+264a^{14}b^{2}+ 8208a^{12}b^{4}-27696a^{11}b^{5}+70632a^{10}
b^{6}-138720a^{9}b^{7}+211296a^{8}b^{8}-
248688a^{7}b^{9}+222552a^{6}b^{10}- 146304a^{5}b^{11}+65880a^{4}b^{12}
-17136a^{3}b^{13}+1008a^{2}b^{14}+
576ab^{15}-72b^{16})$ \\\hline
$C_{2}\times C_{2}$ & $-32d^{3}(  a+b)  (  a-2b) (
2a-b)  $ \\\hline
$C_{2}\times
C_{4}$ & $-(  a^{2}+8ab-16b^{2})  (
a^{2}+8ab+8b^{2}) (  a^{2}+8ab+32b^{2})  $ \\\hline
$C_{2}\times C_{6}$ & $-(  183a^{4}-36a^{3}b-30a^{2}b^{2}+12ab^{3}-b^{4})  (
393a^{4}-156a^{3}b+30a^{2}b^{2}-12ab^{3}+b^{4})  (759a^{4}-
228a^{3}b-30a^{2}b^{2}+ 12ab^{3}-b^{4}$ \\\hline
$C_{2}\times C_{8}$ & $-(  a^{8}+16a^{7}b+96a^{6}b^{2}+256a^{5}b^{3}-256a^{4}b^{4}%
-4096a^{3}b^{5}-12288a^{2}b^{6}-16384ab^{7}-8192b^{8})
( a^{8}+16a^{7}b+ 96a^{6}b^{2}+256a^{5}b^{3}+128a^{4}b^{4}-1024a^{3}%
b^{5}-3072a^{2}b^{6}-4096ab^{7}-2048b^{8})  (a^{8}+
16a^{7}b+96a^{6}b^{2}+ 256a^{5}b^{3}+ 512a^{4}b^{4}+2048a^{3}b^{5}%
+6144a^{2}b^{6}+8192ab^{7}+4096b^{8})$
\label{ta:betT}	
\end{longtable}}

{\renewcommand*{\arraystretch}{1.2} 
\begin{longtable}{C{0.6in}C{4.9in}}
    \caption{The Polynomials $\gamma_{T}$}\\
    \hline
    $T$ & $\gamma_{T} $\\
    \hline
  \endfirsthead
    \caption[]{\emph{continued}}\\
    \hline
 $T$ &  $\gamma_{T}$ \\
    \hline
  \endhead
    \hline
    \multicolumn{2}{r}{\emph{continued on next page}}
  \endfoot
    \hline
  \endlastfoot
	
$C_{2}$ & $64b^{2}d(  b^{2}d-a^{2})  ^{2}$\\\hline
$C_{3}^{0}$ & $-27a^{4}$   \\\hline
$C_{3}$ & $a^{8}b^{3}(  a-27b)  $ \\\hline
$C_{4}$ & $a^{7}b^{4}(  a+16b)  $\\\hline
$C_{5}$ & $(  ab)  ^{5}(  -a^{2}-11ab+b^{2})
$ \\\hline
$C_{6}$ & $a^{2}b^{6}(  a+9b) (  a+b)  ^{3}%
$ \\\hline
$C_{7}$ & $(  ab)  ^{7}(  -a+b)  ^{7}(
a^{3}+5a^{2}b-8ab^{2}+b^{3})  $\\\hline
$C_{8}$ & $a^{2}b^{8}(  a-2b)  ^{4}(  a-b)
^{8}(  a^{2}-8ab+8b^{2})  $\\\hline
$C_{9}$ & $(  ab)  ^{9}(  -a+b)  ^{9}(
a^{2}-ab+b^{2})  ^{3}(  a^{3}+3a^{2}b-6ab^{2}+b^{3})  $\\\hline
$C_{10}$ & $a^{5}b^{10}(  a-2b)  ^{5}(
a-b)  ^{10}(  a^{2}+2ab-4b^{2}) (  a^{2}-3ab+b^{2})  ^{2}$ \\\hline
$C_{12}$ & $a^{2}b^{12}(  a-2b)  ^{6}(  a-b)
^{12}(  a^{2}-6ab+6b^{2}) (  a^{2}-2ab+2b^{2})
^{3}(  a^{2}-3ab+3b^{2})  ^{4}$ \\\hline
$C_{2}\times C_{2}$ & $16a^{2}b^{2}d^{6}(  a-b)  ^{2}$ \\\hline
$C_{2}\times C_{4}$ & $a^{2}b^{4}(  a+8b)  ^{2}(  a+4b)
^{4}$ \\\hline
$C_{2}\times C_{6}$ & $(  2a)  ^{6}(  -9a+b)  ^{2}(
-3a+b)  ^{2}(  3a+b)  ^{2}(  -5a+b)  ^{6}(
-a+b)  ^{6}$ \\\hline
$C_{2}\times C_{8}$ & $(  2ab)  ^{8}(  a+2b)  ^{8}(
a+4b)  ^{8}(  a^{2}-8b^{2})  ^{2}(  a^{2}+8ab+8b^{2}%
)  ^{2}(  a^{2}+4ab+8b^{2})  ^{4}$ 
\label{ta:gamT}	
\end{longtable}}

\bibliographystyle{amsplain}
\bibliography{bibliography}

\end{document}